\newtheorem{thm}{Theorem}[section]
\newtheorem*{thm*}{Theorem}
\newtheorem{lem}[thm]{Lemma}
\newtheorem*{lem*}{Lemma}
\newtheorem{cor}[thm]{Corollary}
\newtheorem{claim}[thm]{Claim}
\newtheorem{prop}[thm]{Proposition}
\theoremstyle{definition}
\newtheorem{assump}[thm]{Assumption}
\newtheorem*{case*}{Case}
\newtheorem{defn}[thm]{Definition}
\newtheorem*{defn*}{Definition}
\newtheorem{exmp}[thm]{Example}
\newtheorem*{exmp*}{Example}
\newtheorem{hyp}[thm]{Hypothesis}
\newtheorem{step}{Step}\renewcommand{\thestep}{}
\theoremstyle{remark}
\newtheorem{case}{Case}\renewcommand{\thecase}{}
\newtheorem{rmk}[thm]{Remark}
\newtheorem*{rmk*}{Remark}
\def\alphenumi{
  \def\theenumi{\alph{enumi}}
  \def\p@enumi{\theenumi}
  \def\labelenumi{(\@alph\c@enumi)}}
\def\thecase{\@arabic\c@case}
\def\thestep{\@arabic\c@step}
\def\hhmm{\number\hh:\ifnum\mm<10{}0\fi\number\mm}
\let\oldmarginpar\marginpar
\renewcommand\marginpar[1]{\-\oldmarginpar[\raggedleft\footnotesize #1]%
{\raggedright\footnotesize #1}}
\renewcommand\emptyset{\varnothing}
\newcommand\BB{\mathbb{B}}
\newcommand\EE{\mathbb{E}}
\newcommand\HH{\mathbb{H}}
\newcommand\NN{\mathbb{N}}
\newcommand\RR{\mathbb{R}}
\newcommand\fa{{\mathfrak{a}}}
\newcommand\fw{{\mathfrak{w}}}
\newcommand\sO{{\mathscr{O}}}
\newcommand\sU{{\mathscr{U}}}
\newcommand\sV{{\mathscr{V}}}
\newcommand\eps{\varepsilon}
\newcommand\less{\setminus}
\newcommand\diam{\operatorname{diam}}
\newcommand\dist{\operatorname{dist}}
\newcommand{\essinf}{\operatornamewithlimits{ess\ inf}}
\newcommand{\esssup}{\operatornamewithlimits{ess\ sup}}
\DeclareMathOperator{\height}{height}
\DeclareMathOperator{\Int}{int}
\newcommand{\osc}{\operatornamewithlimits{osc}}
\newcommand\sign{\operatorname{sign}}
\newcommand\supp{\operatorname{supp}}
\newcommand\vol{\operatorname{vol}}
\newcommand\apriori{{\emph{a priori }}}
\newcommand\loc{{\mathrm{loc}}}
\numberwithin{equation}{section}
\begin{document}

\title[H\"older continuity for solutions to variational equations and inequalities]
{Boundary-degenerate elliptic operators and H\"older continuity for solutions to variational equations and inequalities}

\author[P. M. N. Feehan]{Paul M. N. Feehan}
\address[PF]{Department of Mathematics, Rutgers, The State University of New Jersey, 110 Frelinghuysen Road, Piscataway, NJ 08854-8019}
\email{feehan@math.rutgers.edu}
\curraddr{School of Mathematics, Institute for Advanced Study, Princeton, NJ 08540}
\email{feehan@math.ias.edu}

%CP 1.11.2016: Changed affiliation
%PF 2-27-2016: OK
\author[C. A. Pop]{Camelia A. Pop}
\address[CP]{School of Mathematics, University of Minnesota, Vincent Hall, 206 Church St. SE, Minneapolis, MN 55455}
\curraddr{Institute for Mathematics and Its Applications, Lind Hall, 207 Church St. SE, Minneapolis, MN 55455}
\email{capop@umn.edu}

%\date{\today{ }\hhmm}
\date{March 9, 2016}

\begin{abstract}
We prove local supremum bounds, a Harnack inequality, H\"older continuity up to the boundary, and a strong maximum principle for solutions to a variational equation defined by an elliptic operator which becomes degenerate along a portion of the domain boundary and where no boundary condition is prescribed, regardless of the sign of the Fichera function. In addition, we prove H\"older continuity up to the boundary for solutions to variational inequalities defined by this boundary-degenerate elliptic operator.
\end{abstract}

% AMS 2010 subject classifications (used in AMS journals)
% Primary
% 35J70  Degenerate elliptic equations
% 35J86  Linear elliptic unilateral problems and linear elliptic variational inequalities
% 49J40  Variational methods including variational inequalities
% 35R45  Partial differential inequalities
%
% Secondary
% 35R35  Free boundary problems
% 49J20  Optimal control problems involving partial differential equations
% 60J60  Diffusion processes

\subjclass[2010]{Primary 35J70, 35J86, 49J40, 35R45; Secondary 35R35, 49J20, 60J60}

% AMS keywords (used in AMS journals)
\keywords{American-style option, degenerate elliptic differential operator, degenerate diffusion process, free boundary problem, Harnack inequality, Heston stochastic volatility process, H\"older continuity, mathematical finance,
obstacle problem, variational inequality, weighted Sobolev space}

% Acknowledge support
\thanks{PF was partially supported by NSF grant DMS-1059206.}
%CP 1.11.2016: Removed
%CP was partially supported by a Rutgers University fellowship. }

\maketitle
%\tableofcontents
%\listoffigures

\section{Introduction}
\label{sec:Introduction}
\subsection{Overview}
\label{subsec:Overview}
There is a distinguished history of research on local supremum estimates, Harnack inequalities, and local $C^\alpha$ estimates and $C^\alpha$ regularity for weak solutions to equations,
$$
Au = f \quad\hbox{a.e. on }\sO, \quad u = g  \quad\hbox{on } \partial\sO,
$$
defined by an elliptic partial differential operator,\footnote{We employ the Einstein summation convention with $1 \leq \mu,\nu\leq n$.}
\begin{equation}
\label{eq:A_nondivergence_form}
%PF 2-25-2016 changed \tilde b^i to b^i for consistency with later usage
%CP 2.25.2016: Ok
%PF 2-27-2016: Using Greek letters for 1 \leq \alpha, \beta \leq n to avoid conflict with later 1 \leq i, j \leq n-1
%PF 2-29-2016: Rewritten
Au=-\bar a^{\mu\nu}u_{z_\mu z_\nu}-b^\mu u_{z_\mu}+cu,
\end{equation}
whose coefficient matrix, $(\bar a^{\mu\nu})$, is Lipschitz but which fails to be strictly or uniformly elliptic on an open subset $\sO\subset\RR^n$ (for $n\geq 2$), in the sense of \cite[p. 31]{GilbargTrudinger}. For a selection of such results, see \cite{Chanillo_Wheeden_1986, Fabes_Kenig_Serapioni_1982a, DiFazio_Fanciullo_Zamboni_2010, DiFazio_Zamboni_2006, Franchi_Serapioni_1987, Kohn_Nirenberg_1967, Mohammed_2002, Murthy_Stampacchia_1968, Pingen_2008, Stredulinsky, Zamboni_2002} and references contained therein. In those articles, Dirichlet boundary conditions are imposed on the \emph{full} boundary, $\partial\sO$, in order to obtain local supremum estimates and $C^\alpha$ regularity which hold up to $\partial\sO$.

However, it is known from work of G. Fichera \cite{Fichera_1956, Fichera_1960} and O. A. Ole{\u\i}nik and E. V. Radkevi{\v{c}} \cite{Oleinik_Radkevic, Radkevich_2009a, Radkevich_2009b}, building on prior observations of M. V. Keldy{\v{s}} \cite{Keldys_1951}, that when $A$ is \emph{boundary-degenerate} --- that is, $(\bar a^{\mu\nu})$ fails to be locally strictly elliptic along a non-empty open portion $\Gamma_0\subseteqq\partial\sO$ of the boundary --- then refined weak maximum principles imply that the boundary value problem or associated variational equation may have a unique solution, $u$ in $C^2(\sO)\cap C(\bar\sO)$ or $W^{1,2}(\sO)$ respectively, with Dirichlet boundary condition prescribed only along a \emph{part} of the boundary, $\Gamma_1 := \partial\sO\less\bar\Gamma_0$ (the `non-degenerate boundary') and no boundary condition along $\Gamma_0$ (the `degenerate boundary'). However, the development of local supremum estimates, Harnack inequalities, and H\"older continuity up to $\Gamma_0$ for solutions to variational equations defined by boundary-degenerate elliptic partial differential operators --- where \emph{no} boundary condition is imposed along $\Gamma_0$ --- is far less well developed and,
%CP 1.11.2016: Maybe we should add here the work of Daskalopoulos-Hamilton, Epstein-Mazzeo
%PF 2-27-2016: These authors consider variational equations/inequalities and we do mention there work later on, but if you would like to add more, please do as you are more familiar with Epstein-Mazzeo's work
with the exception of the Habilitation thesis of H. Koch \cite{Koch} (about which we shall say more below), there are far fewer results despite the need from important applications.

We shall consider suitably defined \emph{weak} solutions, $u$, to the elliptic boundary value problem,
\begin{equation}
\label{eq:IntroBoundaryValueProblem}
Au = f \quad\hbox{on }\sO, \quad u = g  \quad\hbox{on } \Gamma_1,
\end{equation}
and the elliptic obstacle problem with \emph{partial} Dirichlet boundary condition (see Figure \ref{fig:domain}),
\begin{equation}
\label{eq:IntroObstacleProblem}
\min\{Au-f,u-\psi\} = 0  \quad\hbox{a.e on }\sO, \quad u = g  \quad\hbox{on } \Gamma_1,
\end{equation}
where $\psi:\sO\to\RR$ is an obstacle function which is compatible with the Dirichlet boundary condition in the sense that
\begin{equation}
\label{eq:ObstacleFunctionLessThanZero}
\psi \leq g \quad\hbox{on }\Gamma_1.
\end{equation}
We note that obstacle problems are not considered by Koch in \cite{Koch}.
\begin{figure}
\centering
\begin{picture}(200,150)(0,0)
\put(0,0){\includegraphics[width=200pt,height=150pt,clip=true,trim=0pt 0pt 0pt 50pt]{./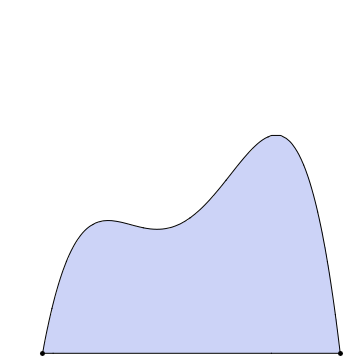}}
\put(-5,0){$\scriptstyle \bar\Gamma_0\cap \bar\Gamma_1$}
\put(60,5){$\scriptstyle \Gamma_0$}
\put(130,80){$\scriptstyle \Gamma_1$}
\put(130,30){$\scriptstyle \sO$}
\put(195,0){$\scriptstyle \bar\Gamma_0\cap \bar\Gamma_1$}
\end{picture}
\caption{Boundaries and corner points for the
%CP 1.23.2016: Removed 'Heston'
%Heston
%PF 2-27-2016: OK
elliptic boundary value and obstacle problems.}
\label{fig:domain}
\end{figure}
The purpose of this article is then to establish the following results for a variational equation corresponding to \eqref{eq:IntroBoundaryValueProblem}
defined by
%CP 1.23.2016: Rephrased below
%PF 2-27-2016: OK
%an example of a boundary-degenerate operator --- the Heston operator \cite{Heston1993} --- which has wide application in mathematical finance:
a class of boundary-degenerate operators that includes the Heston operator \cite{Heston1993}, which has wide application in mathematical finance:
\begin{enumerate}
\item Local supremum estimate up to $\partial\sO$ for a subsolution;
\item A Harnack inequality for a non-negative solution on open subsets $\sO'\Subset\sO\cup\Gamma_0$ when $f=0$ on $\sO$;
\item A strong maximum principle for a subsolution;
\end{enumerate}
and, in the case of a solution, $u$, to a variational equation corresponding to \eqref{eq:IntroBoundaryValueProblem} or variational inequality corresponding to \eqref{eq:IntroObstacleProblem},
\begin{enumerate}
\item[(4)]\setcounter{enumi}{4} $C^\alpha$ regularity up to $\partial\sO$, including the `corner points' where $\Gamma_0$ and $\Gamma_1$ meet, and a local $C^\alpha$ estimate;
\end{enumerate}
where in each of these results, points in $\Gamma_0$ have the same role as those in the interior, $\sO$, and \emph{no} boundary condition is prescribed along $\Gamma_0$. The supremum and $C^\alpha$ estimates for $u$ are expressed in terms of integral norms of $u$, the source function, $f$, the boundary data, $g$, and, in the case of the variational inequality, the obstacle function, $\psi$. Unlike the analogous classical results described by Gilbarg and Trudinger \cite{GilbargTrudinger} for strictly elliptic operators --- for example, local supremum estimates up to $\partial\sO$ \cite[Theorem 8.25]{GilbargTrudinger} or local $C^\alpha$ estimates and regularity up to $\partial\sO$ \cite[Theorem 8.29]{GilbargTrudinger} --- or their analogues for degenerate-elliptic operators in the articles cited above (aside from \cite{Koch}), we do not need to assume that $u$ is bounded or $C^\alpha$ along $\Gamma_0$: those properties are implied by the variational equation alone. In \S \ref{subsec:Survey}, we provide a detailed comparison with previous related results for solutions to variational equations defined by `degenerate elliptic' operators. Our companion article \cite{Feehan_Pop_higherregularityweaksoln} develops higher-order regularity properties up to $\Gamma_0$ for weak solutions.
\begin{comment}
%PF 2-18-2016 Next sentence seems redundant now
%CP 2.24.2016: Ok
We obtain the results in this article using methods which should readily allow generalizations (in a subsequent article) to a much broader class of boundary-degenerate operators which can also be expressed in divergence form and which is described in \S \ref{subsec:Class_noncoercive_bilinear_maps}.
\end{comment}

Some of the motivation for developing these results can be inferred from the work of P. Daskalopoulos and R. Hamilton \cite{DaskalHamilton1998}, C. L. Epstein and R. Mazzeo \cite{Epstein_Mazzeo_2010, Epstein_Mazzeo_annmathstudies}, H. Koch \cite{Koch}, and the authors \cite{Feehan_Pop_higherregularityweaksoln, Feehan_Pop_elliptichestonschauder, Feehan_Pop_mimickingdegen_pde}, where one discovers that the imposition of a Dirichlet boundary condition along $\Gamma_0$ can limit the regularity of the solution, $u$, to be at most $C^\alpha$ up to $\Gamma_0$, whereas employing suitable weighted H\"older or Sobolev spaces to facilitate solving the partial boundary problem (with Dirichlet boundary condition prescribed only along $\Gamma_1$) will yield a solution which is $C^\infty$ up to $\Gamma_0$ (if the coefficients of $A$ and source function $f$ are also $C^\infty$ up to $\Gamma_0$). Applications illustrate that the imposition of a boundary condition along $\Gamma_0$ is often not physically justified, as exemplified in work of Daskalopoulos and Hamilton and Koch on the porous medium equation, Daskalopoulos and the author \cite{Daskalopoulos_Feehan_statvarineqheston, Daskalopoulos_Feehan_optimalregstatheston} on stochastic volatility models in mathematical finance, E. Ekstr\"om and J. Tysk \cite{Ekstrom_Tysk_bcsftse} on interest-rate models in mathematical finance, and Epstein and Mazzeo on Wright-Fisher diffusion models in mathematical biology, and many other examples. Instead, the relevant physical property sought is rather that the solution, $u$, be sufficiently smooth up to $\Gamma_0$.

When the boundary-degenerate operator, $A$, can be expressed in both divergence and non-divergence forms (as we assume here), one has a choice of employing a Schauder approach to existence and regularity theory, as in \cite{DaskalHamilton1998, Epstein_Mazzeo_annmathstudies, Feehan_classical_perron_elliptic, Feehan_Pop_elliptichestonschauder, Feehan_Pop_mimickingdegen_pde}, or a variational approach as in \cite{Daskalopoulos_Feehan_statvarineqheston, Feehan_Pop_higherregularityweaksoln, Koch}. However, for certain questions, the variational approach can have advantages over a Schauder approach. For example, it appears to be a challenging problem to use purely Schauder methods to prove that the solution, $u$, is $C^\alpha$ up to the `corner points', where the degenerate and non-degenerate boundary portions, $\Gamma_0$ and $\Gamma_1$, meet; see \cite{Feehan_classical_perron_elliptic, Feehan_Pop_higherregularityweaksoln, Feehan_Pop_elliptichestonschauder} for discussions of this difficulty. As shown by Daskalopoulos and one of the authors (Feehan) \cite{Daskalopoulos_Feehan_statvarineqheston}, a framework for solving a non-coercive variational equation defined with the aid of appropriate weighted Sobolev spaces is readily extended to include variational inequalities.

Furthermore, Daskalopoulos and Feehan use the Harnack inequality and continuity (up to $\Gamma_0$) developed in this article for a solution, $u$, to a variational inequality as important stepping stones in their proof of $C^{1,1}$ regularity (up to $\Gamma_0$) of a solution to an obstacle problem arising in mathematical finance.
When $A$ is as in \eqref{eq:OperatorHestonIntro} and $f=0$, the solution, $u$, to the obstacle problem \eqref{eq:IntroObstacleProblem} can be interpreted
as the value function for a perpetual American-style barrier option on a
%CP 1.23.2016: Added the below to include more general processes
%PF 2-27-2016: OK
generalization of the Heston stochastic volatility asset price process \cite{Heston1993}, with payoff function $\psi$ and barrier condition $g$ on $\Gamma_1$. The choice $\psi(x,y)=(K-e^x)^+$, for $(x,y)\in\RR\times\RR_+$, yields the price of an American-style put, where $x$ represents the asset log-price, $y$ is the asset variance, and $K>0$ is the strike.

\subsection{Summary of main results}
\label{subsec:Summary}
We shall state a selection of our main results here and then refer the reader to our guide to this article in \S \ref{subsec:Guide} for more of our results on existence, uniqueness and regularity of solutions to variational equations and inequalities and corresponding obstacle problems.

\subsubsection{Mathematical preliminaries}
\label{subsubsec:IntroPreliminaries}
%CP 1.23.2016: Introduced here the generalization to multidimensions and variable coefficients
%PF 2-27-2016: OK; added footnote
%PF 2-29-2016: Simplified description of A and assumptions
In this article, we shall study boundary-degenerate elliptic operators \eqref{eq:A_nondivergence_form} of the specific form, for all $v \in C^\infty(\sO)$,
%\footnote{We employ the Einstein summation convention with $1 \leq i,j \leq n-1$.}
%PF 2-23-2016 missing "+ c(z)v(z)" added and changed Au --> Av on LHS
%CP 2.25.2016: Ok
%PF 2-23-2016: In the discussion of the variable coefficient operator, you have to be consistent in the use of index ranges: Is it 1 \leq i \leq n or n-1? If the former, then the summation convention below does not make sense; if the latter, the coefficient bound \eqrefl{eq:Operator_A_boundedness} is incomplete. Also, there is a notation conflict with later usage -- if x_i is used for a coordinate, then it cannot also mean a marked point (needs to be x^i or vice versa).
%CP 2.25.2016: What x_i means in the later section of the paper is explained. I don't find it confusing. If you think that it is please feel free to change the notation.
%PF 2-27-2016: The coefficients should not assumed to be defined on all of \HH; changed below
\begin{equation}
\label{eq:Operator_simplified}
Av(z) = -ya^{\mu\nu}(z)v_{z_\mu z_\nu}(z) - b^\mu(z)v_{z_\mu}(z) + c(z)v(z),
\quad\text{a.e. } z \in \sO,
\end{equation}
so $\bar a^{\mu\nu} = ya^{\mu\nu}$, where we denote $z=(z_1,\ldots,z_n) = (x,y)\in\HH$ with $x = (x_1,\ldots,x_{n-1}) \in\RR^{n-1}$ and $x_n = y\in\RR_+$. We require that the coefficients of the operator $A$ satisfy
\begin{assump}
\label{assump:Operator_A}
There are positive constants, $\beta$, $\Lambda$ and $\nu_0$, with the following significance.
\begin{enumerate}
%PF 2-18-2016 I think it best not to ``hardcode'' the enumeration format.
%CP 2.24.2016: Ok
%PF 2-27-2016: We can't have i \leq n in one context, i \leq n-1 in another if we're using summation convention
\item The coefficients $a^{\mu\nu}$ belong to $W^{1,\infty}(\sO)$ and $c$ belongs to $L^{\infty}(\sO)$;
\item %PF 2-18-2016 Description of the matrix coefficients seemed a bit odd
%Letting $a^{ni}:=a^{in}$, for all $1\leq i\leq n-1$, the matrix coefficient $(a^{ij}(z))$ is strictly elliptic, that is,
%CP 2.24.2016: Ok
%PF 2-25-2016: We may as well just assume the coefficient matrix is symmetric
%CP 2.25.2016: Ok
The coefficient matrix $(a^{\mu\nu}(z))$
%obeys $a^{ni}=a^{in}$ for $1\leq i\leq n-1$ and is
is symmetric and strictly elliptic,
\begin{equation}
\label{eq:Operator_A_ellipticity}
%PF 2-27-2016: Missing terms added
\nu_0|\xi|^2\leq a^{\mu\nu}(z)\xi_\mu\xi_\nu,
\quad\forall\, \xi\in\RR^n,\quad\hbox{for a.e. } z\in \sO;
\end{equation}
\item There are functions $\hat b_\mu\in L^{\infty}(\sO)$ such that
\begin{equation}
\label{eq:Operator_A_b}
%PF 2-25-2016: According the definition of A and the integration by parts formula, there should be a distinction between b^i (factor 2\beta below) and b^n (factor beta, stated separately)
%CP 2.25.2016: I don't get that. When we integrate by parts the term 2a^{in} u_{x_i y}, it should be split into a^{in} u_{x_i y} and a^{in} u_{x_i y}. In the first term, we use x_i to integrate by parts, and in the second term, use y. In this way, we obtain the part containing u_{x_i}v_{x_j} to be coercive, and there is no 2.
%PF 2-27-2016: OK, but changed below since range for i must be 1 \leq i \leq n-1; otherwise, summation convention will not be correct
%CP 3.7.2016: Changed the sign in front on \hat b^{\mu} from + to -. If the sign is not changed here, then it must be changed in the bilinear form
%PF 3-7-2016: OK
b^\mu = \beta a^{\mu n} - y \hat b^\mu, \quad 1\leq \mu \leq n;
\end{equation}
%PF 2-27-2016: We can't have i \leq n in one context, i \leq n-1 in another if we're using summation convention
\item The coefficients obey the bound
\begin{equation}
\label{eq:Operator_A_boundedness}
\max_{1\leq \mu,\nu \leq n}\|a^{\mu\nu}\|_{W^{1,\infty}(\sO)} +
 \max_{1\leq
%CP 2.29.2016: Changed i to \mu
\mu
%i
\leq n}
%CP 2.28.2016: a^{in} and a^{nn} should have norm W^{1,\infty}(\sO) instead of L^{\infty}(\sO). Changed below
%\|a^{in}\|_{L^{\infty}(\sO)}
\|\hat b^\mu\|_{L^{\infty}(\sO)}
+
%\|a^{nn}\|_{L^{\infty}(\sO)}
\|c\|_{L^{\infty}(\sO)} \leq \Lambda.
\end{equation}
\end{enumerate}
\end{assump}

%CP 1.23.2016: Moved this condition to the example about the Heston process
%PF 2-23-2016: This changes the numbering of the results, so we should replace by a remark or comment or something to avoid that
%CP 2.24.2016: The numbering was not changed by removing the assumptions below because they were replaced by 2 other (Assumption 1.1 and Example 1.2)
\begin{comment}
Throughout this article, the coefficients of the Heston operator, $A$ in \eqref{eq:OperatorHestonIntro}, are required to obey

\begin{assump}[Ellipticity condition for the coefficients of the Heston operator]
\label{assump:HestonCoefficients}
The coefficients defining $A$ in \eqref{eq:OperatorHestonIntro} are constants obeying
\begin{equation}
\label{eq:EllipticHeston}
\sigma \neq 0, \quad -1< \varrho < 1,
\end{equation}
and $\kappa>0$, $\theta>0$, $r\geq 0$, and $q \in \RR$.
\end{assump}
\end{comment}

We shall consider variational solutions to \eqref{eq:IntroBoundaryValueProblem} and \eqref{eq:IntroObstacleProblem}, so we introduce our weighted Sobolev spaces. For $1\leq q<\infty$, let
%PF 2-25-2016: These weighted Sobolev spaces now need motivation. Why are they the "right" ones?
%CP 2.25.2016: Because we can write the equation in divergence form and obtain a coercive and continuous bilinear form. This is written in the sentence at the end of the paragraph that introduces the spaces and the bilinear form.
%CP 2.25.2016: I replaced coercive with the Garding inequality at the end of the paragraph.
\begin{subequations}
\begin{align}
\label{eq:LqWeightedSpace}
L^q(\sO,\fw) &:= \{u\in L^1_{\loc}(\sO): \|u\|_{L^q(\sO,\fw)} < \infty\},
\\
\label{eq:H1WeightedSobolevSpace}
H^1(\sO,\fw) &:= \{u \in L^2(\sO,\fw): (1+y)^{1/2}u, \ y^{1/2}|Du| \in L^2(\sO,\fw)\},
\\
\label{eq:H2WeightedSobolevSpace}
H^2(\sO,\fw) &:= \{u \in L^2(\sO,\fw): (1+y)^{1/2}u, \ (1+y)|Du|, \ y|D^2u| \in L^2(\sO,\fw)\},
\end{align}
\end{subequations}
where
%CP 1.23.2015: Changed
%PF 2-27-2016: OK
$Du$ denotes the gradient of $u$, $D^2u$ denotes the Hessian of $u$,
%$Du = (u_x,u_y)$, $D^2u = (u_{xx}, u_{xy}, u_{yx}, u_{yy})$,
with all derivatives of $u$ being defined in the sense of distributions, and
\begin{subequations}
\begin{align}
\label{eq:LqNormHeston}
\|u\|_{L^q(\sO,\fw)}^q &:= \int_\sO |u|^q\fw\,dx\,dy,
\\
\label{eq:H1NormHeston}
\|u\|_{H^1(\sO,\fw)}^2 &:= \int_\sO\left(y|Du|^2 + (1+y)u^2\right)\fw\,dx\,dy,
\\
\label{eq:H2NormHeston}
\|u\|_{H^2(\sO,\fw)}^2 &:= \int_\sO\left( y^2|D^2u|^2 + (1+y)^2|Du|^2 + (1+y)u^2\right)\,\fw\,dx\,dy,
\end{align}
\end{subequations}
%PF 5.28.2013 Added footnote
with weight function $\fw:\HH\to(0,\infty)$ given by
%CP 1.23.2016: We should remove this when we consider the generalization to variable coefficients
\begin{comment}
\footnote{The factor $y^{\beta-1}$ is the important one in \eqref{eq:HestonWeight}; inclusion of the factor $e^{-\mu y}$ simplifies the structure of the bilinear form \eqref{eq:BilinearForm} slightly and, together with the factor $e^{-\gamma|x|}$, ensures that $\HH$ has finite volume with respect to the weight, $\fw$.}
\end{comment}
\begin{equation}
\label{eq:HestonWeight}
%CP 1.23.2016: I removed the exponential factors from the definition of \fw
%CP 3.7.2016: Added the exponential terms
%PF 3-7-2016: OK, changed gamma to tau to avoid conflict
\fw(x,y) := y^{\beta-1}e^{-\tau|x|-\mu y}, \quad\forall\, (x,y) \in \HH,
%e^{-\gamma|x|-\mu y}
\end{equation}
where $\tau$ and $\mu$ are nonnegative constants.
It will be convenient in our analysis to write $A$ from \eqref{eq:Operator_simplified}, for all $v\in C^{\infty}(\sO)$, in the equivalent form,\footnote{We employ the Einstein summation convention with $1 \leq i,j \leq n-1$.}
\begin{equation}
\label{eq:Operator}
\begin{aligned}
Av(z) &= -y\left(a^{ij}(z)v_{x_ix_j}(z) + 2a^{in}(z)v_{x_iy}(z) + a^{nn}(z)v_{yy}(z)\right)
\\
&\qquad - b^i(z)v_{x_i}(z) - b^n(z)v_y(z) + c(z)v(z),
\quad\text{a.e. } z \in \sO.
\end{aligned}
\end{equation}
%CP 1.23.2016: Introduced the bilinear form here
%PF 2-25-2016: Not consistent -- sometimes C_0, sometimes C_c. Changed to C_0.
%For all $u,v\in C^{\infty}_c(\HH)$, we define
%CP 3.7.2016: Changed the bilinear form
%PF 3-7-2016: OK
For all $u,v\in C^{\infty}_0(\HH)$, we define
\begin{equation}
\label{eq:Operator_A_bilinear_form}
\begin{aligned}
\fa(u,v):=(Au,v)_{L^2(\sO,\fw)}
&=
\int_{\sO} \left(a^{ij}u_{x_i}v_{x_j}+a^{in}(u_{x_i}v_y+u_yv_{x_i})+a^{nn}u_yv_y\right) y\fw \,dxdy
\\
&\quad +\int_{\sO} \left(\partial_{x_j}a^{ij}+\partial_y a^{in}+\hat b_i-\tau a^{ij}\frac{x_j}{|x|}-\mu a^{in}\right)u_{x_i}v y\fw \,dxdy
\\
&\quad +\int_{\sO} \left(\partial_{x_i}a^{in}+\partial_y a^{nn}+\hat b_n-\tau a^{in}\frac{x_i}{|x|}-\mu a^{nn}\right)u_yv y\fw \,dxdy
\\
&\quad +\int_{\sO} cuv\fw \,dxdy,
\end{aligned}
\end{equation}
and we call $\fa$ the \emph{bilinear form associated with the operator $A$}.
%Notice that assumptions \eqref{eq:Operator_A_ellipticity} and \eqref{eq:Operator_A_boundedness} ensure that the bilinear form $\fa:H^1(\sO,\fw)\times H^1(\sO,\fw)\rightarrow\RR$ is coercive and continuous.
%PF 2-25-2016: It's not coercive. That is one of the main issues in proofs of existence of solutions to these VEs or VIs by Hilbert space methods
%CP 2.25.2016: Replaced with Garding inequality
%PF 2-25-2016: Added back ellipticity
The assumptions \eqref{eq:Operator_A_ellipticity} and \eqref{eq:Operator_A_boundedness} ensure that the bilinear form $\fa:H^1(\sO,\fw)\times H^1(\sO,\fw)\rightarrow\RR$ is continuous and satisfies the G\r{a}rding inequality
%PF 2-27-2016: Added motivation for norm definition
and this motivates the definition \eqref{eq:H1NormHeston} of the weighted Sobolev space, $H^1(\sO,\fw)$.
%CP 3.7.2016: Added
%PF 3-7-2016: OK
In definition \eqref{eq:HestonWeight} of the weight $\fw$, the power term $y^{\beta-1}$ is required in order to obtain a bilinear form $\fa$ as in \eqref{eq:Operator_A_bilinear_form} that is continuous and satisfies the G\r{a}rding inequality. The role of the exponential term $e^{-\tau|x|-\mu y}$ is mainly to ensure that the measure of subsets $\sO\subseteqq\HH$ is finite, when $\tau$ and $\mu$ are positive constants. Even though this property is used extensively in the results obtained in \cite{Daskalopoulos_Feehan_statvarineqheston} and \cite{Feehan_maximumprinciple}, it does not play any role in the proofs of the purely local results given in \S \ref{sec:SobolevPoincare}, \S \ref{sec:JohnNirenberg}, \S \ref{sec:SupremumEstimates}, \S \ref{sec:HolderContinuityVariationalEquation} and \S \ref{sec:Harnack}, but we include the exponential term in the definition of the weight $\fw$ for
%PF 3-7-2016: Changed
%completeness.
the sake of consistency with \cite{Daskalopoulos_Feehan_statvarineqheston} and because positivity of $\tau$ is used in the proof of Claim \ref{claim:EssBoundedPenaltyTerm} in Section \ref{sec:HolderContinuityVariationalInequality}.

%CP 1.23.2016: Wrote Heston as an example
%PF 2-27-2016: OK
%CP 3.7.2016: Changed this example
%PF 3-7-2016: OK
\begin{exmp}[Heston operator]
\label{exmp:Heston}
A particular example of a degenerate operator as in \eqref{eq:Operator} is the generator of the two-dimensional Heston stochastic volatility process with killing \cite{Heston1993},
\begin{equation}
\label{eq:OperatorHestonIntro}
Av := -\frac{y}{2}\left(v_{xx} + 2\varrho\sigma v_{xy} + \sigma^2 v_{yy}\right) - \left(r-q-\frac{y}{2}\right)v_x - \kappa(\theta-y)v_y + rv, \quad v\in C^\infty(\HH),
\end{equation}
where $\kappa>0$, $\theta>0$, $r\geq 0$, and $q \in \RR$. We express the Heston operator $A$ in \eqref{eq:OperatorHestonIntro} in divergence form as in \eqref{eq:Operator_A_bilinear_form} by choosing the weight $\fw$ with
\begin{equation}
\label{eq:DefnBetaMu}
\beta := \frac{2\kappa\theta}{\sigma^2} \quad\hbox{and}\quad \mu := \frac{2\kappa}{\sigma^2},
\end{equation}
and $\tau$ is a positive constant; see \cite[\S 1.1]{Daskalopoulos_Feehan_statvarineqheston}. To ensure that the strict ellipticity condition \eqref{eq:Operator_A_ellipticity} is satisfied, we assume that
\begin{equation}
\label{eq:EllipticHeston}
\sigma \neq 0 \quad\hbox{and}\quad -1< \varrho < 1.
\end{equation}
We notice that the condition \eqref{eq:Operator_A_b} is satisfied only if
\begin{equation}
\label{eq:Condition_b_1_Heston}
r-q-\frac{\kappa\theta\varrho}{\sigma} = 0,
\end{equation}
and this can be accomplished by using a simple affine change of variables on $\RR^2$ which maps $(\HH,\partial\HH)$ onto $(\HH,\partial\HH)$, as described in \cite[Lemma 2.2]{Daskalopoulos_Feehan_statvarineqheston}. Then the bilinear form associated with the Heston operator, $A$, in \eqref{eq:OperatorHestonIntro} is given by
%PF 3-7-2016: \rho --> \varrho
\begin{equation}
\label{eq:HestonWithKillingBilinearForm}
\begin{aligned}
\fa(u,v) &:= \frac{1}{2}\int_\sO\left(u_xv_x + \varrho\sigma u_yv_x + \varrho\sigma u_xv_y + \sigma^2u_yv_y\right)y\,\fw\,dx\,dy
\\
&\quad - \frac{1}{2}\int_\sO\left(\tau\sign(x)+\mu\varrho\sigma-1\right)u_x v y\,\fw\,dx\,dy
\\
&\quad - \frac{1}{2}\int_\sO\tau\varrho\sigma\sign(x)u_y v y\,\fw\,dx\,dy
+ \int_\sO ruv\,\fw\,dx\,dy, \quad\forall\, u, v \in H^1(\sO,\fw).
\end{aligned}
\end{equation}
%PF 2-29-2016 added sentence
This completes our discussion of this example.
\end{exmp}

%PF 2-18-2016 added sentence
%CP 2.24.2016: Ok
We now return to the general setting described prior to Example \ref{exmp:Heston}.
Given a subset $T\subset\partial\sO$ we let $H^1_0(\sO\cup T,\fw)$ be the closure in $H^1(\sO,\fw)$ of $C^\infty_0(\sO\cup T)$. Given a source function $f\in L^2(\sO,\fw)$, we call a function
$u\in H^1(\sO,\fw)$ a \emph{solution} to the variational equation
%CP 1.23.2016: Changed the operator
%PF 2-27-2016: OK
%for the Heston operator
defined by the operator $A$ in \eqref{eq:Operator}, if
\begin{equation}
\label{eq:IntroHestonWeakMixedProblemHomogeneous}
\fa(u,v) = (f,v)_{L^2(\sO,\fw)}, \quad \forall\, v \in H^1_0(\sO\cup\Gamma_0,\fw).
\end{equation}
We call $u$ a \emph{subsolution} to \eqref{eq:IntroHestonWeakMixedProblemHomogeneous} if $\fa(u,v) \leq (f,v)_{L^2(\sO,\fw)}$ for all nonnegative test functions, $v$, and call $u$ a \emph{supersolution} to \eqref{eq:IntroHestonWeakMixedProblemHomogeneous} if $-u$ is a subsolution.

Given $g\in H^1(\sO,\fw)$, we say that $u$ obeys an (inhomogeneous) Dirichlet boundary condition $u=g$ on $\Gamma_1$ in the sense of $H^1$ if
$$
u-g \in H^1_0(\sO\cup\Gamma_0,\fw),
$$
and, of course, a homogeneous Dirichlet boundary condition on $\Gamma_1$ if $g=0$.

If $u \in H^2(\sO,\fw)$, we recall from \cite{Daskalopoulos_Feehan_statvarineqheston} that $u$ is a solution to \eqref{eq:IntroBoundaryValueProblem} if and only if $u\in H^1_0(\sO\cup\Gamma_0,\fw)$ and $u$ is a solution to
\eqref{eq:IntroHestonWeakMixedProblemHomogeneous}.

\begin{defn}[Balls with respect to the Euclidean metric]
\label{defn:Balls_with_respect_to_Euclidean_metric}
We let
\begin{align}
\label{eq:Euclidean_balls_relative_to_the_half_space}
\EE_R(z_0) &:= \{z\in\HH: |z-z_0|<R\},
\\
\label{eq:Euclidean_balls_relative_to_a_subdomain}
E_R(z_0) &:= \{z\in\sO: |z-z_0|<R\},
\end{align}
for any given $z_0 \in \bar\HH$ and $R>0$.
\end{defn}

We say that an open subset, $U\subset\HH$, obeys an \emph{exterior cone condition relative to $\HH$ at a point $z_0\in \partial U$} if there exists a finite, right circular cone $K = K_{z_0}\subset \bar\HH$ with vertex $z_0$ such that $\bar U\cap K_{z_0} = \{z_0\}$ (compare \cite[p. 203]{GilbargTrudinger}).  An open subset, $U\subset\HH$, obeys a \emph{uniform exterior cone condition relative to $\HH$ on $T\subset\partial U$} if $U$ satisfies an exterior cone condition relative to $\HH$ at every point $z_0\in T$ and the cones $K_{z_0}$ are all congruent to some fixed finite cone, $K$ (compare \cite[p. 205]{GilbargTrudinger}). Recall that $\Gamma_0$ is the interior of the portion, $\bar\sO\cap\partial\HH$, of the boundary, $\partial\sO$, of the open subset $\sO\subseteqq\HH$.

\begin{defn}[Interior and exterior cone conditions]
\label{defn:RegularDomain}
Let $K$ be a finite, right circular cone. We say that $\sO$ obeys \emph{interior and exterior cone conditions at $z_0\in \bar\Gamma_0\cap \bar \Gamma_1$ with cone $K$} if the open subsets $\sO$ and $\HH\less\bar\sO$ obey exterior cone conditions relative to $\HH$ at $z_0$ with cones congruent to $K$. We say that $\sO$ obeys \emph{uniform interior and exterior cone conditions on $\bar\Gamma_0\cap \bar \Gamma_1$ with cone $K$} if the open subsets $\sO$ and $\HH\less\bar\sO$ obey exterior cone conditions relative to $\HH$ at each point $z_0\in \bar\Gamma_0\cap \bar \Gamma_1$ with cones congruent to $K$.
\end{defn}

\subsubsection{Boundary local supremum bounds}
\label{subsubsec:LocalSupremumBounds}
%CP 1.23.2016: I reformulated this paragraph for the more general operator A
%PF 2-27-2016: OK
\begin{comment}
Recall that $\kappa\theta$ is the coefficient of $-v_y$ in the definition \eqref{eq:OperatorHestonIntro} of $A$ when $y=0$.
As in \cite[Theorem I.1.1]{DaskalHamilton1998}, the assumption that $\kappa\theta$ is positive is of crucial importance. We notice from \eqref{eq:DefnBetaMu} that $\beta = 2\kappa\theta/\sigma^2$ must then be positive and so the weight $\fw$ belongs to $L^1(\HH)$. Therefore, the volumes of bounded subsets in $\HH$ are finite with respect to the weights $y^{\beta-1} \,dx\,dy$, and $\fw \,dx\,dy$, a fact which we repeatedly use in this article. Clearly, if $\beta$ were negative, then $\fw$ would belong to $L^1_{\loc}(\HH)$, but not to $L^1(\HH)$. We rely on the assumption that $\beta>0$ in the statements and proofs of the local supremum estimates.
\end{comment}
The volumes of bounded subsets in $\HH$ are finite with respect to the weight $y^{\beta-1} \,dx\,dy$, when $\beta>0$, a fact which we repeatedly use in this article. We rely on the assumption that $\beta>0$ in the statements and proofs of the local supremum estimates.

\begin{figure}
\centering
\begin{picture}(200,150)(0,0)
\put(0,0){\includegraphics[width=200pt,height=150pt,clip=true,trim=0pt 0pt 0pt 50pt]{./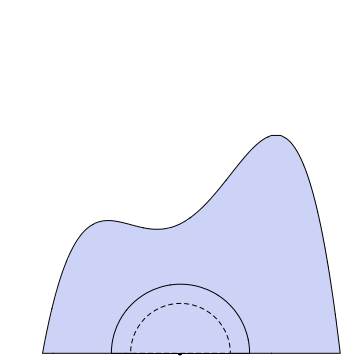}}
\put(40,5){$\scriptstyle \Gamma_0$}
\put(130,80){$\scriptstyle \Gamma_1$}
\put(155,40){$\scriptstyle \sO$}
\put(85,42){$\scriptstyle E_{R_0}(z_0)$}
\put(85,10){$\scriptstyle E_{R_1}(z_0)$}
\put(100,-5){$\scriptstyle z_0$}
\end{picture}
\caption{Concentric half-balls centered at a `degenerate boundary interior' point, $z_0\in\Gamma_0$.}
\label{fig:domain_and_middlehalfballs}
\end{figure}

We have the following analogues of \cite[Proposition 4.5.1]{Koch} and \cite[Theorem 8.15]{GilbargTrudinger}, but now for the cases of a `degenerate-boundary interior' point, $z_0\in\Gamma_0$, and a `degenerate boundary corner point', $z_0\in\bar\Gamma_0\cap\bar\Gamma_0$; see Figures \ref{fig:domain_and_middlehalfballs} and \ref{fig:domain_and_righthalfballs}, respectively. Though Koch allows for points in the interior of $\Gamma_0$, there is no analogue in \cite{Koch} of our Theorem \ref{thm:MainSupremumEstimatesBoundary}, which allows for corner points, while Gilbarg and Trudinger \cite{GilbargTrudinger} only allow for boundary points where the elliptic partial differential operator is strictly elliptic.

\begin{thm}[Supremum estimates near points in $\Gamma_0$]
\label{thm:MainSupremumEstimatesInterior}
Let $s>n+\beta$ and let $R_0$ be a positive constant. Then there are positive constants, $C=C(\Lambda,n,\nu_0,R_0,s)$ and $R_1=R_1(R_0)<R_0$, such that the following holds. Let $\sO\subseteqq\HH$ be an open subset. If $u \in H^1(\sO,\fw)$
is a subsolution (respectively, supersolution) to the variational equation \eqref{eq:IntroHestonWeakMixedProblemHomogeneous} with source function $f \in L^2(\sO,\fw)$, and $z_0 \in \Gamma_0$ is such that
$\EE_{R_0}(z_0) \subset \sO$, and $f$ obeys
\begin{equation}
\label{eq:Lsfcondition_ball}
f \in L^s(E_{R_0}(z_0),y^{\beta-1}),
\end{equation}
then $u \in L^\infty(E_{R_1}(z_0))$, and
\begin{equation}
\label{eq:MainSupremumEstimates2}
\esssup_{E_{R_1}(z_0)}u(-u)
\leq  C\left( \|f\|_{L^s(E_{R_0}(z_0),y^{\beta-1})} + \|u^+(u^-)\|_{L^2(E_{R_0}(z_0),y^{\beta-1})} \right).
\end{equation}
\end{thm}

\begin{thm}[Supremum estimates near points in $\overline{\Gamma}_0\cap\overline{\Gamma}_1$]
\label{thm:MainSupremumEstimatesBoundary}
Let $K$ be a finite right circular cone, let $s>n+\beta$, and let $R_0>0$ be a positive constant. Then there are positive constants, $C=C(K,\Lambda,n,\nu_0,R_0,s)$ and $R_1=R_1(K,\Lambda,n,\nu_0, R_0)<R_0$, such that the following holds. Let $\sO\subsetneqq\HH$ be an open subset. If $u \in H^1(\sO,\fw)$
is a subsolution (respectively, supersolution) of equation \eqref{eq:IntroHestonWeakMixedProblemHomogeneous} with source function $f \in L^2(\sO,\fw)$ and $z_0 \in \overline{\Gamma}_0\cap\overline{\Gamma}_1$ is such that $\sO$ obeys an interior cone condition at $z_0$ with cone $K$, and
$$
u = 0 \hbox{ on } \Gamma_1\cap\bar E_{R_0}(z_0) \quad\hbox{(in the sense of $H^1$)},
$$
and $f$ obeys \eqref{eq:Lsfcondition_ball}, then $\esssup_{E_{R_1}(z_0)} u (-u) < \infty$ and the estimate \eqref{eq:MainSupremumEstimates2} holds.
\end{thm}

\begin{figure}
\centering
\begin{picture}(200,150)(0,0)
\put(0,0){\includegraphics[width=200pt,height=150pt,clip=true,trim=0pt 0pt 0pt 50pt]{./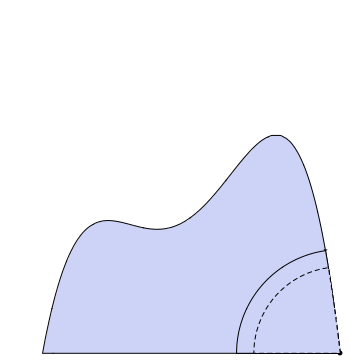}}
\put(50,5){$\scriptstyle \Gamma_0$}
\put(130,80){$\scriptstyle \Gamma_1$}
\put(80,30){$\scriptstyle \sO$}
\put(130,50){$\scriptstyle E_{R_0}(z_0)$}
\put(150,15){$\scriptstyle E_{R_1}(z_0)$}
\put(195,0){$\scriptstyle z_0$}
\end{picture}
\caption{Concentric half-balls centered at a `degenerate boundary corner point', $z_0\in\bar\Gamma_0\cap\bar\Gamma_0$.}
\label{fig:domain_and_righthalfballs}
\end{figure}

\begin{rmk}[Use of the weight $y^{\beta-1}$ versus $\fw$ in Theorems \ref{thm:MainSupremumEstimatesInterior} and \ref{thm:MainSupremumEstimatesBoundary}]
Notice that on the right-hand-side of estimate \eqref{eq:MainSupremumEstimates2} we have $\|f\|_{L^s(E_{R_0}(z_0),y^{\beta-1})}$ instead of $\|f\|_{L^s(E_{R_0}(z_0),\fw)}$. This allows us to conclude that the constant $C$ appearing in \eqref{eq:MainSupremumEstimates2} is independent of the point $z_0\in\bar\Gamma_0$. By \eqref{eq:HestonWeight}, the weight $\fw$ contains the term
$e^{-\tau|x|}$, which means that the constant $C$ will depend on the $x$-coordinate of the point $z_0\in\bar\Gamma_0$, if we replace $\|f\|_{L^s(E_{R_0}(z_0),y^{\beta-1})}$ by $\|f\|_{L^s(E_{R_0}(z_0),\fw)}$ on the right-hand-side of \eqref{eq:MainSupremumEstimates2}.
\end{rmk}

For $g \in L^{\infty}_{\loc}(\bar\Gamma_1)$ and $z_0 \in\bar\Gamma_0\cap\bar\Gamma_1$ and $R_0>0$, we set
\begin{align*}
M := \esssup_{\Gamma_1\cap B_{R_0}(z_0)} g,
\end{align*}
and define
$$
u^M(z):=(u(z) \vee M)^+\quad\hbox{for a.e. } z \in B_{R_0}(z_0).
$$
We then have the following analogue of \cite[Theorem 8.25]{GilbargTrudinger} which applies to a variational equation defined by strictly elliptic operator and an inhomogeneous Dirichlet boundary condition.

\begin{cor} [Supremum estimates near points in $\overline{\Gamma}_0\cap\overline{\Gamma}_1$ for variational subsolutions with inhomogeneous Dirichlet boundary condition]
\label{cor:MainSupremumEstimatesBoundary}
Let $s>n+\beta$ and let $R_0$ be a positive constant. Then there are positive constants, $C=C(K,\Lambda,n,\nu_0,R_0,s)$  and $R_1=R_1(K,\Lambda,n,\nu_0, R_0)<R_0$, such that the following holds. Let $z_0 \in\bar\Gamma_0\cap\bar\Gamma_1$. If $u \in H^1(\sO,\fw)$
is a subsolution of equation \eqref{eq:IntroHestonWeakMixedProblemHomogeneous} with source function $f \in L^2(\sO,\fw)$ satisfying \eqref{eq:Lsfcondition_ball}, and $g \in H^1(\sO, \fw) \cap L^{\infty}_{\loc}(\bar\Gamma_1)$, in the sense that
\begin{equation}
\label{eq:uBoundaryCondition}
u-g \in H^1_0(\sO\cup\Gamma_0,\fw),
\end{equation}
then $u^M \in L^\infty(E_{R_1}(z_0))$, and
\begin{equation}
\label{eq:MainSupremumEstimates3}
\esssup_{E_{R_1}(z_0)}u^M
\leq  C\left( \|f\|_{L^s(E_{R_0}(z_0),y^{\beta-1})} + \|u\|_{L^2(E_{R_0}(z_0),y^{\beta-1})} + \|g\|_{L^{\infty}(\bar \Gamma_1\cap\bar E_{R_0}(z_0))} \right).
\end{equation}
\end{cor}

\begin{rmk}[Supremum estimates near points in $\overline{\Gamma}_0\cap\overline{\Gamma}_1$ for supersolutions with inhomogeneous Dirichlet boundary condition]
\label{rmk:MainSupremumEstimatesBoundary}
Corollary \ref{cor:MainSupremumEstimatesBoundary} holds for supersolutions to equation \eqref{eq:IntroHestonWeakMixedProblemHomogeneous} with the observation that in the estimate \eqref{eq:MainSupremumEstimates3} we need to replace $u^M$ with $u^m$ where $u^m$ is defined as follows. Let
\begin{align*}
m :=\essinf_{\Gamma_1\cap B_{R_0}(z_0)} g,
\end{align*}
and set
$$
u^m(z):=(u(z) \wedge m)^-\quad\hbox{for a.e. } z \in B_{R_0}(z_0).
$$
\end{rmk}

\begin{rmk}[Inhomogeneous Dirichlet boundary conditions and variational equations]
\label{rmk:Inhomogeneous_Dirichlet_variational_equation}
Given a (non-zero) boundary-data function $g\in H^1(\sO,\fw)$ then, as an alternative to our proofs of Corollaries \ref{cor:MainSupremumEstimatesBoundary}, \ref{cor:MainHolderContinuityBoundary}, and \ref{cor:MainContinuity}, we could replace $u$ and $(f,v)_{L^2(\sO,\fw)}$ in \eqref{eq:IntroHestonWeakMixedProblemHomogeneous} by $\tilde u := u-g \in H^1_0(\sO\cup\Gamma_0,\fw)$ and the functional $F\in H^{-1}(\sO,\fw) := (H^1_0(\sO\cup\Gamma_0,\fw))'$, where
\begin{equation}
\label{eq:SourceFunctional}
F(v) := (f,v)_{L^2(\sO,\fw)} - \fa(g,v), \quad\forall\, v\in H^1_0(\sO\cup\Gamma_0,\fw),
\end{equation}
and instead of \eqref{eq:IntroHestonWeakMixedProblemHomogeneous}, consider the variational equation,
\begin{equation}
\label{eq:Variational_equation_reduction}
\fa(\tilde u, v) = F(v), \quad\forall\, v\in H^1_0(\sO\cup\Gamma_0,\fw).
\end{equation}
This reduction would bring our arguments into closer alignment with those of Gilbarg and Trudinger \cite[Chapter 8]{GilbargTrudinger}, but at the cost of a slightly more complicated proofs than those we employ in this article and little gain.
\end{rmk}

\subsubsection{H\"older continuity up to the boundary for solutions to the variational equation}
We recall the definition of the \emph{Koch distance function}, $d(\cdot,\cdot)$, on $\HH$ introduced by Koch in \cite[p.~11]{Koch},
\begin{equation}
\label{eq:IntroKochDistance}
\begin{aligned}
d(z,z_0) := \frac{|z-z_0|}{\sqrt{y + y_0 + |z-z_0|}}, \quad \forall\, z=(x,y),\ z_0=(x_0,y_0)\in \bar\HH,
\end{aligned}
\end{equation}
where $|z-z_0|^2 = (x-x_0)^2 + (y-y_0)^2$. The Koch distance function is equivalent to the \emph{cycloidal distance function} introduced by Daskalopoulos and Hamilton in \cite[p.~901]{DaskalHamilton1998} for the study of the porous medium equation.

Following \cite[\S 1.26]{Adams_1975}, for an open subset $U\subset\HH$, we let $C(U)$ denote the vector space of continuous functions on $U$ and let $C(\bar U)$ denote the Banach space of functions in $C(U)$ which are bounded and
uniformly continuous on $U$, and thus have unique bounded, continuous extensions to $\bar U$, with norm
$$
\|u\|_{C(\bar U)} := \sup_{U}|u|.
$$
Given $\alpha \in (0,1)$, we say that $u\in C^\alpha_s(\bar U)$ if $u\in C(\bar U)$ and
$$
\|u\|_{C^\alpha_s(\bar U)} < \infty,
$$
where
\begin{equation}
\label{eq:CalphasNorm}
\|u\|_{C^\alpha_s(\bar U)} := [u]_{C^\alpha_s(\bar U)} + \|u\|_{C(\bar U)},
\end{equation}
and
\begin{equation}
\label{eq:CalphasSeminorm}
[u]_{C^\alpha_s(\bar U)} := \sup_{\stackrel{z_1,z_2\in U}{z_1\neq z_2}}\frac{|u(z_1)-u(z_2)|}{d^\alpha(z_1,z_2)}.
\end{equation}
Moreover, $C^\alpha_s(\bar U)$ is a Banach space \cite[\S I.1]{DaskalHamilton1998} with respect to the norm \eqref{eq:CalphasNorm}. We say that $u\in C^\alpha_s(U)$ if $u\in C^\alpha_s(\bar V)$ for all precompact open subsets $V\Subset U\cup\Gamma_0$.

When $U$ may be \emph{unbounded}, we let $C_{\loc}(\bar U)$ denote the linear subspace of functions $u\in C(U)$ such that $u\in C(\bar V)$ for every precompact open subset $V\Subset \bar U$; similarly, we let $C^\alpha_{s,\loc}(\bar U)$ denote the linear subspace of functions $u \in C^\alpha_s(U)$ such that $u\in C^\alpha_s(\bar V)$ for every precompact open subset $V\Subset \bar U$.

%CP 1.11.2016: We don't use these spaces in the paper. We can remove their definition.
%PF 2-25-2016: OK
%For any non-negative integer $k$, we let $C^k_0(U\cup\Gamma_0)$ denote the linear subspace of functions $u\in C^k(U)$ such that $u\in C^k(\bar V)$ for every precompact open subset $V\Subset U\cup\Gamma_0$ and similarly define $C^{\infty}_0(U\cup\Gamma_0)$.

We have the following analogues of \cite[Theorem 8.27 and 8.29]{GilbargTrudinger} and \cite[Theorem 4.5.5 and 4.5.6]{Koch}, but again for the cases of a `degenerate-boundary interior' point, $z_0\in\Gamma_0$, and a `degenerate boundary corner point', $z_0\in\bar\Gamma_0\cap\bar\Gamma_0$; see Figures \ref{fig:domain_and_middlehalfballs} and \ref{fig:domain_and_righthalfballs}, respectively. Though Koch allows for points in the interior of $\Gamma_0$, there is no analogue in \cite{Koch} of our Theorem \ref{thm:MainContinuityBoundary}, which allows for corner points; as before, Gilbarg and Trudinger \cite{GilbargTrudinger} only allow for boundary points where the elliptic partial differential operator is strictly elliptic.

\begin{thm} [H\"older continuity near points in $\Gamma_0$ for solutions to the variational equation]
\label{thm:MainContinuityInterior}
Let $s > \max\{2n,n+\beta\}$ and let $R_0$ be a positive constant. Then there are positive constants, $R_1 = R_1(R_0) < R_0$, and $C=C(\Lambda,n,\nu_0,R_0,s)$, and
$\alpha = \alpha(\Lambda,n,\nu_0,R_0,s) \in (0,1)$ such that the following holds. Let $\sO\subseteqq\HH$ be an open subset. If $u \in H^1(\sO,\fw)$ satisfies the variational equation \eqref{eq:IntroHestonWeakMixedProblemHomogeneous} with source function $f \in L^2(\sO,\fw)$ and $z_0 \in \Gamma_0$ is such that $\EE_{R_0}(z_0) \subset \sO$, and $f$ obeys \eqref{eq:Lsfcondition_ball}, then $u \in C^\alpha_s(\bar E_{R_1}(z_0))$, and
\begin{equation}
\label{eq:MainContinuity3}
\|u\|_{C^\alpha_s(\bar E_{R_1}(z_0))} \leq C\left(\|f\|_{L^s(E_{R_0}(z_0),y^{\beta-1})} + \|u\|_{L^2(E_{R_0}(z_0), y^{\beta-1})}\right).
\end{equation}
\end{thm}

\begin{rmk}[H\"older continuity up to $\Gamma_0$ and Sobolev embeddings]
H\"older continuity of solutions does not follow by an embedding theorem for Sobolev weighted spaces, analogous to \cite[Corollary 7.11]{GilbargTrudinger}, not even for functions $u \in H^2(\sO, \fw)$. For example,
for any $\beta >2$, let $q \in (0,(\beta-2)/2)$ and
\[
u(x,y) = y^{-q}, \quad \forall\, (x,y) \in \sO.
\]
Then, $u\in H^2(\sO,\fw)$, but $u \notin C^{\alpha}_s(\sO)$, for any $\alpha \in [0,1]$, since, a fortiori, $u \notin C(\sO\cup\Gamma_0)$.
\end{rmk}

\begin{thm} [H\"older continuity near points in $\overline{\Gamma}_0\cap\overline{\Gamma}_1$ for solutions to the variational equation]
\label{thm:MainContinuityBoundary}
Let $K$ be a finite, right circular cone, let $s > \max\{2n,n+\beta\}$, and let $R_0$ be a positive constant. Then there are positive constants, $R_1 = R_1(K,\Lambda,n,\nu_0,R_0) < R_0$, and $C=C(K,\Lambda,n,\nu_0,R_0,s)$, and $\alpha = \alpha(K,\Lambda,n,\nu_0,R_0,s) \in (0,1)$, such that the following holds. Let $\sO\subsetneqq\HH$ be an open subset. If $u \in H^1(\sO,\fw)$ satisfies the variational equation \eqref{eq:IntroHestonWeakMixedProblemHomogeneous} with source function $f \in L^2(\sO,\fw)$ and $z_0 \in \overline{\Gamma}_0\cap\overline{\Gamma}_1$ is such that $f$ obeys \eqref{eq:Lsfcondition_ball}, and
$$
u = 0 \hbox{ on }\Gamma_1\cap \bar E_{R_0}(z_0) \quad\hbox{(in the sense of $H^1$)},
$$
and $\sO$ obeys an interior and exterior cone condition with cone $K$ at $z_0$ and a uniform exterior cone condition with cone $K$ along $\overline{\Gamma}_1 \cap \bar E_{R_0}(z_0)$, then $u \in C^\alpha_s(\bar E_{R_1}(z_0))$ and satisfies \eqref{eq:MainContinuity3}.
\end{thm}

\begin{rmk}[Comparison with analysis near the non-degenerate boundary]
The term $\sigma(\sqrt{RR_0})$, where $\sigma(R) := \hbox{osc}_{\partial\sO\cap \bar B_R(z_0)}u$, which appears in  \cite[Equation (8.72)]{GilbargTrudinger} in the statement of \cite[Theorem 8.27]{GilbargTrudinger} does not appear in the statement of our Theorem \ref{thm:MainContinuityBoundary}. The reason is that unlike in \cite[Equation (8.71)]{GilbargTrudinger}, the test functions defined in the proof of Theorem \ref{thm:MainContinuityBoundary} do not need to involve $\esssup_{\partial\sO\cap\bar B_R(z_0)}u$ or $\essinf_{\partial\sO\cap\bar B_R(z_0)}u$ since no boundary condition is imposed on $v$ along $\Gamma_0$, in contrast to the Dirichlet boundary condition assumed for $v$ in the proofs of \cite[Theorem 8.18 and 8.26]{GilbargTrudinger}.
\end{rmk}

%CP 1.11.2016: The reviewer had a comment about this remark. My thoughts are the following. %PF 2-27-2016: Uncommented your remark, as it seems appropriate
By constructing suitable weighted Sobolev spaces adapted both to the degeneracy of the operator and the geometry of the corners, we may be able to obtain improved regularity estimates in a neighborhood of the points in $\bar \Gamma_0\cap\bar \Gamma_1$, similar to the ideas used for the study of strictly elliptic operators on polygonal domains described by Grisvard \cite[\S 4.4.1]{Grisvard}. We believe that this problem requires careful consideration and is best considered in a separate article.

\begin{rmk}[Counter-examples to higher-order regularity near corners for solutions to elliptic boundary value problems]
It is worth recalling \cite[\S 7.5]{Krylov_LecturesHolder} that the unique solution $u\in C^2(\sO)\cap C(\bar\sO)$ to the Dirichlet problem, $\Delta u = 1$ on $\sO:=(0,\pi)\times(0,\pi)$ and $u=0$ on $\partial\sO$, belongs to $C^1(\bar\sO)$ but not $C^2(\bar\sO)$. (Following our customary sign convention, we denote $\Delta u = -\sum_{i=1}^n u_{x_ix_i}$.) This example illustrates that the question of regularity near corner points is delicate even for boundary value problems defined by strictly elliptic operators and thus can be expected to be even more so in the case of degenerate-elliptic operators.
\end{rmk}

We have the following analogue of \cite[Theorem 8.27]{GilbargTrudinger} which applies to a variational equation defined by a strictly elliptic operator on an open subset satisfying an exterior cone condition and an inhomogeneous Dirichlet boundary condition.

\begin{cor}[H\"older continuity near points in $\overline{\Gamma}_0\cap\overline{\Gamma}_1$ for variational solutions with inhomogeneous Dirichlet boundary condition]
\label{cor:MainHolderContinuityBoundary}
Let $K$ be a finite, right circular cone, let $s > \max\{2n,n+\beta\}$ and let $R_0$ be a positive constant. Assume  $g \in H^1(\sO, \fw) \cap C^{\gamma}_{s,\loc}(\bar\Gamma_1)$, where $\gamma \in (0,1]$. Then there are positive constants, $R_1 = R_1(K, \Lambda, n, \nu_0, R_0) < R_0$, and $C=C(K,\Lambda,n, \nu_0,R_0,s)$, and $\alpha = \alpha(\gamma, \Lambda,n,\nu_0,R_0,s) \in (0,1)$ such that the following holds. Let $z_0\in\bar\Gamma_0\cap\bar\Gamma_1$. Assume that $\sO$ obeys an interior and exterior cone condition with cone $K$ at $z_0$ and a uniform exterior cone condition with cone $K$ along $\overline{\Gamma}_1 \cap \bar E_{R_0}(z_0)$. If $u \in H^1(\sO,\fw)$ satisfies the variational equation \eqref{eq:IntroHestonWeakMixedProblemHomogeneous} and \eqref{eq:uBoundaryCondition}, and the source function $f \in L^2(\sO,\fw)$ obeys \eqref{eq:Lsfcondition_ball}, then $u \in C^\alpha_s(\bar E_{R_1}(z_0))$, and
\begin{equation}
\label{eq:MainContinuity4}
\|u\|_{C^\alpha_s(\bar E_{R_1}(z_0))} \leq C\left(\|f\|_{L^s(E_{R_0}(z_0),y^{\beta-1})} + \|u\|_{L^2(E_{R_0}(z_0), y^{\beta-1})}
+\|g\|_{C^{\gamma}_s(\bar\Gamma_1\cap\bar E_{R_0}(z_0))}\right).
\end{equation}
When $\gamma=0$, that is $g \in H^1(\sO, \fw) \cap C_{\loc}(\bar\Gamma_1)$, then $u \in C(\bar E_{R_1}(z_0))$ and $u$ satisfies
\begin{equation}
\|u\|_{C(E_{R_1}(z_0))}
\leq  C\left( \|f\|_{L^s(E_{R_0}(z_0),y^{\beta-1})} + \|u\|_{L^2(E_{R_0}(z_0),y^{\beta-1})} + \|g\|_{L^{\infty}(\bar \Gamma_1\cap\bar E_{R_0}(z_0))} \right).
\end{equation}
\end{cor}

For any $\delta>0$, we let
\begin{equation}
\label{eq:Domain_of_finite_height}
\sO_{\delta} : = \sO \cap \left(\RR\times(0,\delta)\right).
\end{equation}
We then have the

\begin{cor} [H\"older continuity up to $\bar\Gamma_0$ for solutions to the variational equation]
\label{cor:MainContinuity}
Let $K$ be a finite, right circular cone, let $s>\max\{2n, n+\beta\}$, $\delta>0$, and $\gamma \in [0,1)$. Then there are constants $C=C(\delta,K,\Lambda,n,\nu_0,s )>0$ and $\alpha_1=\alpha_1(\delta, \gamma, K, \Lambda,n,\nu_0,s)\in [0,1)$ such that the following hold. Assume that $\sO$ obeys a uniform interior and exterior cone condition with cone $K$ on $\bar\Gamma_0\cap\bar\Gamma_1$ and a uniform exterior cone condition with cone $K$ along $\Gamma_1\cap\partial\sO_\delta$. Let $f \in L^2(\sO,\fw)$, $g \in  H^1(\sO,\fw) \cap C^\gamma_s(\bar\Gamma_1\cap\bar\sO_{\delta})$, and $u \in H^1(\sO,\fw)$ obey
\eqref{eq:IntroHestonWeakMixedProblemHomogeneous} and \eqref{eq:uBoundaryCondition}, and assume that $f$ and $u$ satisfy
\begin{equation}
\label{eq:LsfL2ucondition_strip}
\sup_{z_0\in\Gamma_0}\|f\|_{ L^s(E_{\delta}(z_0),y^{\beta-1})} < \infty \quad\hbox{and}\quad
\sup_{z_0\in\Gamma_0}\|u\|_{ L^2(E_{\delta}(z_0),y^{\beta-1})} < \infty.
\end{equation}
Then $u \in C^{\alpha_1}_{s}(\bar\sO_{\delta/2})$ and satisfies
\begin{equation}
\label{eq:MainContinuity5}
\|u\|_{C^{\alpha_1}_{s}(\bar\sO_{\delta/2})} \leq C\left(\sup_{z_0\in\Gamma_0}\|f\|_{ L^s(E_{\delta}(z_0),y^{\beta-1})} + \sup_{z_0\in\Gamma_0}\|u\|_{ L^2(E_{\delta}(z_0),y^{\beta-1})} +
\|g\|_{C^{\gamma}_s(\bar\Gamma_1\cap\bar\sO_{\delta})} \right).
\end{equation}
When $\gamma \in (0,1)$, then $\alpha_1 \in (0,1)$, and when $\gamma=0$, then $\alpha_1=0$.
\end{cor}

Condition \eqref{eq:LsfL2ucondition_strip} on $u$ is satisfied when $u \in L^2(\sO,\fw)$ and the open subset, $\sO$, is bounded in the $x$-direction, as we can see from the definition \eqref{eq:HestonWeight} of the weight $\fw$.

\subsubsection{Strong maximum principle}
We also have the following analogue of \cite[Theorem 8.19]{GilbargTrudinger}. It is important to note that Theorem \ref{thm:StrongMaximumPrinciple} is an analogue of the classical strong maximum principle, except that points in the degenerate-boundary portion, $\Gamma_0$, play the same role as points in $\sO$. We now assume that $\sO\subseteqq\HH$ is domain, that is, a \emph{connected}, open subset.

\begin{thm} [Strong maximum principle]
\label{thm:StrongMaximumPrinciple}
Let $\sO \subseteqq \HH$ be a domain. Let $z_0\in\sO\cup\Gamma_0$, $R_0$ be a positive constant, and $u \in H^1(\sO,\fw)$ be a subsolution to equation \eqref{eq:IntroHestonWeakMixedProblemHomogeneous} with $f=0$. If the ball $E_{R_0}(z_0)$ as in \eqref{eq:Euclidean_balls_relative_to_a_subdomain} obeys $E_{R_0}(z_0) \Subset \sO\cup\Gamma_0$ and
$$
\esssup_{E_{R_0}(z_0)} u = \esssup_{\sO} u,
$$
then $u$ is constant on $\sO$.
\end{thm}

Note that $\esssup_{E_{R_0}(z_0)} u < \infty$ by Theorem \ref{thm:MainSupremumEstimatesInterior} when $z_0\in\Gamma_0$, while \cite[Theorem 8.17]{GilbargTrudinger} yields this local boundedness result when $E_{R_0}(z_0) \Subset \sO$.

\subsubsection{H\"older continuity up to the boundary for solutions to the variational inequality}
Given a source function $f\in L^2(\sO,\fw)$, an (inhomogeneous) Dirichlet boundary condition $g\in H^1(\sO,\fw)$ on $\Gamma_1$, and an obstacle function $\psi\in H^1(\sO,\fw)$ obeying \eqref{eq:ObstacleFunctionLessThanZero} in the sense that
\begin{equation}
\label{eq:ObstacleLessThanZeroBoundary}
(\psi-g)^+ \in H^1_0(\sO\cup \Gamma_0,\fw),
\end{equation}
we call $u\in H^1(\sO,\fw)$ a solution to the variational inequality for the
%CP 1.23.2016: Removed Heston operator and added the more general operator
%Heston operator
%PF 2-27-2016: OK
operator $A$ defined in \eqref{eq:Operator} with Dirichlet boundary condition along $\Gamma_1$ if
\begin{equation}
\label{eq:VIProblemHeston}
\begin{gathered}
u-g\in H^1_0(\sO\cup\Gamma_0,\fw),  \quad u\geq\psi \hbox{ a.e. on }\sO,
\\
\fa(u,v-u) \geq (f,v-u)_{L^2(\sO,\fw)}
\\
\quad \forall\, v\in H^1(\sO,\fw), \quad v-g\in H^1_0(\sO\cup\Gamma_0,\fw), \quad v\geq\psi \hbox{ a.e. on }\sO.
\end{gathered}
\end{equation}
Given additional mild conditions on $f$ and $\psi$, it is proved in \cite{Daskalopoulos_Feehan_statvarineqheston} that there is a unique solution, $u \in H^1(\sO,\fw)$, to \eqref{eq:VIProblemHeston},
%CP 1.23.2016: Added below
%PF 2-27-2016: OK
when $A$ is the Heston operator defined in \eqref{eq:OperatorHestonIntro}. For Theorem \ref{thm:MainHolderContinuityVariationalInequality}, we require

%CP 3.7.2016: Removed my proof of the Holder continuity for the variational inequality
%PF 3-7-2016: I moved this proof to the end of the subsubsection as it is still a nice idea

\begin{hyp}[Conditions on the source and obstacle functions]
\label{hyp:SourceObstacleFunctionBoundaryRegularity}
For some $\delta>0$,
%PF 2-26-2015: This is one but not the only option
%CP 2.25.2016: Added in hypotheses
%Assume that there is a positive constant, $c_0$, such that $c\geq c_0$ on $\sO$. For some $\delta \leq \delta_0$, where the positive constant $\delta_0$ is chosen as in \eqref{eq:Delta_0}, we assume that
%PF 2-27-2016: Added instead to theorem statement as one option
\begin{align}
\label{eq:HolderSourceFunctionUpToBdry}
f &\in L^2(\sO,\fw)\cap L^\infty(\sO_\delta),
\\
\label{eq:HolderObstacleFunctionUpToBdry}
\psi &\in H^2(\sO_\delta,\fw)\cap L^{\infty}(\sO_\delta),
\end{align}
where $\sO_\delta$ is defined in \eqref{eq:Domain_of_finite_height}.
\end{hyp}

We then have

\begin{thm}[H\"older continuity up to $\bar\Gamma_0$ for solutions to the variational inequality with homogeneous boundary condition]
\label{thm:MainHolderContinuityVariationalInequality}
Require that $\sO$ obeys a uniform interior and exterior cone condition on $\bar\Gamma_0\cap\bar\Gamma_1$ with cone $K$
and a uniform exterior cone condition with cone $K$ along $\Gamma_1\cap\partial\sO_\delta$,
%PF 2-27-2016: added
%PF 3-7-2016: Added
for some $\delta > 0$. Assume that $f$ obeys \eqref{eq:HolderSourceFunctionUpToBdry} and $g = 0$ and $\psi$ obeys \eqref{eq:ObstacleLessThanZeroBoundary} (with $g=0$) and
\eqref{eq:HolderObstacleFunctionUpToBdry}, and that
\begin{equation}
\label{eq:ApsifSupBound}
\esssup_{\sO_\delta}(A\psi-f)^+ < \infty.
\end{equation}
%PF 2-27-2016 Added pair of alternative hypotheses
%CP 3.7.2016: Left only the assumption that c >= 0 and added that \gamma, \mu >0.
%Require in addition that $\sO_\delta$ is bounded and $c \geq 0$ a.e. on $\sO_\delta$ \emph{or} $c \geq c_0$ a.e. on $\sO_\delta$ for a positive constant $c_0$ and $\delta \leq \delta_0$ with $\delta_0$ as in \eqref{eq:Delta_0}.
%PF 3-7-2016: We need the following to apply the WMP to the VE and VI
If $\sO_\delta$ is bounded, require that $c \geq 0$ a.e. on $\sO_\delta$; if $\sO_\delta$ is unbounded, require in addition that $c \geq c_0 > 0$ a.e. on $\sO_\delta$ for a positive constant $c_0$.
%PF 3-9-2016: Not needed by Camelia's method
%and that $\tau > 0$ in \eqref{eq:HestonWeight}.
If $u\in H^1_0(\sO\cup\Gamma_0,\fw)$ is a solution to \eqref{eq:VIProblemHeston} such that at least \emph{one} of the following conditions holds,
\begin{equation}
\label{eq:VISolutionW1infinity}
\height(\sO) < \infty \quad\hbox{or}\quad u\in W^{1,\infty}(\sO_\delta\less\sO_{\delta/2}),
\end{equation}
%PF 2-27-2016: delta introduced earlier
%where $\delta$ is as in Hypothesis \ref{hyp:SourceObstacleFunctionBoundaryRegularity}.
then
$$
u\in C_s^{\alpha_1}(\bar\sO_{\delta/2}),
$$
where $\alpha_1=\alpha_1(\delta,K, \Lambda,n,\nu_0,s)\in (0,1)$.
\end{thm}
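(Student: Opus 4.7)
The plan is to reduce the variational inequality \eqref{eq:VIProblemHestonHomgeneous} to a variational equation to which Corollary \ref{cor:MainContinuity} applies. The key intermediate step is a Lewy--Stampacchia--type inequality: I would show that $u$ is a weak solution to
\[
a(u,v) = (F,v)_{L^2(\sO,\fw)}, \qquad \forall v \in C^\infty_0(\sO_\delta \cup \Gamma_0),
\]
for some $F \in L^\infty(\sO_\delta)$ obeying $f \leq F \leq \max\{f,A\psi\}$ almost everywhere. The standard penalization route defines $u_\epsilon \in H^1_0(\sO\cup\Gamma_0,\fw)$ as the solution to
\[
a(u_\epsilon,v) - \epsilon^{-1}\bigl((\psi-u_\epsilon)^+,v\bigr)_{L^2(\sO,\fw)} = (f,v)_{L^2(\sO,\fw)}
\]
for all $v \in H^1_0(\sO\cup\Gamma_0,\fw)$. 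Comparing $u_\epsilon$ with the solution $U_\epsilon$ of $A U_\epsilon = \max\{f,A\psi\}$ (whose right-hand side lies in $L^\infty(\sO_\delta)$ by \eqref{eq:HolderSourceFunctionUpToBdry}, \eqref{eq:HolderObstacleFunctionUpToBdry}, \eqref{eq:ApsifSupBound}, with existence guaranteed by \cite{Daskalopoulos_Feehan_statvarineqheston}), one derives the pointwise bound $0 \leq \epsilon^{-1}(\psi-u_\epsilon)^+ \leq (A\psi-f)^+$ on $\sO_\delta$; passing to the limit $\epsilon \to 0$ and invoking uniqueness of the VI solution \cite{Daskalopoulos_Feehan_statvarineqheston} yields $F = f + \mu$ with $0 \leq \mu \leq (A\psi-f)^+$, so $F \in L^\infty(\sO_\delta)$.

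Next, I would globalise the equation so as to invoke Corollary \ref{cor:MainContinuity}. In the case $\height(\sO) < \infty$, take $\delta \geq \height(\sO)$ so that $\sO_\delta = \sO$: then $u$ satisfies the variational equation on all of $\sO$ with right-hand side $F \in L^\infty(\sO)$. Since $\beta>0$ makes $y^{\beta-1}$ integrable over balls $B_R(z_0)$ centered at $z_0 \in \Gamma_0$, we have $F \in L^s(B_{\delta}(z_0),y^{\beta-1})$ uniformly in $z_0$, while uniform boundedness of $\|u\|_{L^2(B_\delta(z_0),y^{\beta-1})}$ follows from Theorem \ref{thm:MainSupremumEstimates}. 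Hence \eqref{eq:LsfL2ucondition_strip} holds and Corollary \ref{cor:MainContinuity} applies. In the alternative case where $u \in W^{1,\infty}(\sO_\delta\setminus\sO_{\delta/2})$, choose a smooth cutoff $\chi = \chi(y) \in C^\infty(\bar\HH)$ with $\chi \equiv 1$ on $\{y \leq \delta/2\}$ and $\chi \equiv 0$ on $\{y \geq 3\delta/4\}$, and set $w := \chi u \in H^1_0(\sO\cup\Gamma_0,\fw)$. A short computation from \eqref{eq:OperatorHestonIntro} shows that $w$ solves a global variational equation $a(w,v) = (\tilde F, v)_{L^2(\sO,\fw)}$, where $\tilde F = \chi F + R$ and the remainder $R$ is linear in $u$, $u_x$, $u_y$ with coefficients proportional to $\chi'(y)$ or $\chi''(y)$, hence supported in $\{\delta/2 \leq y \leq 3\delta/4\}$. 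Since $y$ is bounded on that support and \eqref{eq:VISolutionW1infinity} controls $u$ and its first derivatives there, we obtain $\tilde F \in L^\infty(\sO)$ and \eqref{eq:LsfL2ucondition_strip} follows as before. Corollary \ref{cor:MainContinuity} applied to $w$ yields $w \in C^{\alpha_1}_s(\bar\sO_{\delta_1'})$ for some $\delta_1' > 0$, and since $w = u$ on $\sO_{\delta/2}$, shrinking $\delta_1 \leq \min\{\delta_1', \delta/2\}$ gives $u \in C^{\alpha_1}_s(\bar\sO_{\delta_1})$.

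The principal obstacle is the Lewy--Stampacchia step in this weighted, degenerate setting. The penalized solutions $u_\epsilon$ must have sufficient $H^2_{\loc}$-type regularity for the identification $A u_\epsilon = f - \epsilon^{-1}(\psi-u_\epsilon)^+$ to hold pointwise almost everywhere; the comparison argument producing the pointwise bound on $\epsilon^{-1}(\psi-u_\epsilon)^+$ must be adapted to the bilinear form $a(\cdot,\cdot)$ weighted by $\fw$; and the pointwise inequality $\mu \leq (A\psi-f)^+$ must survive the weak-$\ast$ passage to the limit. Once these ingredients are in place, the remainder of the argument is a mechanical application of Corollary \ref{cor:MainContinuity} combined with the cutoff trick above.
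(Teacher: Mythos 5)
Your overall strategy coincides with the paper's: penalize, establish a uniform $L^\infty$ bound on the penalization term of size $\esssup_{\sO}(A\psi-f)^+$, reduce to the variational-equation result (Corollary \ref{cor:MainContinuity}), and handle $\height(\sO)=\infty$ by a cutoff in $y$ whose commutator with $A$ is controlled by the $W^{1,\infty}$ hypothesis in \eqref{eq:VISolutionW1infinity} --- this last step is precisely the paper's Lemma \ref{lem:ObstacleFunctionLocalization} and Remark \ref{rmk:FiniteHeightStrip}. One tactical difference is harmless: you pass to the limit in the penalization term first, obtaining a Lewy--Stampacchia identity $a(u,v)=(f+\mu,v)_{L^2(\sO,\fw)}$ with $0\leq\mu\leq(A\psi-f)^+$, and then apply Corollary \ref{cor:MainContinuity} once to $u$; the paper instead applies Corollary \ref{cor:MainContinuity} to each $u_\eps$ (which already solves a variational equation with right-hand side $f_{\lambda,\eps}$ bounded uniformly in $\eps$) and concludes by Arzel\`a--Ascoli together with the convergence $u_\eps\to u$ from \cite{Daskalopoulos_Feehan_statvarineqheston}. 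The paper's ordering spares you the identification of the weak-$*$ limit $\mu$, but either order is viable.

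The genuine gap is in the step you yourself flag as the principal obstacle: the mechanism you propose for the uniform bound does not obviously produce it. Comparing $u_\eps$ with a solution $U_\eps$ of $AU_\eps=\max\{f,A\psi\}$ yields at best a one-sided bound on $u_\eps$ itself, not a pointwise bound on the penalization term $\eps^{-1}(\psi-u_\eps)^+$; to obtain the latter by comparison one must instead prove $(\psi-u_\eps-\eps M)^+=0$ with $M=\esssup_{\sO}(A\psi-f)^+$, and any such pointwise argument also requires $Au_\eps$ to be defined a.e., hence $W^{2,p}_{\loc}$ regularity of $u_\eps$. The paper's Lemma \ref{lem:TroianielloTheorem4.38} avoids both difficulties by working purely variationally: it subtracts the integrated-by-parts identity $a_\lambda(\psi,v)=(A_\lambda\psi,v)_{L^2(\sO,\fw)}$ (valid since $\psi\in H^2(\sO,\fw)$) from the penalized equation, tests against $v=|\beta_\eps(u_\eps-\psi)|^{q-1}$, absorbs the bilinear-form contributions using ellipticity and Cauchy--Schwarz for $\eps\leq\eps_0$, obtains $\|\beta_\eps(u_\eps-\psi)\|_{L^q(\sO,\fw)}\leq 2\|(A\psi-f)^+\|_{L^q(\sO,\fw)}$ uniformly in $q>2$, and lets $q\to\infty$ via Lemma \ref{lem:LimitpInfinityLpnorm}. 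You should replace your comparison step by this (or by the equivalent test-function comparison with $(\psi-u_\eps-\eps M)^+$). A minor further point: in the finite-height case one cannot simply ``take $\delta\geq\height(\sO)$,'' since Hypothesis \ref{hyp:SourceObstacleFunctionBoundaryRegularity} fixes $\delta$ and only gives $f,\psi\in L^\infty(\sO_\delta)$; it is the localization by $\zeta$ that reduces matters to a strip on which the data are globally bounded.
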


\begin{cor}[H\"older continuity up to $\bar\Gamma_0$ for solutions to the variational inequality with inhomogeneous Dirichlet boundary condition]
\label{cor:MainHolderContinuityVariationalInequality}
Assume the hypotheses of Theorem \ref{thm:MainHolderContinuityVariationalInequality} and $g \in H^2(\sO,\fw)\cap C^{\gamma}_s(\bar\Gamma_1\cap\partial \sO_{\delta/2})$, with $\gamma\in (0,1]$. Let $u\in H^1(\sO,\fw)$ be a solution to \eqref{eq:VIProblemHeston}
such that
\begin{equation*}
\height(\sO) < \infty \quad\hbox{or}\quad u-g\in W^{1,\infty}(\sO_\delta\less\sO_{\delta/2}).
\end{equation*}
Then $u\in C_s^{\alpha_2}(\bar\sO_{\delta/2})$, where $\alpha_2=\alpha_1\wedge\gamma$ and the constant $\alpha_1$ is as in the conclusion of Theorem \ref{thm:MainHolderContinuityVariationalInequality}. If $g \in H^2(\sO,\fw)\cap C(\bar\Gamma_1\cap\partial \sO_{\delta/2})$, then $u\in C(\bar\sO_{\delta/2})$.
\end{cor}

\begin{rmk}[Hypotheses on the solution to the variational inequality]
\label{rmk:VISolutionW1infinity}
The second condition in \eqref{eq:VISolutionW1infinity} in Theorem \ref{thm:MainHolderContinuityVariationalInequality} is implied by the $W^{2,p}_{\loc}(\sO)$ regularity result \cite[Theorem 6.18]{Daskalopoulos_Feehan_statvarineqheston}
for $p>2$ and corresponding $W^{2,p}(U)$ \apriori estimates using the conditions \eqref{eq:HolderSourceFunctionUpToBdry} and \eqref{eq:HolderObstacleFunctionUpToBdry}, and the Sobolev embedding $W^{2,p}(U) \hookrightarrow
C_b^1(U)$ for open subsets $U\Subset\HH$ with the interior cone property \cite[Theorem 5.4 (C)]{Adams_1975}.
\end{rmk}

\begin{rmk}[Inhomogeneous Dirichlet boundary conditions and variational inequalities]
\label{rmk:Inhomogeneous_Dirichlet_variational_inequality}
Given a (non-zero) boundary-data function $g\in H^1(\sO,\fw)$ then, as an alternative to our proof of Corollary \ref{cor:MainHolderContinuityVariationalInequality}, we could replace $u, v, \psi\in H^1(\sO,\fw)$ and $(f,v-u)_{L^2(\sO,\fw)}$ in \eqref{eq:VIProblemHeston} by $\tilde u := u-g$, $\tilde v := v-g$, $\tilde\psi := \psi-g$ in $H^1_0(\sO\cup\Gamma_0,\fw)$ and the functional $F\in H^{-1}(\sO,\fw)$ in \eqref{eq:SourceFunctional} and, instead of \eqref{eq:VIProblemHeston}, consider the variational inequality,
\begin{equation}
\label{eq:Variational_inequality_reduction}
\fa(\tilde u, w-\tilde u) \geq F(w-\tilde u), \quad\forall\, w\in H^1_0(\sO\cup\Gamma_0,\fw), \quad w \geq \tilde\psi \quad\hbox{a.e. on }\sO.
\end{equation}
This reduction would bring our arguments into closer alignment with those of Gilbarg and Trudinger \cite[Chapter 8]{GilbargTrudinger} and Troianiello \cite[Chapter 4]{Troianiello}, but at the cost of a slightly more complicated proofs than those we employ in this article and little gain.
\end{rmk}

%CP 2.25.2016: Added paragraph below
%PF 2-27-2016: Slight rephrasing of paragraph below
%PF 3-7-2016: Coercivity proof moved here
%PF 3-9-2016: Removed, since we do use this in remark
%Although we shall not use this observation in this article,
It is interesting to note that the bilinear form $\fa$ given by \eqref{eq:Operator_A_bilinear_form} is coercive if the height of the domain $\sO$ is sufficiently small. Indeed, from the expression \eqref{eq:Operator_A_bilinear_form} for the bilinear form, we can write $\fa(u,u)$ as a sum of four terms $I_1+I_2+I_3+I_4$. If $\sO\subseteqq \RR^{n-1}\times (0,\delta)$, for a constant $\delta\in (0, 1]$, the expression \eqref{eq:Operator_A_bilinear_form} yields a positive constant, $C=C(\Lambda, n)$, such that
$$
|I_2|+|I_3| \leq \sqrt{\delta} C \|u\|^2_{H^1(\sO,\fw)}.
$$
If in addition there is a positive constant, $c_0$, such that $c\geq c_0$ on $\sO$, the preceding inequality and the strict ellipticity condition \eqref{eq:Operator_A_ellipticity} gives the inequality,
$$
\fa(u,u) \geq (\nu_0-\sqrt{\delta} C) \|\sqrt{y}Du\|^2_{L^2(\sO,\fw)} + (c_0-\delta C) \|u\|_{L^2(\sO,\fw)},\quad\forall\, u\in H^1(\sO,\fw).
$$
Hence, there are positive constants
\begin{equation}
\label{eq:Delta_0}
\delta_0=\delta_0(c_0,\Lambda,n,\nu_0)
\quad\hbox{ and } \quad
C_0=C_0(c_0,\Lambda,n,\nu_0),
\end{equation}
such that
\begin{equation}
\label{eq:Coercive_A_delta}
\fa(u,u) \geq C_0 \|u\|_{H^1(\sO,\fw)},\quad\forall\, u\in H^1(\sO,\fw),
\end{equation}
for all subdomains $\sO\subseteqq \RR^{n-1}\times(0,\delta_0)$. Therefore, when $\sO\subseteqq \RR^{n-1}\times(0,\delta_0)$ and $c \geq c_0 > 0$ a.e. on $\sO$, the bilinear form $\fa: H^1(\sO,\fw) \times H^1(\sO,\fw) \to \RR$ is \emph{coercive}.
%PF 3-9-2016: Added
We use this observation in Remark \ref{rmk:ClaimEssBoundedPenaltyTerm_alternative_proof}.

\subsubsection{Harnack inequality for non-negative solutions to the variational equation}
We also have the following analogue of \cite[Theorem 8.20 and Corollary 8.21]{GilbargTrudinger} and \cite[Theorem 4.5.3]{Koch}; it is important to note that Theorem \ref{thm:MainHarnack} is a direct analogue of the classical \emph{interior} Harnack inequality --- with points in the degenerate-boundary portion, $\Gamma_0$, playing the same role as points in $\sO$ --- and \emph{not} a `boundary Harnack inequality' (compare, for example, \cite[Theorem 1.1]{Caffarelli_Fabes_Mortola_Salsa_1981}).

\begin{thm} [Harnack inequality near $\Gamma_0$]
\label{thm:MainHarnack}
Let $\sO'\subset \sO \subseteqq \HH$ be open subsets such that $\sO'\Subset \sO\cup\Gamma_0$. Then there is a positive constant $C$, depending at most on $\diam(\sO')$, $\dist(\partial\sO\cap\HH, \partial\sO'\cap\HH)$, $\Lambda$, $\nu_0$ and $n$, such that for any non-negative $u \in H^1(\sO,\fw)$ obeying \eqref{eq:IntroHestonWeakMixedProblemHomogeneous} with $f=0$ on $\sO$, we have
\begin{equation}
\label{eq:MainHarnack1}
\esssup_{\sO'}u \leq C\essinf_{\sO'}u.
\end{equation}
\end{thm}

\begin{rmk}[Applications to the proof of optimal regularity for variational solutions to the obstacle problem]
Continuity up the `degenerate boundary' (Theorem \ref{thm:MainHolderContinuityVariationalInequality}) and the Harnack inequality (Theorem \ref{thm:MainHarnack}) are among the results of this article which Daskalopoulos and Feehan apply in \cite{Daskalopoulos_Feehan_optimalregstatheston} to prove that a solution $u\in H^1(\sO,\fw)$ to \eqref{eq:IntroObstacleProblem} actually belongs to $C^{1,1}_{s,\loc}(\sO\cup\Gamma_0)$.
\end{rmk}
%COMMENT At some point, we would like to get rid of the ``loc'' where it may be understood as a convention in Holder spaces such as the above

\subsection{Connections with previous research}
\label{subsec:Survey}
As noted in \S \ref{subsec:Overview}, there is a long history of research on local $L^\infty$ and $C^\alpha$ estimates and H\"older regularity and Harnack inequalities for weak solutions to degenerate-elliptic equations, so a reader may reasonably ask what is new in this article. Because our article builds most directly on work of Koch, we begin with a comparison of our methods and results with those in \cite{Koch}. We then contrast our work with that of S. Chanillo and R. L. Wheeden \cite{Chanillo_Wheeden_1986}, E. B. Fabes, C. E. Kenig and R. P. Serapioni \cite{Fabes_Kenig_Serapioni_1982a}, J. J. Kohn and L. Nirenberg \cite{Kohn_Nirenberg_1967}, and M. K. V. Murthy and G. Stampacchia \cite{Murthy_Stampacchia_1968}, as well as a selection of later articles which further develop their ideas.

The arguments in our article are not straightforward adaptations of the proofs of the analogous classical results described by Gilbarg and Trudinger \cite[Theorems 8.15, 8.20, 8.22 and 8.27]{GilbargTrudinger}, due to the fact that our Sobolev spaces are weighted, so the standard Sobolev, Poincar\'e, and John-Nirenberg inequalities do not apply. We rely on
%CP 1.23.2016: removed extra 'the'
%PF 2-27-2016: OK
the Moser iteration technique and the most difficult step in making this technique work involves the selection of a suitable John-Nirenberg inequality. For this purpose, we use the so-called \emph{abstract John-Nirenberg inequality}, due to Bombieri and Giusti \cite[Theorem 4]{Bombieri_Giusti_1972}, which can be applied to any topological space endowed with a regular Borel measure satisfying some natural requirements. In order to verify the hypotheses of the abstract John-Nirenberg inequality in our weighted Sobolev space setting (Proposition \ref{prop:ApplicationAbstractJohnNirenberg}), we prove a local version of the Poincar\'e inequality, Corollary \ref{cor:PoincareInequality}, suitable for our weighted Sobolev spaces.

\subsubsection{Connections with work of Koch}
In \cite{Koch}, Koch considers weak solutions to a certain linear parabolic partial differential equation in divergence form and which arises in the study of the porous medium equation. He takes the spatial domain to be the whole upper half space, $\HH=\RR^{n-1}\times\RR_+$, assumes a degeneracy similar to that in the
%CP 1.23.2016: Adapted to the generalization for variable coefficients
%PF 2-27-2016: OK
operator \eqref{eq:Operator}
%CP 1.23.2016: I removed this parenthesis ...
%(namely, $\vartheta(t,x)=x_n$),
%PF 2-27-2016: OK
and obtains a local $L^\infty$ bound \cite[Proposition 4.5.1]{Koch}, a Harnack inequality \cite[Theorem 4.5.3]{Koch}, and a $C^\alpha$ estimate and H\"older continuity \cite[Theorem 4.5.5]{Koch} up to the degenerate boundary
%CP 1.23.2016: Changed x_n by y
%($x_n=0$)
%PF 2-27-2016: OK
($y=0$) for weak solutions.
%CP 1.23.2016: Adapted to the generalization for variable coefficients
%PF 2-27-2016: OK
% Koch uses Sobolev weights which are comparable to ours ($\fw \simeq x_n^{\beta-1}$),
Koch uses the same Sobolev weights as ours, but whereas he uses potential theory and pointwise estimates for fundamental solutions to prove the Harnack inequality and H\"older continuity, our method of proof is based on Moser iteration and avoids any need for potential theory or pointwise estimates of fundamental solutions.
%PF 2-25-2016 rephrased because now we do have variable coefficient operators
%CP 2.25.2016: Ok
%We believe that this is an important distinction because we can therefore expect the methods and results of our article to extend to the broader class of degenerate elliptic (and parabolic) operators discussed in \S \ref{subsec:Class_noncoercive_bilinear_maps} --- this would be difficult to achieve using potential theory.
We believe that this is an important distinction: in this article we establish results, for a broader class of degenerate elliptic operators, that would be difficult to achieve using potential theory.

While Koch takes the spatial domain to be the whole upper half-space, $\sO=\HH$, we consider the
%CP 1.23.2016: Added below for the more general operator
%Heston
%PF 2-27-2016: OK
variational equation \eqref{eq:IntroHestonWeakMixedProblemHomogeneous}
on subdomains of the half-space, $\sO\subsetneqq\HH$, with Dirichlet boundary condition along the non-degenerate boundary, $\Gamma_1$. In \cite{Koch}, Koch does not need to analyze the regularity of solutions at the `corner points' ($\bar\Gamma_0\cap\bar\Gamma_1$), but in our article we establish local supremum bounds for weak subsolutions and $C^\alpha$ estimates and H\"older continuity up to $\bar\Gamma_0$ for weak solutions on neighborhoods of points in $\bar\Gamma_0\cap\bar\Gamma_1$ (see our Theorems \ref{thm:MainSupremumEstimatesBoundary} and \ref{thm:MainContinuityBoundary}, and Corollaries \ref{cor:MainSupremumEstimatesBoundary} and \ref{cor:MainHolderContinuityBoundary}) --- results which appear difficult to obtain using pointwise estimates of the fundamental solution.

In \cite{Koch}, Koch uses Moser iteration but only to obtain the local $L^\infty$ bound for a weak solution \cite[Proposition 4.5.1]{Koch}. In order to prove H\"older regularity of solutions along the boundary $\bar\Gamma_0$, we need the version of the Poincar\'e inequality for weighted Sobolev spaces that we prove in Corollary \ref{cor:PoincareInequality}. Koch also obtains a version of the Poincar\'e inequality for weighted Sobolev spaces \cite[Lemma 4.4.4]{Koch} that applies to functions defined on the whole half-space. The H\"older regularity results we establish in this article are local and they are most easily proved using a local version of the Poincar\'e inequality, such as our Corollary \ref{cor:PoincareInequality}.
%CP 1.23.2016: Replaced 'Our' by 'The'
%Our
%PF 2-27-2016: OK
The proof of our Poincar\'e inequality --- relying only on integration by parts and the Poincar\'e inequality for standard Sobolev spaces --- appears simpler to us than the proof of \cite[Lemma 4.4.4]{Koch}.

Our local version of the Poincar\'e inequality (Corollary \ref{cor:PoincareInequality}) allows us to appeal to the `abstract John-Nirenberg inequality' \cite[Theorem 4]{Bombieri_Giusti_1972} and employ Moser iteration to obtain, as we noted above, H\"older regularity for weak solutions up to the `corner points'  ($\bar\Gamma_0\cap\Gamma_1$) and a Harnack inequality (on neighborhoods of points in $\Gamma_0$) for non-negative weak solutions without relying on pointwise estimates of fundamental solutions. In particular, Koch does not use a John-Nirenberg inequality for weighted Sobolev spaces to obtain the results we cited in \cite{Koch}.

Finally, Koch does not consider applications to H\"older regularity of solutions to variational inequalities as we do in our article.

%CP 1.11.2016: I think that this survey of the literature is quite extensive. It includes also the work of Koch and Lierl-Saloff-Coste which intersect with ours and explains the differences.
%I checked also the literature on Dirichlet forms. This is mostly done for symmetric Dirichlet forms (and so it does not include the Heston operator). For non-symmetric Dirichlet forms, the work of Lierl-Saloff-Coste is closest to what we do and we explain it in this survey.
%I checked also the references to Daskalopoulos-Hamilton. The paper of Song-Wang would look related to ours, but I think that it is best that we do not mention it. The paper of Fornaro_et_al_2015 studies a similar operator but they discuss only the semigroup properties and derive no supremum estimates, Holder estimates, or Harnack inequality. I don't think that we need to mention it because is not directly related to what we do.
%PF 2-25-2016: OK

\subsubsection{Connections with other closely related work}
%COMMENT: Work of Kohn and Nirenberg
%PF 5.28.2013: Slightly edited
Kohn and Nirenberg prove an \apriori estimate, existence, and uniqueness of a solution in a certain weighted Sobolev space \cite[Equation (1.6)]{Kohn_Nirenberg_1967} to a variational equation defined by a boundary-degenerate, linear, second-order
%PF 5.28.2013: Their main result only concerns elliptic operators
elliptic operator \cite[Theorem 1]{Kohn_Nirenberg_1967}. They assume that the domain boundary is smooth, while we allow the domain to have singularities (at points in $\bar\Gamma_0\cap\bar\Gamma_1$). Rather than exploit the regularity of the solution implied by a suitable choice of weighted Sobolev space, they use the sign of the Fichera
%PF 2-29-2016 Updated footnote notation
function\footnote{Namely, $(b^\mu-a^{\mu\nu}_{z_\nu})\eta_\mu$, where $(\eta_1,\ldots,\eta_n)$ is the inward-pointing unit normal vector field along $\partial\sO$ \cite[Equation (1.1.3)]{Radkevich_2009a}.}
to determine when to impose Dirichlet boundary condition on portions of the domain boundary. In the case of Heston operator $A$ in \eqref{eq:OperatorHestonIntro}, this implies a dichotomy, $0<\beta<1$ and $\beta\geq 1$, when applying a Dirichlet boundary condition along $\Gamma_0$, whereas our choice of rather different weighted Sobolev spaces removes this undesirable dichotomy entirely and we never need to prescribe a Dirichlet boundary condition along $\Gamma_0$; see
%PF 2-18-2016 Reference to the older preprint version is correct here since Appendices omitted in published version
%CP 2.24.2016: Ok
Appendix B in the earlier preprint version \cite{Feehan_maximumprinciple_v5} of \cite{Feehan_maximumprinciple} for a detailed discussion. When $0<\beta<1$ (recall that $\beta=2\kappa\theta/\sigma^2$ from \eqref{eq:DefnBetaMu}), Kohn and Nirenberg would require a homogeneous Dirichlet condition along the full boundary, $\partial\sO$, in their main \cite[Theorem 1]{Kohn_Nirenberg_1967}: while this is in accordance with the Fichera sign condition \cite[pp. 798--801]{Kohn_Nirenberg_1967}, a boundary condition along $\Gamma_0$
%CP 1.23.2016: I removed the part below because such problems can have physical motivation when we study the hitting time of the boundary \Gamma_0
%lacks any physical motivation and
%PF 2-27-2016: OK
limits the regularity of the solution, $u$, to being at most continuous up to $\Gamma_0$.

%PF 5.28.2013: Rephrased per your comments added below; to be fair, these are global, not local contraints and might not alone preclude local regularity
Even when $\beta\geq 1$, their additional technical conditions \cite[(a)--(d), pp. 799--800]{Kohn_Nirenberg_1967} mean that their main result does not apply to the problem we consider in this article. For example, they use the Fichera condition to partition the boundary as $\partial\sO = \Sigma_1 \cup \Sigma_2 \cup \Sigma_3$ and, when $\beta\geq 1$, $\Sigma_1=\bar\Gamma_0$, $\Sigma_2 =  \emptyset$, and $\Sigma_3 = \Gamma_1$. They require that $\Sigma_2\cup\Sigma_3$ be relatively closed, which means that $\Gamma_1$ should be relatively closed, which is not true in our problem. Moreover, the closures of the portions of the boundary with a Dirichlet condition, $\Sigma_2\cup\Sigma_3$, and of the portion without any boundary condition, $\Sigma_1$, are disjoint, while in our problem, they are allowed to intersect.

While \cite[Theorem 1]{Kohn_Nirenberg_1967} provides a global \apriori estimate (see \cite[Inequality (1.7)]{Kohn_Nirenberg_1967}), along with existence and uniqueness of a solution in a certain weighted Sobolev space, it bounds the weighted Sobolev norm \cite[Equation (1.6)]{Kohn_Nirenberg_1967} of $u \in W^{2,k}_{\loc}(\sO)$ in terms of the same weighted Sobolev norm of $f \in W^{2,k}_{\loc}(\sO)$, for any $k\geq 1$, and this regularity requirement on $f$ is unusually strong. While we might try to extract global regularity for $u$ (in terms of H\"older norms) up to $\partial\sO$, that would require a suitable embedding theorem for weighted Sobolev spaces and, as far as we can tell (see, for example, \cite {Kufner}), such an embedding theorem is not available for the weighted Sobolev space defined in \cite[Equation (1.6)]{Kohn_Nirenberg_1967}. Simple localization procedures, using cutoff functions, usually require appropriate interpolation inequalities and these are not developed in \cite{Kohn_Nirenberg_1967} and may not be straightforward. On the other hand, more advanced methods of developing local supremum or H\"older estimates usually require Sobolev, Poincar\'e, and John-Nirenberg inequalities and these are not developed in \cite{Kohn_Nirenberg_1967} and, again, may not be straightforward for the choices of weights selected in \cite{Kohn_Nirenberg_1967}. Indeed, the Sobolev weights appearing in \cite[Theorem 1]{Kohn_Nirenberg_1967} appear to have a technical motivation, while the weights used in our article are directly motivated by the discussion in
%\S \ref{subsec:Class_noncoercive_bilinear_maps}.
\cite[Section 8]{Feehan_maximumprinciple}.

%PF 5.28.2013 Commented out the below. While the below is true, it doesn't explain why our choice of weights works whereas the above discussion does. The exponential factors in our weights are useful because they make \HH finite volume, but they are otherwise irrelevant.
%directly related to quantities with probabilistic interpretation \cite{McKean_1956} as the scale function, $\fs(y)=y^{-\beta}e^{\mu y}$, and speed measure, $\fm(y)=2/\sigma^2 y^{\beta-1}e^{-\mu y}$, for the Feller square-root process \cite{Feller1951}.

%COMMENT: Work of Murthy and Stampacchia
Murthy and Stampacchia \cite{Murthy_Stampacchia_1968, Murthy_Stampacchia_1968corr} establish local supremum estimates, H\"older regularity, and global $L^p$ estimates for solutions in weighted Sobolev spaces to a variational equation defined by a boundary-degenerate, linear, second-order elliptic operator. They assume that the (Lipschitz) coefficients
%PF 2-29-2016 Updated notation
$\bar a^{\mu\nu}$ in \eqref{eq:A_nondivergence_form} obey
$$
\langle \bar a(z)\xi, \xi\rangle \geq m(z) |\xi|^2, \quad\forall\, \xi\in\RR^n \hbox{ and a.e. } z\in \sO,
$$
where the weight, $m\geq 0$ a.e. on a bounded domain $\sO$, is required to obey \cite[p. 1]{Murthy_Stampacchia_1968}
$$
m \in L^s(\sO) \quad\hbox{and}\quad m^{-1}\in L^t(\sO),
$$
for some $s, t \geq 1$ such that $1/s+1/t<2/n$.
%CP 1.23.2016: Adapted to variable coefficients
%The Heston operator, $A$ in \eqref{eq:OperatorHestonIntro},
%PF 2-27-2016: OK
The operator $A$ defined in \eqref{eq:Operator} does not satisfy the Murthy-Stampacchia condition since we would need to choose $m(x,y)=\nu_0y$ and clearly $m^{-1} \notin L^t(\sO)$ for any $t\geq 1$ whenever $\Gamma_0$ is non-empty (as we allow throughout our article).

%COMMENT: Work of Fabes, Kenig and Serapioni
Fabes, Kenig and Serapioni \cite{Fabes_Kenig_Serapioni_1982a} consider operators of the form $Au = (a^{\mu\nu}u_{x_\mu})_{x_\nu}$, and Lipschitz coefficients $a^{\mu\nu}$ obeying \cite[p. 78]{Fabes_Kenig_Serapioni_1982a}
%PF 2-29-2016 Updated notation
$$
C^{-1}w(z) |\xi|^2 \leq \langle \bar a(z)\xi, \xi\rangle \leq C w(z) |\xi|^2, \quad\forall\, \xi\in\RR^n \hbox{ and a.e. } z\in \sO,
$$
where $C$ is a positive constant, and $w$ is a weight that belongs to the Muckenhoupt class, $A_2$. They use Moser iteration to establish local supremum estimates and H\"older continuity for solutions and a Harnack inequality for non-negative solutions, $u \in H^1_0(\sO,w)$, to the variational equation \cite[p. 94]{Fabes_Kenig_Serapioni_1982a}
%PF 2-29-2016 Updated notation
$$
\int_\sO \bar a^{\mu\nu}u_{z_\mu}v_{z_\nu}\,dz = \int_\sO fv\,dz, \quad \forall\, v \in C^\infty_0(\sO),
$$
given $f \in L^2(\sO)$ (by \cite[p. 81]{Fabes_Kenig_Serapioni_1982a}) and where they define \cite[p. 91]{Fabes_Kenig_Serapioni_1982a} (note the contrast with our definition
%PF 2-25-2016: Changed reference
%CP 2.25.2016: Ok
\eqref{eq:H1WeightedSobolevSpace} of $H^1(\sO,\fw)$)
$$
%CP 1.23.2016: Moved the weight w after the parenthesis
%PF 2-25-2016: OK
%\|u\|_{H^1(\sO,w)} := \left(\int_\sO \left(|Du|^2 + wu^2\right)\,dx\right)^{1/2},
\|u\|_{H^1(\sO,w)} := \left(\int_\sO \left(|Du|^2 + u^2\right)w\,dx\right)^{1/2},
$$
and $H^1(\sO,w)$ is the completion of $C^\infty_0(\sO)$ in $H^1(\sO,w)$. The Poincar\'e inequality holds in the case of $A_2$ weights \cite[p. 95, Item (4)]{Fabes_Kenig_Serapioni_1982a} and the Sobolev inequality holds in the case of $A_p$ weights \cite[Theorems 1.2, 1.3, 1.5 and 1.6]{Fabes_Kenig_Serapioni_1982a}. A calculation
%COMMENT: When we integrate this weight to verify the A_p condition, the integral is divergent
shows that our choice of weight,
%CP 1.23.2016: Removed the exponential term from the weight
%$\fw(x,y)=y^{\beta-1}e^{-\mu y -\gamma |x|}$
%PF 2-27-2016: Need some comment as to whether results proved with exponential factor included carry over to when not and vice versa
%CP 3.3.2016: The exponential factor changes nothing about whether the weight with or without exponential factor belongs to a A_p class
%PF 3-7-2016: Added back exponential factor
$\fw(x,y)=y^{\beta-1}e^{\tau|x|-\mu y}$ in \eqref{eq:HestonWeight} --- or any of its variants which keep the important factor $y^{\beta-1}$ --- is not contained in the $A_p$ class when $\beta\geq p$, and therefore the crucial Sobolev and Poincar\'e inequalities established in \cite{Fabes_Kenig_Serapioni_1982a} do not apply. Even if we restrict to the case $\beta<2$, Fabes, Kenig and Serapioni only obtain results for solutions obeying a homogeneous Dirichlet boundary condition along the full boundary, $\partial\sO$, whereas the essential feature of our article is that we impose no boundary condition along $\Gamma_0$. Finally, the absence of the lower-order terms in \eqref{eq:A_nondivergence_form} considerably simplifies the problem since, in a degenerate-elliptic operator, the term
%PF 2-29-2016 Updated notation
$b^\mu u_{z_\mu}$ may be as significant as
%PF 2-29-2016 Updated notation
$a^{\mu\nu}u_{z_\mu z_\nu}$.
%CP 1.23.2016: Added the following previous works of Serapioni and his co-authors
%PF 2-27-2016: OK

The method of Moser iteration has also been extended to degenerate operators in divergence form in articles such as \cite{Chiarenza_Serapioni_1984a, Chiarenza_Serapioni_1984b, Chiarenza_Serapioni_1985, Chiarenza_Rustichini_Serapioni_1989, Franchi_Serapioni_1987}, where the properties of $A_2$ and $A_{n/2+1}$ weights are used to derive the Harnack inequality and H\"older regularity properties of solutions. We remark that the weight
%PF 2-29-2016 Updated notation
$y^{\beta-1}$ considered in our article does not belong to these classes of functions, when $\beta \geq 2$ and $\beta\geq n/2$, respectively. Moreover such a restriction would not be natural in the present context.

%COMMENT: Work of Chanillo and Wheeden, Mohammed, Pingen, etc
Chanillo and Wheeden \cite{Chanillo_Wheeden_1986} prove a Harnack inequality, extending that of \cite{Fabes_Kenig_Serapioni_1982a} by allowing unequal weights,
$$
%PF 2-29-2016 Updated notation
w(z) |\xi|^2 \leq \langle \bar a(z)\xi, \xi\rangle \leq v(z) |\xi|^2, \quad\forall\, \xi\in\RR^n \hbox{ and a.e. } z\in \sO.
$$
While they also relax the condition that $w\in A_2$, they require that $v,w$ obey a doubling condition\footnote{This is also true for our weight, $\fw$, by Lemma \ref{lem:MeasureBalls}.}
and Poincar\'e and Sobolev inequalities \cite[\S 1.2]{Chanillo_Wheeden_1986}. However, their Harnack inequality has the traditional, interior form  (compare
\cite[Theorem 8.21]{GilbargTrudinger} for the case of a strictly elliptic operator) for a subdomain $\sO'\Subset\sO$.
%CP 1.23.2016: I removed this line because I added above more about the work of Serapioni and his co-authors and this paper is also included
%Similar results are obtained by Franchi and Serapioni \cite{Franchi_Serapioni_1987}, assuming $A_p$ weights.
%PF 2-25-2016: OK
Mohammed \cite{Mohammed_2002} extends the work of Chanillo and Wheeden by allowing general, non-zero coefficients $b^\mu$ and $c$ for $A$ in \eqref{eq:A_nondivergence_form}. Pingen also extends the work of Chanillo and Wheeden, but rather by considering quasilinear elliptic system in pure divergence form and no lower-order terms. He obtains an interior Harnack inequality and interior H\"older continuity, under suitable conditions on the structure of the quasilinearity and doubling conditions on the weights $w$ and $z := v^2/w$. Di Fazio, Fanciullo, and Zamboni \cite{DiFazio_Fanciullo_Zamboni_2010, DiFazio_Zamboni_2006, Zamboni_2002} and Stredulinsky \cite{Stredulinsky} also obtain an interior Harnack inequality and interior H\"older continuity for quasi-linear degenerate elliptic equations in divergence form under related hypotheses.

%COMMENT: Work of Lierl-Saloff-Coste
%PF 2-18-2016 Any reason why we're not citing the most recent version of their paper? I know it is not published yet.
%CP 2.24.2016: Updated the references to the latest version
%PF 2-25-2016: OK
Lierl and Saloff-Coste use Moser iteration to establish a parabolic Harnack inequality for time-dependent, non-symmetric, local Dirichlet forms \cite[Theorem 3.14]{Lierl_Saloff-Coste_2012v6_arxiv}. Their hypotheses,
\cite[Assumptions 0,1,2 and 4]{Lierl_Saloff-Coste_2012v6_arxiv}, are satisfied by the bilinear form
%CP 1.23.2016: Changed reference to the bilinear form
%\eqref{eq:HestonWithKillingBilinearForm}
%PF 2-27-2016: OK
\eqref{eq:Operator_A_bilinear_form} defined by the
%CP 1.23.2016: Changed reference to the more general operator
%Heston
%PF 2-27-2016: OK
the operator $A$ in \eqref{eq:Operator} on domains of finite height, for example,
%PF 2-25-2016: This is in 2D still?
%CP 2.25.2016: Changed
$\sO\subseteqq\RR^{n-1}\times(0,y_0)$, where $y_0$ is a positive constant.
%CP 2.24.2016: Lierl and Saloff-Coste now prove a version of the Poincare inequality in Theorem 3.11 under their hypotheses (they actually reference it in a paper of Sturm). I updated this and rephrased the below.
%PF 2-25-2016: OK
%The Poincar\'e inequality is a crucial ingredient in the proof of the Harnack inequality and whereas we prove the required Poincar\'e inequality (Corollary \ref{eq:CorPoincareInequality}) --- adapted to our choice of Sobolev weighted spaces and cycloidal distance function --- used in our proof of the Harnack inequality (Theorem \ref{thm:MainHarnack}), the Poincar\'e inequality is assumed as a hypothesis \cite[Assumption 4]{Lierl_Saloff-Coste_2012v6_arxiv} in the proof due to Lierl and Saloff-Coste.
The Poincar\'e inequality is a crucial ingredient in the proof of the Harnack inequality, which we prove in Corollary \ref{eq:CorPoincareInequality} by elementary methods. Lierl and Saloff-Coste state in \cite[Theorem 3.11]{Lierl_Saloff-Coste_2012v6_arxiv} a different version of the Poincar\'e inequality that involves the distance to the boundary of the ball, which in turn is proved in \cite[Corollary 2.5]{Sturm_1996}. In our Poincar\'e inequality, Corollary \ref{eq:CorPoincareInequality}, we do not need to use the distance to the boundary of the ball.

Lierl and Saloff-Coste also prove H\"older continuity of solutions \cite[Corollary 3.17]{Lierl_Saloff-Coste_2012v6_arxiv} with zero source function. To prove the H\"older continuity of solutions with non-zero source function, $f$, we need the stronger \emph{weak Harnack inequality} (compare \cite[Theorem 8.18]{GilbargTrudinger} for the case of a strictly elliptic operator), which is embedded in our proof of Theorem \ref{thm:MainContinuityInterior} in estimate \eqref{eq:Eq9}. Since the weak Harnack inequality allows non-zero source functions (unlike the Harnack inequality), it enables us to establish H\"older continuity of solutions with non-zero source function. Because the Harnack inequality is an `interior estimate' (recall that $\Gamma_0$ essentially plays the same role as the interior of $\sO$ in our article), it cannot be used to obtain H\"older continuity of solutions to the variational equation at corner points ($\bar\Gamma_0\cap\bar\Gamma_1$), as we do in our Theorem \ref{thm:MainContinuityBoundary}.

For variational inequalities defined by degenerate elliptic or parabolic operators, there has been little previous research. Vitanza and Zamboni \cite{Vitanza_Zamboni_2005, Vitanza_Zamboni_2007} describe existence and uniqueness results for solutions in certain weighted Sobolev spaces, but do not consider boundary regularity of solutions or partial Dirichlet boundary conditions.

\subsection{Mathematical highlights and guide to the article}
\label{subsec:Guide}
For the convenience of the reader, we provide a brief outline of the article. We begin in \S \ref{sec:SobolevPoincare} by describing a Sobolev inequality due to H. Koch \cite{Koch} and prove a Poincar\'e inequality for our weighted Sobolev spaces. In \S \ref{sec:JohnNirenberg}, we recall the abstract
John-Nirenberg inequality (Theorem \ref{thm:AbstractJohnNirenberg}) due to E. Bombieri and E. Giusti \cite{Bombieri_Giusti_1972} and justify its application (via Proposition \ref{prop:ApplicationAbstractJohnNirenberg}) in the setting of our weighted Sobolev spaces. The supremum estimate near $\bar\Gamma_0$ for solutions to the variational equation \eqref{eq:IntroHestonWeakMixedProblemHomogeneous} (Theorems \ref{thm:MainSupremumEstimatesInterior} and \ref{thm:MainSupremumEstimatesBoundary}) is proved in \S \ref{sec:SupremumEstimates} by adapting the Moser iteration technique employed in the proof of \cite[Theorem 8.15]{GilbargTrudinger} to the setting of our degenerate elliptic operators and weighted Sobolev spaces. Section \ref{sec:HolderContinuityVariationalEquation} contains our proof of local H\"older continuity along $\bar\Gamma_0$ of solutions to the variational equation \eqref{eq:IntroHestonWeakMixedProblemHomogeneous} (Theorems \ref{thm:MainContinuityInterior} and \ref{thm:MainContinuityBoundary}). The essential difference between the proofs of Theorems \ref{thm:MainContinuityInterior} and \ref{thm:MainContinuityBoundary} and the proof of their classical analogue for variational solutions to non-degenerate elliptic equations \cite[Theorems 8.27 and 8.29]{GilbargTrudinger} consists in a modification of the methods of \cite[\S 8.6, \S 8.9, and \S 8.10]{GilbargTrudinger} when deriving our energy estimates \eqref{eq:EnergyEstimate}, where we adapt the application of the
John-Nirenberg inequality and Poincar\'e inequality to our framework of weighted Sobolev spaces. In this section we also prove the Strong Maximum Principle (Theorem \ref{thm:StrongMaximumPrinciple}). In \S
\ref{sec:HolderContinuityVariationalInequality}, we apply the penalization method and techniques of \cite{Daskalopoulos_Feehan_statvarineqheston}, together with Theorems \ref{thm:MainContinuityInterior} and \ref{thm:MainContinuityBoundary}, to prove local H\"older continuity along $\bar\Gamma_0$ of solutions to the variational inequality \eqref{eq:IntroObstacleProblem}
(Theorem \ref{thm:MainHolderContinuityVariationalInequality}). Finally, in \S \ref{sec:Harnack} we prove the Harnack inequality (Theorem \ref{thm:MainHarnack}) for solutions to the variational equation \eqref{eq:IntroHestonWeakMixedProblemHomogeneous}. Appendix \ref{sec:Auxiliary} contains the proofs of auxiliary results employed throughout the article whose proofs are sufficiently technical that they would have otherwise interrupted the logical flow of our article.

%COMMENT TODO Keep for the journal version; comment out for the arXiv version
A longer, unpublished version of this article appeared as \cite{Feehan_Pop_regularityweaksoln_v3} and additional details for some lengthy but routine calculations are available there.

\subsection{Notation and conventions} In the definition and naming of function spaces, including spaces of continuous functions, H\"older spaces, or Sobolev spaces, we follow Adams \cite{Adams_1975} and alert the reader to occasional differences in definitions between \cite{Adams_1975} and standard references such as Gilbarg and Trudinger \cite{GilbargTrudinger} or Krylov \cite{Krylov_LecturesHolder, Krylov_LecturesSobolev}. We denote $\RR_+ := (0,\infty)$, $\bar\RR_+ := [0,\infty)$,
%CP 1.23.2016: Added the variable dimension in the definition of \HH
%PF 2-25-2016: OK
$\HH := \RR^{n-1}\times\RR_+$, and $\bar\HH := \RR^{n-1}\times\bar\RR_+$, where $n\geq 2$. We let $\NN:=\left\{1,2,3,\ldots\right\}$ denote the set of positive integers. For $x,y\in\RR$, we denote $x\wedge y : =\min\{x,y\}$, $x \vee y := \max\{x,y\}$. Moreover, $x^+:=x\vee 0$ and $x^- := -(x\wedge 0)$, so $x=x^+-x^-$ and $|x| = x^+ + x^-$, a convention which differs from that of \cite[\S 7.4]{GilbargTrudinger}. If $V\subset S$ is an open subset of a subset $S\subset \RR^n$, we write $U\Subset S$ when $\bar U$ is compact and $\bar U\subset S$.

%CP 1.23.2016: This paragraph is no longer needed
%PF 2-25-2016: :-)
\begin{comment}
Throughout our article, we fix $n = 2$. We keep track of the dependency of many of our estimates on the dimension, $n$, of $\HH = \RR^{n-1}\times(0,\infty)$ in our analysis, even though $n=2$ in this article, as this will make it easier to extend our results to partial differential equations on open subsets of $\HH$ when $n\geq 2$ but which preserve the key features of \eqref{eq:IntroBoundaryValueProblem}.
\end{comment}

When we label a condition an \emph{Assumption}, then it is considered to be universal and in effect throughout this article and so not referenced explicitly in theorem and similar statements; when we label a condition a \emph{Hypothesis}, then it is only considered to be in effect when explicitly referenced.

%PF 2-23-2016 Uncommented. Anyone else to thank?
%CP 2.24.2016: Added
\subsection{Acknowledgments}
We would like to thank Panagiota Daskalopoulos for many useful discussions on degenerate partial differential equations and for proposing some of the questions considered in this article. In addition we want to thank Sagun Chanillo and Richard Wheeden for many helpful references concerning the method of Moser iteration.

\section{Sobolev and Poincar\'e inequalities for weighted Sobolev spaces}
\label{sec:SobolevPoincare}
The main result of this subsection is a Poincar\'e inequality (Lemma \ref{lem:PoincareInequality}) for weighted Sobolev spaces. In addition, we review a Sobolev inequality (Lemma \ref{lem:AnalogSobolev}) due to H. Koch \cite{Koch}. Recall from \cite[Corollary 4.3.4]{Koch} that the weight $y^{\beta-1}$ defines a doubling measure, $y^{\beta-1}\,dx\,dy$ on $\HH$ for any $\beta>0$ (see, for example, \cite[Definition 1.2.6]{Turesson_2000}), where $\,dx\,dy$ is Lebesgue measure on $\HH$. In the following Lemma \ref{lem:AnalogSobolev} and the sequel, we will need the following

\begin{defn}
\label{defn:Defnp}
Throughout our article, we fix
\begin{equation}
\label{eq:Defnp}
p := \frac{2(n+\beta)}{n+\beta-1},
\end{equation}
for any $\beta>0$.
\end{defn}

We recall the

\begin{lem} [Weighted Sobolev inequality] \cite[Lemma 4.2.4]{Koch}
\label{lem:AnalogSobolev}
Let $p$ be as in \eqref{eq:Defnp}. Then there is a positive constant $C=C(n,p)$ such that
\begin{equation}
\label{eq:AnalogSobolev}
\begin{aligned}
\int_{\HH} |u|^p y^{\beta-1}\,dx\,dy
&\leq c \left(\int_{\HH} |u|^2 y^{\beta-1}\,dx\,dy \right)^{\frac{p-2}{2}}
\int_{\HH} |\nabla u|^2 y^{\beta}\,dx\,dy,
\end{aligned}
\end{equation}
for any $u \in L^2\left(\HH, y^{\beta-1}\right)$ such that $\nabla u \in L^2\left(\HH,y^{\beta}\right)$.
\end{lem}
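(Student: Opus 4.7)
The plan is to derive this weighted Gagliardo--Nirenberg inequality from a critical weighted Sobolev embedding combined with H\"older interpolation, paralleling the classical Euclidean argument. By standard approximation one may assume $u \in C^\infty_c(\bar\HH)$.

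First I would establish the critical Sobolev embedding
$$\|u\|_{L^{2^*}(\HH, y^{\beta-1}\,dx\,dy)}^2 \leq C \int_\HH |\nabla u|^2 y^\beta\,dx\,dy, \qquad 2^* := \frac{2N}{N-2},$$
where $N := 2(n+\beta-1)$ plays the role of a homogeneous dimension. This value of $N$ is dictated by the Koch geometry: since $d(z,z_0) \sim \sqrt{|z-z_0|}$ for $z_0 \in \bar\Gamma_0$, a Koch ball $B_R(z_0)$ of small radius centered on the boundary corresponds roughly to the Euclidean rectangle $\{|x-x_0| \lesssim R^2,\ 0<y\lesssim R^2\}$, whose weighted volume is $|B_R(z_0)|_{\beta-1} \sim R^{2(n-1)+2\beta} = R^N$. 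Hence $(\bar\HH, d, y^{\beta-1}\,dx\,dy)$ is doubling of dimension $N$ (as already noted in the excerpt), and the natural Dirichlet energy $\int y|\nabla u|^2 \, y^{\beta-1}\,dx\,dy$ coincides with $\int |\nabla u|^2 y^\beta\,dx\,dy$. This puts us in the abstract framework of Sobolev inequalities on doubling metric measure spaces satisfying a local Poincar\'e inequality, and the embedding follows from the standard chaining argument, taking the Poincar\'e inequality for the sub-Riemannian gradient $\sqrt{y}|\nabla u|$ (in the spirit of the forthcoming Corollary \ref{cor:PoincareInequality}) as input.

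Second, interpolating by H\"older between $L^2(y^{\beta-1})$ and $L^{2^*}(y^{\beta-1})$ yields
$$\|u\|_{L^p(y^{\beta-1})} \leq \|u\|_{L^2(y^{\beta-1})}^{1-\theta}\|u\|_{L^{2^*}(y^{\beta-1})}^\theta, \quad \frac{1}{p} = \frac{1-\theta}{2} + \frac{\theta}{2^*}.$$
A short calculation with $p = 2(n+\beta)/(n+\beta-1)$ and $N = 2(n+\beta-1)$ gives $p\theta = 2$; raising to the $p$-th power and applying the critical embedding to the $L^{2^*}$ factor produces the desired inequality.

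The hard part is the first step. The delicate point is that the gradient weight $y^\beta$ differs from the function weight $y^{\beta-1}$ precisely by the factor $y$ that encodes the degeneracy of the Koch metric; hence the ``correct'' gradient in the metric-measure framework is the sub-Riemannian $\sqrt{y}|\nabla u|$, not $|\nabla u|$ itself. Proving the underlying local Poincar\'e inequality for this degenerate gradient is where the honest work lies. An alternative direct route via a weighted isoperimetric inequality on $(\bar\HH, y^{\beta-1}\,dx\,dy)$ combined with the coarea formula would avoid the abstract machinery but demand explicit computation tailored to the precise form of the Koch metric and weight.
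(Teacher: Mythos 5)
First, note that the paper does not actually prove this lemma: it is imported verbatim from Koch \cite[Lemma 4.2.4]{Koch}, so there is no in-paper argument to measure yours against. Your reduction of the lemma to a critical embedding is correct, and the exponent arithmetic checks out: with $N=2(n+\beta-1)$ and $2^*=2N/(N-2)$, the interpolation identity gives $\theta=N(p-2)/(2p)$, and since \eqref{eq:Defnp} gives $p-2=2/(n+\beta-1)=4/N$ one indeed finds $\theta p=2$, so $\|u\|_{L^p(y^{\beta-1})}^p\leq\|u\|_{L^2(y^{\beta-1})}^{p-2}\|u\|_{L^{2^*}(y^{\beta-1})}^{2}$ and the lemma follows from the embedding $\|u\|_{L^{2^*}(y^{\beta-1})}^2\lesssim\int_\HH|\nabla u|^2y^\beta\,dx\,dy$. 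Your identification of $N$ as the homogeneous dimension is also consistent with Lemma \ref{lem:MeasureBalls}, and the inequality is dilation-invariant with these exponents, so nothing in the bookkeeping can go wrong.

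The gap is that the critical embedding is essentially the entire content of the lemma, and ``follows from the standard chaining argument'' in the doubling-plus-Poincar\'e framework does not discharge it. Three concrete obstructions: (i) that machinery yields \emph{local} Sobolev--Poincar\'e inequalities on balls with the average $u_B$ subtracted, whereas the lemma is a global inequality on $\HH$ with no subtraction and no lower-order term; upgrading requires a limiting argument (e.g.\ showing $u_{\BB_R}\to 0$ as $R\to\infty$ using $u\in L^2(\HH,y^{\beta-1})$ and $|\BB_R|_{\beta-1}\to\infty$) with constants uniform in $R$. (ii) The two-weight Poincar\'e inequality you invoke as input is established in this paper only for balls centered on $\partial\HH$ (Lemma \ref{lem:PoincareInequality}, Corollary \ref{cor:PoincareInequality}), and the scaling argument used there rests on the dilation property \eqref{eq:ScalingPropertyKochMetric}, which, as Remark \ref{rmk:ScalingPropertiesKochMetric} points out, fails for interior centers; a uniform Poincar\'e inequality for \emph{all} balls of $(\bar\HH,d,y^{\beta-1}\,dx\,dy)$ is itself a substantial statement that would have to be proved. (iii) The measure is not Ahlfors $N$-regular: by Lemma \ref{lem:MeasureBalls}, $|\BB_r(z_0)|_{\beta-1}\asymp r^n(r+\sqrt{y_0})^{n+2\beta-2}$, so the local dimension drops to $n$ at interior points and small scales, and only the relative lower volume bound $|\BB_r(z_0)|_{\beta-1}/|\BB_R(z_0)|_{\beta-1}\gtrsim(r/R)^N$ survives; the abstract embedding theorem must be invoked in the form adapted to that relative bound. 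None of these is fatal, but together they \emph{are} the proof, and your proposal leaves all of them, by your own admission, as ``where the honest work lies.''
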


For $R>0$ and $z_0 \in \bar\sO$, we denote
\begin{align}
\label{eq:Balls_relative_to_a_subdomain}
B_R(z_0) &= \left\{z \in \sO: d(z,z_0) < R \right\},
\\
\label{eq:Balls_relative_to_the_half_space}
\BB_R(z_0) &= \left\{z \in \HH: d(z,z_0) < R \right\},
\end{align}
while
\begin{align*}
\bar B_R(z_0) = \left\{z \in \bar\sO: d(z,z_0) \leq R \right\} \quad\hbox{and}\quad
\bar \BB_R(z_0) = \left\{z \in \bar\HH: d(z,z_0) \leq R \right\},
\end{align*}
are the usual closures of $B_R(z_0)$ in $\bar\sO$ and of $\BB_R(z_0)$ in $\bar\HH$. Using definition \eqref{eq:IntroKochDistance} of the cycloidal distance, we obtain the following inclusions. For all $R>0$, we have
\begin{align}
\label{eq:EuclidBallInsideCycloid}
E_{R^2}(z_0) &\subset B_R(z_0),\quad\forall\,z_0\in\bar\HH, \\
\label{eq:SimpleCycloidBallInsideEuclid}
%CP 1.23.2016: Added here that the inclusion is true only for points on the boundary. This is ok for where we apply the inclusion.
%PF 2-27-2016: OK
B_R(z_0) &\subset E_{2R^2}(z_0),\quad\forall\, z_0\in\partial\HH.
\end{align}

Throughout the article we also use the following

\begin{defn}[Volume of sets]
If $S \subset \bar{\HH}$ is a Borel measurable subset, we let $|S|_{\beta}$ denote the volume of $S$ with respect to the measure $y^{\beta} \,dx\,dy$, and $|S|_{\fw}$ denote the volume of $S$ with respect to the measure $\fw \,dx\,dy$.
\end{defn}

We now recall
\begin{lem} \cite[Lemma 4.3.3]{Koch}
\label{lem:MeasureBalls}
There is a positive constant
%PF 2-23-2016: added
%CP 2.24.2016: Ok
$c \geq 1$, depending only on $n$ and $\beta$, such that, for any $R>0$ and $z_0 \in \bar{\HH}$,
\begin{equation}
\label{eq:MeasureBalls}
\begin{aligned}
c^{-1} R^n(R+\sqrt{y_0})^{n+2\beta}
\leq |\BB_R(z_0)|_{\beta}
\leq c R^n(R+\sqrt{y_0})^{n+2\beta}.
\end{aligned}
\end{equation}
Moreover, the following inclusions hold,
\begin{equation}
\label{eq:InclusionBalls}
\begin{aligned}
\EE_{R_1}(z_0) \subseteqq \BB_R(z_0) \subseteqq \EE_{R_2}(z_0),
\end{aligned}
\end{equation}
where $R_1 = R\left(R+\sqrt{y_0}\right)/2000$ and $R_2 = R\left(R+2\sqrt{y_0}\right)$.
\end{lem}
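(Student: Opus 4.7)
The plan is to prove the two inclusions \eqref{eq:InclusionBalls} first, then deduce the volume estimate \eqref{eq:MeasureBalls} by sandwiching $\BB_R(z_0)$ between Euclidean balls and integrating the weight $y^\beta$ explicitly. The whole argument is purely geometric/calculational from the definition \eqref{eq:IntroKochDistance} of $d$, with no appeal to PDE machinery.

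For the inclusion $\EE_{R_1}(z_0) \subseteq \BB_R(z_0)$, I would verify the inequality $|z-z_0|^2 < R^2(y+y_0+|z-z_0|)$ by splitting into the two regimes that control which term dominates the denominator of $d(z,z_0)$: when $\sqrt{y_0}\geq R$, the choice $R_1 \leq R\sqrt{y_0}/1000$ together with $y+y_0+|z-z_0|\geq y_0$ gives $|z-z_0|^2 \leq R_1^2 \ll R^2 y_0$; when $\sqrt{y_0}<R$, the bound $R_1\leq R^2/1000$ combined with $y+y_0+|z-z_0|\geq|z-z_0|$ gives $|z-z_0|^2 < R^2|z-z_0|$ immediately. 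For the reverse inclusion $\BB_R(z_0)\subseteq \EE_{R_2}(z_0)$, the triangle inequality $y\leq y_0+|z-z_0|$ converts the Koch condition into the quadratic $t^2 - 2R^2 t - 2R^2 y_0 < 0$ in $t:=|z-z_0|$, and solving yields $t < R^2 + R\sqrt{R^2+2y_0}$, which is dominated by a constant multiple of $R(R+\sqrt{y_0})$.

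For the volume estimate, the upper bound is easy: on $\EE_{R_2}(z_0)$ one has $y\leq y_0+R_2 \lesssim (R+\sqrt{y_0})^2$, so $y^\beta \leq C(R+\sqrt{y_0})^{2\beta}$, and since the Lebesgue measure of $\EE_{R_2}$ is $\sim R_2^n \sim R^n(R+\sqrt{y_0})^n$, multiplication gives the desired bound. The lower bound uses integration over $\EE_{R_1}(z_0)\cap\HH$ and splits into two cases. When $\sqrt{y_0}\geq 2R$, the choice of $R_1$ forces the ball to lie entirely inside $\{y\geq y_0/2\}$, so $y^\beta \geq (y_0/2)^\beta$ and the estimate reduces to Lebesgue volume. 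When $\sqrt{y_0}<2R$, one has $R_1\sim R^2$, and a direct slice integration $\int_{\max(0,y_0-R_1)}^{y_0+R_1} y^\beta\,2\sqrt{R_1^2-(y-y_0)^2}\,dy$ produces $\sim R_1^{\beta+2}\sim R^{2\beta+4}$, matching the target $R^n(R+\sqrt{y_0})^{n+2\beta}$ in this regime (with $n=2$).

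The main obstacle is the lower bound in the boundary regime $y_0 \lesssim R^2$, since there the weight $y^\beta$ degenerates precisely where the ball is largest, and a naive bound $y^\beta\geq y_0^\beta$ loses the scaling entirely. The key observation is that although the integrand is small near $\{y=0\}$, the ball $\EE_{R_1}\cap\HH$ extends a full distance $\sim R^2$ in the $y$-direction, so the one-dimensional integral $\int_0^{R_1} y^\beta\,dy \sim R_1^{\beta+1}$ recovers the correct power of $R$ when combined with the $x$-width $\sim R_1$. Everything else is bookkeeping with the elementary inequalities $\sqrt{a+b}\leq\sqrt{a}+\sqrt{b}$ and the explicit form of $R_1,R_2$.
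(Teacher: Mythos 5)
The paper offers no proof of this lemma: it is imported verbatim from Koch (\cite[Lemma 4.3.3]{Koch}), so there is no internal argument to compare against. Your self-contained proof is the natural one and is correct in substance. The first inclusion via the dichotomy $\sqrt{y_0}\ge R$ versus $\sqrt{y_0}<R$, the reduction of the second inclusion to a quadratic in $t=|z-z_0|$ using $y\le y_0+|z-z_0|$, and the volume count by sandwiching $\BB_R(z_0)$ between Euclidean balls all go through. You also correctly isolate the only delicate point, namely the lower bound in the regime $y_0\lesssim R^2$, where one must integrate the slice, $\int_0^{cR_1}y^\beta\,dy\sim R_1^{\beta+1}$, rather than bound $y^\beta$ from below pointwise by $y_0^\beta$; multiplied by the $x$-width $\sim R_1\sim R^2$ this gives $R_1^{\beta+2}\sim R^{2\beta+4}=R^n(R+\sqrt{y_0})^{n+2\beta}$ for $n=2$, as required.

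One caveat on constants in the second inclusion. Solving $t^2-2R^2t-2R^2y_0<0$ gives $t<R^2+R\sqrt{R^2+2y_0}\le 2R^2+\sqrt{2}\,R\sqrt{y_0}$, which is a constant multiple of $R(R+\sqrt{y_0})$ but is \emph{not} bounded by the stated $R_2=R(R+2\sqrt{y_0})$. Indeed the displayed value of $R_2$ cannot be achieved: for $y_0=0$ and $z=(x_0,s)$ one has $d(z,z_0)=\sqrt{s/2}$, so $\BB_R(z_0)$ reaches Euclidean distance $2R^2$ from $z_0$ while $R_2=R^2$. Thus your ``dominated by a constant multiple of $R(R+\sqrt{y_0})$'' is in fact the correct conclusion, and the literal second inclusion in \eqref{eq:InclusionBalls} should read, say, $R_2=2R(R+\sqrt{y_0})$; since every application of the lemma in the paper is insensitive to such universal constants, this discrepancy is harmless. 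With that adjustment (and the observation that your volume computation is written for $n=2$, which is all the paper uses), the proof is complete.
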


We have the following Poincar\'e inequalities, adapted to our weighted Sobolev spaces.

%CP 1.11.2016: Comment about the proof of the Poincare inequality. In our proof, the use of the scaling in the y-direction has the purpose to reduce the proof of the Poincare inequality to the application of the classical Poincare inequality, away from the degenerate boundary. We can see this in the penultimate inequality in the proof of the Lemma to which we apply [26, Equation (7.45)]. For this reason, we only need to scale in the degenerate direction y.
%PF 2-27-2016: OK
\begin{lem}[Poincar\'e inequality]
\label{lem:PoincareInequality}
Let $z_0\in\partial\HH$ and $R>0$. Then there is a positive constant $C$, depending on $\beta$, $n$ and $R$, such that for any $u \in H^1(\BB_R(z_0), \fw)$, we have
\begin{equation}
\label{eq:PoincareInequalityAnyBeta}
\begin{aligned}
\inf_{c \in \RR} \left(\int_{\BB_R(z_0)} |u(z)-c|^2 y^{\beta-1} \,dx\,dy\right)^{1/2}
\leq
C \left(\int_{\BB_R(z_0)} |\nabla u(z)|^2 y^{\beta} \,dx\,dy\right)^{1/2}.
\end{aligned}
\end{equation}
\end{lem}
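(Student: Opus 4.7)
The plan is to reduce to a Euclidean half-ball and then combine a one-dimensional Hardy-type inequality in the $y$-direction with the classical Poincar\'e inequality in the $x$-direction.

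Since $z_0 \in \partial\HH$ we have $y_0 = 0$, so Lemma~\ref{lem:MeasureBalls} yields
\[
\EE_{R^2/2000}(z_0) \subseteq \BB_R(z_0) \subseteq \EE_{R^2}(z_0),
\]
and $\BB_R(z_0)$ is comparable to a Euclidean half-ball of radius $\rho \sim R^2$. On this bounded region $\fw(x,y) = y^{\beta-1} e^{-\gamma|x|-\mu y}$ is comparable to $y^{\beta-1}$ with constants depending on $R$, so it suffices to establish the inequality with weights $y^{\beta-1}$ and $y^\beta$ throughout. By a standard mollification argument, smooth functions are dense in $H^1(\BB_R(z_0), \fw)$, so I may assume $u \in C^1(\overline{\BB_R(z_0)})$.

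The key one-dimensional estimate is: for $g \in C^1([0, L])$ and $\beta > 0$,
\[
\int_0^L |g(y) - g(L)|^2 y^{\beta-1}\, dy \leq \frac{L}{\beta} \int_0^L |g'(y)|^2 y^\beta\, dy.
\]
This follows from writing $g(L) - g(y) = \int_y^L g'(s)\, ds$, splitting $g'(s) = g'(s) s^{\beta/2} \cdot s^{-\beta/2}$ before applying Cauchy--Schwarz to obtain
\[
|g(L) - g(y)|^2 \leq \left(\int_y^L s^{-\beta}\, ds\right)\left(\int_0^L |g'(s)|^2 s^\beta\, ds\right),
\]
and using the identity $\int_0^L y^{\beta-1}\int_y^L s^{-\beta}\, ds\, dy = L/\beta$, which is obtained by Fubini and is valid for all $\beta > 0$. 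To upgrade to a 2D estimate on $\BB_R(z_0)$, I would enclose the ball in the rectangle $Q = I \times (0, \rho)$ with $I = (x_0 - \rho, x_0 + \rho)$ and apply the 1D inequality on each vertical slice $\{x\} \times (0, L(x))$, where $L(x) \leq \rho$ is the top of $\BB_R(z_0)$ above $x$. This yields
\[
\int_{\BB_R(z_0)} |u(x, y) - \varphi(x)|^2 y^{\beta-1}\, dx\, dy \leq \frac{\rho}{\beta} \int_{\BB_R(z_0)} |u_y|^2 y^\beta\, dx\, dy,
\]
where $\varphi(x) := u(x, L(x))$ (or, to avoid trace issues near the rim $|x - x_0| \sim \rho$, a suitable average of $u(x, \cdot)$ over a shell $\{y \sim \rho\}$). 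Applying the classical one-variable Poincar\'e inequality to $\varphi$ on $I$ with $c := |I|^{-1}\int_I \varphi\, dx$, and controlling $\varphi'$ by $|\nabla u|$ averaged over the shell $\{y \sim \rho\}$ where $y^\beta \gtrsim \rho^\beta$, then closes the argument.

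The main obstacle is the asymmetry between the weights $y^{\beta-1}$ on the left and $y^\beta$ on the right: classical Muckenhoupt $A_2$-weighted Poincar\'e inequalities use the same weight on both sides, and since the ratio $y^\beta/y^{\beta-1} = y$ degenerates on $\partial\HH$ one cannot pass from such an inequality to~\eqref{eq:PoincareInequalityAnyBeta} by a pointwise comparison. The Hardy-type mechanism above is what overcomes this asymmetry, trading the missing power of $y$ for the geometric factor $L \sim R^2$; this is also the reason the constant $C$ in~\eqref{eq:PoincareInequalityAnyBeta} must depend on $R$.
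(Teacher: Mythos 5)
Your overall mechanism is the right one and is the same as the paper's: a one-dimensional Hardy-type estimate in the $y$-variable that trades the missing power of $y$ for the height of the domain ($\sim R^2$), combined with the classical Poincar\'e inequality on the region where the weight is non-degenerate. However, there is a genuine gap in how you set up the slicing. You slice the ball $\BB_R(z_0)$ itself into segments $\{x\}\times(0,L(x))$ and anchor each slice at the value $\varphi(x)=u(x,L(x))$ on the curved upper boundary. Near the rim $|x-x_0|\sim\rho$ one has $L(x)\to 0$, so $(x,L(x))$ approaches $\partial\HH$ and $\varphi$ there is essentially a trace of $u$ on $\{y=0\}$; for a general $u\in H^1(\BB_R(z_0),\fw)$ such a trace is not controlled (for $\beta\geq 1$ it need not even exist), and $\int_I|\varphi'|^2\,dx$ cannot be bounded by $\int|\nabla u|^2 y^\beta$ --- a trace of an $H^1$ function on a curve is only $H^{1/2}$, and here the curve runs into the degenerate set. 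Your proposed fix, replacing $\varphi$ by an average of $u(x,\cdot)$ over a shell $\{y\sim\rho\}$, fails for exactly the slices where it is needed: near the rim the segment $(0,L(x))$ does not reach the shell, so there is nothing to average over. The paper resolves precisely this point by first constructing an extension operator $E:H^1(\BB_R(z_0),\fw)\to H^1(D,\fw)$ to a full rectangle $D=(a,b)\times(0,c)$ (Lemma~\ref{lem:Extension}, proved by a weighted reflection argument in the appendix), after which every vertical slice has full height and the non-degenerate strip $\{\delta\leq y\leq k\delta\}$ sits above every $x$. Your proof becomes complete once you insert such an extension step (or an equivalent device guaranteeing that every slice reaches the non-degenerate region).

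Apart from this, your implementation of the one-dimensional step is correct and in one respect cleaner than the paper's. The paper anchors at the dilated point $v(x,ky)$ and absorbs the resulting term $2k^{-\beta}\int|v|^2y^{\beta-1}$ into the left-hand side by choosing $k$ with $2k^{-\beta}=\tfrac12$, and it must treat $\beta\neq 1$ and $\beta=1$ separately when evaluating $\int_y^{ky}t^{-\beta}\,dt$. Your version --- anchoring at the top and using the Fubini identity $\int_0^L y^{\beta-1}\int_y^L s^{-\beta}\,ds\,dy=L/\beta$ --- gives the constant $L/\beta$ uniformly for all $\beta>0$ with no absorption and no case split. Your closing remark correctly identifies why the asymmetry of the weights $y^{\beta-1}$ versus $y^\beta$ forces the constant to depend on $R$, which is also the reason the scaled version in Corollary~\ref{cor:PoincareInequality} carries the factor $R^2$.
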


As a consequence of Lemma \ref{lem:PoincareInequality}, we obtain

\begin{cor}[Poincar\'e inequality with scaling]
\label{cor:PoincareInequality}
%PF 2.21.2016: Changed $C$ to $C_0$ to avoid confusion
%CP 2.24.2016: Ok
There is a positive constant $C_0$, depending only on $\beta$ and $n$, such that for any $z_0\in\partial\HH$, $R>0$ and $u \in H^1(\BB_{R}(z_0), \fw)$  we have
\begin{equation}
\label{eq:CorPoincareInequality}
\begin{aligned}
&\inf_{c \in \RR} \left(\frac{1}{|\BB_R(z_0)|_{\beta-1}}\int_{\BB_R(z_0)} |u(z)-c|^2 y^{\beta-1} \,dx\,dy\right)^{1/2}\\
&\qquad \qquad \qquad \leq
C_0 R^2 \left(\frac{1}{|\BB_R(z_0)|_{\beta}}\int_{\BB_R(z_0)} |\nabla u(z)|^2 y^{\beta} \,dx\,dy\right)^{1/2}.
\end{aligned}
\end{equation}
\end{cor}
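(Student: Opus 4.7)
The plan is to derive Corollary~\ref{cor:PoincareInequality} from Lemma~\ref{lem:PoincareInequality} by a scaling argument, using Lemma~\ref{lem:MeasureBalls} to absorb the resulting powers of $R$ into the normalized (measure-averaged) form displayed in \eqref{eq:CorPoincareInequality}.

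The key starting observation is that the Koch metric scales homogeneously at boundary base points. For $z_0=(x_0,0)\in\partial\HH$, under the change of variables $z = z_0 + R^2\zeta$ with $\zeta=(\xi,\eta)$ and $\eta>0$, one has $y = R^2\eta$, and a direct substitution into \eqref{eq:IntroKochDistance} gives $d(z,z_0) = R\,d(\zeta,0)$. Hence this map carries $\BB_1(0)$ bijectively onto $\BB_R(z_0)$.

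The argument would then proceed in three steps. First, setting $\tilde u(\zeta):=u(z_0 + R^2\zeta)$, I apply Lemma~\ref{lem:PoincareInequality} to $\tilde u$ on the fixed ball $\BB_1(0)$, obtaining an inequality with constant depending only on $n$ and $\beta$. Second, I change variables back to $z$ using $d\xi\,d\eta = R^{-2n}\,dx\,dy$, $\eta^{\beta-1} = R^{-2(\beta-1)}y^{\beta-1}$, $\eta^{\beta} = R^{-2\beta}y^{\beta}$, and $|\nabla_\zeta\tilde u|^2 = R^4 |\nabla_z u|^2$. Cancelling the common factor $R^{-2n-2\beta+2}$ that appears on both sides yields
\[
\inf_{c\in\RR}\int_{\BB_R(z_0)} |u-c|^2\, y^{\beta-1}\,dx\,dy
\leq C R^{2}\int_{\BB_R(z_0)} |\nabla u|^2\, y^{\beta}\,dx\,dy,
\]
with $C$ depending only on $n$ and $\beta$. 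Third, I divide by $|\BB_R(z_0)|_{\beta-1}$ and invoke Lemma~\ref{lem:MeasureBalls} with $y_0=0$, which yields two-sided bounds $|\BB_R(z_0)|_{\beta-1}\asymp R^{2n+2\beta-2}$ and $|\BB_R(z_0)|_{\beta}\asymp R^{2n+2\beta}$. These convert the factor $R^{2}/|\BB_R(z_0)|_{\beta-1}$ into a constant multiple of $R^{4}/|\BB_R(z_0)|_{\beta}$, and taking square roots produces \eqref{eq:CorPoincareInequality}.

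The argument is a routine scaling reduction and no serious obstacle arises. The one delicate point, which is exactly what makes the constant independent of both $z_0\in\partial\HH$ and $R>0$, is the \emph{isotropic} $R^2$-scaling of the Koch metric at boundary base points: $d$ scales with the same exponent in the $x$ and $y$ directions when $y_0=0$, even though it behaves anisotropically at interior base points. This is also the reason the hypothesis $z_0\in\partial\HH$ is needed to get a constant depending only on $\beta$ and $n$.
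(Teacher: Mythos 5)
Your proposal is correct and is essentially the paper's own argument: rescale by the isotropic quadratic map that carries $\BB_1(0)$ onto $\BB_R(z_0)$ (exploiting $d(z_0+R^2\zeta,z_0)=R\,d(\zeta,0)$ for $z_0\in\partial\HH$), apply Lemma~\ref{lem:PoincareInequality} on the reference ball, change variables back, and use Lemma~\ref{lem:MeasureBalls} to trade the powers of $R$ for the normalizing volumes $|\BB_R(z_0)|_{\beta-1}$ and $|\BB_R(z_0)|_{\beta}$. The only cosmetic difference is that you translate $z_0$ to the origin and fix the reference radius to $1$, whereas the paper works with a general $\bar R$ after assuming $z_0=(0,0)$.
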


To prove Lemma \ref{lem:PoincareInequality} and Corollary \ref{cor:PoincareInequality}, we make use of the following extension property.

\begin{lem}[Extension operator]
\label{lem:Extension}
Let $z_0\in\partial\HH$ and $R>0$.
%CP 1.23.2016: Changed to a rectangle in n dimensions
%PF 2-27-2016: OK
Let $a_i, b_i\in\RR$, $a_i<b_i$, for all $1\leq i\leq n$, be such that
$D = \prod_{i=1}^n(a_i,b_i)$ is a rectangle with the property that $\BB_{R}(z_0) \subseteqq D$. Then, there is a continuous extension
\[
E : H^1(\BB_R(z_0),\fw) \rightarrow H^1(D,\fw),
\]
and there exists a positive constant $C$, depending on $D$, $R$, $n$ and $\beta$, such that for any $u \in H^1(\BB_R(z_0),\fw)$ we have
\begin{equation}
\label{eq:ExtensionContinuity}
\begin{aligned}
\|Eu\|_{L^2(D,y^{\beta-1})} &\leq C \|u\|_{L^2(\BB_R(z_0),y^{\beta-1})}, \\
\|\nabla Eu\|_{L^2(D,y^{\beta})} &\leq C \|\nabla u\|_{L^2(\BB_R(z_0),y^{\beta})}.
\end{aligned}
\end{equation}
\end{lem}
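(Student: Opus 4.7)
The plan is to build $E$ by a partition-of-unity argument combined with local reflections across $\partial\BB_R(z_0)\cap\HH$ that fix the $y$-coordinate, so that the weight $y^{\beta-1}$ is preserved exactly under pullback.

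First, I would reduce to the pure power weight. Because $D$ is a bounded rectangle, the factor $e^{-\gamma|x|-\mu y}$ in $\fw$ is bounded above and below by positive constants on $D$, so the $\fw$-norms and the $y^{\beta-1}$, $y^{\beta}$ weighted norms are equivalent on $D$; the continuity statement $E : H^1(\BB_R(z_0),\fw) \to H^1(D,\fw)$ is then equivalent to the bounds in \eqref{eq:ExtensionContinuity}, and I may work exclusively with the power weights.

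Second, I would verify that $\BB_R(z_0)$ is a Lipschitz subdomain of $\bar\HH$. With $z_0=(x_0,0)$, the equation $d(z,z_0)=R$ reads $|z-z_0|^2 = R^2(y+|z-z_0|)$, whose intersection with $\partial\HH$ is $\{(x_0\pm R^2,0)\}$. Setting $s:=x-x_0\mp R^2$ and expanding for $s,y$ small, one finds $s = \pm y + O((s^2+y^2)/R^2)$, so the curved piece $\partial\BB_R(z_0)\cap\HH$ meets $\partial\HH$ with finite, nonzero slope. Consequently, $\partial\BB_R(z_0)\cap\bar\HH$ admits a finite cover by open sets $U_j\subset\bar\HH$ on each of which the boundary is a Lipschitz graph $x=g_j(y)$, with Lipschitz constant depending only on $R$.

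Third, I would introduce a smooth partition of unity $\{\phi_j\}$ subordinate to $\{U_j\}$, together with an interior cutoff, and define $(Eu)(z):=u(z)$ on $\BB_R(z_0)$ and $(Eu)(z):=\sum_j \phi_j(z)\,(u\circ\Phi_j)(z)$ on $D\setminus\bar\BB_R(z_0)$, where $\Phi_j(x,y):=(2g_j(y)-x,y)$ is the reflection across $\{x=g_j(y)\}$. Because $\Phi_j$ fixes $y$ and acts as the identity on $\partial\BB_R(z_0)\cap U_j$, the two definitions glue continuously and the weights $y^{\beta-1}$, $y^{\beta}$ are preserved pointwise under pullback. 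Since $g_j$ is Lipschitz with bound depending only on $R$, $\Phi_j$ is bi-Lipschitz with bounded Jacobian, so the change-of-variables formula together with the chain rule yields the estimates \eqref{eq:ExtensionContinuity}.

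The main obstacle lies at the two corners $(x_0\pm R^2,0)$ where the curved boundary meets the degenerate boundary $\partial\HH$. One must check that the $y$-preserving reflection is well defined on a one-sided neighborhood of each corner in $D\setminus\bar\BB_R(z_0)$, that is, that for every sufficiently small $y>0$ a horizontal slice lying just to the exterior side of the boundary is mapped by $\Phi_j$ into $\BB_R(z_0)\cap U_j$. This follows from Step 2, which provides a uniform Lipschitz graph representation of the boundary down to $y=0$; in particular $\Phi_j$ never sends small-$y$ points to large-$y$ points. Once this geometric point is secured, the singular weight $y^{\beta-1}$ is transported without distortion by $\Phi_j$, and the weighted estimates reduce to the Lipschitz bound on $g_j$ together with the standard unweighted change-of-variables calculation.
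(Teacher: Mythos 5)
Your central idea --- reflect across $\partial\BB_R(z_0)\cap\HH$ by a map that fixes the $y$-coordinate, so that the weights $y^{\beta-1}$ and $y^{\beta}$ are transported without distortion --- is exactly the idea of the paper's proof, and your analysis of the corners $(x_0\pm R^2,0)$ (boundary slope tending to $1$, hence a Lipschitz graph $x=g(y)$ there) is correct. But two steps do not go through as written. First, the claim in your Step 2 that all of $\partial\BB_R(z_0)\cap\bar\HH$ is covered by charts in which the boundary is a Lipschitz graph $x=g_j(y)$ fails near the apex of the ball: writing the right arc as $x=x_0+\sqrt{r(y)^2-y^2}$ with $r(y)=\tfrac12\bigl(R^2+\sqrt{R^4+4R^2y}\bigr)$, one checks that $g'(y)\to-\infty$ as $y\uparrow 2R^2$, so the tangent becomes horizontal and no such graph exists there. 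This is fixable --- near the apex $y$ is bounded below, the weights are comparable to constants, and any bi-Lipschitz reflection works --- but it forces a second chart type, which your argument omits.

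Second, and more seriously, the partition-of-unity gluing does not yield the second inequality in \eqref{eq:ExtensionContinuity} in the stated form. Differentiating $Eu=\sum_j\phi_j\,(u\circ\Phi_j)$ produces the terms $\sum_j\nabla\phi_j\,(u\circ\Phi_j)$; since the local reflections $\Phi_j$ disagree on overlaps, these do not cancel, and they are controlled only by $\|u\|_{L^2(\BB_R(z_0),y^{\beta-1})}$, not by $\|\nabla u\|_{L^2(\BB_R(z_0),y^{\beta})}$. The resulting estimate is of the form $\|\nabla Eu\|_{L^2(D,y^\beta)}\le C\bigl(\|\nabla u\|_{L^2(y^\beta)}+\|u\|_{L^2(y^{\beta-1})}\bigr)$, strictly weaker than the lemma asserts; the gradient-only bound is precisely what the proof of Lemma \ref{lem:PoincareInequality} needs when it applies $E$ to $u-c$ and takes the infimum over $c$, and repairing the cross terms by subtracting local averages would require a local weighted Poincar\'e inequality on the patches, which is circular here. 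The paper sidesteps both problems by using a single, cutoff-free, piecewise-defined map: for $0<y\le y'_0$ a horizontal ($y$-preserving) reflection through $\partial\BB_R(z_0)$ toward a fixed interior point $z'_0$, and for $y>y'_0$ a radial inversion about $z'_0$; then $\nabla Eu$ involves only $\nabla u$ composed with the map, and the Jacobian bounds give \eqref{eq:ExtensionContinuity} directly. You should restructure your construction along those lines, or otherwise eliminate the derivatives of the cutoff functions.
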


\begin{rmk}
\label{rmk:ReferenceSetting}
Without loss of generality, in the proofs of Lemmas \ref{lem:PoincareInequality} and  \ref{lem:Extension} and Corollary \ref{cor:PoincareInequality} we may assume $z_0=(0,0)$.
\end{rmk}

\begin{proof}[Proof of Lemma \ref{lem:PoincareInequality}]
%CP 1.23.2016: Changed in n-dimensional rectangle
Let
%PF 2.21.2016: Quantities were already fixed in lemma hypotheses
%CP 2.24.2016: This was in Lemma 2.7, not Lemma 2.5. I think we should
%leave them.
%PF 2-25-2016: Good point; left them.
$a_i<b_i$, for all $1\leq i\leq n-1$, and let
$\delta>0$ be such that $\BB_R(z_0) \subseteqq D_0\times(0,\delta)$, where we denote $D_0:=\prod_{i=1}^{n-1} (a_i,b_i)$. Let $k>1$ be such that
\begin{equation}
\label{eq:DefinitionKPoincare}
2 k^{-\beta}=\frac{1}{2}.
\end{equation}
Let $\hat u=Eu$ be the extension of $u$ to $D$ given by Lemma \ref{lem:Extension}. Assuming that \eqref{eq:PoincareInequalityAnyBeta} holds for $\hat u$, we obtain that it holds for $u$
also in the following way,
\begin{equation*}
\begin{aligned}
\inf_{c \in \RR} \left(\int_{\BB_R(z_0)} |u(z)-c|^2 y^{\beta-1} \,dx\,dy\right)^{1/2}
&\leq \inf_{c \in \RR} \left(\int_{D} |\hat u(z)-c|^2 y^{\beta-1} \,dx\,dy\right)^{1/2}\\
&\leq
C \left(\int_{D} |\nabla \hat u(z)|^2 y^{\beta} \,dx\,dy\right)^{1/2}\\
&\leq
C \left(\int_{\BB_R(z_0)} |\nabla u(z)|^2 y^{\beta} \,dx\,dy\right)^{1/2}.
\end{aligned}
\end{equation*}
%CP 1.23.2016: We only apply inequality \eqref{eq:ExtensionContinuity} in the last line
%PF 2-27-2016: OK
In the last inequality above, we made use of \eqref{eq:ExtensionContinuity}.

Therefore, we may assume $u \in H^1(D,\fw)$. Our goal is to prove that \eqref{eq:PoincareInequalityAnyBeta} holds for $u \in H^1(D,\fw)$. By \cite[Corollary A.14]{Daskalopoulos_Feehan_statvarineqheston}, we may assume without loss
of generality that $u \in C^1(\bar D)$. Let $c \in \RR$ and let $v = u-c$. Then, by the mean value theorem, we have for any $y \in (0,\delta)$ and
%CP 1.23.2016: Changed (a,b) to D_0
%PF 2-27-2016: OK
$x \in D_0$,
\[
v(x,y) = v(x,ky) + \int_{ky}^{y} v_y(x,t) dt.
\]
Squaring both sides of the preceding equation and integrating in $y$ with respect to $y^{\beta-1} \,dy$, we obtain
\begin{equation}
\label{eq:PoincareEq1}
\begin{aligned}
\int_{0}^{\delta} |v(x,y)|^2 y^{\beta-1} \,dy
\leq 2 \int_{0}^{\delta} |v(x,ky)|^2 y^{\beta-1} \,dy
+ 2\int_{0}^{\delta}\left|\int_{ky}^{y} v_y(x,t) dt\right|^2 y^{\beta-1} \,dy.
\end{aligned}
\end{equation}
By applying the change of variable $y'=ky$, we see that
\begin{equation}
\label{eq:PoincareEq2}
\begin{aligned}
\int_{0}^{\delta} |v(x,ky)|^2 y^{\beta-1} \,dy
= k^{-\beta}\int_{0}^{k\delta} |v(x,y')|^2 y'^{\beta-1} \,dy'.
\end{aligned}
\end{equation}
Also, we have for $\beta \neq 1$,
\begin{equation}
\label{eq:PoincareEq3}
\begin{aligned}
\int_{0}^{\delta}\left|\int_{ky}^{y} v_y(x,t) dt\right|^2 y^{\beta-1} \,dy
&=    \int_{0}^{\delta}\left|\int_{ky}^{y} v_y(x,t) t ^{\beta/2} t ^{-\beta/2}dt\right|^2 y^{\beta-1} \,dy\\
&\leq \frac{1}{|1-\beta|} \int_{0}^{\delta}\int_{y}^{ky} |v_y(x,t)|^2 t ^{\beta} dt  \left| y^{-\beta+1} - (ky)^{-\beta+1} \right| y^{\beta-1} \,dy\\
&\leq \delta \frac{1+k^{-\beta+1}}{|1-\beta|} \int_{0}^{k\delta} |v_y(x,y)|^2 y ^{\beta} \,dy.
\end{aligned}
\end{equation}
For $\beta=1$, a similar calculation gives us
\begin{equation}
\label{eq:PoincareEq3'}
\begin{aligned}
\int_{0}^{\delta}\left|\int_{ky}^{y} v_y(x,t) dt\right|^2  \,dy
&\leq \delta \log k  \int_{0}^{k\delta} |v_y(x,y)|^2 y \,dy.
\end{aligned}
\end{equation}
Define a positive constant $C_0\equiv C_0(\beta, \delta)$ by $C_0=2\delta (1+k^{-\beta+1})/|1-\beta|$ when $\beta \neq 1$, and $C_0=2\delta \log k$ when $\beta=1$. By combining equations \eqref{eq:PoincareEq1},
\eqref{eq:PoincareEq2}, \eqref{eq:PoincareEq3} and \eqref{eq:PoincareEq3'}, we obtain
\begin{equation*}
\begin{aligned}
\int_{0}^{\delta} |v(x,y)|^2 y^{\beta-1} \,dy
&\leq 2 k^{-\beta}\int_{0}^{k\delta} |v(x,y)|^2 y^{\beta-1} \,dy
+ C_0 \int_{0}^{k\delta} |v_y(x,y)|^2 y ^{\beta} \,dy  \\
&\leq 2 k^{-\beta}\int_{0}^{\delta} |v(x,y)|^2 y^{\beta-1} \,dy
+2 k^{-\beta}\int_{\delta}^{k\delta} |v(x,y)|^2 y^{\beta-1} \,dy \\
& \quad + C_0 \int_{0}^{k\delta} |v_y(x,y)|^2 y ^{\beta} \,dy.
\end{aligned}
\end{equation*}
Recall that $k>1$ was chosen such that \eqref{eq:DefinitionKPoincare} is satisfied. Therefore, by integrating also in $x$, there exists $C=C(\beta,\delta)$ such that
%CP 1.23.2016: Changed (a,b) to D_0
%PF 2-27-2016: OK
\begin{equation*}
\begin{aligned}
\int_{D_0}\int_{0}^{k\delta} |v(x,y)|^2 y^{\beta-1} \,dy  \,dx
&\leq C \int_{D_0} \int_{\delta}^{k\delta} |v(x,y)|^2 y^{\beta-1} \,dy \,dx
+ C \int_{D_0} \int_{0}^{k\delta} |v_y(x,y)|^2 y ^{\beta} \,dy \,dx.
\end{aligned}
\end{equation*}
Since $v=u-c$, we have
%CP 1.23.2016: Changed (a,b) to D_0
%PF 2-27-2016: OK
\begin{equation}
%CP 2.21.2016: Added reference
%PF 2-27-2016: OK
\label{eq:Poincare_ineq_shifted_up}
\begin{aligned}
&\inf_{c \in \RR} \int_D |u(x,y)-c|^2 y^{\beta-1} \,dy  \,dx\\
& \qquad
\leq C \inf_{c \in \RR} \int_{D_0} \int_{\delta}^{k\delta} |u(x,y)-c|^2 y^{\beta-1} \,dy \,dx  + C \int_D |u_y(x,y)|^2 y ^{\beta} \,dy \,dx.
\end{aligned}
\end{equation}
%CP 1.23.2016: Changed (a,b) to D_0
%CP 2.21.2016: Changed the rest of the proof
%PF 2-27-2016: OK
The rectangle $D':=D_0\times(\delta,k\delta)$ is a convex domain and so we may apply the classical Poincar\'e inequality \cite[Equation (7.45)]{GilbargTrudinger} to give
\begin{equation*}
\begin{aligned}
\inf_{c \in \RR} \int_{D_0} \int_{\delta}^{k\delta} |u(x,y)-c|^2 \,dy \,dx
\leq C \int_{D_0} \int_{\delta}^{k\delta} |\nabla u(x,y)|^2 \,dy \,dx.
\end{aligned}
\end{equation*}
Let $C':=(k\delta)^{\beta-1}$ if $\beta\geq 1$, and $C'=\delta^{\beta-1}$ if $\beta<1$. Then we see that $y^{\beta-1} \leq C'$, for all $y\in (\delta, k\delta)$, which gives
$$
\int_{D_0} \int_{\delta}^{k\delta} |u(x,y)-c|^2 y^{\beta-1}\,dy \,dx  \leq C'\int_{D_0} \int_{\delta}^{k\delta} |u(x,y)-c|^2\,dy \,dx, \quad\forall\, c\in\RR.
$$
Using in addition the inequality,
$$
\int_{D_0} \int_{\delta}^{k\delta} |\nabla u(x,y)|^2 \,dy \,dx \leq \delta^{-\beta}\int_{D_0} \int_{\delta}^{k\delta} |\nabla u(x,y)|^2 y^{\beta}\,dy \,dx,
$$
and combining it with the preceding two inequalities, we obtain that
\begin{equation*}
\begin{aligned}
%CP 1.23.2016: Changed (a,b) to D_0
%PF 2-27-2016: OK
\inf_{c \in \RR} \int_{D_0} \int_{\delta}^{k\delta} |u(x,y)-c|^2 y^{\beta-1} \,dy \,dx  \leq C \int_{D_0} \int_{\delta}^{k\delta} |\nabla u(x,y)|^2 y^{\beta} \,dy \,dx,
\end{aligned}
\end{equation*}
where $C=C(\beta, D_0, \delta, k)$ is a positive constant. Because the domain $D_0$ and $\delta$ depend only on $R$, and $k$ depends on $\beta$, the constant $C$ in the preceding inequality depends only on $\beta, n$ and $R$. Combining the preceding inequality with \eqref{eq:Poincare_ineq_shifted_up} yields \eqref{eq:PoincareInequalityAnyBeta}.
\end{proof}

\begin{rmk}
Koch states a weighted Poincar\'e inequality on the half-space \cite[Lemma 4.4.4]{Koch}, with weight $y^{\beta-1} e^{-\kappa\rho(z,z_0)}$, where $\kappa$ is a positive constant, $z_0$ is a fixed point in $\bar{\HH}$, and $\rho(z,z_0)$ is equivalent to $d^2(z,z_0)$, in the sense that there exists a constant $c>0$ such that
\[
%CP 1.11.2016: Added a \quad
%PF 2-27-2016: OK
c d^2(z,z_0) \leq \rho(z,z_0) \leq \frac{1}{c} d^2(z,z_0), \quad\forall\, z \in \HH.
\]
The proof of this result is long and technical. So, rather than use this result to prove a weighted Poincar\'e inequality on a ball using an extension principle, we give a much simpler proof for balls and weights $y^{\beta-1}$ and
$y^{\beta}$.
\end{rmk}

%CP 1.11.2016: We don't use this remark anywhere in the paper and I think that we can remove it.
%PF 2-27-2016: OK
\begin{comment}
\begin{rmk}
When $\beta \geq 1$, from \cite[Lemma A.1 and A.4]{Daskalopoulos_Feehan_statvarineqheston} we have that $H^1_0(\sO,\fw)=H^1_0(\sO\cup\Gamma_0,\fw)$. Then, as in the case of the Poincar\'e inequality for finite-width domains
\cite[\S 6.26]{Adams_1975}, it might be true that the stronger version of \eqref{eq:PoincareInequalityAnyBeta} holds
\begin{equation}
\label{eq:PoincareInequalityAnyBetaGE1}
\begin{aligned}
\left(\int_{\BB_R(z_0)} |u(z)|^2 y^{\beta-1} \,dx\,dy\right)^{1/2}
&\leq
C \left(\int_{\BB_R(z_0)} |\nabla u(z)|^2 y^{\beta} \,dx\,dy\right)^{1/2}.
\end{aligned}
\end{equation}
\end{rmk}
\end{comment}

\begin{rmk}[Scaling under Koch metric]
\label{rmk:ScalingPropertiesKochMetric}
Using the definitions \eqref{eq:IntroKochDistance} for the cycloidal distance
%PF 2.21.2016: added
%CP 2.24.2016: Ok
and \eqref{eq:Balls_relative_to_the_half_space} for the ball $\BB_R(z_0)$,
we obtain the following scaling property
\begin{equation}
\label{eq:ScalingPropertyKochMetric}
%PF 2.21.2016: added z_0 \in \partial\HH explicitly
%CP 2.24.2016: Ok
\BB_{R_1}(z_0) = \left(\frac{R_1}{R_2}\right)^2 \BB_{R_2}(z_0), \quad \forall\, R_1, R_2 >0 \text{ and } z_0 \in \partial\HH,
\end{equation}
%PF 2.21.2016: added
%CP 2.24.2016: Corrected
%PF 2-25-2016: OK
since
%$d(t^2z, z_0) = td(z, z_0)$
$d(z_0+t^2(z-z_0), z_0) = td(z, z_0)$ for all $z \in \HH$, $z_0 \in \partial\HH$, and $t > 0$.
Notice that \eqref{eq:ScalingPropertyKochMetric} does not hold if $z_0=(x_0, y_0)$ with $y_0>0$.
\end{rmk}

\begin{proof} [Proof of Corollary
%PF 2.21.2016: Wrong reference - \ref{eq:CorPoincareInequality}
%CP 2.24.2016: Ok
\ref{cor:PoincareInequality}]
Let $R>0$ and $\bar R>0$ and define $v$ by rescaling
\[
%CP 2.24.2016: Corrected
%PF 2-25-2016: OK
%u(z) = v \left(\left(\frac{\bar R}{R}\right)^2 z\right), \quad \forall\, z\in  \BB_{R}(z_0).
u(z) = v \left(z_0+\left(\frac{\bar R}{R}\right)^2 (z-z_0)\right), \quad \forall\, z\in  \BB_{R}(z_0).
\]
The rescaling map defined by
%CP 2.24.2016: Corrected
%PF 2-25-2016: OK
%$z \mapsto (\bar R/R)^2 z$
$z \mapsto z_0+(\bar R/R)^2 (z-z_0)$
maps $\BB_{R}(z_0)$ into $\BB_{\bar R}(z_0)$ by Remark \ref{rmk:ScalingPropertiesKochMetric}. By applying Lemma \ref{lem:PoincareInequality} to $v$ on $\BB_{\bar R}(z_0)$, there is a positive constant
%PF 2.21.2016: C -> C_0
%CP 2.24.2016: Ok
$C_0$, depending only on $\bar R$, $n$ and $\beta$, such that \eqref{eq:PoincareInequalityAnyBeta} holds. By changing variables, we obtain
\begin{equation}
\label{eq:RescaledPoincare}
\begin{aligned}
\inf_{c \in \RR} \left(\frac{\bar R}{R}\right)^{2(\beta-1)} \int_{\BB_R(z_0)} |u-c|^2 y^{\beta-1} \,dx\,dy
& \leq
%PF 2.21.2016: Added C_0
%CP 2.24.2016: Ok
C_0\left(\frac{R}{\bar R}\right)^4  \left(\frac{\bar R}{R}\right)^{2\beta} \int_{\BB_R(z_0)} |\nabla u|^2 y^{\beta} \,dx\,dy.
\end{aligned}
\end{equation}
Using Lemma \ref{lem:MeasureBalls}, we rewrite \eqref{eq:RescaledPoincare} in the following form
\begin{equation*}
\begin{aligned}
\inf_{c \in \RR} \frac{|\BB_{\bar R}(z_0)|_{\beta-1}}{|\BB_R(z_0)|_{\beta-1}} \int_{\BB_R(z_0)} |u-c|^2 y^{\beta-1} \,dx\,dy
&\leq
%PF 2.21.2016: Added C_0
%CP 2.24.2016: Ok
C_0\left(\frac{R}{\bar R}\right)^4\frac{|\BB_{\bar R}(z_0)|_{\beta}}{|\BB_R(z_0)|_{\beta}} \int_{\BB_R(z_0)} |\nabla u|^2 y^{\beta} \,dx\,dy,
\end{aligned}
\end{equation*}
from which \eqref{eq:CorPoincareInequality} follows immediately
%PF 2.21.2016: Added
%CP 2.24.2016: Ok
by taking $\bar R=1$.
\end{proof}

\section{John-Nirenberg inequality}
\label{sec:JohnNirenberg}
In this section we recall the abstract John-Nirenberg inequality (Theorem \ref{thm:AbstractJohnNirenberg}) due to E. Bombieri and E. Giusti \cite{Bombieri_Giusti_1972} and, in particular, provide a justification --- via
Proposition \ref{prop:ApplicationAbstractJohnNirenberg} --- that its hypotheses hold in the setting of the problems described in \S \ref{sec:Introduction}.

We restrict the statement of \cite[Theorem 4]{Bombieri_Giusti_1972} to the framework of our problems, so in \cite[Theorem 4]{Bombieri_Giusti_1972} we choose $\HH$ to be the topological space and $d\mu = y^{\beta-1}\,dx\,dy$ to be
the regular positive Borel measure on $\HH$. Let $S_r$, $0\leq r\leq 1$ be a family of non-empty open sets in $\HH$ such that
\begin{equation}
\begin{aligned}
& S_s \subseteqq S_r, & 0\leq s\leq r\leq 1,
\\
& 0< |S_r|_{\beta-1} < \infty, &\forall\, r \in [0, 1].
\end{aligned}
\end{equation}
Let $w$ be a measurable positive function on $S_1$. For $t\neq 0$ and $0\leq r \leq 1$, we denote by
\begin{equation*}
\begin{aligned}
|w|_{t,r} &= \left(\frac{1}{|S_r|_{\beta-1}} \int_{S_r} |w|^t y^{\beta-1}\,dx\,dy\right)^{1/t},\\
|w|_{\infty,r} &= \esssup_{S_r} w,\\
|w|_{-\infty,r} &= \essinf_{S_r} w.
\end{aligned}
\end{equation*}
We now recall the

\begin{thm}[Abstract John-Nirenberg Inequality]
\label{thm:AbstractJohnNirenberg}
\cite[Theorem 4]{Bombieri_Giusti_1972}
Let $0< \theta_0, \theta_1\leq \infty$ and $w$ be a measurable positive function on $S_1$ such that
\[
|w|_{\theta_0,1} < \infty \text{    and     }  |w|_{\theta_1,1} > 0.
\]
Suppose there exist constants $\gamma>0$, $0 < t^* \leq \frac{1}{2} \min\{\theta_0,\theta_1\}$ and $Q>0$ such that for all $0\leq s<r\leq 1$ and $0<t \leq t^*$,
\begin{equation}
\label{eq:AbstractJohnNirenberg1}
\begin{aligned}
|w|_{\theta_0,s} \leq \left(Q(r-s)^{\gamma}\right)^{1/\theta_0-1/t} |w|_{t,r},\\
|w|_{-\theta_1,s} \geq \left(Q(r-s)^{\gamma}\right)^{1/t-1/\theta_1} |w|_{-t,r}.
\end{aligned}
\end{equation}
Assume further that
\begin{equation}
\label{eq:AbstractJohnNirenberg2}
\begin{aligned}
A := \sup_{0\leq r\leq 1} \inf_{c \in \RR} \frac{1}{|S_r|_{\beta-1}} \int_{S_r} |\log w - c | y^{\beta-1}\,dx\,dy < \infty.
\end{aligned}
\end{equation}
Then, we have
\begin{equation}
\label{eq:AbstractJohnNirenberg}
\begin{aligned}
|w|_{\theta_0,0}
\leq
\left(\frac{|S_1|_{\beta-1}}{|S_0|_{\beta-1}}\right)^{1/\theta_0+1/\theta_1}
\exp\left\{c_2Q^{-2}\left(A+1/t^*\right)\right\} |w|_{-\theta_1,0},
\end{aligned}
\end{equation}
where $c_2$ is a constant depending only on $\gamma$, but not on $Q, \theta_0, \theta_1, t^*,A$ and $\beta$.
\end{thm}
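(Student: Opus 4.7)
The strategy is to combine the two reverse-H\"older-type iterations furnished by \eqref{eq:AbstractJohnNirenberg1} with the BMO-type bound \eqref{eq:AbstractJohnNirenberg2} via a John-Nirenberg argument in which the usual Calder\'on-Zygmund covering is replaced by an abstract self-improvement mechanism built directly from the hypotheses. At the top level, I would choose a small exponent $t_0 \in (0, t^*]$ depending on $A$, $Q$, $\gamma$ and $t^*$ and prove the three estimates
\begin{equation*}
|w|_{\theta_0, 0} \leq C_1\, |w|_{t_0, 1/2}, \qquad |w|_{t_0, 1/2} \leq C_2\, |w|_{-t_0, 1/2}, \qquad |w|_{-t_0, 1/2} \leq C_3\, |w|_{-\theta_1, 1},
\end{equation*}
with constants of the form $C_i = \exp\{c Q^{-2}(A + 1/t^*)\}$ times an appropriate power of $|S_1|_{\beta-1}/|S_0|_{\beta-1}$, after which \eqref{eq:AbstractJohnNirenberg} follows by chaining.

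The two outer estimates follow directly from \eqref{eq:AbstractJohnNirenberg1}: the first inequality applied at $(s, r, t) = (0, 1/2, t_0)$ and the second applied at $(1/2, 1, t_0)$, combined with the elementary monotonicity $|w|_{-\theta_1, 1/2} \geq (|S_{1/2}|_{\beta-1}/|S_1|_{\beta-1})^{1/\theta_1} |w|_{-\theta_1, 1}$ for $L^{-\theta_1}$-norms over nested domains (a symmetric monotonicity comparing $S_0 \subset S_{1/2}$ yields the companion factor with exponent $1/\theta_0$). The central estimate $|w|_{t_0, 1/2} \leq C_2 |w|_{-t_0, 1/2}$ is the analogue of the classical John-Nirenberg inequality and is the heart of the proof. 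Setting $v = \log w$ and picking $c \in \RR$ nearly attaining the infimum in \eqref{eq:AbstractJohnNirenberg2} at $r = 1/2$, one rewrites the ratio as
\begin{equation*}
\left(\frac{|w|_{t, 1/2}}{|w|_{-t, 1/2}}\right)^{t} = \frac{1}{|S_{1/2}|_{\beta-1}^{2}}\int_{S_{1/2}} e^{t(v-c)}\,y^{\beta-1}\,dx\,dy \cdot \int_{S_{1/2}} e^{-t(v-c)}\,y^{\beta-1}\,dx\,dy,
\end{equation*}
so it suffices to bound each factor by $|S_{1/2}|_{\beta-1}\exp(c' t A)$ for $t \leq t_0$. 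Expanding $e^{\pm t(v-c)}$ to second order reduces this to controlling the exponential moment $|S_{1/2}|_{\beta-1}^{-1}\int_{S_{1/2}} |v-c|^{2} e^{t|v-c|}\,y^{\beta-1}\,dx\,dy$ for small $t$.

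The main obstacle, and the genuinely new ingredient of the Bombieri-Giusti method, is to establish this exponential integrability abstractly, without a Calder\'on-Zygmund decomposition. I would do so by using \eqref{eq:AbstractJohnNirenberg1} to extract the requisite geometric decay of the distribution functions $\lambda \mapsto |S_r \cap \{v - c > \lambda\}|_{\beta-1}$ and $\lambda \mapsto |S_r \cap \{v - c < -\lambda\}|_{\beta-1}$. Iterating \eqref{eq:AbstractJohnNirenberg1} along a dyadic sequence of shrinking radii $r_k \downarrow 1/2$ paired with a geometrically shrinking sequence of exponents $t_k \to 0$ trades a factor depending on $[Q(r_k - r_{k+1})^\gamma]$ at each step for a halving of the exponent, and Chebyshev's inequality converts the resulting $L^{t_k}$ bounds on $w^{\pm 1}$ into level-set bounds on $v - c$. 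A Cavalieri integration, anchored by the $L^{1}$ control \eqref{eq:AbstractJohnNirenberg2}, then yields exponential integrability of $t_0|v - c|$ for $t_0$ of order $Q^{2}/(A + 1/t^*)$, closing the argument. This is why hypothesis \eqref{eq:AbstractJohnNirenberg1} must hold for \emph{all} $0 < t \leq t^*$ rather than merely one value of $t$.
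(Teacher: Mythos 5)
First, a point of order: the paper does not prove this statement. Theorem \ref{thm:AbstractJohnNirenberg} is quoted from Bombieri and Giusti \cite[Theorem 4]{Bombieri_Giusti_1972}, and the paper's own contribution is only the verification of its hypotheses in Proposition \ref{prop:ApplicationAbstractJohnNirenberg}. So your proposal must be measured against the Bombieri--Giusti argument itself, and as written it has two genuine gaps.

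The first is the link $|w|_{-t_0,1/2}\leq C_3\,|w|_{-\theta_1,1}$. The second inequality of \eqref{eq:AbstractJohnNirenberg1} at $(s,r,t)=(1/2,1,t_0)$ rearranges to $|w|_{-t_0,1}\leq[Q(1/2)^{\gamma}]^{1/\theta_1-1/t_0}\,|w|_{-\theta_1,1/2}$, so both the radius being bounded and the radius doing the bounding are the opposite of what you need; and the monotonicity you invoke is a \emph{lower} bound on $|w|_{-\theta_1,1/2}$ in terms of $|w|_{-\theta_1,1}$, which cannot convert an upper bound by $|w|_{-\theta_1,1/2}$ into one by $|w|_{-\theta_1,1}$ (an infimum-type norm over the smaller set $S_{1/2}$ may be much larger than over $S_1$). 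Run correctly, your chain terminates at $|w|_{-\theta_1,0}$; landing on the stated right-hand side $|w|_{-\theta_1,1}$ requires arranging the negative-exponent half of the argument to conclude on the outermost set, which is not a one-line consequence of \eqref{eq:AbstractJohnNirenberg1}.

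The second, more serious gap is that the engine of your central step has no starting point. You propose to obtain ``$L^{t_k}$ bounds on $w^{\pm1}$'' by ``iterating \eqref{eq:AbstractJohnNirenberg1},'' but \eqref{eq:AbstractJohnNirenberg1} only ever compares two norms of $w$ with each other; iterated any number of times it never yields an absolute bound on any $|w|_{t,r}$, and without such a bound Chebyshev produces nothing. The $L^1$ control \eqref{eq:AbstractJohnNirenberg2} alone gives only the polynomial decay $|\{|\log w-c|>\lambda\}|_{\beta-1}\leq A|S_1|_{\beta-1}/\lambda$, far short of exponential integrability. The missing idea --- the actual content of Bombieri--Giusti --- is the absorption: split $\int_{S_r}w^{t}$ over $\{\log w-c\leq\lambda\}$ and its complement, estimate the tail by H\"older with exponent $\theta_0/t$ together with the Chebyshev level-set bound, which reintroduces $|w|_{\theta_0,r}$; then use \eqref{eq:AbstractJohnNirenberg1} to convert $|w|_{\theta_0,s}$ back into $|w|_{t,r}$ and choose $\lambda$ and $t$ along an infinite sequence of radii so that the self-referential term carries a factor strictly less than one and can be summed. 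This bootstrap is the source of the constant $\exp\{c_2Q^{-2}(A+1/t^{*})\}$ and of the need for all $0<t\leq t^{*}$; you correctly flag the latter but do not supply the mechanism that uses it. The second-order expansion of $e^{\pm t(v-c)}$ is likewise a detour, since it presupposes control of $\int|v-c|^{2}e^{t|v-c|}$, which is exactly as hard as what you set out to prove.
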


In many of our proofs, we will make use of a sequence of cutoff functions, $\{\eta_N\}_{N\in\NN}$. Let $\varphi:\mathbb{R}\rightarrow [0,1]$ be a smooth function such that $\varphi(x)\equiv 1$ for $x<0$, and $\varphi \equiv 0$ for $x>1$. Let $z_0 \in\HH$ and let $\{R_N\}_{N\geq 0}$ be an non-increasing sequence of positive numbers. We define
\begin{equation}
\label{eq:GlobalDefinitionCutOffFunction}
\eta_N(z) := \varphi\left(\frac{1}{R^2_{N-1}-R^2_N} (d^2(z_0,z)-R^2_N)\right), \quad \forall\, z\in\bar\HH, \quad\forall\, N\in\NN.
\end{equation}
Then, the sequence $\{\eta_N\}_{N\geq 1}$ has the following properties,
\begin{align}
\label{eq:RangeCutOffFunction}
&\eta_N|_{B_{R_N}(z_0)}\equiv 1, \quad \eta_N|_{B^c_{R_{N-1}(z_0)}}\equiv 0,
\\
\label{eq:GlobalBoundCutOffFunction}
&\left|\nabla \eta_N\right| \leq  \frac{C}{R^2_{N-1}-R^2_N},
\end{align}
where $B^c_{R_{N-1}}(z_0) := \HH\less\bar B_{R_{N-1}}(z_0)$ and $C$ is a positive constant independent of $N$ and the sequence
$\{R_N\}_{N\geq 0}$. The bound in \eqref{eq:GlobalBoundCutOffFunction} can be deduced from the calculation,
\[
\nabla \eta_N = \varphi'\left(\frac{1}{R^2_{N-1}-R^2_N} (d^2(z_0,z)-R^2_N)\right) \frac{1}{R^2_{N-1}-R^2_N} \nabla d^2(z_0,z).
\]
Also, we have that $|\nabla d^2(z_0,z)| \leq 5$, for all $z_0, z \in \HH$. Since $\varphi'$ is also uniformly bounded on $\RR$, we obtain
%PF 2.21.2016: Added
%CP 2.24.2016: Ok
the forthcoming inequality
\eqref{eq:BoundCutOffFunction}.

Similarly, we can construct a sequence of cutoff functions, $\{\eta_N\}_{N\in\NN}$, when
$\{R_N\}_{N \geq 0}$ is a non-decreasing sequence of positive numbers.

We now provide a justification that the hypotheses of Theorem \ref{thm:AbstractJohnNirenberg} hold in the setting of the problems discussed in this article.

\begin{prop} [Application of Theorem \ref{thm:AbstractJohnNirenberg}]
\label{prop:ApplicationAbstractJohnNirenberg}
Let $z_0 \in \partial\HH$ and $0<4R\leq 1$. Let $S_r=\BB_{(2+r)R}(z_0)$, for all $0\leq r \leq 1$. Let $\theta_0,\theta_1$ be as in Theorem \ref{thm:AbstractJohnNirenberg} and set $t^* = \frac{1}{2} \min\{\theta_0,\theta_1\}$. Then, there exist positive constants $Q$ and $\gamma$, independent of $R$ and $z_0$, such that \eqref{eq:AbstractJohnNirenberg} holds for any bounded positive function $w$ on $S_1$ which satisfies
%PF 2.21.2016: Added
%CP 2.24.2016: Ok
the forthcoming
energy estimates \eqref{eq:EnergyEstimate} or \eqref{eq:HarnackEnergyEstimates},
%CP 2.24.2016: Added where p is defined
%PF 2-27-2016: OK
where we recall that $p$ is defined in \eqref{defn:Defnp}.
\end{prop}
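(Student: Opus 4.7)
The plan is to verify the two main hypotheses of Theorem \ref{thm:AbstractJohnNirenberg}, namely the reverse integrability chain \eqref{eq:AbstractJohnNirenberg1} and the BMO-type bound \eqref{eq:AbstractJohnNirenberg2} for $\log w$, and then simply invoke the theorem. The key facts I will use repeatedly are the weighted Sobolev inequality (Lemma \ref{lem:AnalogSobolev}), the scaled Poincar\'e inequality (Corollary \ref{cor:PoincareInequality}), the volume doubling estimate of Lemma \ref{lem:MeasureBalls}, and the sequence of cutoff functions $\{\eta_N\}$ from \eqref{eq:GlobalDefinitionCutOffFunction}. Because $z_0 \in \partial\HH$, Lemma \ref{lem:MeasureBalls} simplifies to $|\BB_{(2+r)R}(z_0)|_{\beta} \asymp R^{2(n+\beta)}$ with constants independent of $R$ and $z_0$, so ratios of volumes of $S_s$ and $S_r$ for $0\le s<r\le 1$ are uniformly controlled.

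To establish \eqref{eq:AbstractJohnNirenberg1}, I would run a Moser iteration based on the energy estimate \eqref{eq:EnergyEstimate} (respectively \eqref{eq:HarnackEnergyEstimates}). Given $0 \le s < r \le 1$ and $0 < t \le t^*$, choose a decreasing sequence of radii $R_N$ interpolating between $(2+s)R$ and $(2+r)R$ with $R_{N-1}^2 - R_N^2 \asymp (r-s)R^2\,2^{-N}$, and let $\eta_N$ be the corresponding cutoff from \eqref{eq:GlobalDefinitionCutOffFunction}, so that $|\nabla \eta_N|\le C\,2^N/((r-s)R^2)$ by \eqref{eq:GlobalBoundCutOffFunction}. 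At each step, test the energy estimate against $\eta_N^2 w^{t_N}$ (with $t_0 = t$ and $t_{N+1} = t_N\cdot p/2$ where $p$ is as in Definition \ref{defn:Defnp}), and apply Lemma \ref{lem:AnalogSobolev} to $\eta_N w^{t_N/2}$. The result is a standard reverse-H\"older recursion
\begin{equation*}
|w|_{t_{N+1},s_N} \le \left(\frac{C\, 4^N}{(r-s)^2 R^2}\right)^{1/t_N}|w|_{t_N,s_{N-1}},
\end{equation*}
and iterating as $N\to\infty$ and absorbing the volume ratios from Lemma \ref{lem:MeasureBalls} into the normalized norms $|\cdot|_{t,r}$ yields the first line of \eqref{eq:AbstractJohnNirenberg1} with $\gamma = 2$ and some $Q$ depending only on $n$, $\beta$ and the constants in the energy estimate. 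Replacing $w$ by $w^{-1}$ (which also satisfies the analogous energy estimate by the hypothesis on \eqref{eq:HarnackEnergyEstimates}) and arguing similarly, but iterating with negative exponents, yields the second inequality of \eqref{eq:AbstractJohnNirenberg1}.

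To verify \eqref{eq:AbstractJohnNirenberg2}, I would apply Corollary \ref{cor:PoincareInequality} to $\log w$ on each ball $S_r = \BB_{(2+r)R}(z_0)$, which gives
\begin{equation*}
\inf_{c\in\RR}\frac{1}{|S_r|_{\beta-1}}\int_{S_r}|\log w - c|\, y^{\beta-1}\,dx\,dy \le C\,R^2\left(\frac{1}{|S_r|_\beta}\int_{S_r}|\nabla \log w|^2\, y^\beta\,dx\,dy\right)^{1/2},
\end{equation*}
after an application of the Cauchy--Schwarz inequality in the weighted measure. The gradient term is then controlled by testing the energy estimate \eqref{eq:EnergyEstimate} or \eqref{eq:HarnackEnergyEstimates} against $v = \eta^2/w$, where $\eta$ is a cutoff between $S_r$ and $S_1$; this produces a Caccioppoli-type bound of the form
\begin{equation*}
\int_{S_r}|\nabla \log w|^2 \, y^\beta\,dx\,dy \le \frac{C}{R^2}\,|S_1|_\beta,
\end{equation*}
with $C$ depending only on the structure constants. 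Dividing by $|S_r|_\beta$ and using the volume equivalence of $S_r$ and $S_1$ from Lemma \ref{lem:MeasureBalls}, the factor $R^2$ in the Poincar\'e inequality cancels precisely against $R^{-2}$ in the gradient bound, so that $A$ in \eqref{eq:AbstractJohnNirenberg2} is bounded by a constant independent of $R$ and $z_0$.

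The main obstacle will be keeping track of the weights throughout the Moser iteration: the Sobolev inequality \eqref{eq:AnalogSobolev} pairs the measure $y^{\beta-1}\,dx\,dy$ on the left with $y^\beta\,dx\,dy$ on the right, while the norms $|\cdot|_{t,r}$ in Theorem \ref{thm:AbstractJohnNirenberg} are all normalized with respect to $y^{\beta-1}$. One must therefore absorb this mismatch at every iteration step using the cutoff $\eta_N$, whose gradient interacts with the $y^\beta$ weight via the scaling property of the Koch metric (Remark \ref{rmk:ScalingPropertiesKochMetric}), and show that the resulting geometric series of constants still yields a clean power $(r-s)^\gamma$ uniformly in $R\le 1/4$ and in $z_0\in\partial\HH$. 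A subtlety to be handled is that the choice $t^* = \tfrac{1}{2}\min\{\theta_0,\theta_1\}$ must be small enough that the iterated exponents $t_N \to \infty$ (respectively $-\infty$), so that the endpoints $\theta_0$ and $-\theta_1$ are reached; since the recursion multiplies the exponent by $p/2 = (n+\beta)/(n+\beta-1) > 1$ at each step, this automatically holds, confirming that the same $Q$ and $\gamma$ work for both inequalities in \eqref{eq:AbstractJohnNirenberg1}.
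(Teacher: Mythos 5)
Your verification of \eqref{eq:AbstractJohnNirenberg1} follows essentially the same route as the paper: a finite Moser iteration driven by the energy estimate, with the cutoffs \eqref{eq:GlobalDefinitionCutOffFunction}, the weighted Sobolev inequality of Lemma \ref{lem:AnalogSobolev}, and the volume bounds of Lemma \ref{lem:MeasureBalls} (simplified by $y_0=0$) to normalize the measures. Two small points you gloss over but which the paper handles explicitly: the iteration is \emph{finite} (you stop once the exponent reaches $\theta_0$), and since $\theta_0$ need not equal $t(p/2)^{N^*}$ exactly, one must first prove the inequality for the overshoot exponent $\theta_0^* = t(p/2)^{N^*}$ and then descend to $\theta_0$ by H\"older, adjusting $Q$ and $\gamma$ using $t\le t^*\le\theta_0/2$; this is routine but is where the hypothesis on $t^*$ is actually used (not, as you suggest, to guarantee that the exponents diverge).

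The genuine gap is in your treatment of \eqref{eq:AbstractJohnNirenberg2}. You propose to obtain the Caccioppoli-type bound $\int_{S_r}|\nabla\log w|^2 y^\beta\,dx\,dy\le CR^{-2}|S_1|_\beta$ by ``testing the energy estimate \eqref{eq:EnergyEstimate} against $v=\eta^2/w$.'' But \eqref{eq:EnergyEstimate} is not an equation one can test: it is a norm inequality for $\eta w^{(\alpha+1)/2}$ with $\alpha\ne -1$, and the logarithmic Caccioppoli estimate is exactly the excluded borderline case $\alpha=-1$. It cannot be extracted from the hypotheses of the proposition, which assume only that $w$ is a bounded positive function satisfying \eqref{eq:EnergyEstimate} or \eqref{eq:HarnackEnergyEstimates}; one needs to go back to the variational equation \eqref{eq:IntroHestonWeakMixedProblemHomogeneous} and use $\eta^2/w$ as a test function there. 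This is precisely why the paper's proof of the proposition establishes only \eqref{eq:AbstractJohnNirenberg1} and defers the verification of \eqref{eq:AbstractJohnNirenberg2} to the proofs of Theorems \ref{thm:MainContinuity} and \ref{thm:MainHarnack}, where the equation is available. Your Poincar\'e-plus-Caccioppoli scheme for bounding $A$ is the right one in those settings, but as written it does not follow from the stated hypotheses of the proposition.
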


\begin{proof}
We begin by proving the first inequality in \eqref{eq:AbstractJohnNirenberg1} by applying Moser iteration finitely many times. The second inequality in \eqref{eq:AbstractJohnNirenberg1} can be proved by a similar technique. We
give the proof when $w$ satisfies the energy estimate \eqref{eq:EnergyEstimate}, but the proof applies as well to positive bounded functions $w$ satisfying the energy estimate \eqref{eq:HarnackEnergyEstimates}.

%CP 2.24.2016: Added here
%PF 2-27-2016: OK
As in the hypotheses of Theorem \ref{thm:AbstractJohnNirenberg}, we let $t\in (0, t^*]$. First, we consider the \emph{special case} when $\theta_0$ and
%PF 2.23.2016: t does not appear in the hypothesis of Proposition \ref{prop:ApplicationAbstractJohnNirenberg}
%CP 2.24.2016: Added at the beginning of this paragraph.
%PF 2-27-2016: OK
$t$ satisfy the requirement: There exists an integer $N^* \geq 1$ such that $\theta_0$ can be written as
\begin{equation}
\label{eq:SpecialCase}
\begin{aligned}
\theta_0 = t \left(\frac{p}{2}\right)^{N^*}.
\end{aligned}
\end{equation}
%PF 2-23-2016: What is p? While p appears in \eqref{eq:EnergyEstimate} and \eqref{eq:HarnackEnergyEstimates}, its range needs to be recalled in the hypotheses in this proposition.
%CP 2.24.2016: p is defined in Definition 2.1. It never changes in the paper, it is always this value. I added this to the hypotheses of the proposition.
%PF 2-27-2016: OK
Let $0 \leq s < r \leq 1$ and set $R_0=(2+r)R$. We denote
$$
c := \sum_{k=1}^{\infty} \frac{1}{k^2}
$$
and we let
\begin{equation}
\label{eq:JNRadius}
R^2_N := \left((2+r)^2-(r-s)^2\sum_{k=1}^N \frac{1}{ck^2} \right)R^2, \quad  N=1,\ldots,N^*.
\end{equation}
We observe that $(2+s)R < R_N < R_{N-1} \leq (2+r)R$. Let $\{\eta_N\}_{N\in\NN}$ be a sequence of non-negative, smooth cutoff functions as constructed in \eqref{eq:GlobalDefinitionCutOffFunction}, by choosing $R_N$ as in \eqref{eq:JNRadius}. Then,
\eqref{eq:GlobalBoundCutOffFunction} becomes
\begin{align}
\label{eq:BoundCutOffFunction}
\left|\nabla \eta_N\right| \leq \frac{CN^2}{R^2(r-s)^2}.
\end{align}

Let $P_N:=t \left(p/2\right)^N$, for $N=1,\ldots, N^*$, and $\alpha_N=p_N-1$, for all $N=0,\ldots, N^*-1$. We set
\begin{equation}
\label{eq:DefinitionINJohnNirenberg}
\begin{aligned}
I(N) := \left(\int_{\BB_{R_N}(z_0)} |w|^{p_{N}} y^{\beta-1}\,dx \,dy\right)^{1/p_{N}},
\end{aligned}
\end{equation}
From our hypothesis, $w$ satisfies \eqref{eq:EnergyEstimate}, that is,
\begin{equation}
\label{eq:EnergyEstimateJohnNirenberg}
\begin{aligned}
\| \eta w^{(\alpha+1)/2} \|_{L^{p}\left(\HH,y^{\beta-1}\right)}
&\leq
C_0(R,\alpha) \|w^{(\alpha+1)/2}\|_{L^2\left(\supp\eta, y^{\beta-1}\right)},
\end{aligned}
\end{equation}
where
\begin{equation}
\label{eq:EnergyEstimateConstantJohnNirenberg}
\begin{aligned}
C_0(R,\alpha) &:= \left(C|1+\alpha|\right)^{(\xi+1)/p} \left(1+ \|\sqrt{y}\nabla \eta\|_{L^{\infty}(\HH)}^2\right)^{1/p},
\end{aligned}
\end{equation}
and $\xi$ and $C$ are positive constants, independent of $w$, $\alpha$ and $\eta$. We choose $\alpha=\alpha_{N-1}$ and $\eta=\eta_N$ in \eqref{eq:EnergyEstimateJohnNirenberg}, so the definition \eqref{eq:DefinitionINJohnNirenberg}
gives us, for all $N\geq 1$,
\begin{equation}
\label{eq:NewEquation}
\begin{aligned}
I(N) \leq C_1(R,r,s,N) I(N-1),
\end{aligned}
\end{equation}
where
\begin{equation*}
\begin{aligned}
C_1(R,r,s,N):=\left(C|p_{N-1}|\right)^{(\xi+1)/p_{N}}\left(  1+\|\sqrt{y}\nabla\eta_N\|^2_{L^{\infty}(\HH)}\right)^{1/p_{N}}.
\end{aligned}
\end{equation*}
From Lemma \ref{lem:MeasureBalls}, we have $y \leq C R^2$ on $\BB_{R_N}(z_0)$, where $C$ is a positive constant independent of $R$ and $N$. Using the bound
\eqref{eq:BoundCutOffFunction}, we obtain
\begin{equation*}
\begin{aligned}
C_1(R,r,s,N):=\left(C|p_{N-1}|\right)^{(\xi+1)/p_{N}}\left( \frac{CN^4}{R^2(r-s)^4}\right)^{1/p_{N}}.
\end{aligned}
\end{equation*}
By iterating inequality \eqref{eq:NewEquation}, we obtain
\begin{equation}
\label{eq:Eq10}
\begin{aligned}
I(N^*) &\leq C_2(R,r,s) I(0),
\end{aligned}
\end{equation}
where
\begin{equation}
\label{eq:DefnCRrs}
C_2(R,r,s) := \prod_{N=1}^{N^*} \left(C p_{N-1}^{\xi+1} N^4 R^{-2}(r-s)^{-4}\right)^{1/p_N}.
\end{equation}
Next, we prove the

\begin{claim}
\label{claim:CoeffIteration}
There are positive constants $Q$ and $\gamma$, independent of $N^*,R,r$ and $s$, such that
\begin{equation}
\label{eq:CoeffIteration}
\begin{aligned}
C_2(R,r,s) \leq \left(Q(r-s)^{\gamma}\right)^{1/\theta_0-1/t} R^{\frac{4}{p-2}\left(1/\theta_0-1/t\right)}.
\end{aligned}
\end{equation}
\end{claim}

\begin{proof}[Proof of Claim \ref{claim:CoeffIteration}]
We can rewrite the expression \eqref{eq:DefnCRrs} for $C_2(R,r,s)$ to obtain
\begin{align}
\label{eq:CoeffIteration1}
C_2(R,r,s)
&\leq \left(C t^{\xi+1} R^{-2}(r-s)^{-4}\right)^{\sum_{N=1}^{N^*} 1/p_N}  \left(C \frac{p}{2}\right)^{\sum_{N=1}^{N^*}N/p_N},
\end{align}
where we used in the last line that $N^4 \leq C (p/2)^N$, for some positive constant $C=C(p)$. Equation \eqref{eq:SpecialCase} leads to the identities
\[
\sum_{N=1}^{N^*} \frac{1}{p_N} = \frac{2}{p-2}\left(\frac{1}{t} - \frac{1}{\theta_0}\right)
\quad\hbox{and}\quad
 \sum_{N=1}^{N^*}  \frac{N}{p_N} = \frac{4}{p(p-2)}\left(\frac{1}{t} - \frac{1}{\theta_0}\right).
\]
Therefore, inequality \eqref{eq:CoeffIteration} becomes
\begin{equation}
\label{eq:CoeffIteration2}
\begin{aligned}
C_2(R,r,s)
&\leq \left( R^{-2}(r-s)^{-4}\right)^{\frac{2}{p-2}\left(\frac{1}{t} - \frac{1}{\theta_0}\right)}
 \left(C \theta_0^{\xi+1} \frac{p}{2}\right)^{\frac{4}{p(p-2)}\left(\frac{1}{t} - \frac{1}{\theta_0}\right)},
\end{aligned}
\end{equation}
which is equivalent to \eqref{eq:CoeffIteration} with the choice of the constants
$Q =\left(C \theta_0^{\xi+1}p/2\right)^{-1}$ and $\gamma = 8/(p-2)$. This completes the proof of Claim \ref{claim:CoeffIteration}.
\end{proof}
%PF 2-23-2016: What fact? Reference? We assumed p obeys 2 < p < \infty?
%CP 2.24.2016: Added reference
%PF 2-27-2016: OK
From identity \eqref{eq:Defnp}, we have that $4/(p-2)=2(n+\beta-1)$, and so Lemma \ref{lem:MeasureBalls}
%PF 2.23.2016 added
%CP 2.24.2016: Ok
(with constant $c_0 = c_0(n,\beta) > 1$) yields
\begin{align*}
\frac{|\BB_{(2+s)R}(z_0)|_{\beta-1}^{1/\theta_0}}{|\BB_{(2+r)R}(z_0)|_{\beta-1}^{1/t}}
%PF 2.23.2016 added
%CP 2.24.2016: Ok
&\geq
\frac{c_0^{-1/\theta_0} ((2+s)R)^{2(n+\beta-1)/\theta_0} } {c_0^{1/t} ((2+r)R)^{2(n+\beta-1)/t} }
\\
%PF 2.23.2016 added
%CP 2.24.2016: Ok
&=
c_0^{-1/\theta_0-1/t} \left(\frac{2+s}{2+r}\right)^{2(n+\beta-1)(1/\theta_0-1/t)} R^{2(n+\beta-1)(1/\theta_0-1/t)}
\\
%PF 2.23.2016 corrected 4/(p-2) --> (4/(p-2))
%CP 2.24.2016: Ok
&\geq C^{1/\theta_0+1/t} R^{(4/(p-2))(1/\theta_0-1/t)},
\end{align*}
for a positive constant $C = 1/c_0 < 1$ and recalling that $0 \leq s < r \leq 1$ and $\theta_0 > t$ by \eqref{eq:SpecialCase}.
%PF 2.23.2016 maybe "by \eqref{eq:SpecialCase}" should be replaced by
%"by hypothesis of Proposition \ref{prop:ApplicationAbstractJohnNirenberg}" after adding t
%CP 2.24.2015: Both are correct, but \eqref{eq:SpecialCase} says
%exactly what from the hypotheses of the proposition is used. Please
%choose whichever you prefer.
%PF 2-25-2016: It is OK now
%CP 2.25.2016: Ok
Therefore, inequality
%PF 2.23.2016 corrected reference
%CP 2.24.2016: Ok
%\eqref{eq:CoeffIteration2}
\eqref{eq:CoeffIteration}
becomes
\begin{equation}
\label{eq:CoeffIteration3}
\begin{aligned}
C_2(R,r,s)
&\leq
C^{-1/\theta_0-1/t} \left(Q(r-s)^{\gamma}\right)^{1/\theta_0 -1/t}  \frac{|\BB_{(2+s)R}(z_0)|_{\beta-1}^{1/\theta_0}}{|\BB_{(2+r)R}(z_0)|_{\beta-1}^{1/t}}.
\end{aligned}
\end{equation}
From
%PF 2.23.2016 changed
%CP 2.24.2016: Ok
the hypothesis of Proposition \ref{prop:ApplicationAbstractJohnNirenberg} that
%our hypothesis,
$t \leq t^*\leq\theta_0/2$, we have
\[
3(1/\theta_0-1/t) \leq -1/\theta_0-1/t \leq 1/\theta_0-1/t,
\]
and so, for a new positive constant $Q$, the inequality \eqref{eq:CoeffIteration3} leads to
\begin{equation}
\label{eq:CoeffIteration4}
\begin{aligned}
C_2(R,r,s)
&\leq
\left(Q(r-s)^{\gamma}\right)^{1/\theta_0 -1/t}  \frac{|\BB_{(2+s)R}(z_0)|_{\beta-1}^{1/\theta_0}}{|\BB_{(2+r)R}(z_0)|_{\beta-1}^{1/t}}.
\end{aligned}
\end{equation}
By employing the inequalities \eqref{eq:CoeffIteration4} and \eqref{eq:Eq10} and the definition \eqref{eq:DefinitionINJohnNirenberg} of $I(N)$, we obtain
\begin{equation*}
\begin{aligned}
&\left(\int_{\BB_{(2+s)R}(z_0)} |w|^{\theta_0} y^{\beta-1} \,dx\,dy\right)^{1/\theta_0}\\
&\qquad\qquad \leq
  \left(Q(r-s)^{\gamma}\right)^{1/\theta_0 -1/t}  \frac{|\BB_{(2+s)R}(z_0)|_{\beta-1}^{1/\theta_0}}{|\BB_{(2+r)R}(z_0)|_{\beta-1}^{1/t}}
  \left(\int_{\BB_{(2+r)R}(z_0)} |w|^{t} y^{\beta-1} \,dx\,dy\right)^{1/t},
\end{aligned}
\end{equation*}
from which we readily obtain the first inequality in \eqref{eq:AbstractJohnNirenberg1}, in the \emph{special case} where $t$ and $\theta_0$ satisfy \eqref{eq:SpecialCase} for some integer $N^*\geq 1$.

Next, we show that the first inequality in \eqref{eq:AbstractJohnNirenberg1} holds for \emph{any} $t \in (0,t^*)$. For this purpose, we choose an integer $N^*\geq 1$ such that
\[
t \left(\frac{p}{2}\right)^{N^*-1} < \theta_0 < t \left(\frac{p}{2}\right)^{N^*}.
\]
We denote $\theta^*_0 = t \left(p/2\right)^{N^*}$ and we apply the previous analysis to $t$ and $\theta_0^*$, which now satisfy \eqref{eq:SpecialCase}, to give
\begin{equation*}
\begin{aligned}
|w|_{\theta^*_0,s} &\leq \left(Q(r-s)^{\gamma}\right)^{1/\theta^*_0-1/t} |w|_{t,r}.
\end{aligned}
\end{equation*}
Using H\"older's inequality with $p=\theta^*_0/\theta_0>1$, we find that
\[
|w|_{\theta_0,s} \leq |w|_{\theta^*_0,s},
\]
and so
\begin{equation*}
\begin{aligned}
|w|_{\theta_0,s}
&\leq \left(Q(r-s)^{\gamma}\right)^{1/\theta^*_0-1/t} |w|_{t,r}\\
&\leq \left(Q(r-s)^{\gamma}\right)^{\frac{1/\theta^*_0-1/t}{1/\theta_0-1/t} \left(1/\theta_0-1/t\right)} |w|_{t,r}.
\end{aligned}
\end{equation*}
Notice that $2\theta^*_0/p \leq \theta_0 \leq \theta^*_0$ and $0 < t < \theta_0/2$. Then,
\begin{align*}
1 \leq \frac{1/\theta^*_0-1/t}{1/\theta_0-1/t}
  \leq \frac{1/\theta^*_0 - 1/t}{p/2\theta^*_0-1/t}
  \leq \frac{(2/p)^{N^*}-1}{(2/p)^{N^*+1}-1}
  \leq \frac{p}{p-2}.
\end{align*}
Consequently, we define $\widetilde{Q}$ to be $Q^{p/(p-2)}$ if $Q<1$, and we leave $Q$ unchanged if $Q \geq 1$ and, setting $\widetilde{\gamma}:=\gamma p/(p-2)$, the preceding estimate for $|w|_{\theta_0,s}$ becomes
\begin{equation*}
\begin{aligned}
|w|_{\theta_0,s} &\leq \left(\widetilde{Q}(r-s)^{\widetilde{\gamma}}\right)^{1/\theta_0-1/t} |w|_{t,r},
\end{aligned}
\end{equation*}
which is precisely the first inequality in \eqref{eq:AbstractJohnNirenberg1}.
\end{proof}

\section{Supremum estimates near the degenerate boundary}
\label{sec:SupremumEstimates}
In this section, we prove Theorems \ref{thm:MainSupremumEstimatesInterior} and \ref{thm:MainSupremumEstimatesBoundary}
and Corollary \ref{cor:MainSupremumEstimatesBoundary}, that is, local boundedness up to $\bar\Gamma_0$ for
subsolutions (respectively, supersolutions), $u$, to the variational equation \eqref{eq:IntroHestonWeakMixedProblemHomogeneous}. Our choice of test functions when applying Moser iteration follows that employed in the proof of \cite[Theorem 8.15]{GilbargTrudinger}. However, the choice of test functions used in the proof of the classical local supremum estimates \cite[Theorem 8.17]{GilbargTrudinger} is not suitable in our case because the test functions in \eqref{eq:IntroHestonWeakMixedProblemHomogeneous} are not required to satisfy a homogeneous Dirichlet boundary condition along $\bar\Gamma_0$. In addition, the method of deriving the energy estimate \eqref{eq:SupEstEnergyEst} is slightly different from \cite[Theorem 8.18]{GilbargTrudinger} because, instead of using the classical Sobolev inequalities \cite[Theorem 7.10]{GilbargTrudinger}, we use Lemma \ref{lem:AnalogSobolev}.

We begin with
%CP 1.11.2016: There should be no "the", is this right?
%PF 2-27-2016: Maybe better included I think :-)
the

\begin{lem}
\label{lem:RegularDomain}
Let $K$ be a finite, right circular cone and $\sO$ be an open subset which obeys the uniform interior and exterior cone condition on $\bar \Gamma_0 \cap \bar \Gamma_1 $ with cone $K$. Then, there are positive constants $\bar R$ and $c$ depending on $K$, $n$ and $\beta$ such that, for all $R \in (0,\bar R]$, we have
\begin{equation}
\label{eq:DomainCondition_interior}
c^{-1} |\BB_R(z_0)|_{\beta-1} \leq |B_R(z_0)|_{\beta-1} \leq c |\BB_R(z_0)|_{\beta-1},
\quad\forall\, z_0 \in \bar \Gamma_0,
\end{equation}
and also
\begin{equation}
\label{eq:DomainCondition_exterior}
c^{-1} |\BB_R(z_0)|_{\beta-1} \leq |\BB_R(z_0)\backslash B_R(z_0)|_{\beta-1} \leq c |\BB_R(z_0)|_{\beta-1},
\quad \forall\, z_0 \in \bar \Gamma_0 \cap \bar \Gamma_1.
\end{equation}
\end{lem}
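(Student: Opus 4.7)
The plan is to reduce everything to a single geometric estimate comparing the $y^{\beta-1}$-measure of a cone to that of a Koch ball at the same vertex on $\partial\HH$, and then to apply it twice using the interior and exterior cone hypotheses. The upper bounds in \eqref{eq:DomainCondition_interior} and \eqref{eq:DomainCondition_exterior} are immediate from $B_R(z_0) \subseteq \BB_R(z_0)$ and $\BB_R(z_0) \setminus B_R(z_0) \subseteq \BB_R(z_0)$ together with the monotonicity of $|\cdot|_{\beta-1}$, so one may take $c = 1$ for those halves. The two lower bounds share a common core.

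The auxiliary estimate I would isolate first reads: \emph{for any finite right circular cone $K_0 \subset \bar\HH$ congruent to $K$ with vertex $z_0 \in \partial\HH$, there exist positive constants $R_0 = R_0(K)$ and $c_0 = c_0(K,n,\beta)$ such that}
\[
|K_0 \cap \BB_R(z_0)|_{\beta-1} \geq c_0\, |\BB_R(z_0)|_{\beta-1}, \qquad 0 < R \leq R_0.
\]
To prove it, I would apply Lemma \ref{lem:MeasureBalls} with $\beta-1$ in place of $\beta$ and $y_0 = 0$ to obtain $|\BB_R(z_0)|_{\beta-1} \asymp R^{2(n+\beta-1)}$. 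For the intersection, I would translate $z_0$ to the origin, use the inclusion $\EE_{R^2/2000}(z_0) \subseteq \BB_R(z_0)$ from \eqref{eq:InclusionBalls}, and integrate $y^{\beta-1}$ explicitly over the angular sector $K_0 \cap \EE_{R^2/2000}(0)$, arriving at a lower bound of the same order $R^{2(n+\beta-1)}$ with constant depending only on the aperture and height of $K$. The threshold $R_0$ is chosen so that the Koch-ball extent $\asymp R^2$ stays below the height of $K_0$; uniformity of $c_0$ across admissible axis orientations of $K_0$ in $\bar\HH$ follows from the compactness of the set of such orientations.

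The lower bounds then unfold quickly. At any $z_0 \in \bar\Gamma_0 \cap \bar\Gamma_1$, apply the auxiliary estimate with $K_0$ equal to the interior cone $K^{\mathrm{int}}_{z_0}$ supplied by the uniform interior cone condition: since $K^{\mathrm{int}}_{z_0} \setminus \{z_0\} \subseteq \sO$, the set $K^{\mathrm{int}}_{z_0} \cap \BB_R(z_0)$ lies in $B_R(z_0)$ up to a null set, which yields \eqref{eq:DomainCondition_interior}. Applying the estimate instead to the exterior cone $K^{\mathrm{ext}}_{z_0} \setminus \{z_0\} \subseteq \bar\HH \setminus \bar\sO$ yields \eqref{eq:DomainCondition_exterior}. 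For the remaining case of \eqref{eq:DomainCondition_interior} at $z_0 \in \Gamma_0 \setminus \bar\Gamma_1$, where no cone is postulated at $z_0$, I would split on $\rho := d(z_0, \bar\Gamma_1)$ in the Koch metric: if $\rho > R$ then $\Gamma_1 \cap \BB_R(z_0) = \emptyset$, so $\sO \cap \BB_R(z_0)$ is both relatively open and relatively closed in the connected open set $\BB_R(z_0) \subset \HH$; since $z_0 \in \partial\sO$ forces $\sO \cap \BB_R(z_0) \neq \emptyset$ via $\EE_{R^2/2000}(z_0) \subseteq \BB_R(z_0)$, we conclude $B_R(z_0) = \BB_R(z_0)$; if instead $\rho \leq R$, I would pick $z_1 \in \bar\Gamma_0 \cap \bar\Gamma_1$ with $d(z_0, z_1)$ of order $R$, apply the corner case to a slightly enlarged Koch ball at $z_1$, and transfer the resulting bound to $\BB_R(z_0)$ using the quasi-triangle inequality for $d$ and the doubling property of $y^{\beta-1}\,dx\,dy$ from \cite[Corollary 4.3.4]{Koch}.

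The main obstacle is the auxiliary cone-in-ball estimate itself. When $\beta < 1$ the weight $y^{\beta-1}$ is singular along $\partial\HH$, so a large share of $|\BB_R(z_0)|_{\beta-1}$ concentrates in a thin layer near $\{y=0\}$; one must verify that a cone with vertex exactly on $\partial\HH$ still captures a fixed positive fraction of this mass, and that this fraction is uniform over the compact family of axis orientations of $K_0$ admissible inside $\bar\HH$.
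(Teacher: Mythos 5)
Your proposal follows essentially the same route as the paper: the upper bounds are dismissed by monotonicity, the lower bounds reduce to an explicit computation of $\int y^{\beta-1}$ over a planar sector with vertex on $\partial\HH$ inside the Euclidean ball $\EE_{R^2/2000}(z_0)\subseteq\BB_R(z_0)$, this is compared to $|\BB_R(z_0)|_{\beta-1}\asymp R^{2(n+\beta-1)}$ via Lemma \ref{lem:MeasureBalls}, and the interior and exterior cones are used at corner points while interior points of $\Gamma_0$ are handled by a dichotomy on the distance to $\bar\Gamma_0\cap\bar\Gamma_1$. The "main obstacle" you flag for $\beta<1$ is resolved by exactly the sector integral you propose, since $\beta>0$ makes $y^{\beta-1}$ integrable up to $\{y=0\}$; this is what the paper does.

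The one step that does not work as written is the subcase $\rho\leq R$ for $z_0\in\Gamma_0$: you apply the corner estimate on an \emph{enlarged} ball $\BB_{R'}(z_1)\supseteq\BB_R(z_0)$ and then "transfer" the bound by doubling. A density lower bound $|\sO\cap\BB_{R'}(z_1)|_{\beta-1}\geq c\,|\BB_{R'}(z_1)|_{\beta-1}$ on a larger ball does not restrict to the smaller ball $\BB_R(z_0)$ — the guaranteed mass of $\sO$ could sit entirely outside $\BB_R(z_0)$, and doubling only compares the total measures of the two balls, not the portions occupied by $\sO$. What you need is that the \emph{specific} good set produced by the corner case, namely $K^{\mathrm{int}}_{z_1}\cap\BB_{R''}(z_1)$ for a suitably \emph{smaller} $R''$, is contained in $\BB_{R}(z_0)$; equivalently (and this is the paper's move), since $z_0$ lies in the interior cone based at the nearby corner $z_1$, one can place a congruent-up-to-constants cone with vertex at $z_0$ itself inside $\sO$ and rerun the corner computation verbatim at $z_0$. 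With that repair the argument closes; also note that your first subcase ($\rho>R$, hence $B_R(z_0)=\BB_R(z_0)$ by connectedness of the star-shaped Koch ball) is sound.
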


An open subset, $\sO$, which does not satisfy condition \eqref{eq:DomainCondition_interior} can be created along the lines of \cite[Example 4.2.17]{KaratzasShreve1991} (Lebesgue's thorn); see
%CP 1.11.2016: Changed v1 to v2
%PF 2-27-2016: Can we use just one version preprint reference? We're citing v2 and v3 here
%CP 2.28.2016: This example is not in v3, only in v2. This is the only place where we cite v2 and in all other places we cite v3
%PF 3-5-2016: I suggest we just add this example back so we can just reference v3.
\cite[Example A.1]{Feehan_Pop_regularityweaksoln_v2}.

\begin{proof}[Proof of Lemma \ref{lem:RegularDomain}]
The proof of the lemma can be obtained just as in the case of the Euclidean distance function with the aid of Lemma \ref{lem:MeasureBalls}. Complete details are provided in the proof of \cite[Lemma 4.1]{Feehan_Pop_regularityweaksoln_v3}.
\end{proof}

We can now proceed to the

\begin{proof}[Proof of Theorems \ref{thm:MainSupremumEstimatesInterior} and \ref{thm:MainSupremumEstimatesBoundary}]
For the proof of Theorem \ref{thm:MainSupremumEstimatesInterior}, we choose $\bar R < \sqrt{R_0/2}$. For the proof of Theorem \ref{thm:MainSupremumEstimatesBoundary}, we choose $\bar R$ smaller than $\sqrt{R_0/2}$ and than the constant $\bar R$ appearing in the conclusion of
Lemma \ref{lem:RegularDomain}. Notice that  \eqref{eq:SimpleCycloidBallInsideEuclid} shows that $B_{\bar R}(z_0) \subset E_{R_0}(z_0)$.

\begin{step} [Energy estimates]
\label{step:EnergyEstimates}
Let $\alpha \geq 1$ and let $\eta \in C^1_0(\bar{\HH})$ be a non-negative cutoff function with support in $\bar\BB_{2R}(z_0)$,
where $R$ is chosen such that $0<2R<\bar R$. We define
\begin{equation}
\label{eq:DefAandW}
\begin{aligned}
A &:= \|f \|_{L^s(\supp\eta, y^{\beta-1})}.
\end{aligned}
\end{equation}
We will apply the calculations in Steps \ref{step:EnergyEstimates} and \ref{step:MoserIterations} to $w$ defined by
\begin{equation}
\label{eq:Defnw}
w := u^+(u^-)+A.
\end{equation}
For concreteness, we will illustrate our calculations with the choice $w=u^+ + A$ (when u is a subsolution), but they apply equally well to the choice $w=u^- + A$
(when u is a supersolution). Our goal in Step \ref{step:EnergyEstimates} is to prove the following

\begin{claim}[Energy estimate]
\label{claim:SupEstEnergyEst}
There are positive constants $C=C(\Lambda, \nu_0, n, s, \bar R)$, and $\xi=\xi(n, \beta, s)$, such that
\begin{equation}
\label{eq:SupEstEnergyEst}
\begin{aligned}
&\left(\int_{\sO} |\eta w^{\alpha}|^p y^{\beta-1}\,dx\,dy \right)^{1/p}\\
&\qquad
  \leq  (C\alpha)^{\xi+1} \left(\|\sqrt{y}\nabla\eta\|_{L^{\infty}(\HH)}^{2/p} + |\supp\eta|_{\beta-1} ^{1/p-1/2}\right)
  \left(\int_{\supp\eta} w^{2\alpha} y^{\beta-1} \,dx\,dy\right)^{1/2}.
\end{aligned}
\end{equation}
\end{claim}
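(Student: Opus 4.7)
\medskip
\noindent\textbf{Proof plan for Claim \ref{claim:SupEstEnergyEst}.}
The plan is to feed a Moser-type test function into the variational identity \eqref{eq:IntroHestonWeakMixedProblemHomogeneous}, extract a coercive $\int\eta^{2}w^{2\alpha-2}y|\nabla w|^{2}\fw$ term via the ellipticity bound \eqref{eq:HestonModulusEllipticity}, and then apply the weighted Sobolev inequality of Lemma \ref{lem:AnalogSobolev} to pass from $L^{2}$ energy control to the desired $L^{p}$ estimate. The calculation is laid out for $w = u^{+}+A$; the case $w = u^{-}+A$ follows with sign changes. Throughout, I will use that on $\bar\BB_{2R}(z_{0})$ with $2R\leq\bar R$ the weights $\fw$ and $y^{\beta-1}$ are comparable with a ratio depending only on $\bar R$ and the Heston coefficients (not on $z_{0}$), since the Euclidean diameter of $\BB_{2R}(z_{0})$ is uniformly bounded by Lemma \ref{lem:MeasureBalls}, and the exponential factor $e^{-\gamma|x|-\mu y}$ in \eqref{eq:HestonWeight} is comparable on such sets; this lets me transfer freely between the two weights while keeping $C$ independent of $z_{0}$.

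First I would choose the test function $v := \eta^{2}w^{2\alpha-1}$, using a truncation $w_{k}:=w\wedge k$ and a density argument to justify that $v\in H^{1}_{0}(\sO\cup\Gamma_{0},\fw)$; sending $k\to\infty$ at the end is standard, so below I treat $w$ directly. Substituting into $a(u,v)=(f,v)_{L^{2}(\sO,\fw)}$ and using $\nabla u=\nabla w$ on $\{u>0\}$ and $v=0$ on $\{u\leq 0\}\cap\{A=0\}$ (or absorbing the constant-$w$ piece otherwise), the principal symmetric part of \eqref{eq:HestonWithKillingBilinearForm} applied with the ellipticity bound \eqref{eq:HestonModulusEllipticity} yields
\begin{equation*}
\nu_{0}(2\alpha-1)\int_{\sO}\eta^{2}w^{2\alpha-2}y|\nabla w|^{2}\fw\,dx\,dy \;\leq\; |\text{cross term}| + |\text{lower order terms}| + |(f,v)|.
\end{equation*}
The cross term arising from $2\eta\nabla\eta\cdot w^{2\alpha-1}$ is dominated, via Cauchy--Schwarz in $a_{ij}$ and weighted Young's inequality, by $\tfrac{\nu_{0}(2\alpha-1)}{4}\int\eta^{2}w^{2\alpha-2}y|\nabla w|^{2}\fw + C\int y|\nabla\eta|^{2}w^{2\alpha}\fw$, the first piece being absorbed on the left. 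The first-order Heston terms in \eqref{eq:HestonWithKillingBilinearForm} (the $\gamma/2$ drift and the $(a_{1}y+b_{1})u_{x}v$ term) carry a factor of $y$ or $1$, and after another Young's inequality and the elementary bound $y\leq CR^{2}$ on $\BB_{2R}(z_{0})$ from Lemma \ref{lem:MeasureBalls}, they are controlled by $\varepsilon\int\eta^{2}w^{2\alpha-2}y|\nabla w|^{2}\fw + C_{\varepsilon}\alpha\int\eta^{2}w^{2\alpha}\fw$; the zeroth-order $\int ruv\,\fw$ term is harmless since $r\geq 0$ and contributes to a bound of the same form.

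Next I would treat the source term by exploiting that $w\geq A = \|f\|_{L^{s}(\supp\eta,y^{\beta-1})}$:
\begin{equation*}
\Bigl|\int_{\sO}f\eta^{2}w^{2\alpha-1}\fw\,dx\,dy\Bigr| \leq \frac{1}{A}\int_{\supp\eta}|f|\,\eta^{2}w^{2\alpha}\,\fw\,dx\,dy \leq C\|\eta^{2}w^{2\alpha}\|_{L^{s'}(\supp\eta,y^{\beta-1})},
\end{equation*}
where the Hölder conjugate satisfies $s'<p/2$ thanks to the hypothesis $s>n+\beta$ and the identity $p/(p-2)=n+\beta$ from Definition \ref{defn:Defnp}. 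Collecting, and combining with $|\nabla(\eta w^{\alpha})|^{2}\leq 2|\nabla\eta|^{2}w^{2\alpha}+2\alpha^{2}\eta^{2}w^{2\alpha-2}|\nabla w|^{2}$, I obtain the energy bound
\begin{equation*}
\int_{\sO}|\nabla(\eta w^{\alpha})|^{2}y\fw\,dx\,dy \;\leq\; C\alpha^{2}\Bigl(\|\sqrt{y}\nabla\eta\|_{\infty}^{2}\|w^{\alpha}\|_{L^{2}(\supp\eta,y^{\beta-1})}^{2}+\|\eta w^{\alpha}\|_{L^{p}(y^{\beta-1})}^{2}\,|\supp\eta|_{\beta-1}^{1/s'-2/p}\Bigr).
\end{equation*}

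Finally, I would feed this into Lemma \ref{lem:AnalogSobolev} applied to $\eta w^{\alpha}\in L^{2}(\HH,y^{\beta-1})$, which gives $\|\eta w^{\alpha}\|_{L^{p}(y^{\beta-1})}^{p}\leq c\|\eta w^{\alpha}\|_{L^{2}(y^{\beta-1})}^{p-2}\|\nabla(\eta w^{\alpha})\|_{L^{2}(y^{\beta})}^{2}$; bounding $\|\eta w^{\alpha}\|_{L^{2}(y^{\beta-1})}\leq\|w^{\alpha}\|_{L^{2}(\supp\eta,y^{\beta-1})}$ and substituting the energy bound yields an inequality of the form
$
M^{p}\leq C\alpha^{2}\sigma^{2}m^{p}+C\alpha^{2}\nu^{1/s'-2/p}m^{p-2}M^{2},
$
where $M=\|\eta w^{\alpha}\|_{L^{p}(y^{\beta-1})}$, $m=\|w^{\alpha}\|_{L^{2}(\supp\eta,y^{\beta-1})}$, $\sigma=\|\sqrt{y}\nabla\eta\|_{\infty}$, $\nu=|\supp\eta|_{\beta-1}$. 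A weighted Young's inequality with exponents $p/2$ and $p/(p-2)=n+\beta$ absorbs the $M^{2}$ term on the left, gives the second geometric factor in the desired form $\nu^{1/p-1/2}$ after the arithmetic is worked out, and produces a polynomial dependence $(C\alpha)^{\xi+1}$ on $\alpha$ with $\xi$ depending only on $n,\beta,s$.

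The main obstacle is the last step: keeping careful track of the exponents in the weighted Young's inequality so that the final $\alpha$-dependence remains polynomial and the $\nu$-dependence matches the stated $\nu^{1/p-1/2}$, while simultaneously ensuring that all comparison constants between $\fw$ and $y^{\beta-1}$ on $\BB_{2R}(z_{0})$ can be absorbed into a $C$ independent of $z_{0}$ — as noted in the remark following Theorem \ref{thm:MainSupremumEstimates}, this uniform dependence is precisely what necessitates working with the $y^{\beta-1}$ weight on the right-hand side.
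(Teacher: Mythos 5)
Your overall strategy --- a Moser-type test function, the ellipticity bound \eqref{eq:HestonModulusEllipticity}, the weighted Sobolev inequality of Lemma \ref{lem:AnalogSobolev}, then absorption --- is the same as the paper's, but there is a genuine gap at the first step. The test function $v=\eta^{2}w^{2\alpha-1}$ with $w=u^{+}+A$ is \emph{not} in $H^1_0(\sO\cup\Gamma_0,\fw)$ whenever $\supp\eta$ meets $\Gamma_1$ and $A>0$: along $\Gamma_1$ one has $u=0$ in the trace sense, hence $w=A$ and $v=\eta^{2}A^{2\alpha-1}\neq 0$ there, so $v$ violates the Dirichlet condition that admissible test functions must satisfy. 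Since Theorem \ref{thm:MainSupremumEstimates} is asserted for all $z_0\in\bar\Gamma_0$, including points of $\bar\Gamma_0\cap\bar\Gamma_1$, you cannot assume $\supp\eta$ avoids $\Gamma_1$. Your parenthetical ``absorbing the constant-$w$ piece otherwise'' is exactly the missing work: the paper's test function $v=G_k(w)\eta^{2}$, with $G_k(t)=\int_0^t|H_k'|^{2}\,ds$ and $H_k\equiv 0$ for $t<A$, is built so that $v$ vanishes wherever $u^{+}=0$, in particular along $\Gamma_1$. Moreover, the correction this forces --- one controls $\eta H_k(w)=\eta(w^{\alpha}-A^{\alpha})$ rather than $\eta w^{\alpha}$, and must add back $\|\eta A^{\alpha}\|_{L^p}\leq A^{\alpha}|\supp\eta|_{\beta-1}^{1/p}$ together with $A^{\alpha}\leq\bigl(|\supp\eta|_{\beta-1}^{-1}\int_{\supp\eta}w^{2\alpha}y^{\beta-1}\bigr)^{1/2}$ --- is precisely the origin of the factor $|\supp\eta|_{\beta-1}^{1/p-1/2}$ in \eqref{eq:SupEstEnergyEst}. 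Your proposal instead attributes that factor to the source-term H\"older step, but if you carry out the weighted Young absorption you describe, that step contributes $|\supp\eta|_{\beta-1}$ to the \emph{positive} power $(1/s^{*}-2/p)/(p-2)$, not the negative power $1/p-1/2$; the deferred arithmetic does not produce the stated factor from that source.

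Two secondary points. First, the truncation needed to make $w^{2\alpha-1}$ an admissible $H^1$ function for large $\alpha$ is not dismissible as standard here: the paper takes $H_k$ \emph{linear} beyond level $k$ (so $|H_k'|\leq\alpha k^{\alpha-1}$ globally), and it needs the extension operator of Lemma \ref{lem:Extension} together with Lemma \ref{lem:AnalogSobolev} merely to verify that the $L^{2s^{*}}$ and $L^{p}$ norms appearing in the absorption step are a priori finite before they may be moved to the left-hand side. Second, your claim that $\fw$ and $y^{\beta-1}$ are comparable on $\bar\BB_{2R}(z_0)$ with constants independent of $z_0$ is false as stated --- the ratio is comparable to $e^{-\gamma|x_0|}$ --- but it is harmless where you use it, because the intermediate energy inequality (the analogue of \eqref{eq:SupEst1}) is homogeneous of degree one in the weight, so the factor $e^{-\gamma|x_0|}$ cancels; the change of weight must be made at that homogeneous stage and not at the level of the final, inhomogeneous estimate.
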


\begin{proof}[Proof of Claim \ref{claim:SupEstEnergyEst}]
We fix $k\in \NN$. As in the proof of \cite[Theorem 8.15]{GilbargTrudinger}, we consider the functions $H_k :\RR\rightarrow[0,\infty)$,
\begin{equation}
\label{eq:DefinitionOfH}
\begin{aligned}
H_k(t) &:=
\begin{cases}
0, & t <A,\\
t^{\alpha}-A^{\alpha}, & A\leq t\leq k,\\
\alpha k^{\alpha-1}(t-k)+H_k(k), & t > k.
\end{cases}
\end{aligned}
\end{equation}
and
\begin{equation}
\label{eq:DefinitionOfG}
G_k(t) = \int_0^{t} |H'_k(s)|^2 ds.
\end{equation}
Then,
\begin{equation}
\label{eq:TestFunctionSupEst}
v=G_k(w)\eta^2
\end{equation}
is a valid test function in $H^1_0(\sO\cup\Gamma_0,\fw)$ in
%CP 1.23.2016: Replaced with the more general bilinear form
%\eqref{eq:HestonWithKillingBilinearForm}
%PF 2-27-2016: OK
\eqref{eq:Operator_A_bilinear_form}
by \cite[Lemma A.1]{Feehan_Pop_regularityweaksoln_v3}. Using the strict ellipticity of the operator $y^{-1}A$, together with the fact that $\nabla v = G'_k(w)\eta^2 \nabla w + 2 G_k(w) \eta \nabla \eta$ and $G_k(w)=0$ when $w \leq A$, we obtain as in the proof of \cite[Theorem 8.15]{GilbargTrudinger} that there is a positive constant, $C=C(\Lambda, n, \nu_0, \bar R)$, such that
\begin{equation}
\label{eq:SupEst1}
\begin{aligned}
\int_{\sO} |\nabla w|^2 \eta^2 G'_k(w) y^{\beta}\,dx\,dy
&\leq C \left[\int_{\sO}\eta^2\frac{|f|}{A} w^2 G'_k(w) y^{\beta-1}\,dx\,dy \right.\\
&\qquad  \left. + \int_{\sO}\left(\eta^2 +y|\nabla\eta|^2\right) w^2 G'_k(w) y^{\beta-1}\,dx\,dy  \right].
\end{aligned}
\end{equation}
H\"older's inequality applied to the conjugate pair $(s,s^*)$ gives
\begin{align*}
&\int_{\sO}\eta^2\frac{|f|}{A} w^2 G'_k(w) y^{\beta-1}\,dx\,dy \\
&\quad\leq \left(\int_{\hbox{supp }\eta}\frac{|f|^s}{A^s} y^{\beta-1}\,dx\,dy\right)^{1/s}
      \left(\int_{\sO} |\eta^2w^2 G'_k(w)|^{s^*} y^{\beta-1}\,dx\,dy\right)^{1/s^*},
\end{align*}
and thus, by definition \eqref{eq:DefAandW} of $A$,
\begin{equation}
\label{eq:SupEst2}
\begin{aligned}
\int_{\sO}\eta^2\frac{|f|}{A} w^2 G'_k(w) y^{\beta-1}\,dx\,dy
\leq \left(\int_{\sO}|\eta^2 w^2 G'_k(w)|^{s^*} y^{\beta-1}\,dx\,dy\right)^{1/s^*}.
\end{aligned}
\end{equation}
We need to justify first that the right-hand side in \eqref{eq:SupEst2} is finite. First, we notice that the following identities hold
\begin{equation}
\label{eq:SupEstIdentities}
\begin{aligned}
 |\nabla H_k(w)|^2 &=  |\nabla w|^2 |H'_k(w)|^2= |\nabla w|^2 G'_k(w),\\
 |wH'_k(w)|^2      &=  |w|^2G'_k(w),
\end{aligned}
\end{equation}
From the hypothesis $s>n+\beta$ in Theorems \ref{thm:MainSupremumEstimatesInterior} and  \ref{thm:MainSupremumEstimatesBoundary}, we observe that $2<2s^*<p$, so we may apply the interpolation inequality \cite[Inequality (7.10)]{GilbargTrudinger}. For any $\eps\in(0,1)$, we have
\begin{equation}
\label{eq:SupEstInterpolation}
\|\eta w H'_k(w)\|_{L^{2s^*}(\HH,y^{\beta-1})}
\leq
\eps \|\eta w H'_k(w)\|_{L^{p}(\HH,y^{\beta-1})}
+ \eps^{-\xi}\|\eta w H'_k(w)\|_{L^{2}(\HH,y^{\beta-1})},
\end{equation}
where
\begin{equation}
\label{eq:SupEstInterpolationXi}
\xi \equiv \xi(p,s) := \frac{p(s^*-1)}{p-2s^*}.
\end{equation}
We notice that $|H'_k(w)| \leq \alpha k^{\alpha-1}$ and $\eta w \in H^1(\sO, \fw)$ has compact support in $\bar B_{2R}(z_0)$. Therefore, we may apply Lemma \ref{lem:Extension} to build an extension $\hat w$ of $\eta w$ to a
rectangle $D$ containing $\bar B_{2R}(z_0)$. Lemma \ref{lem:AnalogSobolev}, shows that $\hat w \in L^p(D, y^{\beta-1})$, which implies that
\[
\|\eta w H'_k(w)\|_{L^{p}(\HH,y^{\beta-1})}  < \infty,
\]
and so, the right-hand side of \eqref{eq:SupEst2} is finite.

Inequalities \eqref{eq:SupEst1} and \eqref{eq:SupEst2}, together with the identities \eqref{eq:SupEstIdentities} yield
\begin{equation}
\label{eq:SupEst3}
\begin{aligned}
\int_{\sO} \eta^2 |\nabla H_k(w)|^2 y^{\beta}\,dx\,dy
&\leq
C  \left[\left(\int_{\sO}|\eta w H'_k(w)|^{2s^*} y^{\beta-1}\,dx\,dy\right)^{1/s^*} \right.\\
&\qquad \left. +\int_{\sO}\left(\eta^2+y|\nabla\eta|^2\right)|w H'_k(w)|^2  y^{\beta-1}\,dx\,dy \right].
\end{aligned}
\end{equation}
From Lemma \ref{lem:AnalogSobolev}, we obtain
\begin{equation}
\label{eq:SupEst4}
\begin{aligned}
\int_{\sO} |\eta H_k(w)|^p y^{\beta-1}\,dx\,dy
&\leq \left( \int_{\sO} \eta^2|H_k(w)|^2 y^{\beta-1}\,dx\,dy\right)^{(p-2)/2}
 \int_{\sO} |\nabla(\eta H_k(w))|^2 y^{\beta}\,dx\,dy\\
& \leq 2\left( \int_{\sO} \eta^2|H_k(w)|^2 y^{\beta-1}\,dx\,dy\right)^{(p-2)/2} \\
&\quad \times\left(\int_{\sO} |\nabla\eta|^2 |H_k(w)|^2 y^{\beta}\,dx\,dy +\eta^2 |\nabla H_k(w)|^2 y^{\beta}\,dx\,dy\right).
\end{aligned}
\end{equation}
Using $H_k(w) \leq w H'_k(w)$ and inequality \eqref{eq:SupEst3} in \eqref{eq:SupEst4}, we see that
\begin{equation}
\label{eq:SupEst5}
\begin{aligned}
&\int_{\sO} |\eta H_k(w)|^p y^{\beta-1}\,dx\,dy
\leq C \left[\left(1+\|\sqrt{y}\nabla\eta\|^2_{L^{\infty}(\HH)}\right)\left( \int_{\supp\eta}  |w H'_k(w)|^2 y^{\beta-1}\,dx\,dy\right)^{p/2} \right.\\
&\qquad \left. + \left( \int_{\sO}  |\eta w H'_k(w)|^2 y^{\beta-1}\,dx\,dy\right)^{(p-2)/2}
\left( \int_{\sO} |\eta w H'_k(w)|^{2s^*}  y^{\beta-1}\,dx\,dy\right)^{1/s^*} \right],
\end{aligned}
\end{equation}
where $C=C(\Lambda, n, \nu_0, \bar R)>0$. We rewrite the estimate for $\eta w H'_k(w)$ in \eqref{eq:SupEstInterpolation} in the form
\begin{equation*}
\begin{aligned}
\left(\int_{\sO} |\eta w H'_k(w)|^{2s^*}y^{\beta-1}\,dx\,dy\right)^{1/s^*}
&= \|\eta w H'_k(w)\|^2_{L^{2s^*}(\HH,y^{\beta-1})}\\
&\leq 2\eps^2\|\eta w H'_k(w)\|^2_{L^{p}(\HH,y^{\beta-1})} + 2\eps^{-2\xi}\|\eta w H'_k(w)\|^2_{L^{2}(\HH,y^{\beta-1})}.
\end{aligned}
\end{equation*}
Applying the preceding inequality in \eqref{eq:SupEst5}, we obtain
\begin{equation*}
\begin{aligned}
\|\eta H_k(w)\|^p_{L^p(\HH, y^{\beta-1})}
&\leq
C (1+\eps^{-2\xi})\left(1+\|\sqrt{y}\nabla\eta\|^2_{L^{\infty}(\HH)}\right) \|w H'_k(w)\|^p_{L^2(\supp\eta, y^{\beta-1})}\\
&\quad+ C \eps^2\| \eta w H'_k(w)\|^{p-2}_{L^2(\HH, y^{\beta-1})} \|\eta w H'_k(w)\|^2_{L^{p}(\HH,y^{\beta-1})}.
\end{aligned}
\end{equation*}
To estimate the last term in the preceding inequality, we apply Young's inequality with the conjugate pair of exponents, $\left(p/2, p/(p-2)\right)$, to give
\begin{equation}
\label{eq:SupEst6}
\begin{aligned}
\|\eta H_k(w)\|^p_{L^p(\HH, y^{\beta-1})}
& \leq
C\left(1+(\eps^2+\eps^{-2\xi})\right) \left(1+\|\sqrt{y}\nabla\eta\|^2_{L^{\infty}(\HH)}\right) \| w H'_k(w)\|^p_{L^2(\supp\eta, y^{\beta-1})}\\
& \quad+
C \eps^2  \| \eta w H'_k(w)\|^{p}_{L^p(\HH, y^{\beta-1})},
\end{aligned}
\end{equation}
Employing the definition \eqref{eq:DefinitionOfH} of $H_k(w)$ gives $ 0 \leq w H'_k(w) \leq \alpha H_k(w) + \alpha A^{\alpha}$, and so
\begin{align*}
\int_{\sO} |\eta w H'_k(w)|^p y^{\beta-1}\,dx\,dy
&\leq |2\alpha|^p \left[\int_{\sO} |\eta H_k(w)|^p y^{\beta-1}\,dx\,dy
   + |\supp \eta|_{\beta-1} A^{\alpha p}\right],
\end{align*}
and thus, applying inequality \eqref{eq:SupEst6} yields
\begin{equation*}
\begin{aligned}
\int_{\sO} |\eta  H_k(w)|^p y^{\beta-1}\,dx\,dy
&\leq
C\left(1+\left(\eps^2+\eps^{-2\xi}\right)\right) \left(1+\|\sqrt{y}\nabla\eta\|^2_{L^{\infty}(\HH)}\right) \| wH'_k(w)\|^p_{L^2(\supp\eta, y^{\beta-1})}\\
& \quad+ C|2\alpha|^p \eps^2 \left( \| \eta H_k(w)\|^{p}_{L^p( y^{\HH,\beta-1})} + |\supp \eta|_{\beta-1} A^{\alpha p}\right).
\end{aligned}
\end{equation*}
By choosing $\eps =1/(2\sqrt{C(2\alpha)^p})$ and taking $p$-th order roots, we obtain
\begin{align*}
&\left(\int_{\sO} |\eta H_k(w)|^p y^{\beta-1}\,dx\,dy \right)^{1/p}\\
&\quad\leq  (C\alpha)^{\xi} \left(\left(1+\|\sqrt{y}\nabla\eta\|^2_{L^{\infty}(\HH)}\right)^{1/p}
  \left(\int_{\supp\eta} |wH'_k(w)|^2 y^{\beta-1} dx\,dy\right)^{1/2}
 +|\supp\eta|_{\beta-1}^{1/p} A^{\alpha}\right).
\end{align*}
Because the positive constants $C$ and $\xi$ are independent of $k$, we may take limit as $k$ goes to $\infty$, in the preceding inequality, and we obtain
\begin{equation*}
\begin{aligned}
\left(\int_{\sO} |\eta w^{\alpha}|^p y^{\beta-1}\,dx\,dy \right)^{1/p}
&\leq  (C\alpha)^{\xi+1} \left(\left(1+\|\sqrt{y}\nabla\eta\|^2_{L^{\infty}(\HH)}\right)^{1/p}
  \left(\int_{\supp\eta} |w|^{2\alpha} y^{\beta-1} \,dx\,dy\right)^{1/2}\right.\\
&\quad  +\left.|\supp\eta|_{\beta-1}^{1/p} A^{\alpha}\right),
\end{aligned}
\end{equation*}
We also have
\begin{equation*}
\begin{aligned}
A^{\alpha}
&\leq \left(\frac{1}{|\supp\eta|_{\beta-1}} \int_{\supp\eta} w^{2\alpha} y^{\beta-1}\,dx\,dy\right)^{1/2}.
\end{aligned}
\end{equation*}
Combining the last two inequalities gives \eqref{eq:SupEstEnergyEst}. This completes the proof of Claim \ref{claim:SupEstEnergyEst}.
\end{proof}
This completes Step \ref{step:EnergyEstimates}.
\end{step}

\begin{step} [Moser iteration]
\label{step:MoserIterations}
The purpose of this step is to apply the Moser iteration technique to $w$ in \eqref{eq:Defnw} with a suitable choice of $\alpha \geq 1$ and of a sequence of non-negative cutoff functions, $\{\eta_N\}_{N\geq 1} \subset C^1_0(\bar\HH)$, with support in $\bar\BB_{2R}(z_0)$. We choose $\{\eta_N\}_{N\in\NN}$ as in \eqref{eq:GlobalDefinitionCutOffFunction} with $R_N:=R\left(1+1/(N+1)\right)$. Then, \eqref{eq:RangeCutOffFunction} and \eqref{eq:GlobalBoundCutOffFunction} become
\begin{equation}
\label{eq:SupEstCutOffFunction}
\begin{aligned}
 \eta_N|_{B_{R_N}(z_0)}\equiv 1, \quad \eta_N|_{B_{R_{N-1}}^c(z_0)}\equiv 0, \quad \left|\nabla \eta_N\right| \leq \frac{cN^3}{R^2},
\end{aligned}
\end{equation}
where $c$ is a positive constant independent of $R$ and $N$.
For each $N \geq 0$, we set $p_N:=2(p/2)^N$ and $\alpha_N:=(p/2)^N$. Let $A_N:=\|f\|_{L^s(\supp\eta_N, y^{\beta-1})}$ and $w_N:=u^++A_N$ or $w_N:=u^-+A_N$. Define
\[
I(N) := \left(\int_{B_{R_N}(z_0)} |w_N|^{p_{N}} y^{\beta-1}\,dx\, dy\right)^{1/p_{N}}.
\]
Applying the energy estimate \eqref{eq:SupEstEnergyEst} with $w=w_N$, $\alpha=\alpha_{N-1}$, and $\eta=\eta_N$, we obtain for all $N\geq 1$ that
\begin{equation}
\label{eq:SupEst8}
\begin{aligned}
I(N) \leq C_0(R,N) I(N-1),
\end{aligned}
\end{equation}
where we denote
\begin{equation}
\label{eq:SupEst9}
\begin{aligned}
C_0(R,N):=\left(C|\alpha_{N-1}|\right)^{2(\xi+1)/p_{N-1}}\left(\|\sqrt{y}\nabla\eta_N\|_{L^{\infty}(\HH)}^{2/p} + |\supp\eta_N|_{\beta-1} ^{1/p-1/2}\right)^{2/p_{N-1}},
\end{aligned}
\end{equation}
and $C =C(\Lambda, n, \nu_0, \bar R)$. By applying \eqref{eq:DomainCondition_interior} and \eqref{eq:MeasureBalls}, there is a constant $c>0$ such that
\begin{equation}
\label{eq:BallsEst}
\begin{aligned}
c^{-1} R^{4/(p-2)} \leq |B_{2R}(z_0)|_{\beta-1} \leq c R^{4/(p-2)},\quad\forall\, R \in (0,\bar R],
\end{aligned}
\end{equation}
where we used the fact that $2(n+\beta-1) = 4/(p-2)$ by \eqref{eq:Defnp};
the positive constant $c$ depends only on $n$ and $\beta$ in the case of Theorem \ref{thm:MainSupremumEstimatesInterior}, and on  $n$, $\beta$ and $K$, in the case of Theorem \ref{thm:MainSupremumEstimatesBoundary}. Moreover, by \eqref{eq:SimpleCycloidBallInsideEuclid} we know that $0\leq y \leq 2 R^2$ on $B_R(z_0)$, for all $R\geq 0$. Consequently,
we have
\[
\|\sqrt{y}\nabla\eta_N\|_{L^{\infty}(\HH)}^{2/p} + |\supp\eta_N|_{\beta-1} ^{1/p-1/2} \leq c N^{6/p} R^{-2/p},
\]
and so, using \eqref{eq:BallsEst}, we obtain
\begin{equation*}
\begin{aligned}
\prod_{N \geq 1} C_0(R,N)
&\leq C_1 |B_{2R}(z_0)|_{\beta-1}^{-1/2},
\end{aligned}
\end{equation*}
where $C_1=C_1(\Lambda, n, \nu_0, \bar R, s)$. In the case of Theorem \ref{thm:MainSupremumEstimatesBoundary}, the constant $C_1$ depends in addition on $K$. By iterating \eqref{eq:SupEst8}, we obtain, after using \cite[Theorem 2.8]{Adams_1975},
\begin{equation}
\label{eq:SupEst10}
\begin{aligned}
\esssup_{B_R(z_0)} w = I(+\infty)
&\leq C_1 \left(\frac{1}{|B_{2R}(z_0)|_{\beta-1}}\int_{B_{2R}(z_0)} |w|^2 y^{\beta-1}\,dx \,dy\right)^{1/2}.
\end{aligned}
\end{equation}
Applying \eqref{eq:SupEst10} to $w$ as in \eqref{eq:Defnw} yields
\begin{equation}
\label{eq:MainSupremumEstimates_old}
\esssup_{B_R(z_0)} u^+(u^-)
\leq  C\left(|B_{2R}(z_0)|_{\beta-1}^{-1/2}\|u^+(u^-)\|_{L^2(B_{2R}(z_0),y^{\beta-1})} + \|f\|_{L^s(B_{2R}(z_0),y^{\beta-1})}\right),
\end{equation}
for all $0<R<\bar R/2$, where $C=C(\Lambda, n, \nu_0, \bar R, s)$. In the case of Theorem \ref{thm:MainSupremumEstimatesBoundary}, the constant $C_1$ depends in addition on $K$. This completes Step \ref{step:MoserIterations}.
\end{step}

\begin{step}[Completion of the proof of Theorem \ref{thm:MainSupremumEstimatesInterior}]
\label{step:CompletionSupremumEstimatesInterior}
Recall that we have chosen $\bar R$ so that $R_0 > 2\bar R^2$ (we see by \eqref{eq:SimpleCycloidBallInsideEuclid} that this implies $B_{\bar R}(z_0) \subset E_{R_0}(z_0)$). For any $R>0$, we have by \eqref{eq:EuclidBallInsideCycloid} that $E_R(z_0) \subset B_{\sqrt{R}}(z_0)$. Therefore, using \eqref{eq:EuclidBallInsideCycloid}, \eqref{eq:SimpleCycloidBallInsideEuclid} and \eqref{eq:MainSupremumEstimates_old} we obtain, for all $R>0$ obeying $2\sqrt{R} < \bar R$ or, equivalently, $R < R_0/8$,
\begin{align*}
\esssup_{E_R(z_0)} u^+(u^-)
&\leq  C\left(\|u^+(u^-)\|_{L^2(E_{R_0}(z_0),y^{\beta-1})} + \|f\|_{L^s(E_{R_0}(z_0),y^{\beta-1})}\right),
\end{align*}
where $C=C(\Lambda, n, \nu_0, R_0, s)$. We obtain the desired inequality \eqref{eq:MainSupremumEstimates2} by choosing $R_1 < R_0/8$ and setting $R=R_1$ in the preceding last inequality. This completes Step \ref{step:CompletionSupremumEstimatesInterior} and the proof of Theorem \ref{thm:MainSupremumEstimatesInterior}.
\end{step}

\begin{step}[Completion of the proof of Theorem \ref{thm:MainSupremumEstimatesBoundary}]
\label{step:CompletionSupremumEstimatesBoundary}
The proof of Theorem \ref{thm:MainSupremumEstimatesBoundary} follows
%COMMENT: At the beginning of the proof we choose \bar R, which in the case of the boundary sup estimates depends also on K (from Lemma 4.1). This is where from the dependency on K comes in the case of the boundary sup estimates.
exactly in the same way as the proof of Theorem \ref{thm:MainSupremumEstimatesInterior}, with the only observation that all constants now also depend on the cone, $K$.
(The dependence on $K$ is due to the choice of $\bar R$ via Lemma \ref{lem:RegularDomain} at the start of the proof.) This completes Step \ref{step:CompletionSupremumEstimatesBoundary} and the proof of Theorem \ref{thm:MainSupremumEstimatesBoundary}.
\end{step}
This concludes the proofs of Theorems \ref{thm:MainSupremumEstimatesInterior} and \ref{thm:MainSupremumEstimatesBoundary}.
\end{proof}

We now complete the

\begin{proof}[Proof of Corollary \ref{cor:MainSupremumEstimatesBoundary}]
Theorem \ref{thm:MainSupremumEstimatesBoundary} can be extended to the case of non-zero Dirichlet boundary condition given by a function $g \in H^1(\sO, \fw) \cap L^{\infty}_{\loc}(\bar\Gamma_1)$, in the sense that
$$
u-g \in H^1_0(\sO\cup\Gamma_0,\fw),
$$
with the aide of the following modifications to the proof of Theorem \ref{thm:MainSupremumEstimatesBoundary}. Let
\begin{align*}
M := \esssup_{\Gamma_1\cap B_{2R}(z_0)} g
\quad\hbox{and}\quad
m :=\essinf_{\Gamma_1\cap B_{2R}(z_0)} g,
\end{align*}
and replace the definitions of the functions $u^+$ and $u^-$ (the positive and negative part of the variational
subsolution and supersolution, respectively) by
$$
u^M(z):=(u(z) \vee M)^+\quad\hbox{and}\quad u^m(z):=(u(z) \wedge m)^-\quad\hbox{for a.e. } z \in B_{2R}(z_0).
$$
We also need to redefine the function $H_k$ in \eqref{eq:DefinitionOfH} by
\begin{equation*}
\begin{aligned}
H_k(t) &:=
\begin{cases}
0, & t <A+|M|,\\
t^{\alpha}-(A+|M|)^{\alpha}, & A+|M|\leq t\leq k,
\\
\alpha k^{\alpha-1}(t-k)+H_k(k), & t > k,
\end{cases}
\end{aligned}
\end{equation*}
when we apply Step \ref{step:EnergyEstimates} in the proof of Theorem \ref{thm:MainSupremumEstimatesBoundary} to the function
$w=u^M+A$ (when $u$ is assumed to be a subsolution), and by
\begin{equation*}
\begin{aligned}
H_k(t) &:=
\begin{cases}
0, & t <A+|m|,\\
t^{\alpha}-(A+|m|)^{\alpha}, & A+|m|\leq t\leq k,
\\
\alpha k^{\alpha-1}(t-k)+H_k(k), & t > k,
\end{cases}
\end{aligned}
\end{equation*}
when we apply the same step to $w=u^m+A$
(when $u$ is assumed to be a supersolution). Then, the argument used in the proof of Theorem \ref{thm:MainSupremumEstimatesBoundary} to obtain \eqref{eq:SupEst10} now yields
\begin{equation*}
\begin{aligned}
\esssup_{B_R(z_0)} u^{M}
&\leq  C_1 \left[\left( \frac{1}{|B_{2R}(z_0)|_{\beta-1}}\int_{B_{2R}(z_0)} |u^{M}|^2 y^{\beta-1}\,dx\,dy\right)^{1/2} +\|f\|_{L^s(B_{2R}(z_0), y^{\beta-1})}\right],\\
\esssup_{B_R(z_0)} u^{m}
&\leq  C_1 \left[\left( \frac{1}{|B_{2R}(z_0)|_{\beta-1}}\int_{B_{2R}(z_0)} |u^{m}|^2 y^{\beta-1}\,dx\,dy\right)^{1/2} +\|f\|_{L^s(B_{2R}(z_0), y^{\beta-1})}\right],
\end{aligned}
\end{equation*}
when $u$ is assumed a subsolution and supersolution, respectively.
%CP 1.11.2016: Updated the sentence below.
%PF 2-27-2016: OK
The preceding estimates imply \eqref{eq:MainSupremumEstimates3} and the statement in Remark \ref{rmk:MainSupremumEstimatesBoundary}, just as estimate \eqref{eq:MainSupremumEstimates_old} implies \eqref{eq:MainSupremumEstimates2} in Step
\ref{step:CompletionSupremumEstimatesBoundary} of the proof of Theorem \ref{thm:MainSupremumEstimatesBoundary}.
\end{proof}

\section{H\"older continuity for solutions to the variational equation}
\label{sec:HolderContinuityVariationalEquation}
In this section, we prove Theorems \ref{thm:MainContinuityInterior} and \ref{thm:MainContinuityBoundary} and
Corollaries \ref{cor:MainHolderContinuityBoundary} and \ref{cor:MainContinuity}, that is, local H\"older continuity on a neighborhood of $\bar\Gamma_0$ for solutions $u$ to the variational equation \eqref{eq:IntroHestonWeakMixedProblemHomogeneous}. We
consider separately the case of the interior boundary points $z_0 \in \Gamma_0$ and of the `corner points' $z_0 \in \bar\Gamma_0\cap\bar\Gamma_1$. (While $\bar\Gamma_0\cap\bar\Gamma_1$ is a set of geometric corner points for the
open subset, $\sO$, the lesson of \cite{DaskalHamilton1998} is that the solution, $u$, along $\Gamma_0$ behaves, in many respects, just as it does in the interior of $\sO$.) The proof of the second case, for corner points, is easier
than the proof of the first case as it does not require an application of the John-Nirenberg inequality. The essential difference between the proofs of Theorems \ref{thm:MainContinuityInterior} and \ref{thm:MainContinuityBoundary} and the proof of its classical analogue for variational solutions to non-degenerate elliptic equations \cite[Theorems 8.27 and 8.29]{GilbargTrudinger} consists in a modification of the methods of \cite[\S 8.6, \S 8.9, and \S 8.10]{GilbargTrudinger} when deriving our energy estimates
\eqref{eq:EnergyEstimate}, where we adapt the application of the John-Nirenberg inequality and Poincar\'e inequality to our framework of weighted Sobolev spaces. Moreover, because the balls defined by the Koch metric, $d$, do not
have good scaling properties unless they are centered at a point $z_0 \in \partial\HH$ (see Remark \ref{rmk:ScalingPropertiesKochMetric}), the Moser iteration technique applies only to such balls. Therefore, the estimate
\eqref{eq:MainContinuity2} holds only for points $z_0 \in \partial\HH$, and in order to obtain the full H\"older continuity of solutions \eqref{eq:MainContinuity3}, we need to apply a rescaling argument which is outlined in the
last steps of the arguments below. Therefore, boundary H\"older continuity does not follow in the same way as in \cite{GilbargTrudinger}. We also prove Theorem \ref{thm:StrongMaximumPrinciple}.

We now proceed to the proofs of Theorems \ref{thm:MainContinuityInterior} and \ref{thm:MainContinuityBoundary}, first in \S \ref{subsec:HolderContinuityInteriorDegenBdry} for the case of points $z_0\in \Gamma_0$ and then in \S \ref{subsec:HolderContinuityCornerPointsDegenBdry} for points $z_0\in \bar\Gamma_0\cap\bar\Gamma_1$. The
proofs of Corollaries \ref{cor:MainHolderContinuityBoundary} and \ref{cor:MainContinuity} can be found in \S \ref{subsec:HolderContinuityCornerPointsDegenBdry}.

\subsection{Local H\"older continuity in the interior the degenerate boundary}
\label{subsec:HolderContinuityInteriorDegenBdry}
In this subsection, we prove Theorem \ref{thm:MainContinuityInterior}. Let $z_0 \in \Gamma_0$ and $R_0>0$ be as in the hypotheses of Theorem \ref{thm:MainContinuityInterior}, and let $\bar R$ be small enough such that
\begin{equation}
\label{eq:ChoicebarR}
B_{\bar R}(z_0) \subset E_{R_0}(z_0),
\end{equation}
and for all $z_i=(x_i,y_i)\in B_{\bar R}(z_0)$, $i=1,2$, we have
\begin{equation}
\label{eq:ContinuityAssumption2}
0< y_1 <1, \quad 0< y_2 <1, \quad 0 \leq |z_1-z_2| <1, \quad\hbox{and}\quad 0 \leq d(z_1,z_2) <1.
\end{equation}
For $z_0\in\bar\sO$ and $0<R<\bar R$, we denote
\begin{align}
\label{defn:Esssup_function}
M_R &:= \esssup_{B_R(z_0)} u,
\\
\label{defn:Essinf_function}
m_R &:= \essinf_{B_R(z_0)} u,
\end{align}
and we let
$$
\osc_{B_R(z_0)}u:=M_R-m_R
$$
denote the oscillation of $u$ over the ball $B_R(z_0)$. From Theorem \ref{thm:MainSupremumEstimatesInterior}, we know that $M_R$ and $m_R$ are finite quantities and $\osc_{B_R(z_0)} u$ is well-defined. Before proceeding to the proof of Theorem \ref{thm:MainContinuityInterior}, we first establish the

\begin{thm}[Oscillation estimate]
\label{thm:OscillationEstimate}
There is a positive constant, $C$, depending at most on $\Lambda$, $\nu_0$, $R_0$, $n$, $s$, and a constant $\alpha_0\in (0,1)$, depending at most on $s$, $n$ and $\beta$, such that the following holds. For all $R$ such that $0<4R\leq \bar{R}$, we have
\begin{equation}
\label{eq:MainContinuity2}
\osc_{B_R(z_0)} u \leq C\left(\|f\|_{L^s(E_{R_0}(z_0),y^{\beta-1})} + \|u\|_{L^2(E_{R_0}(z_0), y^{\beta-1})}\right) R^{\alpha_0}.
\end{equation}
\end{thm}

\begin{proof}
We choose
\begin{align}
\label{eq:Choiceq}
q\in (n+\beta,s),
\\
\label{eq:Choicedelta}
\omega \in (0,2(n+\beta-1)/q),
\end{align}
and define $k(R)>0$ by
\begin{equation}
\label{eq:DefinitionK}
\begin{aligned}
k\equiv k(R) &:= \|f\|_{L^q(B_{4R}(z_0),y^{\beta-1})}+\left(|m_{\bar{R}}|+|M_{\bar{R}}|\right) R^{\omega}.
\end{aligned}
\end{equation}
The remaining steps in the proof will apply to either of the following choices of functions $w$ defined on $B_{4R}(z_0)$,
\begin{equation}
\label{eq:Two_choices_for_w}
 w = u - m_{4R} + k(R) \quad\hbox{or}\quad w = M_{4R} - u + k(R),
\end{equation}
but, for concreteness, we choose
\begin{equation}
\label{eq:DefinitionW}
\begin{aligned}
 w = u - m_{4R} + k(R).
\end{aligned}
\end{equation}
If $m_{\bar R}=M_{\bar R}=0$ or $m_{4R}=M_{4R}=0$, then we automatically have $u=0$ on $B_{4R}(z_0)$ and \eqref{eq:MainContinuity2}
holds on $B_{4R}(z_0)$.
%COMMENT: 4R can be replaced by any radius larger than R. But we do energy estimates for balls with radii from R to 2R, and for the John-Nirenberg inequality we move from radii 2R to 3R. But I wanted to make sure that we want have troubles with enlarging the balls, so I defined everything on balls of radius 4R.
Therefore,  without loss of generality, we may assume
\begin{equation}
\label{eq:Assump_max}
m_{4R}\neq 0 \quad\hbox{or}\quad M_{4R} \neq 0,
\end{equation}
and $m_{\bar R} \neq 0$ or $M_{\bar R} \neq 0$. The last assumption implies that
\begin{equation}
\label{eq:Assump_k}
k(R)\neq 0,
\end{equation}
by \eqref{eq:DefinitionK}. Therefore, we notice that both choices of $w$ in \eqref{eq:DefinitionW} are bounded, positive functions.

\setcounter{step}{0}
\begin{step} [Energy estimate for $w$]
\label{step:ContinuityVarEqEnergyEstw}
Let $\eta \in C^1_0(\bar{\HH})$ be a non-negative cutoff function with  $\supp\eta \subseteqq \bar B_{4R}(z_0)$. For any $\alpha \in \RR$ with $\alpha \neq -1$, let
\begin{equation}
\label{eq:Defnv}
v := \eta^2 w^{\alpha}.
\end{equation}
Then, $v$ is a valid test function in $H^1_0(\sO \cup\Gamma_0, \fw)$
by \cite[Lemma A.2]{Feehan_Pop_regularityweaksoln_v3}. Let
\begin{equation}
\label{eq:DefnHw}
H(w) := w^{(\alpha+1)/2},
\end{equation}
and notice that Theorem \ref{thm:MainSupremumEstimatesInterior} implies that $H(w)$ is a positive, bounded function, so the following operations are justified.
The goal in this step is to prove

\begin{claim}[Energy estimate]
\label{claim:InteriorContEnergyEst}
There are positive constants, $C=C(\Lambda, \nu_0, n, \bar R)$ and $\xi=\xi(n, \beta, q)$, such that
\begin{equation}
\label{eq:EnergyEstimate}
\begin{aligned}
\| \eta H(w) \|_{L^{p}\left(\HH,y^{\beta-1}\right)}
&\leq
C_0(R,\alpha) \|H(w)\|_{L^2\left(\supp\eta, y^{\beta-1}\right)},
\end{aligned}
\end{equation}
where the constant $C_0(R,\alpha)$ is defined by
\begin{equation}
\label{eq:EnergyEstimateConstant}
\begin{aligned}
C_0(R,\alpha) := \left(C|1+\alpha|\right)^{(\xi+1)/p} \left(1+ \|\sqrt{y}\nabla \eta\|_{L^{\infty}(\HH)}^2\right)^{1/p},
\end{aligned}
\end{equation}
and the constant $\xi$ is given by
\begin{equation}
\label{eq:EnergyEstimateConstantXiP}
\begin{aligned}
\xi \equiv \xi(p,q) &:= \frac{p(q^*-1)}{p-2q^*},
\end{aligned}
\end{equation}
where $q^*$ is the conjugate exponent for $q$ in \eqref{eq:Choiceq}, that is, $1/q+1/q^*=1$.
\end{claim}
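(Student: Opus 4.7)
The plan is to follow the Moser test-function strategy of Claim \ref{claim:SupEstEnergyEst}, inserting the test function \eqref{eq:Defnv} directly into the variational identity. Because $w$ is bounded above by Theorem \ref{thm:MainSupremumEstimates} and bounded below by $k(R)>0$ in view of \eqref{eq:Assump_k}, no analogue of the truncations $H_k$, $G_k$ used in the proof of Claim \ref{claim:SupEstEnergyEst} is needed; an argument like that of Lemma \ref{lem:TestFunctionSupEst} shows that $v\in H^1_0(\sO\cup\Gamma_0,\fw)$. Since $\nabla w=\pm\nabla u$ and $|\nabla H(w)|^2=\tfrac{(\alpha+1)^2}{4}\,w^{\alpha-1}|\nabla w|^2$, substituting $v$ into $a(u,v)=(f,v)_{L^2(\sO,\fw)}$, invoking the uniform ellipticity \eqref{eq:HestonModulusEllipticity}, and absorbing the cross-term $\int_{\sO}\eta w^\alpha\bigl(u_x\eta_x+\rho\sigma u_x\eta_y+\rho\sigma u_y\eta_x+\sigma^2 u_y\eta_y\bigr)\,y\fw\,dx\,dy$ by Cauchy--Schwarz and Young produce an upper bound on $\tfrac{|\alpha|}{(1+\alpha)^2}\int_{\sO}\eta^2|\nabla H(w)|^2\,y\fw\,dx\,dy$ in terms of $\int_{\sO}(\eta^2+y|\nabla\eta|^2)H(w)^2 y^{\beta-1}\,dx\,dy$, plus analogous contributions from the lower-order parts of $a(u,v)$ and from the source. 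The zeroth- and first-order terms are handled exactly as in the proof of Claim \ref{claim:SupEstEnergyEst} after replacing $u$ by $w$ using $|u|\leq w+|m_{4R}|\leq 2w$ on $B_{4R}(z_0)$, which holds by the choice of $k(R)$ in \eqref{eq:DefinitionK}.

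Next I would handle the source-term contribution using the normalization built into $k(R)$. Since $w\geq k(R)$ pointwise and $\fw\leq y^{\beta-1}$, H\"older's inequality applied to the conjugate pair $(q,q^*)$ yields
\[
\int_{\sO}|f|\,\eta^2 w^\alpha\,\fw\,dx\,dy \leq \int_{\sO}\tfrac{|f|}{k(R)}\,\eta^2 H(w)^2\,y^{\beta-1}\,dx\,dy \leq \|f/k\|_{L^q(\supp\eta,y^{\beta-1})}\,\|\eta H(w)\|_{L^{2q^*}(\HH,y^{\beta-1})}^2,
\]
and the first factor is at most $1$ by \eqref{eq:DefinitionK}. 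The weighted Sobolev inequality (Lemma \ref{lem:AnalogSobolev}) applied to $\eta H(w)$, extended by zero outside $\supp\eta$, combined with $|\nabla(\eta H(w))|^2 \leq 2(H(w)^2|\nabla\eta|^2+\eta^2|\nabla H(w)|^2)$ and the energy bound above then produces, after dividing by the leading coefficient $|\alpha|/(1+\alpha)^2$,
\[
\|\eta H(w)\|_{L^p(\HH,y^{\beta-1})}^p \leq C|1+\alpha|^2\,\|H(w)\|_{L^2(\supp\eta,y^{\beta-1})}^{p-2}\!\left[(1+\|\sqrt{y}\nabla\eta\|_{L^\infty}^2)\|H(w)\|_{L^2(\supp\eta,y^{\beta-1})}^2+\|\eta H(w)\|_{L^{2q^*}(\HH,y^{\beta-1})}^2\right]\!.
\]
Since $q>n+\beta$, Definition \ref{defn:Defnp} implies $2q^*<p$, so the interpolation inequality \cite[Inequality (7.10)]{GilbargTrudinger} splits $\|\eta H(w)\|_{L^{2q^*}}$ into $\eps\|\eta H(w)\|_{L^p}+\eps^{-\xi}\|\eta H(w)\|_{L^2}$ with $\xi$ as in \eqref{eq:EnergyEstimateConstantXiP}. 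Choosing $\eps$ proportional to $|1+\alpha|^{-(\xi+1)/p}$ and absorbing the $L^p$-piece into the left-hand side yields \eqref{eq:EnergyEstimate} with $C_0(R,\alpha)$ of the form \eqref{eq:EnergyEstimateConstant}.

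The main obstacle is the careful sign and factor bookkeeping when $\alpha\in(-\infty,-1)\cup(-1,0)$: in this range the coercive term arising from the principal part of $a(u,v)$ carries the opposite sign and must be moved across the identity before the cross term and lower-order terms can be absorbed, and the two choices of $w$ in \eqref{eq:Two_choices_for_w} produce gradients of opposite sign, so it is the combination $|1+\alpha|$ (entering through $|\nabla H(w)|$) rather than $|\alpha|$ that governs the final coefficient. A secondary technical point is the a priori finiteness of the $L^p$-norm of $\eta H(w)$ needed to justify the absorption step, which follows from the $L^\infty$-bound on $w$ given by Theorem \ref{thm:MainSupremumEstimates} together with an extension argument based on Lemma \ref{lem:Extension}.
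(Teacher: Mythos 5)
Your proposal follows the paper's own proof essentially step for step: the test function $v=\eta^2 w^{\alpha}$, ellipticity plus Cauchy--Schwarz to isolate $\int\eta^2|\nabla H(w)|^2\,y\fw\,dx\,dy$, H\"older with the conjugate pair $(q,q^*)$ together with $w\geq k(R)$ and the normalization built into \eqref{eq:DefinitionK} for the source term, the weighted Sobolev inequality of Lemma \ref{lem:AnalogSobolev}, the interpolation inequality producing $\xi$ as in \eqref{eq:EnergyEstimateConstantXiP}, and Young's inequality with exponents $(p/2,p/(p-2))$ to absorb the $L^p$-piece. The discrepancy between your intermediate factor $C|1+\alpha|^2$ and the paper's $C|1+\alpha|$ is harmless: it only changes the constant inside the bracket of \eqref{eq:EnergyEstimateConstant} and does not affect the convergence of the subsequent Moser iteration.

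The one step that fails as written is the pointwise bound $|u|\leq w+|m_{4R}|\leq 2w$ used to control the zeroth-order term $\int_{\sO} ru\,\eta^2 w^{\alpha}\,\fw\,dx\,dy$. The second inequality requires $|m_{4R}|\leq w$, which is not guaranteed: the definition \eqref{eq:DefinitionK} only gives $k(R)\geq(|m_{\bar R}|+|M_{\bar R}|)R^{\omega}\geq |m_{4R}|R^{\omega}$, hence $|m_{4R}|\leq R^{-\omega}w$, and the resulting factor $R^{-\omega}$ is inadmissible in a constant that must depend only on $\bar R$ (it would spoil the $R$-scaling in the product of the iteration constants and hence the oscillation estimate \eqref{eq:MainContinuity2}). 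The paper's device is to write $ru=rw+r(m_{4R}-k)$, keep the term $rw\cdot w^{\alpha}=rw^{\alpha+1}$ with the other zeroth-order contributions, and fold the constant $r(m_{4R}-k)$ into the source term; there the bound $w\geq k$ and the H\"older step with exponent $q$ produce the volume factor $|\supp\eta|_{\beta-1}^{1/q}\sim R^{2(n+\beta-1)/q}$, which compensates for $|m_{4R}|/k\leq R^{-\omega}$. With that correction your argument coincides with the paper's proof of Claim \ref{claim:InteriorContEnergyEst}.
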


The estimate \eqref{eq:EnergyEstimate} will be used in Moser iteration.

\begin{proof}[Proof of Claim \ref{claim:InteriorContEnergyEst}]
Notice that estimate \eqref{eq:EnergyEstimate} is similar to \eqref{eq:SupEstEnergyEst}. The proofs of the two estimates are also very similar and we only outline the differences.

Substituting the choice \eqref{eq:Defnv} of $v$ in
%CP 1.23.2016: Replaced with the more general bilinear form
%\eqref{eq:HestonWithKillingBilinearForm}
%PF 2-27-2016: OK
\eqref{eq:Operator_A_bilinear_form}, using $\nabla v = \alpha \eta^2 w^{\alpha-1} \nabla w + 2\eta\nabla\eta w^{\alpha}$ together with $\nabla H(w) = \frac{\alpha+1}{2} w^{(\alpha-1)/2} \nabla w$ (see \eqref{eq:DefnHw}) and $w \geq k$ (by \eqref{eq:DefinitionW}), gives
%CP 1.23.2016: In the inequalities below I replaced r (the zeroth order term for the Heston operator) with c (the zeroth order term for the more general operator)
%PF 2-27-2016: OK
\begin{equation}
\label{eq:Eq1}
\begin{aligned}
\int_{\HH} \eta^2 |\nabla H(w)|^2 y^{\beta} \,dx\,dy
&\leq C|1+\alpha|
\left[\int_{\HH} \left(\eta^2+y|\nabla\eta|^2\right) w^{\alpha+1} y^{\beta-1} \,dx\,dy \right.\\
&\qquad \left.+ \int_{\HH} \eta^2 \frac{|f+c(k-m_{4R})|}{k} w^{\alpha+1} y^{\beta-1} \,dx\,dy\right],
\end{aligned}
\end{equation}
where $C=C(\Lambda, \nu_0,\bar R)$. By H\"older's inequality, we have
\begin{equation}
\label{eq:Eq2}
\begin{aligned}
\int_{\HH} \eta^2 \frac{|f+c(k-m_{4R})|}{k} w^{\alpha+1} y^{\beta-1} \,dx\,dy
&\leq
\left(\int_{\supp\eta}\left|\frac{f+c(k-m_{4R})}{k}\right|^{q} y^{\beta-1}\, dx\,dy\right)^{1/q}\\
&\qquad \times
\left(\int_{\HH} \left|\eta w^{(\alpha+1)/2}\right|^{2q^*} y^{\beta-1} \, dx\,dy\right)^{1/q^*}.
\end{aligned}
\end{equation}
From our definition of $k$ in \eqref{eq:DefinitionK}, the choice of $\omega$ in \eqref{eq:Choicedelta} and \eqref{eq:MeasureBalls}, we see that
\begin{equation*}
\left(\int_{\supp\eta}\left|\frac{f+c(k-m_{4R})}{k}\right|^{q} y^{\beta-1} \, dx\,dy\right)^{1/q}
\leq
1+c + \bar R^{2(n+\beta-1)/q-\omega}
\end{equation*}
and so, because $\omega$ was chosen such that $\omega<2(n+\beta-1)/q$ in \eqref{eq:Choicedelta}, there is a positive constant, $C=C(\Lambda, \bar R)$, such that
\begin{equation}
\label{eq:Eq3}
\begin{aligned}
\left(\int_{\supp\eta}\left|\frac{f+c(k-m_{4R})}{k}\right|^{q} y^{\beta-1} \, dx\,dy\right)^{1/q}
&\leq
C.
\end{aligned}
\end{equation}
From inequalities \eqref{eq:Eq1}, \eqref{eq:Eq2} and \eqref{eq:Eq3}, we obtain
\begin{equation}
\label{eq:Eq4}
\begin{aligned}
\int_{\HH} \eta^2 |\nabla H(w)|^2 y^{\beta} \,dx\,dy
&\leq C|1+\alpha|
\left[\int_{\HH} \left(\eta^2+y|\nabla\eta|^2\right) w^{\alpha+1} y^{\beta-1} \,dx\,dy \right.\\
&\qquad \left. + \left(\int_{\HH} \left|\eta w^{(\alpha+1)/2}\right|^{2q^*}y^{\beta-1}\, dx\,dy\right)^{1/q^*} \right],
\end{aligned}
\end{equation}
where $C=C(\Lambda, \nu_0, \bar R)$.

Now, we can follow the argument used in the proof of estimate \eqref{eq:SupEstEnergyEst}. We first apply Lemma \ref{lem:AnalogSobolev} to $\eta H(w)$ which we combine with \eqref{eq:Eq4} to obtain
\begin{equation}
\label{eq:Eq5}
\begin{aligned}
{}&\int_{\HH} |\eta H(w)|^p y^{\beta-1}\,dx\,dy\\
&\leq
C|1+\alpha| \left(1+\|\sqrt{y}\nabla\eta\|^2_{L^{\infty}(\HH)}\right)\left(\int_{\supp\eta}
|H(w)|^2 y^{\beta-1}\,dx\,dy\right)^{p/2}\\
&\quad +
C|1+\alpha| \left( \int_{\HH} \eta^2|H(w)|^2 y^{\beta-1}\,dx\,dy\right)^{(p-2)/2}
\left(\int_{\HH} |\eta H(w)|^{2q^*}y^{\beta-1}\,dx\,dy\right)^{1/q^*}.
\end{aligned}
\end{equation}
Next, using the fact that $2<2q^*<p$ (by \eqref{eq:Choiceq}), we apply the interpolation inequality \cite[Inequality (7.10)]{GilbargTrudinger}, for any $\eps>0$, to give
\[
\|\eta H(w)\|_{L^{2q^*}(\HH,y^{\beta-1})} \leq \eps \|\eta H(w)\|_{L^{p}(\HH,y^{\beta-1})} + \eps^{-\xi}\|\eta H(w)\|_{L^{2}(\HH,y^{\beta-1})},
\]
where $\xi$ is given by \eqref{eq:EnergyEstimateConstantXiP}. Applying the preceding inequality in \eqref{eq:Eq5}, we obtain
\begin{equation*}
\begin{aligned}
&\|\eta H(w)\|^p_{L^p(\HH, y^{\beta-1})}\\
&\qquad\leq
C|1+\alpha|\left(1+\eps^{-2\xi}\right) \left(1+\|\sqrt{y}\nabla\eta\|^2_{L^{\infty}(\HH)}\right) \| H(w)\|^p_{L^2(\supp\eta, y^{\beta-1})}\\
&\qquad \quad+
C|1+\alpha| \eps^2 \| \eta H(w)\|^2_{L^p(\HH, y^{\beta-1})} \| \eta H(w)\|^{p-2}_{L^2(\HH, y^{\beta-1})}.
\end{aligned}
\end{equation*}
To bound the last term in the preceding inequality, we apply Young's inequality with the conjugate exponents $\left(p/2, p/(p-2)\right)$. By choosing $\eps = 1/\left( 2C|1+\alpha|\right)^{1/2}$ and taking roots of order $p$, we obtain \eqref{eq:EnergyEstimate} and \eqref{eq:EnergyEstimateConstant}. This concludes the proof of Claim \ref{claim:InteriorContEnergyEst}.
\end{proof}
This concludes Step \ref{step:ContinuityVarEqEnergyEstw}.
\end{step}

\begin{step}[Moser iteration with negative power]
\label{step:ContinuityVarEqEnergyMoserNegPower}
In this step we apply the Moser iteration technique starting with a suitable $\alpha=\alpha_0<-1$ in \eqref{eq:EnergyEstimate} to functions $w$ as in \eqref{eq:Defnw}. Let $\{\eta_N\}_{N\in\NN}$ be the sequence of cutoff functions
considered in Step \ref{step:MoserIterations} in the proof of Theorem
\ref{thm:MainSupremumEstimatesInterior}. Let $\alpha_0<-1$, $p_0:=\alpha_0+1$, $p_N:=p_0(p/2)^N$, where $p$ is as in \eqref{eq:Defnp}, and $\alpha_N+1:=p_N$. We notice
that $p_N \rightarrow -\infty$ as $N$ increases. Set
\[
I(N) := \left(\int_{B_{R_N}(z_0)} |w|^{p_{N}} y^{\beta-1}\,dx\, dy\right)^{1/p_{N}}.
\]
Applying an argument very similar to that in Step \ref{step:MoserIterations} of the proof of Theorem
\ref{thm:MainSupremumEstimatesInterior}, with the aid of \eqref{eq:EnergyEstimate} instead of \eqref{eq:SupEstEnergyEst}, we find that
\begin{equation}
\label{eq:Eq6}
\begin{aligned}
I(N) \geq C_1(R,N) I(N-1),
\end{aligned}
\end{equation}
where $C_1(R,N)$ is given by
\begin{equation}
\label{eq:Eq7}
\begin{aligned}
C_1(R,N)=\left(C|p_{N-1}|N^6\right)^{(\xi+1)/p_{N}} R^{-2/p_{N}},
\end{aligned}
\end{equation}
and $C=C(\Lambda, \nu_0, \bar R)$ is a positive constant, independent of $R$ and $N$. Using \eqref{eq:BallsEst}, we obtain
\[
\prod_{N\geq 1} C_1(R,N) \geq C_2 |B_{2R}(z_0)|_{\beta-1}^{1/|p_0|},
\]
where $C_2=C_2(\Lambda, \nu_0, \bar R, q)$. By iterating \eqref{eq:Eq6}, we obtain $I(-\infty)\geq I(0) \prod_{N\geq 1} C_0(R,N)$, which gives us
\begin{equation}
\label{eq:Eq8}
\begin{aligned}
\essinf_{B_R(z_0)} w = I(-\infty)
&\geq C_2 \left(\frac{1}{|B_{2R}(z_0)|_{\beta-1}}\int_{B_{2R}(z_0)} |w|^{p_0} y^{\beta-1}\,dx\, dy\right)^{1/p_0}.
\end{aligned}
\end{equation}
This concludes Step \ref{step:ContinuityVarEqEnergyMoserNegPower}.
\end{step}

\begin{step}[Application of Theorem \ref{thm:AbstractJohnNirenberg}]
\label{step:LocalHolderInequalityApplicationAbstractJohnNirenberg}
The purpose of this step is to show that we may apply Theorem \ref{thm:AbstractJohnNirenberg} to $w$ with $S_r=B_{(2+r)R}(z_0)$, $0\leq r \leq 1$, and $\theta_0=\theta_1=1$. By Proposition
\ref{prop:ApplicationAbstractJohnNirenberg}, we find that $w$ satisfies the inequalities \eqref{eq:AbstractJohnNirenberg1}, so it remains to show that \eqref{eq:AbstractJohnNirenberg2} holds for $\log w$. For $A$ as defined in
\eqref{eq:AbstractJohnNirenberg2} and $S_r=\BB_{(2+r)R}(z_0) = B_{(2+r)R}(z_0)$,
%PF 2-23-2016 I don't understand the qualifier below -- I assume a typo
%writing $B_{(2+r)R}(z_0)$ in place of $B_{(2+r)R}(z_0)$ for brevity,
%CP 2.24.2016: Yes, a typo. We should remove what was commented.
%PF 2-26-2016: OK
we have by H\"older's inequality that
\begin{equation*}
A
\leq \sup_{0 \leq r \leq 1} \inf_{c \in \RR}\left(\frac{1}{|B_{(2+r)R}(z_0)|_{\beta-1}} \int_{B_{(2+r)R}(z_0)} |\log w-c|^2 y^{\beta-1}\,dx\,dy\right)^{1/2},
\end{equation*}
and so, Corollary \ref{cor:PoincareInequality} gives us
\begin{equation}
\label{eq:BoundForA}
\begin{aligned}
A&\leq \sup_{0 \leq r \leq 1} \left((2+r)R\right)^2 \left(\frac{1}{|B_{(2+r)R}(z_0)|_{\beta}} \int_{B_{(2+r)R}(z_0)} |\nabla\log w|^2 y^{\beta}\,dx\,dy\right)^{1/2}.
\end{aligned}
\end{equation}
Let $\eta \in C^1_0(\bar{\HH})$ be a non-negative cutoff function such that $\eta=1$ on $B_{(2+r)R}(z_0)$, and $\eta=0$ outside $B_{4R}(z_0)$, and $|\nabla \eta| \leq C/R^2$. We choose $v =\eta^2/w$, where $w$ is given by \eqref{eq:Two_choices_for_w}, or \eqref{eq:DefinitionW} for concreteness, and notice that $v \in H^1_0(\sO \cup\Gamma_0, \fw)$, which can be shown by modifying the corresponding argument in the proof of \cite[Theorem 8.18]{GilbargTrudinger}. With this choice of $v$ as a test function in the variational equation
%CP 1.23.2016: Replaced with reference to the more general bilinear form
%\eqref{eq:HestonWithKillingBilinearForm}
%PF 2-27-2016: OK
\eqref{eq:Operator_A_bilinear_form}
satisfied by $u$, using the strict ellipticity of
%PF 2-23.2016 added clarifications because of notational conflict
%CP 2.24.2016: I think that it is better to add reference to
%\eqref{eq:Operator} than to \eqref{eq:A_nondivergence_form}.
%PF 2-25-2016: OK
the operator $y^{-1}A$ defined by
%\eqref{eq:A_nondivergence_form}
\eqref{eq:Operator}
and H\"older's inequality, we see that there is a positive constant $C=C(\Lambda, \nu_0, \bar R)$, such that
\begin{equation}
\label{eq:Eq15}
\int_{\sO} \eta^2 |\nabla \log w|^2 y^{\beta}\,dx\,dy
\leq
C \int_{\sO} (|\nabla \eta|^2 + \eta^2) y^{\beta}\,dx\,dy  + C\int_{\sO} \eta^2 \frac{|f|+|u|}{w} y^{\beta-1}\,dx\,dy.
\end{equation}
From Lemma \ref{lem:MeasureBalls}
and the fact that $|\nabla \eta| \leq C/R^2$, we have
\begin{equation}
\label{eq:Eq16}
\begin{aligned}
\int_{\sO} (|\nabla \eta|^2 + \eta^2) y^{\beta}\,dx\,dy
& \leq C \left((2+r)R\right)^{-4}|B_{(2+r)R}(z_0)|_{\beta}.
\end{aligned}
\end{equation}
Using the definition \eqref{eq:DefinitionK} of $k(R)$ and H\"older's inequality, we obtain
\begin{equation}
\label{eq:L1EstforAbstractJohnNirenberg}
\int_{\sO} \eta^2 \frac{|f|+|u|}{w} y^{\beta-1}\,dx\,dy
\leq C \left(R^{2(n+\beta-1)/q^*}  + R^{2(n+\beta-1)-\omega}\right).
\end{equation}
The condition $q>n+\beta$ implies
\begin{equation}
\label{eq:AuxJNExponent1}
2(n+\beta-1)/q^*-2(n+\beta) > -4,
\end{equation}
since $1/q+1/q^*=1$. Also, because $\omega$ is chosen in $(0,2(n+\beta-1)/q)$ in \eqref{eq:Choicedelta} and $q>n+\beta$ in \eqref{eq:Choiceq}, we see that $\omega \in (0,2)$,
and we obviously have
\begin{equation}
\label{eq:AuxJNExponent2}
-2-\omega>-4.
\end{equation}
Using \eqref{eq:AuxJNExponent1} and \eqref{eq:AuxJNExponent2}, and $0<R\leq \bar R$, we obtain in inequality \eqref{eq:L1EstforAbstractJohnNirenberg} that there is a positive constant
$C=C(\Lambda, \nu_0, \bar R)$, such that
\begin{equation}
\label{eq:Eq17}
\begin{aligned}
\int_{\sO} \eta^2 \frac{|f|+|u|}{w} y^{\beta-1}\,dx\,dy
&\leq C \left((2+r)R\right)^{-4}|B_{(2+r)R}(z_0)|_{\beta}.
\end{aligned}
\end{equation}
In the last inequality, we used Lemma \ref{lem:MeasureBalls}.
By combining equations \eqref{eq:Eq15}, \eqref{eq:Eq16} and \eqref{eq:Eq17}, we obtain
\begin{equation*}
\begin{aligned}
\int_{B_{(2+r)R}(z_0)} |\nabla \log w|^2 y^{\beta}\,dx\,dy
& \leq
C \left((2+r)R\right)^{-4}|B_{(2+r)R}(z_0)|_{\beta}.
\end{aligned}
\end{equation*}
Then, it immediately follows that the right hand side of \eqref{eq:BoundForA} is finite, and so, \eqref{eq:AbstractJohnNirenberg2} holds for $\log w$. This concludes Step \ref{step:LocalHolderInequalityApplicationAbstractJohnNirenberg}.
\end{step}

\begin{step}[Proof of inequality \eqref{eq:MainContinuity2}]
\label{step:Oscillation}
In the previous step we showed that Theorem \ref{thm:AbstractJohnNirenberg} applies to $w$ with $\theta_0=\theta_1=1$. Hence, there is a positive constant $C=C(\Lambda, \nu_0, \bar R)$, independent of $R$ and $w$, such that
\begin{multline}
\label{eq:JohnNirenbergIneqForW}
\left(\frac{1}{|B_{2R}(z_0)|_{\beta-1}}\int_{B_{2R}(z_0)} |w| y^{\beta-1}\,dx \,dy\right)
\\
\leq C
\left(\frac{1}{|B_{2R}(z_0)|_{\beta-1}}\int_{B_{2R}(z_0)} |w|^{-1} y^{\beta-1}\,dx \,dy\right)^{-1}.
\end{multline}
From \eqref{eq:Eq8} and \cite[Theorem 2.8]{Adams_1975}, we obtain
\begin{equation}
\label{eq:Eq9}
\begin{aligned}
\essinf_{B_R(z_0)} w = I(-\infty)
&\geq C \left(\frac{1}{|B_{2R}(z_0)|_{\beta-1}}\int_{B_{2R}(z_0)} |w| y^{\beta-1}\,dx \,dy\right).
\end{aligned}
\end{equation}
We now choose $w=u-m_{4R}+k$ and $w=M_{4R}-u+k$ in \eqref{eq:Eq9}. By adding the following two inequalities
\begin{equation*}
\begin{aligned}
m_R-m_{4R}+k(R)
&\geq \frac{C}{|B_{2R}(z_0)|_{\beta-1}} \int_{B_{2R}(z_0)} (u-m_{4R}) y^{\beta-1}\,dx\,dy,\\
M_{4R}-M_R+k(R)
&\geq \frac{C}{|B_{2R}(z_0)|_{\beta-1}} \int_{B_{2R}(z_0)} (M_{4R}-u) y^{\beta-1}\,dx\,dy,
\end{aligned}
\end{equation*}
we obtain
\[
(M_{4R}-m_{4R}) - (M_R-m_R) + 2k(R) \geq C \left(M_{4R}-m_{4R}\right) .
\]
Without loss of generality, we may assume $C<1$ (if not, we can make $C$ smaller on the right-hand side of the preceding inequality). Therefore, the preceding inequality can be rewritten in the form
\begin{equation}
\label{eq:AlmostHolderCont}
\begin{aligned}
\osc_{B_R(z_0)} u \leq C\osc_{B_{4R}(z_0)} u + 2k(R).
\end{aligned}
\end{equation}
Because $q \in (n+\beta, s)$ by \eqref{eq:Choiceq} and $f \in L^s(B_{\bar{R}}(z_0), \fw)$ for some $s > n+\beta$, by hypothesis in
Theorem \ref{thm:MainContinuityInterior} and the assumption $B_{\bar R}(z_0)\subset E_{R_0}(z_0)$, H\"older's inequality yields
\[
\|f\|_{L^q(B_{4R}(z_0),y^{\beta-1})} \leq C R^{2(n+\beta-1)\frac{s-q}{sq}} \|f\|_{L^s(B_{\bar{R}}(z_0),y^{\beta-1})}.
\]
Let
\[
\nu := \min \left\{\omega, 2(n+\beta-1)\frac{s-q}{sq}\right\}.
\]
Consequently, from \eqref{eq:DefinitionK}, we see that there is a positive constant $C=C(n, \beta)$, such that
\begin{equation}
\label{eq:EstimateK}
\begin{aligned}
k (R) \leq C\left( \|f\|_{L^s(B_{\bar{R}}(z_0),y^{\beta-1})} + |m_{\bar{R}}| + |M_{\bar{R}}|\right) R^{\nu}.
\end{aligned}
\end{equation}
Therefore, by applying \cite[Lemma 8.23]{GilbargTrudinger} to \eqref{eq:AlmostHolderCont} and using the inequality \eqref{eq:EstimateK}, we find that there are positive constants,
$C=C(\Lambda, \nu_0, \bar R, n, s)$ and $\alpha_0=\alpha_0(s,n,\beta) \in (0,1)$, such that
\[
\osc_{B_R(z_0)} u \leq C\left(\|f\|_{L^s(B_{\bar{R}}(z_0),y^{\beta-1})} + \|u\|_{L^{\infty}(B_{\bar{R}}(z_0))}\right) R^{\alpha_0},\quad\forall\, R \in (0,\bar R/4),
\]
Without loss of generality, we may assume that $\bar R \leq R_1$, where $R_1$ is the constant appearing in the conclusion of Theorem \ref{thm:MainSupremumEstimatesInterior}. Then the preceding estimate together with \eqref{eq:MainSupremumEstimates2} gives us \eqref{eq:MainContinuity2}. This concludes Step \ref{step:Oscillation}.
\end{step}
This concludes the proof of Theorem \ref{thm:OscillationEstimate}.
\end{proof}

We can now conclude the

\begin{proof}[Proof of Theorem \ref{thm:MainContinuityInterior}]
Notice that if $z \in B_{\bar R/16}(z_0)$, then $B_{\bar R/16}(z) \subset B_{\bar R/4}(z_0) \subset E_{R_0}(z_0)$ (by \eqref{eq:ChoicebarR}),
and so inequality \eqref{eq:MainContinuity2} applies in the form
\begin{equation}
\label{eq:MainContinuity2_new}
\osc_{B_R(z)} u \leq C\left(\|f\|_{L^s(E_{R_0}(z_0),y^{\beta-1})} + \|u\|_{L^2(E_{R_0}(z_0), y^{\beta-1})}\right) R^{\alpha_0},
\end{equation}
for all $z \in B_{\bar R/16}(z_0)$ and $0<R \leq \bar R/64$.
In the remainder of the proof of Theorem \ref{thm:MainContinuityInterior}, we assume that $R$ obeys
\begin{equation}
\label{eq:ContinuityBoundR}
0<R \leq \bar R/64.
\end{equation}
In particular,
%PF 3-7-2016: Added footnote to clear up possible coordinate/marked point confusion
for any points\footnote{Here, we are using $x_1, x_2 \in \RR^{n-1}$ to denote marked points rather than coordinates.}
%PF 2-23-2016 We should use x^\mu, \mu = 1,...,n-1, y = x^n for coordinates to avoid conflict below. Let's check usage throughout
%CP 2.24.2016: I did not find any conflict. When we denoted the center
%of a ball, we used z_0 with a lower index. This is the reason why I
%continued to use z_i with a lower index. I did the same with x_i and
%y_i. If you check in (5.43), we square y_i, and I think it is better
%in that relation to have lower indices than upper indices. But, if
%you prefer upper indices, please feel free to make this change.
%PF 2-25-2016: I will check again after redoing the Introduction
$(x_1,y_1), (x_1,0), (x_2,0) \in \bar B_R(z_0)$, the estimate \eqref{eq:MainContinuity2_new} gives
\begin{equation}
\label{eq:Eq19}
\begin{aligned}
{}&|u(x_1,y_1)-u(x_1,0)|
\\
&\qquad \leq C\left(\|f\|_{L^s(E_{R_0}(z_0),y^{\beta-1})} + \|u\|_{L^2(E_{R_0}(z_0), y^{\beta-1})}\right) d^{\alpha_0}\left((x_1,y_1),(x_1,0)\right),
\\
{}&|u(x_1,0)-u(x_2,0)|
\\
&\qquad \leq C\left(\|f\|_{L^s(E_{R_0}(z_0),y^{\beta-1})} + \|u\|_{L^2(E_{R_0}(z_0), y^{\beta-1})}\right)
d^{\alpha_0}\left((x_1,0), (x_2,0)\right).
\end{aligned}
\end{equation}
Notice that
%PF 2-23-2016 added
%CP 2.24.2016: Ok
from \eqref{eq:IntroKochDistance}
we have the simple identities,
\begin{equation}
\label{eq:SimpleDistances}
\begin{aligned}
d\left((x_1,y_1),(x_1,0)\right) &= \sqrt{y_1/2},\\
d\left((x_1,0), (x_2,0)\right)  &= \sqrt{|x_1-x_2|},
\end{aligned}
\end{equation}
and so, we can rewrite \eqref{eq:Eq19} in the form
\begin{equation}
\label{eq:Eq19Add}
\begin{aligned}
|u(x_1,y_1)-u(x_1,0)| & \leq C\left(\|f\|_{L^s(E_{R_0}(z_0),y^{\beta-1})} + \|u\|_{L^2(E_{R_0}(z_0), y^{\beta-1})}\right) |y_1|^{\alpha_0/2},\\
|u(x_1,0)-u(x_2,0)| & \leq C\left(\|f\|_{L^s(E_{R_0}(z_0),y^{\beta-1})} + \|u\|_{L^2(E_{R_0}(z_0), y^{\beta-1})}\right) |x_1-x_2|^{\alpha_0/2}.
\end{aligned}
\end{equation}
The proof of inequality \eqref{eq:MainContinuity3} now follows the proofs of
\cite[Corollary I.9.7 and Theorem I.9.8]{DaskalHamilton1998}, but with certain differences which we outline for clarity.

\begin{claim}
\label{claim:Eq22}
There are constants $C=C(\Lambda,n,\nu_0,R_0,s)>0$, and $\alpha=\alpha(\Lambda,n,\nu_0,R_0,s)\in (0,1)$ such that
\begin{equation}
\label{eq:Eq22}
|u(z_1)-u(z_2)| \leq C\left(\|f\|_{L^s(E_{R_0}(z_0),y^{\beta-1})} + \|u\|_{L^2(E_{R_0}(z_0), y^{\beta-1})}\right)d^\alpha(z_1,z_2),
\end{equation}
for all points $z_1, z_2 \in B_{\bar R/16}(z_0)$.
\end{claim}

\begin{proof}
Let $\eps \in (0, 1/8)$ be fixed and consider the following two cases.

\begin{case}[Pairs of points in $B_R(z_0)$ obeying \eqref{eq:ContinuityCase1}]
\label{case:ContinuityCase1}
Let $z_i=(x_i,y_i)\in B_R(z_0)$, for $i=1,2$, be such that
\begin{equation}
\label{eq:ContinuityCase1}
\begin{aligned}
|z_1-z_2| \geq \eps (y_1^2 +y_2^2).
\end{aligned}
\end{equation}
We want to show that \eqref{eq:Eq22} holds, for all points $z_1, z_2 \in B_R(z_0)$ satisfying \eqref{eq:ContinuityCase1}.

From \eqref{eq:ContinuityAssumption2}, we can find a positive constant $C$ such that
\begin{equation}
\label{eq:Eq20}
\begin{aligned}
|x_1-x_2| \leq C d(z_1,z_2).
\end{aligned}
\end{equation}
Using our current assumption \eqref{eq:ContinuityCase1}, in addition to \eqref{eq:ContinuityAssumption2}, we also have
\[
d(z_1,z_2) \geq \eps C y_i^2,\quad i=1,2,
\]
and so, there exists a positive constant $C$, depending on $\eps$, such that
\begin{equation}
\label{eq:Eq20Add}
\begin{aligned}
y_i \leq C d^{1/2}(z_1,z_2), \quad i=1,2.
\end{aligned}
\end{equation}
Denote $z'_i=(x_i,0)$, for $i=1,2$. Applying \eqref{eq:Eq20} and \eqref{eq:Eq20Add} in \eqref{eq:Eq19Add}, we obtain
\begin{equation*}
\begin{aligned}
|u(z_i)-u(z'_i)|  &\leq C\left(\|f\|_{L^s(E_{R_0}(z_0),y^{\beta-1})} + \|u\|_{L^2(E_{R_0}(z_0), y^{\beta-1})}\right) d^{\alpha_0/4}(z_1,z_2),    &i=1,2,\\
|u(z'_1)-u(z'_2)| &\leq C\left(\|f\|_{L^s(E_{R_0}(z_0),y^{\beta-1})} + \|u\|_{L^2(E_{R_0}(z_0), y^{\beta-1})}\right) d^{\alpha_0/2}(z_1,z_2),
\end{aligned}
\end{equation*}
and hence, using \eqref{eq:ContinuityAssumption2},
\begin{align*}
|u(z_1)-u(z_2)| &\leq |u(z_1)-u(z'_1)| + |u(z'_1)-u(z'_2)| + |u(z_2)-u(z'_2)|\\
   &\leq C\left(\|f\|_{L^s(E_{R_0}(z_0),y^{\beta-1})} + \|u\|_{L^2(E_{R_0}(z_0), y^{\beta-1})}\right) d^{\alpha_0/4}(z_1,z_2).
\end{align*}
Therefore, the estimate \eqref{eq:Eq22} holds in the special case $|z_1-z_2| \geq \eps (y_1^2+y_2^2)$.
\end{case}

Now we prove \eqref{eq:Eq22} for pairs of points obeying $|z_1-z_2| < \eps (y_1^2+y_2^2)$.

\begin{case}[Pairs of points in $B_R(z_0)$ obeying \eqref{eq:ContinuityCase2}]
\label{case:ContinuityCase2}
Now we consider points $z_i=(x_i,y_i)\in B_R(z_0)$, for $i=1,2$, such that
\begin{equation}
\label{eq:ContinuityCase2}
\begin{aligned}
|z_1-z_2| < \eps (y_1^2 +y_2^2).
\end{aligned}
\end{equation}
By scaling and using interior H\"older estimates \cite[Theorem 8.22]{GilbargTrudinger}, we show that the estimate \eqref{eq:MainContinuity3} also holds in this case. We proceed by analogy with the proofs of \cite[Theorems I.9.1--4
and Corollary I.9.7]{DaskalHamilton1998}. We may assume without loss of generality that
\begin{equation}
\label{eq:AssumptionCase2}
\begin{aligned}
1>y_2\geq y_1 \quad\hbox{and}\quad x_2=0.
\end{aligned}
\end{equation}
Let $a=y_2$.  We consider the function $v$ defined by rescaling,
\[
u(x,y) =: v(x/a, y/a).
\]
The rescaling $z \mapsto z'=z/a$ maps $\EE_{y_2/2}(z_2)$ into $\EE_{1/2}(z'_2)$. Recall that $\EE_{\rho}(z)$ denotes the Euclidean ball centered at $z$ of radius $\rho$ relative to $\HH$ (see \eqref{eq:Euclidean_balls_relative_to_the_half_space}). From our assumptions \eqref{eq:ContinuityAssumption2}, \eqref{eq:ContinuityCase2} and the choice of $\eps\in (0, 1/8)$, we see that
\begin{equation}
\label{eq:EqAux1}
\begin{aligned}
|z'_1-z'_2| \leq 2\eps y_2 < 1/4,
\end{aligned}
\end{equation}
and so $z'_1\in \EE_{1/4}(z'_2)$. From \cite[Theorem 5.10]{Daskalopoulos_Feehan_statvarineqheston}, we know that $u \in H^2_{\loc}(B_{\bar R}(z_0))$, and so by direct calculation, we conclude that $v(z')$  solves
\[
\tilde A v (z') = a f(az') \quad\hbox{on } \EE_{1/2}(z'_2),
\]
where we define
\[
\begin{aligned}
%CP 1.23.2016: Changed the definition of the operator \tilde A to variable coefficients
%PF 2-27-2016: OK
\tilde A v(z') &:= y' \left(a^{ij}(az')v_{x_ix_j}(z')+2a^{in}(az') v_{x_iy}(z') + a^{nn}(az') v_{yy}(z')\right)
\\
&\quad  +b^i(az') v_{x_i}(z')+ b^n(az') v_y(z')-c(az') v(z').
\end{aligned}
\]
On the ball $\EE_{1/2}(z'_2)$, the operator $\tilde A$ is strictly elliptic with bounded coefficients. For brevity, we denote $f_a(z') := af(a z')$. By \cite[Theorem 8.22]{GilbargTrudinger}, there are positive constants $C$ and $\alpha_1 \in (0,1)$, depending only on $\Lambda$, $n$, $\nu_0$ and $s$, such that
\begin{equation}
\label{eq:Rescaling1}
\begin{aligned}
\osc_{\EE_{R}(z'_2)} v \leq C R^{\alpha_1} \left(\|v\|_{L^{\infty}(\EE_{1/2}(z'_2))} + \|f_a\|_{L^s(\EE_{1/2}(z'_2))}\right), \quad \forall\, R \in (0,1/2],
\end{aligned}
\end{equation}
because $s$ was assumed to satisfy $s>2n$.
%PF 2-23-2016 Presumably not now
%(recall that $n=2$).
%CP 2.24.2016: Ok
We see that
\begin{equation}
\label{eq:Rescaling2}
\begin{aligned}
\|v\|_{L^{\infty}(\EE_{1/2}(z'_2))} = \|u\|_{L^{\infty}(\EE_{y_2/2}(z_2))} \leq \|u\|_{L^{\infty}(B_{\bar{R}}(z_0))},
\end{aligned}
\end{equation}
where we used the fact that $\EE_{y_2/2}(z_2) \subseteqq B_{\bar R}(z_0)$, which in turn follows
from our assumption \eqref{eq:ContinuityBoundR}.
We also have
\begin{align*}
\|f_a\|^s_{L^s(\EE_{1/2}(z'_2))}
= \int_{\EE_{1/2}(z'_2)} |a f(az')|^s \,dx'\,dy'
= \int_{\EE_{y_2/2}(z_2)} |f(z)|^s a^{s-n}\,dx\,dy,
\end{align*}
that is,
\begin{equation}
\label{eq:Rescaling3}
\begin{aligned}
\|f_a\|^s_{L^s(\EE_{1/2}(z'_2))}
= \int_{\EE_{y_2/2}(z_2)} |f(z)|^s a^{s-n}\,dx\,dy.
\end{aligned}
\end{equation}
Using the fact that $y_2/2 \leq y \leq 3 y_2/2$ for all $z=(x,y) \in \EE_{y_2/2}(z_2)$, assumption \eqref{eq:ContinuityAssumption2}, and the fact that $s>n+\beta$ by hypothesis of Theorem \ref{thm:MainContinuityInterior}, the estimate
\eqref{eq:Rescaling3} yields
\begin{equation}
\label{eq:Rescaling4}
\begin{aligned}
\|f_a\|^s_{L^s(\EE_{1/2}(z'_2))} &\leq C\int_{B_{\bar{R}}(z_0)} |f(z)|^s y^{\beta-1}\,dx\,dy,
\end{aligned}
\end{equation}
where $C$ is a positive constant depending only on $\beta$. Applying \eqref{eq:Rescaling2} and \eqref{eq:Rescaling4} in \eqref{eq:Rescaling1} yields
\begin{equation*}
\begin{aligned}
\osc_{\EE_{R}(z'_2)} v \leq C \left(\|u\|_{L^{\infty}(B_{\bar{R}}(z_0))} + \|f\|_{L^s(B_{\bar{R}}(z_0))}\right) R^{\alpha_1}, \quad \forall\, R \in (0,1/2].
\end{aligned}
\end{equation*}
In particular, because $z'_1\in \EE_{1/2}(z'_2)$, we see that
\begin{equation*}
\begin{aligned}
|v(z'_1)-v(z'_2)| \leq C\left(\|u\|_{L^{\infty}(B_{\bar{R}}(z_0))} + \|f\|_{L^s(B_{\bar{R}}(z_0))}\right) |z'_1-z'_2|^{\alpha_1},
\end{aligned}
\end{equation*}
where the positive constant $C$ depends on $\Lambda$, $n$, $\nu_0$ and $s$. By rescaling back, we obtain
\begin{equation}
\label{eq:Eq24}
\begin{aligned}
|u(z_1)-u(z_2)|
&\leq C \left(\|u\|_{L^{\infty}(B_{\bar{R}}(z_0))} + \|f\|_{L^s(B_{\bar{R}}(z_0))}\right) \left(\frac{|z_1-z_2|}{y_2} \right)^{\alpha_1}.
\end{aligned}
\end{equation}
Using \eqref{eq:ContinuityAssumption2} and the fact that $\eps\in(0,1/8)$, we see that
\begin{equation}
\label{eq:RelationshipDistances}
\frac{|z_1-z_2|}{y_2} \leq d^{1/2}(z_1,z_2).
\end{equation}
Consequently, \eqref{eq:Eq24} and \eqref{eq:MainSupremumEstimates2}  give us
\[
|u(z_1)-u(z_2)| \leq C\left(\|f\|_{L^s(E_{R_0}(z_0),y^{\beta-1})} + \|u\|_{L^2(E_{R_0}(z_0), y^{\beta-1})}\right) d^{\alpha_1/2}(z_1,z_2).
\]
This implies estimate \eqref{eq:Eq22} in the special case $|z_1-z_2| < \eps (y_1^2+y_2^2)$.
\end{case}
This completes the proof of Claim \ref{claim:Eq22} by choosing $\alpha:=\min\{\alpha_0/4,\alpha_1/2\}$.
\end{proof}

By choosing $R_1$ smaller than $(\bar R/16)^2$ and than the constant $R_1$ in the conclusion of Theorem \ref{thm:MainSupremumEstimatesInterior}, we see by \eqref{eq:EuclidBallInsideCycloid} that $E_{R_1}(z_0)\subset B_{\bar R/16}(z_0)$, and so estimates \eqref{eq:Eq22} and \eqref{eq:MainSupremumEstimates2} now give us \eqref{eq:MainContinuity3}. This completes the proof of Theorem \ref{thm:MainContinuityInterior}.
\end{proof}

\subsection{H\"older continuity on neighborhoods of the corner points of the degenerate boundary}
\label{subsec:HolderContinuityCornerPointsDegenBdry}
We now have
%CP 1.11.2016: Added
the

\begin{proof} [Proof of Theorem \ref{thm:MainContinuityBoundary}]
Suppose $z_0\in \bar\Gamma_0\cap\bar\Gamma_1$. We let $\bar R$ be as in the proof of Theorem \ref{thm:MainContinuityInterior}, but in addition we require that $\bar R$
be small enough so that the conclusion of Lemma \ref{lem:RegularDomain} holds with the cone, $K$, given
in the hypotheses of Theorem \ref{thm:MainContinuityBoundary}.
From the standard theory of non-degenerate elliptic partial differential equations (for example, \cite[Theorem
8.30]{GilbargTrudinger}), we know that
\begin{equation}
\label{eq:Continuity_u_on_Gamma_1}
u\in C(\bar B_{\bar R}(z_0)\cap \HH)\quad\hbox{and}\quad u=0 \quad\hbox{on } \partial B_{\bar R}(z_0) \cap \Gamma_1.
\end{equation}
Recalling that $u^+ = \max\{u,0\}$ and $u^- = \max\{-u,0\}$ denote the positive and negative parts of $u$, respectively, we have that $u^{\pm} \in C(\bar B_{\bar R}(z_0)\cap \HH)$ and $u^{\pm}=0$ along the portion of the boundary $\partial B_{\bar R}(z_0) \cap \Gamma_1$.

Our goal is first to prove that there are constants $C$, depending only on
$\Lambda$, $\nu_0$, $K$, $n$, $s$, $\bar R$, and $\alpha_0$, depending only on $n$, $s$ and $\beta$, such that
\begin{equation}
\label{eq:RestatementCorners_pos_neg}
\osc_{B_R(z_0)}u^{\pm} \leq C\left(\|f\|_{L^s(E_{R_0}(z_0), y^{\beta-1})} + \|u\|_{L^2(E_{R_0}(z_0), y^{\beta-1})}\right) R^{\alpha_0}, \quad\forall\, R \in (0,\bar R/4],
\end{equation}
which obviously implies that \eqref{eq:MainContinuity2} holds for $u$, for possibly a different constant $C$ with the same dependency as above.

Our proof uses the same method as in the case of points in $\Gamma_0$ but a choice of $w$ which is different from that of \eqref{eq:Defnw}, and a choice of test function $v$ which is different from that of \eqref{eq:Defnv}.
Moreover, we do not need to appeal to the John-Nirenberg inequality. Since $z_0\in \bar\Gamma_0\cap\bar\Gamma_1$, however, it is important to make a distinction between $B_R(z_0)$ and $\BB_R(z_0)$.

We denote
\begin{equation}
\label{eq:Maximum_plus_minus}
M^{\pm}_R : = \esssup_{B_{R}(z_0)} u^{\pm}.
\end{equation}
Let $k\equiv k(R)$ be defined as in \eqref{eq:DefinitionK}. Therefore, we now define $w^{\pm}$ on $\BB_{4R}(z_0)$ by
\begin{equation}
\label{eq:ContCornersW}
\begin{aligned}
w^{\pm}(z) &:= k +
\begin{cases}
-u^{\pm}(z) + M^{\pm}_{4R}, & z \in \BB_{4R}(z_0) \cap B_{4R}(z_0),\\
     +M^{\pm}_{4R}, & z \in \BB_{4R}(z_0)\backslash B_{4R}(z_0).
\end{cases}
\end{aligned}
\end{equation}
As in the case of points in $\Gamma_0$, we may assume without loss of generality that \eqref{eq:Assump_max} and  \eqref{eq:Assump_k} hold. From \eqref{eq:Continuity_u_on_Gamma_1}, we notice that $M_{4R} \geq 0$ and $m_{4R} \leq 0$, and so it follows that $M_{4R}=M^+_{4R}$ and $m_{4R}=-M^-_{4R}$. Therefore, assumption \eqref{eq:Assump_max} becomes
\[
M^+_{4R} \neq 0 \quad\hbox{or}\quad M^-_{4R} \neq 0.
\]
If $M^-_{4R} = 0$, then $u=u^+$ on $B_{4R}(z_0)$, and it suffices to continue the following argument only for $u^+$. The same remark applies to $M^+_{4R}=0$. Thus, we may assume without loss of generality that
\begin{equation}
\label{assump:Non_zero_max_and_min}
M^+_{4R} \neq 0 \quad\hbox{and}\quad M^-_{4R} \neq 0.
\end{equation}
Let $\alpha <-1$, and let $\eta$ be a smooth cutoff function such that $\supp \eta \subseteqq \BB_{4R}(z_0)$. We now define
\begin{equation}
\label{eq:CornersContV}
v^{\pm} := \eta^2 \left(\left(w^{\pm}\right)^{\alpha}-(k+M^{\pm}_{4R})^{\alpha}\right).
\end{equation}
We notice that $v^{\pm}$ is a well-defined function, for any choice of $\alpha\in\RR$, by \eqref{assump:Non_zero_max_and_min} and \eqref{eq:Assump_k},
and $v^{\pm} \in H^1_0(\sO\cup\Gamma_0, \fw)$ is a valid test function in
%CP 1.23.2016: Replaced with the more general bilinear form
%\eqref{eq:HestonWithKillingBilinearForm}
%PF 2-27-2016: OK
\eqref{eq:Operator_A_bilinear_form}
by \cite[Lemma A.3]{Feehan_Pop_regularityweaksoln_v3}.
We observe that the function $w^{\pm}$ obeys
\[
k \leq w^{\pm} \leq k+M^{\pm}_{4R} \quad\hbox{on }\BB_{4R}(z_0),
\]
and, because $\alpha$ is non-positive, we also have
\[
k^{\alpha} \geq \left(w^{\pm}\right)^{\alpha} \geq \left(k+M^{\pm}_{4R}\right)^{\alpha} \quad\hbox{on }\BB_{4R}(z_0).
\]
These inequalities are important in deriving the analogues of the energy estimates in the proof of Theorem \ref{thm:MainContinuityInterior} for points in $\Gamma_0$. Steps \ref{step:ContinuityVarEqEnergyEstw} and \ref{step:ContinuityVarEqEnergyMoserNegPower} in the proof of Theorem \ref{thm:MainContinuityInterior} for points in $\Gamma_0$ apply to our current choice of $w^{\pm}$ for points in
$\bar\Gamma_0\cap\bar\Gamma_1$, with the only exception that we now define $I(N)$ by
\[
I(N) := \left(\int_{\BB_{R_N}(z_0)} |w^{\pm}|^{p_N} y^{\beta-1}\,dx\,dy\right)^{1/P_N}.
\]
Therefore, using the fact that
\begin{equation}
\label{eq:ContinuityDomainCondition}
\begin{aligned}
 |\BB_R(z_0)\backslash B_R(z_0)|_{\beta-1} \neq 0,
\end{aligned}
\end{equation}
we obtain the analogue of \eqref{eq:Eq8},
\begin{equation}
\label{eq:Inequality1_w_plus_minus}
\essinf_{\BB_R} w^{\pm}
\geq C \left(\frac{1}{|\BB_{2R}(z_0)|_{\beta-1}}\int_{\BB_{2R}(z_0)\backslash B_{2R}(z_0)} |w^{\pm}|^{p_0} y^{\beta-1}\,dx \,dy\right)^{1/p_0},
\end{equation}
where $p_0$ is a negative power and $C=C(K, \Lambda, \nu_0, n, s)$. Condition \eqref{eq:ContinuityDomainCondition} is implied by \eqref{eq:DomainCondition_exterior}, which follows from the
exterior cone condition on $\bar \Gamma_0 \cap \bar \Gamma_1$, by \eqref{eq:DomainCondition_exterior}. Notice that \eqref{eq:ContCornersW} implies
\begin{align}
\label{eq:Inequality2_w_plus_minus}
w^{\pm} &= k+M^{\pm}_{4R}\geq M^{\pm}_{4R}\quad\hbox{on}\quad \BB_{2R}(z_0)\backslash B_{2R}(z_0),
\\
\label{eq:Infimum_w_plus_minus}
\essinf_{\BB_R(z_0)} w^{\pm} &= k-M^{\pm}_{R}+M^{\pm}_{4R}.
\end{align}
Using \eqref{eq:Infimum_w_plus_minus} on the left-hand-side of \eqref{eq:Inequality1_w_plus_minus} and \eqref{eq:Inequality2_w_plus_minus} on the right-hand-side of \eqref{eq:Inequality1_w_plus_minus}, we obtain
\begin{equation}
\label{eq:Eq18}
k(R)-M^{\pm}_{R}+M^{\pm}_{4R}
\geq
C M^{\pm}_{4R}.
\end{equation}
Indeed, \eqref{eq:Eq18} follows because $p_0<0$ and
\[
\frac{|\BB_{2R}(z_0)|_{\beta-1}}{|\BB_{2R}(z_0)\backslash B_{2R}(z_0)|_{\beta-1}} \geq 1.
\]
We rewrite \eqref{eq:Eq18}, using $\osc_{B_R(z_0)}u^{\pm} =M^{\pm}_R$, as
\[
\osc_{B_R(z_0)} u^{\pm} \leq C \osc_{B_{4R}(z_0)} u^{\pm} + k(R),
\]
where $C \in (0,1)$ is a constant independent of $R$. Just as in the proof of Theorem \ref{thm:MainContinuityInterior} for the case of points in $\Gamma_0$, we can apply \cite[Lemma 8.23]{GilbargTrudinger} to conclude that \eqref{eq:RestatementCorners_pos_neg} holds for $u^{\pm}$ with positive constants $C=C(K, \Lambda, \nu_0, n, s, \bar R)$, and $\alpha_0=\alpha_0(s,n,\beta)\in (0,1)$, which implies that \eqref{eq:MainContinuity2} holds for $u$, for possibly a different constant $C$ with the same dependencies as before.

To establish \eqref{eq:MainContinuity3}, we proceed as in the proof of Theorem \ref{thm:MainContinuityInterior} for the case of points in $\Gamma_0$. In order to adapt the argument for the case of points in $\Gamma_0$ to points in $\bar{\Gamma}_0 \cap \bar{\Gamma}_1$, we need analogues of the inequalities \eqref{eq:Eq19} to hold in a neighborhood in $\sO$ of $z_0 \in \bar{\Gamma}_0 \cap \bar{\Gamma}_1$. Given these analogues of the inequalities \eqref{eq:Eq19}, we can apply the same argument as used in the proof of Theorem \ref{thm:MainContinuityInterior} for the case of points in $\Gamma_0$, but instead of applying \cite[Theorem 8.22]{GilbargTrudinger}, we now apply \cite[Theorem 8.27]{GilbargTrudinger}. As before, we assume
\eqref{eq:ContinuityBoundR} holds.

Without loss of generality, we may assume
%PF 2-23-2016 As before, let's make sure we use x^\mu for coordinates and z_i, x_i, y_i,  for marked points
%CP 2.24.2016: I did not find any conflict and for the previous reason I prefer to keep this notation.
$z_0=(0,0)$. Let $z_1=(x_1,0)$, $z_2=(x_2,0)$, $z_3=(x,y)$ and $z_4=(x,0)$ be points in $\bar B_{R}(z_0)$.
%CP 2.24.2016: We don't need these inequalities anymore.
%PF 2-25-2016: OK
%We may assume
%PF 2-23-2016 Below needs adjustment for n \geq 2 or we saying that the proof for n \geq 2 is identical to that of n=2?
%$x_2\geq x_1$ and $x, x_1,x_2 \geq 0$.
We claim that the following
analogues of the inequalities \eqref{eq:Eq19} (for points $z_0\in \Gamma_0$) hold for points $z_0 \in \bar{\Gamma}_0 \cap \bar{\Gamma}_1$,
\begin{equation}
\label{eq:ContCornersEq1}
\begin{aligned}
|u(z_1)-u(z_2)| &\leq C_3\left(\|f\|_{L^s(E_{R_0}(z_0), y^{\beta-1})} + \|u\|_{L^2(E_{R_0}(z_0,y^{\beta-1}))}\right) d^{\alpha_3}(z_1,z_2),
\\
|u(z_3)-u(z_4)| &\leq C_3\left(\|f\|_{L^s(E_{R_0}(z_0), y^{\beta-1})} + \|u\|_{L^2(E_{R_0}(z_0,y^{\beta-1}))}\right) d^{\alpha_3}(z_3,z_4),
\end{aligned}
\end{equation}
for some positive constant $C_3$ and $\alpha_3 \in (0,1)$ satisfying the same dependency conditions as in the statement of Theorem \ref{thm:MainContinuityBoundary}. For the \emph{first} inequality in \eqref{eq:ContCornersEq1}, we consider two cases.

\setcounter{case}{0}
\begin{case}[Points $z_1,z_2\in \bar B_{R}(z_0)$ obeying \eqref{eq:FirstEqCase1}]
If
\begin{equation}
\label{eq:FirstEqCase1}
\begin{aligned}
d(z_1,z_2) \geq \frac{1}{8} \max \left\{d(z_1,z_0), d(z_2,z_0)\right\},
\end{aligned}
\end{equation}
then we have
\begin{equation*}
\begin{aligned}
|u(z_1)-u(z_2)| &\leq |u(z_1)-u(z_0)| + |u(z_2)-u(z_0)|
\\
&\leq C \left(\|f\|_{L^s(E_{R_0}(z_0), y^{\beta-1})} + \|u\|_{L^2(E_{R_0}(z_0,y^{\beta-1}))}\right) d^{\alpha_0}(z_1,z_2) \quad\hbox{(by \eqref{eq:MainContinuity2} and \eqref{eq:FirstEqCase1}),}
\end{aligned}
\end{equation*}
and so the first inequality in \eqref{eq:ContCornersEq1} holds in this case.
\end{case}

\begin{case}[Points $z_1,z_2\in \bar B_{R}(z_0)$ obeying \eqref{eq:FirstEqCase2}]
If
\begin{equation}
\label{eq:FirstEqCase2}
\begin{aligned}
d(z_1,z_2) \leq \frac{1}{8} \max \left\{d(z_1,z_0), d(z_2,z_0)\right\},
\end{aligned}
\end{equation}
then, we apply \eqref{eq:Eq22} on the ball $B_{\tilde R}(z_2)$ with $\tilde R=d(z_1,z_2)$.
\end{case}
Combining the preceding two cases, we obtain the first inequality in \eqref{eq:ContCornersEq1}.

Next, we consider the \emph{second} inequality in \eqref{eq:ContCornersEq1}. By \eqref{eq:SimpleDistances}, we have
\begin{equation}
\label{eq:AdaptedSimpleDistances}
\begin{aligned}
d(z_3,z_4) = \sqrt{y/2}
\quad\hbox{and}\quad
%PF 2-23-2016 where \sqrt{x} means ...
%C 2.24.2016: x should have absolute value, which I added now
%PF 2-25-2016: OK
d(z_4,z_0) = \sqrt{|x|}.
\end{aligned}
\end{equation}
As in the proof of the first inequality in \eqref{eq:ContCornersEq1}, we consider two possible cases.

\setcounter{case}{0}
\begin{case}[Points $z_3,z_4\in \bar B_{R}(z_0)$ obeying \eqref{eq:SecondEqCase1}]
If
\begin{equation}
\label{eq:SecondEqCase1}
%PF 2-23-2016 where \geq means ...
%CP 2.24.2016: x should have absolute value
%PF 2-25-2016: OK
|x| \geq 32y,
\end{equation}
then, by \eqref{eq:AdaptedSimpleDistances}, we have $d(z_3,z_4) \leq (1/8) d(z_4,z_0)$. We may apply \eqref{eq:Eq22} on the ball $B_{\tilde R}(z_4)$ with $\tilde R=d(z_3,z_4)$, and we obtain the second inequality
in \eqref{eq:ContCornersEq1}.
\end{case}

\begin{case}[Points $z_3,z_4\in \bar B_{R}(z_0)$ obeying \eqref{eq:SecondEqCase2}]
If
\begin{equation}
\label{eq:SecondEqCase2}
%PF 2-23-2016 where < means ...
%PF 2-25-2016: OK
%CP 2.24.2016: x should have absolute value
|x|<32y,
\end{equation}
then we have $d(z_4,z_0) \leq 8 d(z_3,z_4)$. Also, a direct calculation gives us $d(z_3,z_0) \leq C d(z_3,z_4)$, for some positive constant $C$. By \eqref{eq:MainContinuity2}, we obtain
\begin{equation*}
\begin{aligned}
|u(z_3)-u(z_4)| &\leq |u(z_3)-u(z_0)| + |u(z_4)-u(z_0)|\\
& \leq 2C \left(\|f\|_{L^s(E_{R_0}(z_0), y^{\beta-1})} + \|u\|_{L^2(E_{R_0}(z_0,y^{\beta-1}))}\right) d^{\alpha_0}(z_3,z_4),
\end{aligned}
\end{equation*}
and we obtain the second inequality in \eqref{eq:ContCornersEq1}.
\end{case}

The proof of \eqref{eq:ContCornersEq1} is complete. We may now conclude that \eqref{eq:MainContinuity3} holds at points $z_0\in\bar\Gamma_0\cap\bar\Gamma_1$, by applying the same argument as in the proof of Theorem \ref{thm:MainContinuityInterior}.
\end{proof}

\begin{proof}[Proof of Corollary \ref{cor:MainHolderContinuityBoundary}]
Theorem \ref{thm:MainContinuityBoundary} can now be extended to the case when we assume that the Dirichlet boundary condition along $\Gamma_1$ is defined by a function
$g \in H^1(\sO, \fw) \cap C^{\gamma}_{s,\loc}(\bar\Gamma_1)$ with $\gamma \in (0,1]$ or a function $g \in H^1(\sO, \fw) \cap C_{\loc}(\bar\Gamma_1)$, so that $u-g \in H^1_0(\sO\cup\Gamma_0, \fw)$.
Corollary \ref{cor:MainSupremumEstimatesBoundary} and Remark \ref{rmk:MainSupremumEstimatesBoundary} shows that the solutions are essentially bounded in neighborhoods of points $z_0\in\bar\Gamma_0\cap\bar\Gamma_1$. In the proof of Theorem \ref{thm:MainContinuityBoundary} for points $z_0 \in \bar\Gamma_0\cap\bar\Gamma_1$, we need to make the following modifications. Let
\begin{align*}
M := \esssup_{\Gamma_1\cap B_{4R}(z_0)} g
\quad\hbox{and}\quad
m :=\essinf_{\Gamma_1\cap B_{4R}(z_0)} g.
\end{align*}
As in the proof of \cite[Theorem 8.27]{GilbargTrudinger}, we replace our definitions of the functions
$u^{\pm}$ (the positive and negative part of the
variational solution, respectively), $w^{\pm}$ in \eqref{eq:ContCornersW} and $v^{\pm}$ in \eqref{eq:CornersContV} by
\begin{align*}
u_M(z):= u(z)\vee M\quad\hbox{and}\quad u_m(z):=u(z)\wedge m\quad\hbox{for a.e. } z \in B_{4R},
\end{align*}
and
\begin{align*}
w_M(z) &:= k +
\begin{cases}
-u_M(z) + M_{4R}, &\hbox{for a.e. } z \in \BB_{4R}(z_0) \cap B_{4R}(z_0),
\\
-M     +M_{4R}, &\hbox{for a.e. } z \in \BB_{4R}(z_0)\backslash B_{4R}(z_0),
\end{cases}
\\
w_m(z) &:= k +
\begin{cases}
u_m(z) - m_{4R}, &\hbox{for a.e. } z \in \BB_{4R}(z_0) \cap B_{4R}(z_0),
\\
m -m_{4R}, &\hbox{for a.e. } z \in \BB_{4R}(z_0)\backslash B_{4R}(z_0).
\end{cases}
\end{align*}
and
\begin{align*}
v_M &:= \eta^2 \left((w_M)^{\alpha}-(k+M_{4R}-M)^{\alpha}\right),
\\
v_m &:= \eta^2 \left((w_m)^{\alpha}-(k+m-m_{4R})^{\alpha}\right).
\end{align*}
Inequality \eqref{eq:Inequality1_w_plus_minus} applied to $w_M$ and $w_m$ now becomes
\begin{align*}
k +M_{4R}-M_R &\geq C \left(k+M_{4R} - M\right),\\
k +m_{R}-m_{4R} &\geq C \left(k+m - m_{4R}\right),
\end{align*}
and by adding, we obtain
$$
(1-C)(M_{4R}-m_{4R}) \geq 2(C-1) k +(M_R-m_R)-C(M-m),
$$
for a constant $C\in (0,1)$. Therefore, instead of
$$
\osc_{B_{R}(z_0)} u \leq C \osc_{B_{4R}(z_0)} u + k(R),
$$
we now obtain
\begin{equation}
\label{eq:Oscillation_boundary}
\osc_{B_{R}(z_0)} u \leq C \osc_{B_{4R}(z_0)} u +C \osc_{\bar\Gamma_1\cap\bar B_{4R}(z_0)} g + k(R).
\end{equation}
Assuming that $g \in C^{\gamma}_{s,\loc}(\bar\Gamma_1)$ with H\"older exponent $\gamma\in (0,1]$,
we see that
$$
\osc_{\bar\Gamma_1\cap\bar B_{4R}(z_0)} g \leq C [g]_{C^{\gamma}_s(\bar\Gamma_1\cap\bar B_{4R}(z_0))} R^{\gamma},
$$
for a positive constant $C=C(\Lambda, n, \nu_0)$. Applying \cite[Lemma 8.23]{GilbargTrudinger} and proceeding as in Step \ref{step:Oscillation} in the proof of Theorem \ref{thm:MainContinuityInterior}, we again obtain the following analogue of estimate \eqref{eq:MainContinuity2}
\begin{equation*}
\osc_{B_R(z_0)} u \leq C\left(\|f\|_{L^s(E_{R_0}(z_0),y^{\beta-1})} + \|u\|_{L^2(E_{R_0}(z_0), y^{\beta-1})}+[g]_{C^{\gamma}_s(\bar\Gamma_1\cap\bar E_{R_0}(z_0))}\right) R^{\alpha_0},
\end{equation*}
where the constants $C$ and $\alpha_0$ satisfy the same dependencies, with the exception that $\alpha_0$ depends now in addition on $\gamma$. Then the
argument in the proof of Theorem \ref{thm:MainContinuityBoundary}, following the oscillation estimate \eqref{eq:MainContinuity2} at points $z_0 \in \bar\Gamma_0\cap\bar\Gamma_1$, can be applied to show that $u$ satisfies \eqref{eq:MainContinuity4} with $\alpha$ depending now in addition on $\gamma$.

Set $\varphi(R):=(R\bar R)^{1/2}$. When $\gamma=0$, that is, when we assume $g \in C_{\loc}(\bar\Gamma_1)$, \cite[Lemma 8.23]{GilbargTrudinger} applied to \eqref{eq:Oscillation_boundary} with $\mu=1/2$ gives
\begin{equation*}
\osc_{B_{R}(z_0)} u \leq C\left(R^{\alpha} \|u\|_{L^{\infty}(B_{4R}(z_0))} + \osc_{\bar\Gamma_1\cap\bar B_{4\varphi(R)}(z_0)} g + k(\varphi(R))\right).
\end{equation*}
for some positive constants, $C$ and $\alpha\in (0,1)$, depending only on $K$, $\Lambda$, $n$, $\nu_0$, $R_0$ and $s$. Because the right-hand-side in the preceding inequality converges to $0$ as $R$ tends to $0$, we see that $u$ is continuous at $z_0$. Therefore, using also Theorem \ref{thm:MainContinuityInterior} and \cite[Theorem 8.27]{GilbargTrudinger}, we obtain that $u \in C(\bar B_{\bar R/4}(z_0))$. Letting
$4R_1^2 =\bar R$, we see by \eqref{eq:EuclidBallInsideCycloid} that $B_{\bar R/4}(z_0) \subset E_{R_1}(z_0)$, and so $u \in C(\bar E_{R_1}(z_0))$.
\end{proof}

We now have the

\begin{proof}[Proof of Corollary \ref{cor:MainContinuity}]
The proof of the corollary follows by a standard covering argument as in \cite[Lemma 3.17]{Feehan_Pop_mimickingdegen_pde} with the aid of Theorem \ref{thm:MainContinuityInterior} and Corollary \ref{cor:MainContinuity} in place of \cite[Theorem 3.8 and Proposition 3.13]{Feehan_Pop_mimickingdegen_pde}. More details can be found in the proof of \cite[Corollary 1.17]{Feehan_Pop_regularityweaksoln_v3}.
\end{proof}

We conclude this section with the

\begin{proof}[Proof of Theorem \ref{thm:StrongMaximumPrinciple}]
Suppose first that $E_{R_0}(z_0)\Subset \sO$. Then the classical strong maximum principle \cite[Theorem 8.19]{GilbargTrudinger} implies that $u$ is constant on $\sO$, since the hyotheses \cite[Equations (8.5), (8.6), and (8.8)]{GilbargTrudinger} are obeyed on precompact open subdomains of $\HH$, as one can easily see by examining the coefficients of our bilinear form
%PF 2-25-2016 - Wrong reference
%\eqref{eq:BilinearForm},
%CP 2.25.2016: Ok
\eqref{eq:Operator_A_bilinear_form}
and this is sufficient for the proof of \cite[Theorem 8.19]{GilbargTrudinger}.

%PF 5.28.2013: Made next few lines more precise
Otherwise, by \eqref{eq:SimpleCycloidBallInsideEuclid}, we may assume that there is a constant $R>0$ and a point $z'_0\in\sO\cup\Gamma_0$ such that $B_{4R}(z'_0)\Subset \sO\cup\Gamma_0$ and
$$
\esssup_{B_{R}(z'_0)} u = \esssup_{\sO} u.
$$
If $B_{4R}(z'_0)\Subset \sO$, we can apply \eqref{eq:EuclidBallInsideCycloid} to find a ball $E_{R_1}(z_0'')\Subset \sO$ obeying the hypothesis of \cite[Theorem 8.19]{GilbargTrudinger} and the previous case applies. If $z_0'\in \Gamma_0$, the argument in the proof of \cite[Theorem 8.19]{GilbargTrudinger} applies to show that $u$ is constant on a ball centered at $z_0'$, except that instead of using the classical weak Harnack inequality \cite[Inequality (8.47)]{GilbargTrudinger}, we use estimate \eqref{eq:Eq9} applied to $w=M_{4R}-u$, where we recall that $M_{4R}=\esssup_{B_{4R}(z'_0)} u$. Notice that in the definition of $w=M_{4R}-u+k(R)$ in \eqref{eq:Two_choices_for_w}, we can take $k(R)=0$ because $u$ is a subsolution to equation \eqref{eq:IntroHestonWeakMixedProblemHomogeneous} with $f=0$. To complete the proof, we can use the argument employed in the proof of \cite[Theorems 2.2 and 8.19]{GilbargTrudinger}, except that when $z_0'\in \Gamma_0$, the role of the Euclidean ball is replaced by that of the ball defined by the cycloidal distance function.
\end{proof}

\section{H\"older continuity for solutions to the variational inequality}
\label{sec:HolderContinuityVariationalInequality}
%CP 1.23.2016: We should say that the results in \cite{Daskalopoulos_Feehan_statvarineqheston} are proved for the Heston operator
%PF 2-26-2016: Done where appropriate
In this section, we use the penalization method and \apriori estimates for solutions to the penalized equation derived in \cite{Daskalopoulos_Feehan_statvarineqheston} together with Theorems \ref{thm:MainContinuityInterior} and \ref{thm:MainContinuityBoundary} to prove local
H\"older continuity on a neighborhood of $\bar\Gamma_0$ in $\bar\sO$ for solutions $u$ to the variational inequality
%CP 1.23.2016: Changed reference to the variational inequality
%\eqref{eq:IntroObstacleProblem}
%PF 2-27-2016: OK
\eqref{eq:VIProblemHeston} (Theorem \ref{thm:MainHolderContinuityVariationalInequality}).

\subsection{Reduction to an open subset with finite-height}
\label{subsec:HolderContinuityVariationalInequality_Reduction}
If $\height(\sO)=\infty$, we shall need to avail of the second condition in \eqref{eq:VISolutionW1infinity} to enable cutting off the solution and use localization to reduce to the case of an open subset with finite-height.\footnote{It is important to remember that we cannot use a cutoff function to localize solutions to the variational equation or inequality without assuming information about regularity of the solution $u$ up to $\Gamma_0$ that is stronger than what we are trying to prove.}
Let $\sU \subseteqq \sO$ be an open subset. Suppose we are given an open subset $\sV \subset \sU$ with $\bar\sV\less\partial\sO \subset \sU$ and
\begin{equation}
\label{eq:DistanceUPrimeUPositive}
\dist(\sO\cap\partial\sV,\sO\cap\partial\sU) > 0.
\end{equation}
Let $\zeta \in C^\infty(\bar\HH)$ be a cutoff function such that $0\leq \zeta\leq 1$ on $\HH$, $\zeta = 1$ on $\sV$, $\zeta>0$ on $\sU$, and $\zeta  = 0$ on $\sO\less \sU$. By \eqref{eq:DistanceUPrimeUPositive} and construction of
$\zeta$, there is a positive constant, $C_0$, depending only on $\dist(\sO\cap\partial\sV,\sO\cap\partial\sU)$ such that
\begin{equation}
\label{eq:CutoffFunctionSecondDerivativeBounds}
\|\zeta\|_{C^2(\HH)} \leq C_0.
\end{equation}
We obtain $\zeta\psi \in H^1(\sU,\fw)$ by \eqref{eq:CutoffFunctionSecondDerivativeBounds} and the fact that $\psi \in H^1(\sO,\fw)$. Because $\zeta = 0$ on $\partial\sU\less\partial\sO$ and $\psi\leq 0$ on $\Gamma_1 =
\partial\sO\less\bar\Gamma_0$ (trace sense), then $\zeta\psi\leq 0$ on $\partial\sU\less\bar\Gamma_0$ (trace sense). Similarly, as $\zeta=0$ on $\partial\sU\less\partial\sO$ and $u=0$ on $\partial\sO\less\bar\Gamma_0$ (trace
sense), then $\zeta u = 0$ on $\partial\sU\less\bar\Gamma_0$ (trace sense) and therefore
\begin{equation}
\label{eq:CutoffuH1Zero}
\zeta u \in H^1_0(\sU\cup\Gamma_0,\fw)
\end{equation}
by
%CP 1.11.2016: Changed A.32 to A.31
%PF 2-27-2016: OK
\cite[Lemma A.31]{Daskalopoulos_Feehan_statvarineqheston}.

\begin{lem}[Localization of solutions to variational inequalities]
\cite[Claim 6.16]{Daskalopoulos_Feehan_statvarineqheston}
\label{lem:ObstacleFunctionLocalization}
If $u\in H^1_0(\sO\cup\Gamma_0,\fw)$ is a solution to
%CP 1.23.2016: Changed reference to the variational inequality
%\eqref{eq:IntroObstacleProblem}
%PF 2-27-2016: OK
\eqref{eq:VIProblemHeston} with obstacle function, $\psi \in H^1(\sO,\fw)$ with $\psi^+ \in H^1_0(\sO\cup\Gamma_0,\fw)$, and source function, $f\in L^2(\sO,\fw)$, then
$\zeta u \in H^1_0(\sU\cup\Gamma_0,\fw)$ is a solution to the variational inequality
%CP 1.23.2016: Changed reference to the variational inequality
%\eqref{eq:IntroObstacleProblem}
%PF 2-27-2016: OK
\eqref{eq:VIProblemHeston}
on $\sU$ with obstacle function, $\zeta\psi\in H^1(\sU,\fw)$ with $\zeta\psi^+ \in H^1_0(\sU\cup\Gamma_0,\fw)$,
and source function,
\begin{equation}
\label{eq:fCutoff}
f_\zeta := \zeta f+[A,\zeta]u \in L^2(\sU,\fw),
\end{equation}
%CP 1.11.2016: Added
where
%PF 2-23-2016 added
%CP 2.25.2016: Ok
$A$ is as in \eqref{eq:Operator}
and the commutator $[A,\zeta]u$ is given by
%CP 1.23.2016: Changed formula of the commutator with the more general expression
\begin{comment}
\begin{align*}
[A,\zeta]u &:=-y\left[(\zeta_x+\rho\sigma\zeta_y)u_x+(\rho\sigma\zeta_x+\sigma^2\zeta_y)u_y\right]
-\frac{y}{2}(\zeta_{xx}+2\rho\sigma\zeta_{xy}+\sigma^2\zeta_{yy})u\\
&\quad -\left(r-q-\frac{y}{2}\right)\zeta_xu-\kappa(\theta-y)\zeta_yu.
\end{align*}
%
\begin{align*}
[A,\zeta]u &:=-y\left[(a^{ij}\zeta_{x_j}+a^{ji}\zeta_{x_i})u_{x_i}
+2(a^{in}\zeta_{x_i}+a^{nn}\zeta_y)u_y\right]
-y(a^{ij}\zeta_{x_ix_j}+2a^{in}\zeta_{x_iy}+a^{nn}\zeta_{yy})u
\\
&\qquad -b^i\zeta_{x_i}u-b^n\zeta_yu.
\end{align*}
\end{comment}
%PF 2-23-2016 changed; from summation convention, what was there
%before did not quite make sense and there were missing terms
%CP 2.25.2016: Ok
\begin{align*}
[A,\zeta]u &=-y\left(2a^{ij}\zeta_{x_i}u_{x_j}
+ 2a^{in}\zeta_yu_{x_i} + 2(a^{in}\zeta_{x_i} + a^{nn}\zeta_y)u_y\right)
\\
&\quad - y(a^{ij}\zeta_{x_ix_j} + 2a^{in}\zeta_{x_iy} + a^{nn}\zeta_{yy})u
- b^i\zeta_{x_i}u - b^n\zeta_yu.
\end{align*}
\end{lem}

\begin{rmk}[Reduction to the case of an open subset with finite-height]
\label{rmk:FiniteHeightStrip}
In order to reduce the case of an open subset $\sO\subseteqq\HH$ with $\height(\sO)=\infty$ to the case of an open subset $\sO\subseteqq\RR\times(0,\delta)$ with finite height $\delta>0$, we can apply Lemma \ref{lem:ObstacleFunctionLocalization}
to the choice
\begin{equation}
\label{eq:ZetaStrip}
\zeta = \begin{cases} 1 &\hbox{on }\RR\times(-\infty,\delta/2], \\ 0 &\hbox{on }\RR\times[3\delta/4,\infty), \end{cases}
\end{equation}
given by $\zeta(x,y) = \chi(y/\delta)$, for $(x,y)\in\RR^2$, where $\chi\in C^\infty(\bar\RR)$ is a cutoff function with $0\leq\chi\leq 1$ on $\RR$, $\chi(t)=1$ for $t\leq 1/2$, and $\chi(t)=0$ for $t\geq 3/4$. Observe that
$\supp[A,\zeta]u\subset \RR\times[\delta/2,3\delta/4]$ in \eqref{eq:fCutoff} and that, because $u$ obeys \eqref{eq:VISolutionW1infinity}, we obtain
$$
f_\zeta \in L^2(\sO_\delta,\fw) \cap L^\infty(\sO_\delta),
$$
and thus $f_\zeta$ obeys \eqref{eq:HolderSourceFunctionUpToBdry}, while
\begin{equation}
\label{eq:WhereSolnandCutoffSolnAgree}
\zeta u = u \quad\hbox{on } \sO_{\delta/2},
\end{equation}
with $\sO_\delta$ as in Hypothesis \ref{hyp:SourceObstacleFunctionBoundaryRegularity}.
\end{rmk}

\subsection{Proof of H\"older continuity up to $\bar\Gamma_0$ for solutions to the variational inequality}
\label{subsec:HolderContinuityVariationalInequality_ProofMainTheorem}
By Remark \ref{rmk:FiniteHeightStrip}, we may assume without loss of generality for the remainder of this section that $\sO$ has \emph{finite height},
%PF 2-26-2016: Changed from 2-dimensional to n-dimensional
\begin{equation}
\label{eq:FiniteHeightStrip}
\sO\subseteqq\RR^{n-1}\times(0,\delta),
\end{equation}
where $\delta>0$ is as in Hypothesis \ref{hyp:SourceObstacleFunctionBoundaryRegularity}, with source function (relabeled if necessary), $f$, obeying \eqref{eq:HolderSourceFunctionUpToBdry} and obtain the desired H\"older
continuity for $u$ along the open subset $\sO_{\delta/2}$ via \eqref{eq:WhereSolnandCutoffSolnAgree}.

We shall prove Theorem \ref{thm:MainHolderContinuityVariationalInequality} using the \emph{method of penalization}, following the pattern in \cite{Daskalopoulos_Feehan_statvarineqheston}, by first deriving an $L^\infty$ bound on a
\emph{penalization term}, $\beta_{\eps}(u_{\eps}-\psi)$ in the semilinear \emph{penalized equation} \eqref{eq:PenalizedEq1} corresponding to the variational inequality \eqref{eq:VIProblemHeston}, which is uniform with
respect to $\eps\in(0,\eps_0]$, for some sufficiently small positive constant $\eps_0$. We then appeal to Theorems
\ref{thm:MainContinuityInterior} and \ref{thm:MainContinuityBoundary} to conclude that the family of functions $\{u_\eps\}_{\eps\in(0,\eps_0]}$ solving the
penalized equation is $C^{\alpha_0}$-continuous up to $\bar\Gamma_0$ and hence, by passing to a subsequence and taking limits, via the convergence results in \cite{Daskalopoulos_Feehan_statvarineqheston}, that the same is true for
a solution, $u\in H^1_0(\sO\cup\Gamma_0,\fw)$, to \eqref{eq:VIProblemHeston}. Following \cite[Equations (3.1) and (3.2)]{Daskalopoulos_Feehan_statvarineqheston}, we denote\footnote{We add a term $\lambda(1+y)u$, rather than just $\lambda u$, due to the presence of the factor $1+y$ in our definition \eqref{eq:H1NormHeston} of the norm $H^1(\sO,\fw)$.}
%PF 2-26-2016 added footnote
\begin{align}
\label{eq:BilinearFormCoerciveHeston}
\fa_\lambda(u,v) &:= \fa(u,v) + \lambda((1+y)u,v)_{L^2(\sO,\fw)}, \quad\forall\, u,v \in H^1(\sO,\fw),
\\
\label{eq:CoerciveHestonOperator}
A_\lambda &:= A + \lambda(1+y),
\end{align}
where $\lambda\geq 0$ and, as usual, $\fa(u,v)$ is given by
%CP 1.23.2016: This should be replaced by \eqref{eq:Operator_A_bilinear_form}
%\eqref{eq:HestonWithKillingBilinearForm}
%PF 2-23-2016 replaced ...
%CP 2.25.2016: Ok
\eqref{eq:Operator_A_bilinear_form}
and $A$ by
%CP 1.23.2016: This should be replaced by \eqref{eq:Operator}
%\eqref{eq:OperatorHestonIntro}.
%PF 2-23-2016 replaced ...
%CP 2.25.2016: Ok
\eqref{eq:Operator}.

\begin{lem}[Uniform bound on the penalization term]
\label{lem:TroianielloTheorem4.38}
Let $f \in L^2(\sO,\fw)\cap L^\infty(\sO)$ and $\psi \in H^2(\sO, \fw)\cap L^{\infty}(\sO)$ obey \eqref{eq:ObstacleLessThanZeroBoundary}. For $u \in H^1_0(\sO\cup\Gamma_0, \fw)$ obeying $u \geq \psi$ a.e on $\sO$ and $\lambda\geq
0$, and $\eps>0$, let $u_{\eps}\in H^1_0(\sO\cup\Gamma_0,\fw)\cap L^{\infty}(\sO)$ be a solution to the \emph{penalized equation},
\begin{equation}
\label{eq:PenalizedEq1}
\fa_{\lambda}(u_{\eps},v) + (\beta_{\eps}(u_{\eps}-\psi),v)_{L^2(\sO, \fw)} = (f_{\lambda},v)_{L^2(\sO,\fw)}, \quad \forall\, v \in H^1_0(\sO\cup\Gamma_0,\fw),
\end{equation}
defined by the \emph{penalization function},
\begin{equation}
\label{eq:DefnBetaEps}
\beta_\eps(t) := -\frac{1}{\eps}t^-, \quad t\in\RR,
\end{equation}
where $t^- := -\min\{t,0\}$, and\footnote{Not to be confused with $f_\zeta$ as defined in equation \eqref{eq:fCutoff}.}
\begin{equation}
\label{eq:flambda}
f_\lambda := f + \lambda(1+y)u  \in L^2(\sO,\fw).
\end{equation}
%PF 2-23-2016 replaced r by \essinf_\sO c
%CP 2.25.2016: Ok
If
%$\lambda+r>0$,
%CP 2.25.2016: Ok
$\underline{c} := \essinf_\sO c$ and \footnote{Recall that $\|c\|_{L^\infty(\sO)} \leq \Lambda$ by \eqref{eq:Operator_A_boundedness}.}
$\lambda + \underline{c} > 0$,
there is a positive constant $\eps_0$, depending only on $n$, $\lambda$,
$\Lambda$ and $\nu_0$, such that
\begin{equation}
\label{eq:PenalizedEqBoundBetaEps}
\|\beta_{\eps}(u_{\eps}-\psi)\|_{L^{\infty}(\sO)} \leq 2\esssup_\sO (A\psi - f)^+, \quad \forall\, \eps\in (0,\eps_0].
\end{equation}
\end{lem}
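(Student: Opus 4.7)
Let $M := 2\esssup_{\sO}(A\psi - f)^+$, which we may assume to be finite, since otherwise the conclusion is vacuous. The goal is to show that $w := u_\eps - \psi + \eps M \geq 0$ a.e.\ on $\sO$; this is equivalent to $(u_\eps-\psi)^- \leq \eps M$, and hence to $|\beta_\eps(u_\eps-\psi)| \leq M$ by \eqref{eq:DefnBetaEps}. We would test the penalized equation \eqref{eq:PenalizedEq1} with $\phi := w^-$. Then $\phi \in H^1_0(\sO \cup \Gamma_0, \fw)$: the $H^1(\sO,\fw)$-regularity of $\phi$ is inherited from $u_\eps$ and $\psi$, while the trace of $\phi$ vanishes along $\Gamma_1$ because $u_\eps = 0$ and $\psi \leq 0$ there in the trace sense, whence $\psi - u_\eps - \eps M \leq -\eps M < 0$ on $\Gamma_1$. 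Using the Green's-type identity $a_\lambda(\psi,\phi) = (A_\lambda\psi, \phi)_{L^2(\sO,\fw)}$, valid for $\psi \in H^2(\sO,\fw)$ and $\phi \in H^1_0(\sO\cup\Gamma_0,\fw)$, which follows from the integration-by-parts formulas established in \cite{Daskalopoulos_Feehan_statvarineqheston}, together with $A_\lambda(\eps M) = \eps M(r + \lambda(1+y))$ and the decomposition $f_\lambda - A_\lambda\psi = (f - A\psi) + \lambda(1+y)(u-\psi)$, the penalized equation becomes
\[
a_\lambda(w, \phi) \;=\; \int_\sO \bigl[(f - A\psi) + \lambda(1+y)(u - \psi) + \eps M(r + \lambda(1+y)) - \beta_\eps(u_\eps - \psi)\bigr] \phi\, \fw\, dx\, dy.
\]

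\textbf{Right-hand side.} On $\{\phi > 0\}$ we have $u_\eps - \psi < -\eps M \leq 0$, so $\beta_\eps$ lies in its linear regime and $-\beta_\eps(u_\eps - \psi) = (\phi + \eps M)/\eps \geq M$. Combined with $f - A\psi \geq -M/2$ (by definition of $M$), $u \geq \psi$ a.e.\ (a hypothesis of the lemma), and $\eps M(r + \lambda(1+y)) \geq 0$, the integrand is bounded below by $M/2 + \phi/\eps$ on $\{\phi > 0\}$, yielding
\[
\text{RHS} \;\geq\; \frac{M}{2}\int_\sO \phi\, \fw\, dx\, dy + \frac{1}{\eps}\int_\sO \phi^2\,\fw\,dx\,dy \;\geq\; 0.
\]

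\textbf{Left-hand side.} On $\{\phi > 0\}$ one has $\nabla w = -\nabla\phi$. By the uniform ellipticity \eqref{eq:HestonModulusEllipticity}, the second-order part of $a_\lambda(w,\phi)$ contributes at most $-\nu_0 \int y|\nabla\phi|^2 \fw$. The first-order terms (the $\sign(x)$ drift and the $a_1 y u_x$ term, where $b_1=0$ by Assumption \ref{assump:HestonCoefficientb1}) are handled via Cauchy--Schwarz, each producing a piece absorbable into $\tfrac{\nu_0}{2}\int y|\nabla\phi|^2 \fw$ at the cost of a term of the form $C\int y\phi^2 \fw$. The zeroth-order contribution equals $-r\int \phi^2\fw - \lambda\int (1+y)\phi^2\fw$, which is non-positive. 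For $\eps \leq \eps_0$ sufficiently small, depending on $\lambda$ and the coefficients of $A$, the residual $C\int y\phi^2\fw$ is controlled either by $\lambda\int (1+y)\phi^2\fw$ (in regimes where $\lambda$ is large enough) or by part of the positive term $\tfrac{1}{\eps}\int \phi^2 \fw$ contributed by the right-hand side, whence LHS $\leq 0$.

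\textbf{Conclusion and main obstacle.} Combining the two bounds yields $\int_\sO (M/2 + \phi/\eps)\phi\,\fw\,dx\,dy \leq 0$, which forces $\phi = 0$ a.e., and hence $w \geq 0$ a.e.\ as required. The delicate point is the LHS estimate: the first-order contributions to $a_\lambda(w,\phi)$ must be absorbed by a principal part whose coefficient vanishes like $y$ on $\Gamma_0$, and Assumption \ref{assump:HestonCoefficientb1} is crucial here — a nonzero $b_1 u_x\phi\fw$ integral could not be controlled uniformly up to the degenerate boundary by $\int y|\nabla\phi|^2\fw$ alone. The precise interplay of $\lambda$, $\eps$, and the Heston coefficients in this absorption step is what determines the dependency of $\eps_0$ on $\lambda$ and on $A$ recorded in the statement.
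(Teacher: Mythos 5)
Your proof is correct in its essentials, but it takes a genuinely different route from the paper. The paper tests the penalized equation with the power functions $v=|\beta_\eps(u_\eps-\psi)|^{q-1}$, derives a uniform-in-$q$ bound $\tfrac12\|\beta_\eps(u_\eps-\psi)\|_{L^q(\sO,\fw)}\leq\|(A\psi-f)^+\|_{L^q(\sO,\fw)}$ for all $q>2$ after absorbing the first-order and zeroth-order contributions of $a_\lambda$ into the $\eps$-small error, and then sends $q\to\infty$ (Lemma \ref{lem:LimitpInfinityLpnorm}); to make $|\beta_\eps(u_\eps-\psi)|^{q-1}$ an admissible test function it must first prove that $\beta_\eps(u_\eps-\psi)\in L^\infty(\sO)$ via a weak maximum principle (this is where $\lambda+r>0$ enters). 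Your comparison-function argument --- testing with $\phi=(u_\eps-\psi+\eps M)^-$ and showing LHS $\leq 0\leq$ RHS --- is more elementary: it bypasses both the a priori boundedness of the penalization term and the $q\to\infty$ limit, and it exploits the structure of $\beta_\eps$ more directly (on $\{\phi>0\}$ the penalization is in its linear regime, producing the decisive $+\phi^2/\eps$ term). Two points you should make explicit. First, in the small-$\lambda$ regime your absorption of $C\int_\sO y\phi^2\,\fw\,dx\,dy$ into $\tfrac1\eps\int_\sO\phi^2\,\fw\,dx\,dy$ requires $y$ to be bounded on $\sO$; this is legitimate here because the lemma is invoked only after the reduction to a finite-height domain in \S\ref{subsec:HolderContinuityVariationalInequality_Reduction} (the paper's own bound on its term $I_4$ likewise uses the height of $\sO$), but then $\eps_0$ acquires a dependence on $\height(\sO)$ that should be recorded. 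Second, the admissibility of $\phi$ uses that the constant $\eps M$ lies in $H^1(\sO,\fw)$, which holds because $\int_\HH(1+y)\fw\,dx\,dy<\infty$ for $\beta,\mu,\gamma>0$; together with $u_\eps\in H^1_0(\sO\cup\Gamma_0,\fw)$, $\psi\in H^2(\sO,\fw)$, and $w\geq\eps M>0$ on $\Gamma_1$ in the trace sense, the cited lemmas of \cite{Daskalopoulos_Feehan_statvarineqheston} then give $\phi\in H^1_0(\sO\cup\Gamma_0,\fw)$ as you assert.
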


\begin{proof}
We adapt an argument used in the proof of \cite[Theorem 4.38]{Troianiello}. Integration by parts \cite[Lemma 2.23]{Daskalopoulos_Feehan_statvarineqheston} with $\psi \in H^2(\sO, \fw)$ and $v \in H^1_0(\sO\cup\Gamma_0,\fw)$
yields
\begin{equation}
\label{eq:PenalizedEq2}
\fa_{\lambda}(\psi,v) = (A_{\lambda} \psi,v)_{L^2(\sO,\fw)}.
\end{equation}
Since $u_{\eps}\in H^1_0(\sO\cup\Gamma_0,\fw)$ and $\psi^+\in H^1_0(\sO\cup\Gamma_0,\fw)$, it follows that $\beta_{\eps}(u_{\eps}-\psi) \in H^1_0(\sO\cup\Gamma_0,\fw)$ by the proof of
%CP 1.11.2016: Changed A.34 to A.33
%PF 2-27-2016: OK
\cite[Lemma A.33]{Daskalopoulos_Feehan_statvarineqheston}.
%COMMENT: Maybe add details
In order to use $\beta_\eps(u_\eps-\psi)$ to construct suitable test functions, we need the forthcoming Claim \ref{claim:EssBoundedPenaltyTerm} and that relies in turn on the

%PF 2-26-2016 Added next claim and its proof
%PF 3-7-2016: Updated hypotheses and removed references to \sO bounded
\begin{claim}[Boundedness of the solution $u$ to the variational inequality]
%for $\sO$ bounded and $c$ non-negative]
\label{claim:EssBoundedSolution_u_VI}
%PF 3-7-2016: Moved to theorem hypotheses
%If $g=0$, then t
%PF 3-9-2016: Added back two possible WMP hypotheses for clarity
Assume that $\sO$ is bounded and $c \geq 0$ a.e. on $\sO$ \emph{or} that $\sO$ is unbounded and $c \geq c_0 > 0$ a.e. on $\sO$, for a positive constant $c_0$, and $\tau > 0$ in \eqref{eq:HestonWeight}.
Then the solution $u$ to the variational inequality \eqref{eq:VIProblemHeston} belongs to $L^\infty(\sO)$.
\end{claim}

\begin{proof}[Proof of Claim \ref{claim:EssBoundedSolution_u_VI}]
According to \cite[Theorem 3.16]{Daskalopoulos_Feehan_statvarineqheston}, there exists a solution $w \in H_0^1(\sO\cup\Gamma_0,\fw)$ to the inhomogeneous variational equation \eqref{eq:IntroHestonWeakMixedProblemHomogeneous}, namely
\begin{equation}
\label{eq:VE_w}
\fa(w, v) = (f, v)_{L^2(\sO,\fw)}, \quad\forall\, v \in H_0^1(\sO\cup\Gamma_0,\fw).
\end{equation}
%CP 2.28.2016: To obtain uniqueness in this theorem you use the fact
%that r>0 (although uniqueness is not important for the proof of this
%claim). Thus, when we apply this theorem, we assume that c(x,y) \geq
%c_0 >0.
%PF 2.29.2016: The condition c\geq c_0 > is not required for existence, so we do not need that condition when applying the theorem. I added a note below.
(While \cite[Theorem 3.16]{Daskalopoulos_Feehan_statvarineqheston} was proved for the Heston operator \eqref{eq:OperatorHestonIntro}, the proof for the more general operator $A$ in \eqref{eq:Operator} is identical; moreover, as is clear from the proof of existence in \cite[pp. 34-35]{Daskalopoulos_Feehan_statvarineqheston}, a condition such as $r>0$ in \eqref{eq:OperatorHestonIntro} or more generally $c\geq c_0>0$ a.e. on $\sO$ in \eqref{eq:Operator} is not required for existence even when $\sO$ is unbounded.) We rewrite the preceding  variational equation as
\[
\fa(w, v-u) = (f, v-u)_{L^2(\sO,\fw)}, \quad\forall\, v \in H_0^1(\sO\cup\Gamma_0,\fw)
\]
and subtract from the variational inequality \eqref{eq:VIProblemHeston} to give the equivalent variational inequality,
\[
\fa(u-w, v-u) \geq 0, \quad\forall\, v \in H_0^1(\sO\cup\Gamma_0,\fw), \quad v \geq \psi.
\]
Set $u_0 := u-w$ and $\psi_0 := \psi-w$ and $v_0 = v-w$, and observe that $u \in H_0^1(\sO\cup\Gamma_0,\fw)$ is a solution to the preceding variational inequality if and only if $u_0 \in H_0^1(\sO\cup\Gamma_0,\fw)$ is a solution to the variational inequality,
\[
\fa(u_0, v_0-u_0) \geq 0, \quad\forall\, v_0 \in H_0^1(\sO\cup\Gamma_0,\fw), \quad v_0 \geq \psi_0.
\]
%PF 3-7-2016: Changed WMP references according to two cases
The bilinear form $\fa$ given by \eqref{eq:Operator_A_bilinear_form} has the weak maximum principle property by \cite[Theorem 8.7]{Feehan_maximumprinciple} when $\sO$ is a bounded domain and $c \geq 0$ a.e. on $\sO$ while $\fa$ has the weak maximum principle property by \cite[Theorem 8.14]{Feehan_maximumprinciple}
when $\sO$ is a (finite height) unbounded domain and
%PF 3-9-2016: Corrected
$c \geq c_0 > 0$ a.e. on $\sO$ and $\tau > 0$.
%CP 2.28.2016: I think that in the hypotheses of this WMP it is needed that c(x,y) \geq c_0 > 0. Because otherwise, let's assume that c \equiv 0 and A is the Heston operator. Then given any solution u to the v.i., then u + C is also a solution to v.i., for all positive constants C. Let's also assume that \sO is bounded, so that the solutions shifted by constants are in the weighted Sobolev spaces.
%PF 2-29-2016: Our solutions obey a Dirichlet boundary condition on Gamma_1, so the above non-uniqueness could only occur when Gamma_0 is the entire boundary of \sO. In our application of the WMP, that cannot happen since it is being applied only when \sO is bounded. In my maximum principle paper, I was careful to distinguish between such cases in the setting of PDEs and obstacle problems: see Theorem 5.1, for example. I will have to recheck my paper for the case of VEs and VIs, as I do not seem to make the same distinction there but evidently should have. In any event, the issue does not arise here since Gamma_1 will be non-empty.
%CP 2.29.2016: Ok
%PF 3-7-2016: Split reference
Consequently, the weak maximum principle estimates
\cite[Proposition 7.9 (1) and (3)]{Feehan_maximumprinciple} for $\fa$ imply that
\[
0 \leq u_0 \leq 0\vee\esssup_\sO \psi \quad\hbox{a.e. on }\sO,
\]
where $x\vee y := \max\{x,y\}, \forall\, x,y \in \RR$. In particular, $u_0 \in L^\infty(\sO)$ in either case. Moreover, the local supremum estimates provided by Theorems \ref{thm:MainSupremumEstimatesInterior} and \ref{thm:MainSupremumEstimatesBoundary} for the solution $w \in H_0^1(\sO\cup\Gamma_0,\fw)$ to the variational equation imply that
$w \in L^\infty(\sO)$ and thus $u = u_0 + w \in L^\infty(\sO)$, as desired.
%PF 3-7-2016: Added
This completes the proof of Claim \ref{claim:EssBoundedSolution_u_VI}.
\end{proof}

Next, we have the key\footnote{The hypothesis in Claim \ref{claim:EssBoundedPenaltyTerm} that $\tau > 0$ can be removed by the alternative proof for that case in Remark \ref{rmk:ClaimEssBoundedPenaltyTerm_alternative_proof}.}

\begin{claim}[Boundedness of the penalization term]
\label{claim:EssBoundedPenaltyTerm}
%CP 3.7.2016: Left the simpler statement below because everything else is not needed now.
%If $\sO$ is bounded and $c$ is non-negative \emph{or} $c \geq c_0$ for a positive constant $c_0$, then the penalization term, $\beta_\eps(u_\eps-\psi)$, belongs to $L^\infty(\sO)$.
%PF 3-9-2016: Added back two possible WMP hypotheses for clarity
Assume that $\sO$ is bounded and $c \geq 0$ a.e. on $\sO$ \emph{or} that $\sO$ is unbounded and $c \geq c_0 > 0$ a.e. on $\sO$, for a positive constant $c_0$, and $\tau > 0$ in \eqref{eq:HestonWeight}.
Then the penalization term, $\beta_\eps(u_\eps-\psi)$, belongs to $L^\infty(\sO)$.
\end{claim}

\begin{proof}[Proof of Claim \ref{claim:EssBoundedPenaltyTerm}]
%CP 3.7.2016: Sentence no longer needed
%We first consider the case where $\sO$ is bounded and $c$ is only assumed to be non-negative.
Since $\beta_\eps(u-\psi) \leq 0$ a.e. on $\sO$, we have
\begin{equation}
%CP 2.25.2016: Removed \lambda and added label
%PF 2-26-2016: Added \lambda back
\label{eq:Eq_with_penalization_lambda}
\fa_\lambda(u_\eps,v) =
%CP 2.28.2016: Added \lambda to f
%PF 2-29-2016: OK
%(f,v)_{L^2(\sO,\fw)}
(f_{\lambda},v)_{L^2(\sO,\fw)}
- (\beta_\eps
%CP 1.23.2016: Added \eps to u
%PF 2-26-2016: Corrected f to f_\lambda
(u_{\eps}-\psi),v)_{L^2(\sO,\fw)} \geq (f_\lambda,v)_{L^2(\sO,\fw)},
\end{equation}
for all $v\in H^1_0(\sO\cup\Gamma_0,\fw)$ with $v\geq 0$ a.e. on $\sO$.
%PF 2-26-2016: Corrected
%PF 3-7-2016: Split WMP into two cases, since WMP in case \sO is bounded has simpler proof, and expanded explanation
The bilinear form $\fa_\lambda$ given by \eqref{eq:BilinearFormCoerciveHeston} has the weak maximum principle property by \cite[Theorem 8.7]{Feehan_maximumprinciple} when $\sO$ is a bounded domain and $c \geq 0$ a.e. on $\sO$ by \cite[Theorem 8.14]{Feehan_maximumprinciple} when $\sO$ is a (finite height) unbounded domain and
%PF 3-9-2016: Corrected
$c \geq c_0 > 0$ a.e. on $\sO$ and $\tau > 0$. Hence, the \apriori weak maximum principle estimate \cite[Proposition 6.5 (4)]{Feehan_maximumprinciple} for $\fa_\lambda$ implies that
$$
%CP 1.23.2016: I think that we can remove r from below (the general operator A has zeroth order term c which may be a variable coefficient so we don't want it to show up here). We should require that \lambda >\|c\|_{L^{\infty}}.
%PF 2-23-2016 replaced r by \essinf_\sO c
%CP 2.25.2016: Ok
%PF 2-26-2016: Corrected f to f_\lambda
u_\eps \geq 0\wedge\frac{1}{\lambda+\underline{c}}\essinf_\sO f_\lambda \quad\hbox{a.e. on }\sO,
$$
where $x\wedge y := \min\{x,y\},$
%CP 2.28.2016: Replaced \forall by for all
%PF 2-29-2016: OK
for all $x,y \in \RR$.
%PF 2-26-2016: Added
Because $f_\lambda = f + \lambda(1+y)u$ and $u \in L^\infty(\sO)$ by Claim \ref{claim:EssBoundedSolution_u_VI} and $\sO$ has finite height and $f \in L^\infty(\sO)$ by hypothesis \eqref{eq:HolderSourceFunctionUpToBdry}, then $f_\lambda \in L^\infty(\sO)$ and the preceding lower bound for $u_\eps$ is indeed finite. In particular,
\begin{align*}
%CP 1.23.2016: I think that we can remove r from below (the general operator A has zeroth order term c which may be a variable coefficient so we don't want it to show up here). We should require that \lambda >\|c\|_{L^{\infty}}.
%PF 2-23-2016 replaced r by \essinf_\sO c
%CP 2.25.2016: Ok
%PF 2-26-2016: Corrected f to f_\lambda below
(u_\eps-\psi)^-
&\leq \left(\esssup_\sO\psi - 0\wedge\frac{1}{\lambda+\underline{c}}\essinf_\sO f_\lambda\right)^+ \quad\hbox{a.e. on }\sO.
\end{align*}
Since $(u_\eps-\psi)^- \geq 0$ and $\psi\in L^\infty(\sO)$ by hypothesis, it follows that $(u_\eps-\psi)^- \in L^\infty(\sO)$ and thus $\beta_\eps(u_\eps-\psi) \in L^\infty(\sO)$.
%PF 3-7-2016: below redundant now
%, as desired for this case.
%PF 2-26-2016: Added slightly simplified version of your proof
%CP 3.7.2016: Removed
%PF 3-7-2016: Moved to a remark after lemma proof
This completes the proof of Claim \ref{claim:EssBoundedPenaltyTerm}.
\end{proof}

%CP 2.25.2016: In the remaining proof some signs and inequalities where not written in the right way. I corrected them.
%PF 2-27-2016: I kept your sign and inequality corrections but added back \lambda
If $F(t) := t^{q-1}$, for $q>2$, and $F'(t) = (q-1)t^{q-1}$, for $t\in\RR$, then the proofs of \cite[Lemmas 7.5 and 7.6 and Theorem 7.8]{GilbargTrudinger} (see
%CP 1.11.2016: Changed A.34 to A.33
%PF 2-27-2016: OK
\cite[Lemma A.33]{Daskalopoulos_Feehan_statvarineqheston} and its
proof) and the fact that $\beta_\eps(u_\eps-\psi) \in L^\infty(\sO)$ by Claim \ref{claim:EssBoundedPenaltyTerm} show that
%COMMENT: Maybe add details
\begin{equation}
\label{eq:PenalizedEqTestFunction}
v := |\beta_{\eps}(u_{\eps}-\psi)|^{q-1} \in H^1_0(\sO\cup\Gamma_0,\fw).
\end{equation}
By subtracting \eqref{eq:PenalizedEq2} from \eqref{eq:PenalizedEq1} and choosing $v$ as in \eqref{eq:PenalizedEqTestFunction}, we obtain
\begin{equation}
\begin{aligned}
\label{eq:PenalizedEq3}
& \fa_\lambda(u_{\eps}-\psi,|\beta_{\eps}(u_{\eps}-\psi)|^{q-1}) + (\beta_{\eps}(u_{\eps}-\psi),|\beta_{\eps}(u_{\eps}-\psi)|^{q-1})_{L^2(\sO, \fw)}
\\
&\qquad = (f_{\lambda}-A_{\lambda} \psi,|\beta_{\eps}(u_{\eps}-\psi)|^{q-1})_{L^2(\sO,\fw)}.
\end{aligned}
\end{equation}
Since $u \geq \psi$ a.e. on $\sO$ by hypothesis, the term on the right-hand side of equation \eqref{eq:PenalizedEq3} obeys
\begin{equation}
\label{eq:PenalizedEq4}
(f_{\lambda}-A_{\lambda} \psi,|\beta_{\eps}(u_{\eps}-\psi)|^{q-1})_{L^2(\sO,\fw)}
\geq
(f-A \psi,|\beta_{\eps}(u_{\eps}-\psi)|^{q-1})_{L^2(\sO,\fw)},
\end{equation}
since $f_{\lambda}-A_{\lambda} \psi = f + \lambda(1+y)(u-\psi) - A\psi \geq f-A \psi$ a.e. on $\sO$ by \eqref{eq:CoerciveHestonOperator} and \eqref{eq:flambda}. Notice that
\begin{equation}
\label{eq:PenalizedEq5}
(\beta_{\eps}(u_{\eps}-\psi),|\beta_{\eps}(u_{\eps}-\psi)|^{q-1})_{L^2(\sO, \fw)}
=
-\int_{\sO} |\beta_{\eps}(u_{\eps}-\psi)|^q \fw\, dx\,dy,
\end{equation}
and so \eqref{eq:PenalizedEq3}, \eqref{eq:PenalizedEq4}, and \eqref{eq:PenalizedEq5} yield
\begin{equation}
\label{eq:PenalizedEquationBound}
\begin{aligned}
{}&\fa_\lambda(u_{\eps}-\psi,|\beta_{\eps}(u_{\eps}-\psi)|^{q-1}) -\int_{\sO} |\beta_{\eps}(u_{\eps}-\psi)|^q \fw\, dx\,dy
\\
&\quad \geq (f-A \psi,|\beta_{\eps}(u_{\eps}-\psi)|^{q-1})_{L^2(\sO,\fw)}.
\end{aligned}
\end{equation}
Observe that \eqref{eq:PenalizedEqTestFunction}
%PF 3-2-2016: Added
and the fact that $|\beta_{\eps}(u_{\eps}-\psi)| = -\beta_{\eps}(u_{\eps}-\psi)$ by \eqref{eq:DefnBetaEps}
gives
%CP 1.23.2016: Added multidimensions
%PF 2-25-2016 added label
%CP 2.25.2016: Ok
\begin{equation}
\label{eq:Derivatives_test_function_v}
v_{x_i} =
-(q-1)|\beta_{\eps}(u_{\eps}-\psi)|^{q-2}\beta_{\eps}'(u_{\eps}-\psi)(u_{\eps}-\psi)_{x_i},\quad 1\leq i\leq n-1,
\end{equation}
and similarly for $v_y$. By a straightforward calculation using the expression \eqref{eq:BilinearFormCoerciveHeston} for $\fa_\lambda(u,v)$
%PF 2-23-2016 added
%CP 2.25.2016: Ok
and \eqref{eq:Operator_A_bilinear_form} for $\fa(u,v)$ and the expressions \eqref{eq:Derivatives_test_function_v} for $v_{x_i}$ (and $v_y$),
we find that
%CP 1.23.2016: Changed this expression with the more general bilinear form
%PF 2-27-2016: OK
\begin{comment}
\begin{equation}
\label{eq:PenalizedEq6}
\begin{aligned}
&\fa_\lambda(u_{\eps}-\psi,|\beta_{\eps}(u_{\eps}-\psi)|^{q-1}) \\
& \quad =
\frac{1}{2} \int_{\sO} \left[(u_{\eps}-\psi)^2_x + 2\varrho\sigma(u_{\eps}-\psi)_x (u_{\eps}-\psi)_y + \sigma^2 (u_{\eps}-\psi)^2_y\right]\\
&\qquad\qquad\qquad
                        \times (q-1) |\beta_{\eps}(u_{\eps}-\psi)|^{q-2} \beta_{\eps}'(u_{\eps}-\psi)  y \fw\, dx\,dy\\
&\qquad
-\frac{\gamma}{2} \int_{\sO} \left[(u_{\eps}-\psi)_x+\varrho\sigma(u_{\eps}-\psi)_y\right] |\beta_{\eps}(u_{\eps}-\psi)|^{q-1}\sign(x)y \fw\, dx\,dy\\
&\qquad
-\frac{1}{2} \int_{\sO} a_1(u_{\eps}-\psi)_x |\beta_{\eps}(u_{\eps}-\psi)|^{q-1} y \fw\, dx\,dy\\
& \qquad
+ \int_{\sO} \left(r+\lambda(1+y)\right)(u_{\eps}-\psi) |\beta_{\eps}(u_{\eps}-\psi)|^{q-1}y \fw\, dx\,dy.
\end{aligned}
\end{equation}
\end{comment}
%CP 3.7.2016: Added the new terms in the bilinear form
\begin{equation}
\label{eq:PenalizedEq6}
\begin{aligned}
&\fa_\lambda(u_{\eps}-\psi,|\beta_{\eps}(u_{\eps}-\psi)|^{q-1}) \\
& \quad =
%CP 2.25.2016: Added minus in front of the first integral. This is from the expression of v_{x_i}
-\int_{\sO} \left(a^{ij}(u_{\eps}-\psi)_{x_i}(u_{\eps}-\psi)_{x_j}
+2a^{in}(u_{\eps}-\psi)_{x_i}(u_{\eps}-\psi)_y+a^{nn}((u_{\eps}-\psi)_y)^2\right)
%PF 3-2-2016: There should not be a measure here
% y\fw \,dxdy
%PF 3-7-2016: \gamma --> \tau
\\
&\qquad\qquad\qquad
                        \times (q-1) |\beta_{\eps}(u_{\eps}-\psi)|^{q-2} \beta_{\eps}'(u_{\eps}-\psi)  y \fw\, dx\,dy\\
&\qquad +\int_{\sO} \left(\partial_{x_j}a^{ij}+\partial_y a^{in}+\hat b_i-\tau a^{ij}\frac{x_j}{|x|}-\mu a^{in}\right)(u_{\eps}-\psi)_{x_i}|\beta_{\eps}(u_{\eps}-\psi)|^{q-1} y\fw \,dxdy\\
&\qquad +\int_{\sO} \left(\partial_{x_i}a^{in}+\partial_y a^{nn}+\hat b_n-\tau a^{in}\frac{x_i}{|x|}-\mu a^{nn}\right)(u_{\eps}-\psi)_y |\beta_{\eps}(u_{\eps}-\psi)|^{q-1} y\fw \,dxdy\\
&\qquad+\int_{\sO} (c+\lambda(1+y))(u_{\eps}-\psi) |\beta_{\eps}(u_{\eps}-\psi)|^{q-1} \fw \,dxdy.
\end{aligned}
\end{equation}
We write the sum of integrals on the right-hand side of \eqref{eq:PenalizedEq6} as $I_1+I_2+I_3+I_4$. By the strict ellipticity of the operator $y^{-1}A$, we find that there exists a positive constant, $C_1=C_1(\Lambda, \nu_0)$, such that
$$
%CP 2.25.2016: Added minus in front of I_1
%PF 2-27-2016: OK
-I_1 \geq (q-1) C_1 \int_{\sO} |\nabla(u_{\eps}-\psi)|^2\beta_{\eps}'(u_{\eps}-\psi)|\beta_{\eps}(u_{\eps}-\psi)|^{q-2} y\fw\, dx\,dy,
$$
noting that $\beta_\eps'(t)\geq 0$ a.e. $t\in \RR$. Indeed, by \eqref{eq:DefnBetaEps} we have\footnote{Recall that we define $t^- = 0\vee(-t)$.}
\[
\beta_{\eps}'(t) = \frac{1}{\eps} 1_{\{t \leq 0\}} \leq \frac{1}{\eps} \quad\hbox{a.e. } t \in \RR,
\]
and so the identity,
\begin{equation}
\label{eq:GradientPenaltyTerm}
\nabla\beta_{\eps}(u_{\eps}-\psi) = \beta_\eps'(u_{\eps}-\psi)\nabla(u_{\eps}-\psi) = \frac{1}{\eps} 1_{\{u_{\eps}\leq\psi\}}\nabla(u_{\eps}-\psi)  \quad\hbox{a.e. on } \sO,
\end{equation}
yields
\begin{align*}
|\nabla(u_{\eps}-\psi)|^2\beta_{\eps}'(u_{\eps}-\psi) &= \frac{1}{\eps}|\nabla(u_{\eps}-\psi)|^2 1_{\{u_{\eps}\leq\psi\}}
\\
&= \eps|\nabla\beta_{\eps}(u_{\eps}-\psi)|^2 1_{\{u_{\eps}\leq\psi\}}
\\
&= \eps|\nabla\beta_{\eps}(u_{\eps}-\psi)|^2 \quad \hbox{a.e. on } \sO.
\end{align*}
Hence, by combining the preceding inequality and identity, we see that
\begin{equation}
\label{eq:PenalizedEq7}
%CP 2.25.2016: Added minus on the right hand side and changed inequality
I_1 \leq -\eps (q-1) C_1 \int_{\sO} |\nabla\beta_{\eps}(u_{\eps}-\psi)|^2|\beta_{\eps}(u_{\eps}-\psi)|^{q-2} y\fw\, dx\,dy.
\end{equation}
Using \eqref{eq:GradientPenaltyTerm} and the fact that $\beta_{\eps}(t)1_{\{t \leq 0\}} = \beta_{\eps}(t)$, we can write $I_2$ in the form
\begin{align*}
%CP 1.23.2016: Changed I_2 to the more general operator A
%PF 2-27-2016: OK
%CP 3.7.2016: Updated I_2 with the new terms
%PF 3-7-2016: \gamma --> \tau
I_2 &= \eps\int_{\sO} \left(\partial_{x_j}a^{ij}+\partial_y a^{in}+\hat b_i-\tau a^{ij}\frac{x_j}{|x|}-\mu a^{in}\right)\left(\beta_{\eps}(u_{\eps}-\psi)\right)_{x_i}
\\
&\qquad \times|\beta_{\eps}(u_{\eps}-\psi)|^{(q-2)/2} |\beta_{\eps}(u_{\eps}-\psi)|^{q/2} y \fw\, dx\,dy.
\end{align*}
Hence, there is a positive constant $C_2$, depending only on
$\Lambda$, $\nu_0$ and $\tau$,
%PF 3-7-2016: \gamma --> \tau and omitted redundant D-F reference
%$\gamma$ (which in turn, by \cite{Daskalopoulos_Feehan_statvarineqheston}, can be assumed to depend only on those coefficients),
such that for any $\eta>0$,
\begin{equation}
\label{eq:PenalizedEq8}
\begin{aligned}
|I_2| \leq  \eps\eta \int_{\sO} |\nabla\beta_{\eps}(u_{\eps}-\psi)|^2|\beta_{\eps}(u_{\eps}-\psi)|^{q-2} y\fw\, dx\,dy
+C_2\frac{\eps}{\eta} \int_{\sO}  |\beta_{\eps}(u_{\eps}-\psi)|^{q} y \fw\, dx\,dy.
\end{aligned}
\end{equation}
Similarly, we obtain for $I_3$, for any $\eta>0$,
\begin{equation}
\label{eq:PenalizedEq9}
\begin{aligned}
|I_3| \leq  \eps\eta \int_{\sO} |\nabla\beta_{\eps}(u_{\eps}-\psi)|^2|\beta_{\eps}(u_{\eps}-\psi)|^{q-2} y\fw\, dx\,dy
+C_3\frac{\eps}{\eta} \int_{\sO}  |\beta_{\eps}(u_{\eps}-\psi)|^{q} y \fw\, dx\,dy,
\end{aligned}
\end{equation}
where $C_3$ is a positive constant depending only on
$\Lambda$ and $\nu_0$. We can also estimate $I_4$ by
\begin{equation}
\label{eq:PenalizedEq10}
\begin{aligned}
|I_4| \leq  \eps C_4 \int_{\sO}  |\beta_{\eps}(u_{\eps}-\psi)|^{q}  \fw\, dx\,dy,
\end{aligned}
\end{equation}
where $C_4$ is a positive constant depending only on $\lambda$,
$\Lambda$, $\nu_0$, and the height of the open subset $\sO$. Substituting \eqref{eq:PenalizedEq7}, \eqref{eq:PenalizedEq8},
\eqref{eq:PenalizedEq9} and \eqref{eq:PenalizedEq10} in \eqref{eq:PenalizedEq6}, we obtain
\begin{equation*}
\begin{aligned}
%CP 2.25.2016: Removed \lambda and corrected inequalities and signs
%PF 2-26-2016: Added lambda back
{}&\fa_\lambda(u_{\eps}-\psi,|\beta_{\eps}(u_{\eps}-\psi)|^{q-1}) \\
&\quad \leq
\eps\left(\frac{C_2}{\eta}+\frac{C_3}{\eta}+C_4\right) \int_{\sO}  |\beta_{\eps}(u_{\eps}-\psi)|^{q}  \fw\, dx\,dy\\
&\qquad
-\eps((q-1)C_1-2\eta) \int_{\sO} |\nabla\beta_{\eps}(u_{\eps}-\psi)|^2|\beta_{\eps}(u_{\eps}-\psi)|^{q-2} y\fw\, dx\,dy.
\end{aligned}
\end{equation*}
Choose $\eta := C_1/2$ and, noting that $q>2$, we have $(q-1)C_1-2\eta \geq 0$ and thus
\begin{equation}
\label{eq:PenalizedEq11}
\begin{aligned}
%CP 2.25.2016: Removed \lambda and corrected inequalities and signs
%PF 2-26-2016: Added lambda back
\fa_\lambda(u_{\eps}-\psi,|\beta_{\eps}(u_{\eps}-\psi)|^{q-1})
& \leq
\eps C \int_{\sO}  |\beta_{\eps}(u_{\eps}-\psi)|^{q}  \fw\, dx\,dy,
\end{aligned}
\end{equation}
where $C := 2C_2/C_1 + 2C_3/C_1 + C_4$. But \eqref{eq:PenalizedEquationBound} gives
\begin{align*}
{}&\int_{\sO}  |\beta_{\eps}(u_{\eps}-\psi)|^{q}  \fw\, dx\,dy
\\
&\quad\leq
-(f-A \psi,|\beta_{\eps}(u_{\eps}-\psi)|^{q-1})_{L^2(\sO,\fw)} + \eps C \int_{\sO}  |\beta_{\eps}(u_{\eps}-\psi)|^{q}  \fw\, dx\,dy \quad\hbox{(by \eqref{eq:PenalizedEq11})}
\end{align*}
and thus,
\begin{align*}
(1-\eps C) \int_{\sO}  |\beta_{\eps}(u_{\eps}-\psi)|^{q}  \fw\, dx\,dy
&\leq ((A\psi - f)^+,|\beta_{\eps}(u_{\eps}-\psi)|^{q-1})_{L^2(\sO,\fw)}.
\end{align*}
Now choose $\eps_0 = 2/C$ and so $(1-\eps C) \geq 1/2$, for any $0<\eps \leq \eps_0$. By applying the H\"older inequality on the right-hand side, we see that
\begin{equation*}
\begin{aligned}
\frac{1}{2} \|\beta_{\eps}(u_{\eps}-\psi) \|_{L^{q}(\sO,\fw)}
&\leq
\|(A\psi - f)^+\|_{L^{q}(\sO,\fw)}, \quad\hbox{for } q>2 \hbox{ and }  0<\eps \leq \eps_0,
\end{aligned}
\end{equation*}
which yields, by taking the limit as $q\to\infty$ and applying
\cite[Theorem 2.8]{Adams_1975}, the desired inequality \eqref{eq:PenalizedEqBoundBetaEps}.
%PF 3-7-2016: Added
This completes the proof of Lemma \ref{lem:TroianielloTheorem4.38}.
\end{proof}

%CP 1.23.2016: This is proved only for the Heston operator
%PF 2-25-2016: Comments added; uniqueness follows from the maximum principle, which covers general operators already
%CP 2.25.2016: Ok
%PF 3-7-2016: Made into remark
\begin{rmk}[Existence of solutions to the penalized equation]
When we specialize the variable-coefficient operator $A$ in \eqref{eq:Operator} to the Heston operator \eqref{eq:OperatorHestonIntro}, then
solutions to \eqref{eq:PenalizedEq1} exist by \cite[Theorem 4.18]{Daskalopoulos_Feehan_statvarineqheston} for all $\eps>0$ and $\lambda\geq \lambda_0$, where $\lambda_0$ is a positive constant depending only on
$\Lambda$ and $\nu_0$ (see \cite[Lemma 3.2]{Daskalopoulos_Feehan_statvarineqheston}), chosen such that $\fa_\lambda$ is coercive; the proof of the corresponding existence result for $A$ in \eqref{eq:Operator} is identical.
\end{rmk}

\begin{rmk}[Alternative proof of Claim \ref{claim:EssBoundedPenaltyTerm} when $\sO$ is unbounded and $c \geq c_0 > 0$]
\label{rmk:ClaimEssBoundedPenaltyTerm_alternative_proof}
When $\sO$ is unbounded but $c \geq c_0 > 0$ a.e. on $\sO$, we may give an alternative proof of Claim \ref{claim:EssBoundedPenaltyTerm}. According to \eqref{eq:Coercive_A_delta}, the bilinear form $\fa$ is coercive for $0 < \delta < \delta_0(c_0,\Lambda,n,\nu_0)$ and we may set $\lambda = 0$. The variational inequality \eqref{eq:Eq_with_penalization_lambda} then simplifies to
\begin{equation}
\label{eq:Eq_with_penalization}
\fa(u_\eps,v) = (f,v)_{L^2(\sO,\fw)} - (\beta_\eps(u_{\eps}-\psi),v)_{L^2(\sO,\fw)} \geq (f,v)_{L^2(\sO,\fw)},
\end{equation}
for all $v\in H^1_0(\sO\cup\Gamma_0,\fw)$ with $v\geq 0$ a.e. on $\sO$. Let $w \in H^1_0(\sO\cup\Gamma_0,\fw)$ be as in the proof of Claim \ref{claim:EssBoundedSolution_u_VI} and recall that $w \in L^\infty(\sO)$. Subtracting the variational equation \eqref{eq:VE_w} from \eqref{eq:Eq_with_penalization} yields
\[
\fa(u_{\eps}-w,v) \geq 0,
\quad\forall\, v\in H^1_0(\sO\cup\Gamma_0,\fw) \text{ with } v\geq 0 \text{ a.e. on } \sO.
\]
By choosing $v:=(u_{\eps}-w)^-$ in the preceding inequality, we obtain
\[
-\fa((u_{\eps}-w)^-, (u_{\eps}-w)^-) \geq 0.
\]
Coercivity of the bilinear form \eqref{eq:Coercive_A_delta} implies that $C_0\|v\|^2_{H^1(\sO,\fw)} \leq \fa(v,v)$. Hence, $-C_0\|v\|^2_{H^1(\sO,\fw)} \geq 0$ and consequently $v = 0$ a.e on $\sO$, and thus $u_\eps \geq w$ a.e on $\sO$. Since $w, \psi\in L^\infty(\sO)$, it follows that $(u_\eps-\psi)^- \in L^\infty(\sO)$ and therefore $\beta_\eps(u_\eps-\psi) \in L^\infty(\sO)$, as desired for this case. Note that this method does \emph{not} require the hypothesis $\tau > 0$.
\end{rmk}

We can now proceed to the

\begin{proof}[Proof of Theorem \ref{thm:MainHolderContinuityVariationalInequality}]
Fix $u\in H^1_0(\sO\cup\Gamma_0,\fw)$ as in the hypothesis of Theorem \ref{thm:MainHolderContinuityVariationalInequality} and, with $f_\lambda$ as in \eqref{eq:flambda} with this choice of $u$, set
\begin{equation}
\label{eq:fpsilambdaepsilon}
f_{\lambda,\eps} := f_\lambda - \beta_\eps(u_\eps-\psi) \in L^2(\sO,\fw).
\end{equation}
%CP 2.29.2016: Added that \lambda=0 when c(x,y)\geq c_0>0.
%PF 3-1-2016: We don't need this.
%When $c\geq c_0$ on $\sO$, for a positive constant $c_0$, we recall that we can choose $\lambda=0$ since \eqref{eq:Coercive_A_delta} holds.
Since $f, \psi\in L^\infty(\sO)$
%PF 3-1-2016: Added
by \eqref{eq:HolderSourceFunctionUpToBdry} and \eqref{eq:HolderObstacleFunctionUpToBdry}
and $u$ is a solution to the variational inequality \eqref{eq:VIProblemHeston} with $g = 0$ a.e. on $\sO$, then $u$ also solves
\begin{gather*}
\fa_\lambda(u,v-u) \geq (f_\lambda,v-u)_{L^2(\sO,\fw)} \quad\hbox{and}\quad u\geq\psi \hbox{ a.e. on }\sO,
\\
\quad \forall\, v\in H^1_0(\sO\cup\Gamma_0,\fw) \hbox{ with } v\geq\psi \hbox{ a.e. on }\sO.
\end{gather*}
\begin{comment}
%PF 2-26-2016: Replaced by Claim \ref{claim:EssBoundedSolution_u_VI}.
We may assume that
%CP 1.23.2016: We have to make an argument independent of r because it does not show up in the more general operator A
%PF 2-25-2016: adjusted
%CP 2.25.2016: Ok
$\lambda+\underline{c}>0$, without loss of generality, and so the weak maximum principle for $\fa_\lambda$ in \cite[Proposition 7.9 and Theorem 8.15]{Feehan_maximumprinciple} implies that
\begin{equation}
\label{eq:uBoundStrip}
%CP 1.23.2016: We have to make an argument independent of r because it does not show up in the more general operator A
%PF 2-25-2016: adjusted
%CP 2.25.2016: Ok
\|u\|_{L^\infty(\sO)} < \frac{1}{\lambda+\underline{c}}\|f\|_{L^\infty(\sO)}\vee \|\psi\|_{L^\infty(\sO)},
\end{equation}
where $x\vee y := \max\{x,y\}$, for all $x,y \in \RR$.
\end{comment}
%CP 2.29.2016: Added the 2 different cases: bounded and unbounded domain.
%PF 3-1-2016: We don't need this. We're trying to apply Corollary 1.17 and the two cases are already included in the two previous claims and lemma
%When the domain $\sO$ is assumed to be bounded,
For a Euclidean ball $E_\delta(z_0)$ with $z_0 \in \Gamma_0$, as in the statement of Corollary \ref{cor:MainContinuity}, we observe that
%PF 3-1-2016: Added label
\begin{equation}
\label{eq:Ls_norm_ball_f_lambda_eps_leq_Linfty_norm_ball}
%CP 2.28.2016: Here we are using that the domain in bounded. For unbounded domains, this is not true, so we need to leave my previous argument.
%PF 2-29-2016: Please add your correction for the case that \sO is unbounded
\|f_{\lambda,\eps}\|_{L^s(E_\delta(z_0))} \leq \vol^{1/s}(E_\delta(z_0),\fw)\|f_{\lambda,\eps}\|_{L^\infty(E_\delta(z_0))},
\quad \forall\, \eps>0,
\end{equation}
where we take $s>2n\vee (n+\beta)$.
Claim \ref{claim:EssBoundedSolution_u_VI} implies that $u \in L^\infty(\sO)$ and the bound \eqref{eq:PenalizedEqBoundBetaEps} for $\beta_\eps(u_{\eps}-\psi)$
%PF 2-26-2016: No longer needed
%, and the uniform bound \eqref{eq:uBoundStrip} for $u$ on $\sO$ imply that
%PF 2-26-2016: Added
and the definitions \eqref{eq:flambda} for $f_\lambda$ and \eqref{eq:fpsilambdaepsilon} for $f_{\lambda,\eps}$ imply that
$$
\|f_{\lambda,\eps}\|_{L^\infty(\sO)}
\leq \|f\|_{L^\infty(\sO)}+\lambda(1+\hbox{height}(\sO))\|u\|_{L^\infty(\sO)} + 2\esssup_{\sO}(A\psi-f)^+, \quad \forall\, \eps \in (0,\eps_0],
$$
where $\eps_0>0$ is as in Lemma \ref{lem:TroianielloTheorem4.38}.
%PF 3-1-2016: We don't need this
\begin{comment}
When $\sO$ is not necessarily bounded, we recall that we choose $\lambda=0$ in \eqref{eq:fpsilambdaepsilon}, and so by Lemma \ref{lem:TroianielloTheorem4.38}, we have that
$$
\|f_{\lambda,\eps}\|_{L^\infty(\sO)}
\leq \|f\|_{L^\infty(\sO)}+ 2\esssup_{\sO}(A\psi-f)^+, \quad \forall\, \eps \in (0,\eps_0].
$$
\end{comment}
%PF 3-1-2016: Added detail in sentence to explain the point better
Because $f, u \in L^\infty(\sO)$ and $\esssup_{\sO}(A\psi-f)^+ < \infty$ by \eqref{eq:ApsifSupBound}, then \eqref{eq:Ls_norm_ball_f_lambda_eps_leq_Linfty_norm_ball} implies that $f_{\lambda,\eps}$ in \eqref{eq:fpsilambdaepsilon} obeys the hypothesis \eqref{eq:LsfL2ucondition_strip} of
Corollary \ref{cor:MainContinuity} and so, by application to the solution $u_\eps
\in H^1_0(\sO\cup\Gamma_0,\fw)$ to \eqref{eq:PenalizedEq1}, that is
$$
\fa_\lambda(u_{\eps},v) = (f_{\lambda,\eps},v)_{L^2(\sO,\fw)}, \quad \forall\, v \in H^1_0(\sO\cup\Gamma_0,\fw),
$$
we see that $u_\eps\in C_s^{\alpha_1}(\bar\sO_{\delta/2})$ satisfies estimate \eqref{eq:MainContinuity5} with $g=0$,
where the H\"older exponent $\alpha_1=\alpha_1(\delta, K, \Lambda,n,\nu_0,s)\in (0,1)$ and the constant $C=C(\delta, K, \Lambda,n,\nu_0,s)>0$ in \eqref{eq:MainContinuity5} are independent of $\eps \in (0,\eps_0]$. By the Arzel\'a-Ascoli Theorem, we can find a subsequence which converges uniformly on compact subsets of $\bar\sO_{\delta/2}$ to a function $u_0 \in C^{\alpha_1}_s(\bar\sO_{\delta/2})$. But \cite[Theorem 6.2]{Daskalopoulos_Feehan_statvarineqheston} and the choice \eqref{eq:flambda} of $f_\lambda=f+\lambda(1+y)u$ imply that $u_\eps\to u$ strongly in $L^2(\sO,\fw)$ (in fact, $H^1_0(\sO\cup\Gamma_0,\fw)$) as
$\eps\downarrow 0$ and thus, after passing to a subsequence, $u_\eps\to u$ pointwise a.e. on $\sO$ as $\eps\downarrow 0$. Therefore, by choosing a diagonal subsequence, we obtain $u=u_0$ a.e. on $\sO_{\delta/2}$, and the result
follows.
\end{proof}

Now we can give the
\begin{proof}[Proof of Corollary \ref{cor:MainHolderContinuityVariationalInequality}]
We reduce the proof to the setting of Theorem \ref{thm:MainHolderContinuityVariationalInequality} by defining
$$
\tilde u:=u-g,\quad \tilde \psi:=\psi-g,\quad\tilde f:=f-Ag.
$$
Notice that $\tilde u$, $\tilde \psi$ and $\tilde f$ satisfy the assumptions of Theorem \ref{thm:MainHolderContinuityVariationalInequality}
 for $u$, $\psi$ and $f$, respectively. Therefore, we obtain that $\tilde u \in C^{\alpha_1}_s(\bar\sO_{\delta/2})$, for a constant $\alpha_1=\alpha_1(\delta, K, \Lambda, \nu_0,n,s)\in (0,1)$. Because we assume $g \in C^{\gamma}_s(\bar\Gamma_1\cap\bar\partial\sO_{\delta/2})$, we see that $u \in C^{\alpha_2}_s(\bar\sO_{\delta_2})$, where we may choose $\alpha_2:=\alpha_1\wedge\gamma$. When $\gamma=0$, that is, $g\in H^2(\sO,\fw)\cap C(\bar\Gamma_1\cap\partial\sO_{\delta/2})$, we see that $\alpha_2=0$, and so $u\in C(\bar\sO_{\delta/2})$.
\end{proof}

\section{Harnack inequality}
\label{sec:Harnack}
In this section, we prove Theorem \ref{thm:MainHarnack}, that is, the Harnack inequality for solutions $u\in H^1_0(\sO\cup\Gamma_0,\fw)$ to the variational equation \eqref{eq:IntroHestonWeakMixedProblemHomogeneous}. The key
differences from the proof of the classical Harnack inequality for variational solutions to non-degenerate elliptic equations \cite[Theorem 8.20]{GilbargTrudinger} are essentially those which we already outlined in \S
\ref{sec:HolderContinuityVariationalEquation} and the proof follows the same pattern as that of
Theorem \ref{thm:MainContinuityInterior}. Therefore, we only point out the major steps in the proof of Theorem \ref{thm:MainHarnack}, as the
details were explained in the preceding sections. We now proceed to the

\begin{proof} [Proof of Theorem \ref{thm:MainHarnack}]
Let $\bar R:= \dist(\partial\sO\cap\HH,\partial\sO'\cap\HH)$, and $R:=\bar R/4$. We first show that there is a positive constant $C=C(\Lambda, \nu_0, n, \bar R)$, such that for all $z_0 \in\Gamma_0\cap\partial_0\sO'$, we have
\begin{equation}
\label{eq:MainHarnack2}
\esssup_{\BB_R(z_0)}u \leq C\essinf_{\BB_R(z_0)}u.
\end{equation}
For clarity, we split the proof into principal steps.

\setcounter{step}{0}
\begin{step} [Energy estimates]
Let $\eta \in C^1_0(\bar{\HH})$ be a non-negative cutoff function with support in
$\bar \BB_{4R}(z_0)$. Let $\eps>0$ and
\begin{equation}
\label{eq:HarnackTestFunctionW}
w=u+\eps.
\end{equation}
We consider $\alpha \in \RR$, $\alpha \neq -1$. We set $H(w)=w^{(\alpha+1)/2}$ and
\begin{equation}
\label{eq:HarnackTestFunctionV}
v = \eta^2 w^{\alpha}.
\end{equation}
Then, $v \in H^1_0(\sO\cup\Gamma_0,\fw)$ is a valid test function in
%CP 1.23.2016: Replaced with the more general bilinear form
%\eqref{eq:HestonWithKillingBilinearForm}
%PF 2-27-2016: OK
\eqref{eq:Operator_A_bilinear_form}
by \cite[Lemma A.4]{Feehan_Pop_regularityweaksoln_v3}. By applying the same arguments as in the proofs of Theorem \ref{thm:MainSupremumEstimatesInterior} and Theorem \ref{thm:MainContinuityInterior}, we obtain the following analogous energy estimate to \eqref{eq:SupEstEnergyEst} and \eqref{eq:EnergyEstimate}, respectively
\begin{equation}
\label{eq:HarnackEnergyEstimates}
\begin{aligned}
{}&\left(\int|\eta H(w)|^p y^{\beta-1}\,dx\,dy\right)^{1/p}
\\
&\quad \leq
\left(C |1+\alpha|\right)^{1/p} \|\sqrt{y}\nabla \eta\|^{2/p}_{L^{\infty}(\HH)} \left(\int_{\hbox{supp }\eta} |H(w)|^2 y^{\beta-1}\,dx\,dy\right)^{1/p},
\end{aligned}
\end{equation}
where $C=C(\Lambda, \nu_0,  n, \bar R)$ is independent of $\eps$.
\end{step}

\begin{step}[Moser iteration]
By applying Moser iteration as described in the proofs of Theorem \ref{thm:MainSupremumEstimatesInterior}, for $\alpha>0$, and of Theorem \ref{thm:MainContinuityInterior}, for $\alpha<0$, we obtain
\begin{equation}
\label{eq:HarnackMoserIterations}
\begin{aligned}
\esssup_{\BB_R(z_0)} w &\leq C       \left(\frac{1}{|\BB_{2R}(z_0)|_{\beta-1}} \int_{\BB_{2R}(z_0)} w^2 y^{\beta-1}\,dx\,dy \right)^{1/2},\\
\essinf_{\BB_R(z_0)} w &\geq C^{-1} \left(\frac{1}{|\BB_{2R}(z_0)|_{\beta-1}} \int_{\BB_{2R}(z_0)} w^{-2} y^{\beta-1}\,dx\,dy \right)^{-1/2},
\end{aligned}
\end{equation}
where $C$ satisfies the same dependencies as the constant in \eqref{eq:HarnackEnergyEstimates}.
\end{step}

\begin{step} [Application of Theorem \ref{eq:AbstractJohnNirenberg}]
In this step, we verify that $w$ satisfies the requirements of the abstract John-Nirenberg inequality (Theorem \ref{thm:AbstractJohnNirenberg}) with $\theta_0=\theta_1=2$ and $S_r=\BB_{(2+r)R}(z_0)$, for all $0\leq r\leq 1$. From the hypotheses, we have that $0<4R<\dist(z_0,\Gamma_1)$, and so
$S_r=\BB_{(2+r)R}(z_0)$, for all $0\leq r\leq 1$, by \eqref{eq:Balls_relative_to_the_half_space} and \eqref{eq:Balls_relative_to_a_subdomain}.
By Proposition \ref{prop:ApplicationAbstractJohnNirenberg}, we see that $w$ satisfies condition \eqref{eq:AbstractJohnNirenberg1} of Theorem \ref{thm:AbstractJohnNirenberg}. Therefore, it remains to verify condition
\eqref{eq:AbstractJohnNirenberg2}, which follows in precisely the same way as in the proof of Theorem \ref{thm:MainContinuityInterior}.
\end{step}

\begin{step} [Proof of the Harnack inequality \eqref{eq:MainHarnack2} on a half-ball]
Because $w$ satisfies the conditions of Theorem \ref{thm:AbstractJohnNirenberg} by the preceding step, there is a positive constant $C$, independent of $\eps$, such that
\begin{equation}
\label{eq:HarnackJohnNirenberg}
\begin{aligned}
{}&\left(\frac{1}{\BB_{2R}(z_0)|_{\beta-1}} \int_{\BB_{2R}(z_0)} w^{2} y^{\beta-1}\,dx\,dy \right)^{1/2}
\\
&\quad\leq  C  \left(\frac{1}{|\BB_{2R}(z_0)|_{\beta-1}} \int_{\BB_{2R}(z_0)} w^{-2} y^{\beta-1}\,dx\,dy \right)^{-1/2}.
\end{aligned}
\end{equation}
Thus, combining inequalities \eqref{eq:HarnackMoserIterations} and \eqref{eq:HarnackJohnNirenberg} and recalling that $w=u+\eps$, we obtain
\[
\esssup_{\BB_R(z_0)}(u+\eps) \leq C \essinf_{\BB_R(z_0)}(u+\eps),
\]
for all $\eps>0$. Taking the limit as $\eps\downarrow 0$, we obtain the Harnack inequality \eqref{eq:MainHarnack2} on a half-ball.
\end{step}

The proof of \eqref{eq:MainHarnack1}, the Harnack inequality on an open subset $\sO'\Subset\sO\cup\Gamma_0$, follows by a standard covering argument similar to that in the proof of \cite[Corollary 8.21]{GilbargTrudinger}, with \eqref{eq:MainHarnack2} replacing \cite[Inequality (8.63)]{GilbargTrudinger} on half-balls centered at boundary points. More details can be found in the proof of \cite[Theorem 1.23]{Feehan_Pop_regularityweaksoln_v3}.
\end{proof}

\appendix

\section{Auxiliary results}
\label{sec:Auxiliary}
In this section we give the proof of Lemma \ref{lem:Extension}. As in \S \ref{sec:SobolevPoincare}, we work under the assumption stated in Remark \ref{rmk:ReferenceSetting}
%PF 2-25-2016: added (otherwise a bit obscure)
%CP 2.25.2016: Ok
that $z_0 = (0,0)$.

\begin{proof} [Proof of Lemma \ref{lem:Extension}]
By \cite[Corollary A.14]{Daskalopoulos_Feehan_statvarineqheston}, it is enough to prove the existence of an extension operator for functions $u \in C^1(\bar \BB_R(z_0))$. Fix a point
%CP 1.23.2016: Changed definition of point to simplify calculations.
%PF 2-27-2016: OK
$z'_0 = (0, y'_0)\in \BB_R(z_0)$, say with $y'_0= R^2/100$.
%PF 2-25-2016: Not sure this makes sense; we assumed R < 100 somewhere?
%CP 2.25.2016: We don't need to assume that. If you take the ball B_R, the point z=(x,y) on the boundary of B_R with x=0 has y=R^2. The above choice of z_0'=(x_0', y_0') with x_0'=0 and y_0'=R^2/100, just means we are fixing a point inside of the ball. It can be any interior point but it is easier in the proof if it is on the axis x=0.
%PF 2-27-2016: OK
We consider two different cases depending on whether $0<y \leq y'_0$ or $y >y'_0$.

First, we consider the points $z=(x,y) \in D \backslash \BB_R(z_0)$ such that $0<y \leq y'_0$.
%PF 2-25-2016: Slightly rephrased
%CP 2.25.2016: Ok
Let $z'=(x',y)$ be the intersection of $\partial \BB_R(z_0)$ with the line through $z$ and $(0,y)$.
Then, we define
$E u(z)$ by reflection (with respect to the point $z'$ in the hyperplane at level $y$)
\[
E u(z) := u \left(\frac{|x'|}{|x|^2}x, y\right).
\]
Next, we consider the case of points $z=(x,y) \in D \backslash \BB_R(z_0)$ such that $y > y'_0$.
%PF 2-25-2016: Slightly rephrased
%CP 2.25.2016: Ok
Again let $z'=(x',y')$ be the intersection of $\partial \BB_R(z_0)$ with the line through $z$ and $z'_0$.
Then, we define $E u(z)$ by reflection
\[
E u(z) := u \left(z'_0 + \frac{|z'-z'_0|}{|z-z'_0|^2} (z-z'_0)\right).
\]
%PF 2-25-2016: Added
%CP 2.25.2016: The display above and below are exactly the same thing, but if you think it makes everything clearer we can leave it
Therefore,
\[
E u(x,y) := u \left(\frac{|z'-z'_0|}{|z-z'_0|^2}x, y_0' + \frac{|z'-z'_0|}{|z-z'_0|^2}(y-y_0')\right).
\]
%PF 2-25-2016: Rephrased
%It is clear that $Eu$ is a continuous extension of $u$ from $\BB_R(z_0)$ to $D$.
%CP 2.25.2016: Ok
and so it is clear that the function $Eu$ is continuous on $D$ and is equal to $u$ on $\BB_R(z_0)$.
%PF 2-25-2016: We're only reflecting through the smooth surface in the upper half plane
%Because $\partial \BB_R(z_0)$ is a piecewise smooth
%CP 2.25.2016: Ok
Because $\HH\cap \partial \BB_R(z_0)$ is a smooth
%CP 1.23.2016: Changed "curve" to "surface"
%PF 2-27-2016: OK
surface, $Eu$ has well-defined weak derivatives in $D$. Next, we show that
\eqref{eq:ExtensionContinuity} holds. For this purpose, we denote by
\begin{equation*}
\begin{aligned}
D_1 &:= \left(D \backslash \BB_R(z_0)\right) \cap \{y<y_0'\},\\
D_2 &:= \left(D \backslash \BB_R(z_0)\right) \cap \{y \geq y_0'\}.
\end{aligned}
\end{equation*}
To prove \eqref{eq:ExtensionContinuity}, it is enough to show there is a positive constant $C$, depending on $R$ and $D$, such that
\begin{equation}
\label{eq:ExtensionAdd1}
\begin{aligned}
\int_{D_1} |Eu(x,y)|^2 y^{\beta-1}\,dx\,dy  & \leq C \int_{\BB_R(z_0)} |u(x,y)|^2 y^{\beta-1} \,dx\,dy,\\
\int_{D_1} |\nabla Eu(x,y)|^2 y^{\beta}  \,dx\,dy &\leq C \int_{\BB_R(z_0)} |\nabla u(x,y)|^2 y^{\beta} \,dx\,dy,\\
\int_{D_2} |Eu(x,y)|^2 y^{\beta-1}\,dx\,dy  & \leq C \int_{\BB_R(z_0)} |u(x,y)|^2 y^{\beta-1} \,dx\,dy,\\
\int_{D_2} |\nabla Eu(x,y)|^2 y^{\beta}  \,dx\,dy &\leq C \int_{\BB_R(z_0)} |\nabla u(x,y)|^2 y^{\beta} \,dx\,dy,
\end{aligned}
\end{equation}
%CP 1.23.2016: Many changes from here on
%PF 2-27-2016: OK
We begin by evaluating the integrals over $D_1$ in \eqref{eq:ExtensionAdd1} and we denote by
\begin{equation}
\label{eq:ExtensionAdd3}
\begin{aligned}
x''=\varphi(x,y) := \frac{|x'|}{|x|^2}x.
\end{aligned}
\end{equation}
We notice that $(\varphi(x),y) \in \BB_{R}(z_0)$, for all $(x,y)\in D_1$, so $Eu(x,y)$ is well-defined on $D_1$.  The coordinate $x'=x'(y)$ is determined by the condition $d((x',y), z_0)=R$. Direct calculations give us that
$$
x'(y)=\left(\frac{R^2+\sqrt{R^4+4Ry}}{2}-y^2\right)^{1/2}\frac{x}{|x|}.
$$
We can find a positive constant $C_1$, depending only on $R$, such that
\[
|x| \geq |x''| \geq C_1, \quad \forall\, (x,y) \in D_1^+,
\]
and using the fact that
$$
\varphi^{-1}(x'') = \frac{|x'|}{|x''|^2}x'',
$$
we can find a positive constant $C_2$, depending on $R$ and $D$, such that
\begin{equation}
\label{eq:ExtensionAdd4}
\begin{aligned}
|\nabla\varphi(x,y)| &\leq C_2,\quad\forall\, (x,y)\in D_1,\\
|\nabla\varphi^{-1}(x'',y)| &\leq C_2,\quad\forall\, (x'',y)\in \varphi(D_1),
\end{aligned}
\end{equation}
Using the change of variable $x''=\varphi(x,y)$, we obtain
\begin{equation}
\label{eq:ExtensionAdd5}
\begin{aligned}
\int_{D_1} |Eu(x,y)|^2 y^{\beta-1}\,dx\,dy  &\leq  \int_{\varphi(D_1)} |u(x'',y)|^2 y^{\beta-1} |\hbox{det}\nabla\varphi^{-1}(x'',y)| dx''dy\\
& \leq C_2 \int_{\BB_R(z_0)} |u(x,y)|^2 y^{\beta-1} \,dx\,dy, \quad\hbox{(by \eqref{eq:ExtensionAdd4}).}
\end{aligned}
\end{equation}
Using
\begin{equation*}
\begin{aligned}
\partial_{x_i} Eu(x,y) &= \sum_{j=1}^{n-1}u_{x_j}(\varphi(x,y),y) \partial_{x_i}\varphi_j(x,y),\quad 1\leq i\leq n-1,\\
\partial_y Eu(x,y) &=  \sum_{j=1}^{n-1}u_{x_j}(\varphi(x,y),y) \partial_{y}\varphi_j(x,y)+ u_y(\varphi(x,y),y),\\
\end{aligned}
\end{equation*}
the change of variable $x''=\varphi(x,y)$ and the upper bound \eqref{eq:ExtensionAdd4}, we obtain for a positive constant $C_3$, depending on $R$ and $D$,
$$
\int_{D_1} |\nabla Eu(x,y)|^2 y^{\beta}\,dx\,dy  \leq  C \int_{\varphi(D_1)} |\nabla u(x'',y)|^2
|\nabla\varphi(x,y)|^2 |\hbox{det}\nabla\varphi^{-1}(x'',y)| y^{\beta} dx''dy,
$$
and thus
\begin{equation}
\label{eq:ExtensionAdd6}
\begin{aligned}
\int_{D_1} |\nabla Eu(x,y)|^2 y^{\beta}\,dx\,dy
& \leq C_3 \int_{\BB_R(z_0)} |\nabla u(x,y)|^2 y^{\beta} \,dx\,dy.
\end{aligned}
\end{equation}
Therefore, \eqref{eq:ExtensionAdd5} and \eqref{eq:ExtensionAdd6} give us the first two inequalities in \eqref{eq:ExtensionAdd1}.

Next, we consider the last two integrals in \eqref{eq:ExtensionAdd1}. Notice that on $D_2$ we have $y\geq y'_0>0$ and so it is enough to show
\begin{equation}
\label{eq:ExtensionAdd7}
\begin{aligned}
\int_{D_2} |Eu(z)|^2\,dz  & \leq C_4 \int_{\BB_R(z_0)} |u(z)|^2  \,dz,\\
\int_{D_2} |\nabla Eu(z)|^2  \,dz &\leq C_4 \int_{\BB_R(z_0)} |\nabla u(z)|^2  \,dz,
\end{aligned}
\end{equation}
for some positive constant $C_4$, depending on $R$ and $D$. For all $z\in D_2$, we now denote
\begin{equation*}
\begin{aligned}
z''=\varphi(z) &:= z'_0 + \frac{z'-z'_0}{|z-z'_0|^2}(z-z'_0).
\end{aligned}
\end{equation*}
Analogous to \eqref{eq:ExtensionAdd4}, we can find a positive constant $C_5$, depending on $R$ and $D$, such that for all $z \in D_2$,
\begin{equation}
\label{eq:ExtensionAdd8}
\begin{aligned}
|\nabla \varphi(z)| &\leq C_5,\quad\forall\, z\in D_2,\\
|\nabla \varphi^{-1}(z'')| &\leq C_5,\quad\forall\, z''\in\varphi(D_2).
\end{aligned}
\end{equation}
We notice that $\varphi(z) \in \BB_R(z_0)$, for all $z \in D_2$. Therefore, using the change of variable $z'' =\varphi(z)$, we obtain
\begin{equation}
\label{eq:ExtensionAdd9}
\begin{aligned}
\int_{D_2} |Eu(z)|^2\,dz &\leq \int_{\varphi(D_2)} |u(z'')|^2 |\text{det} \nabla \varphi^{-1}(z'')|\, dz''\\
& \leq C_5 \int_{\BB_R(z_0)} |u(x,y)|^2 \,dx\,dy \quad\hbox{(by \eqref{eq:ExtensionAdd8})}.
\end{aligned}
\end{equation}
Using
\begin{equation*}
\begin{aligned}
\partial_{x_i} E u(z) &= \sum_{j=1}^{n-1}u_{x_j}(\varphi(z)) \partial_{x_i}\varphi_j(z)
+ u_{y}(\varphi(z)) \partial_{x_i}\varphi_n(z),\quad 1\leq i\leq n-1,\\
\partial_y E u(z) &= \sum_{j=1}^{n-1}u_{x_j}(\varphi(z)) \partial_y\varphi_j(z)+u_y(\varphi(z)) \partial_{y}\varphi_n(z),
\end{aligned}
\end{equation*}
we obtain
\begin{equation}
\label{eq:ExtensionAdd10}
\begin{aligned}
\int_{D_2} |\nabla Eu(z)|^2\,dz &\leq C \int_{\varphi(D_2)} |\nabla u(z'')|^2 |\nabla \varphi(z)|^2
|\text{det} \nabla \varphi^{-1}(z'')| \,dz''\\
& \leq C C_5 \int_{\BB_R(z_0)} |\nabla u(z)|^2 \,dz, \quad\hbox{(by \eqref{eq:ExtensionAdd8}.)}
\end{aligned}
\end{equation}
From \eqref{eq:ExtensionAdd9} and \eqref{eq:ExtensionAdd10}, we obtain \eqref{eq:ExtensionAdd7}. This concludes the proof of Lemma \ref{lem:Extension}.
\end{proof}

%%%%%%%%%%%%%%%%%%%%%%%%%%%%%%%%%%%%%%%%%%%%%%%%%%%%%%%%%%%%%%%%%%%%%%%%%%%%%%%
%
%                                bibliography
%
%%%%%%%%%%%%%%%%%%%%%%%%%%%%%%%%%%%%%%%%%%%%%%%%%%%%%%%%%%%%%%%%%%%%%%%%%%%%%%%

\bibliography{master,mfpde}

\def\cprime{$'$} \def\cprime{$'$}
  \def\ocirc#1{\ifmmode\setbox0=\hbox{$#1$}\dimen0=\ht0 \advance\dimen0
  by1pt\rlap{\hbox to\wd0{\hss\raise\dimen0
  \hbox{\hskip.2em$\scriptscriptstyle\circ$}\hss}}#1\else {\accent"17 #1}\fi}
  \def\cprime{$'$} \def\cprime{$'$} \def\cprime{$'$} \def\cprime{$'$}
  \def\polhk#1{\setbox0=\hbox{#1}{\ooalign{\hidewidth
  \lower1.5ex\hbox{`}\hidewidth\crcr\unhbox0}}} \def\cprime{$'$}
  \def\cprime{$'$} \def\cprime{$'$}
  \def\lfhook#1{\setbox0=\hbox{#1}{\ooalign{\hidewidth
  \lower1.5ex\hbox{'}\hidewidth\crcr\unhbox0}}} \def\cprime{$'$}
  \def\cprime{$'$} \def\cprime{$'$} \def\cprime{$'$} \def\cprime{$'$}
\providecommand{\bysame}{\leavevmode\hbox to3em{\hrulefill}\thinspace}
\providecommand{\MR}{\relax\ifhmode\unskip\space\fi MR }
% \MRhref is called by the amsart/book/proc definition of \MR.
\providecommand{\MRhref}[2]{%
  \href{http://www.ams.org/mathscinet-getitem?mr=#1}{#2}
}
\providecommand{\href}[2]{#2}
\begin{thebibliography}{10}

\bibitem{Adams_1975}
R.~A. Adams, \emph{Sobolev spaces}, Academic Press, Orlando, FL, 1975.

\bibitem{Bombieri_Giusti_1972}
E.~Bombieri and E.~Giusti, \emph{{Harnack's inequality for elliptic
  differential equations on minimal surfaces}}, Inventiones Mathematicae
  \textbf{15} (1972), 24--46.

\bibitem{Caffarelli_Fabes_Mortola_Salsa_1981}
L.~Caffarelli, E.~Fabes, S.~Mortola, and S.~Salsa, \emph{Boundary behavior of
  nonnegative solutions of elliptic operators in divergence form}, Indiana
  Univ. Math. J. \textbf{30} (1981), 621--640.

\bibitem{Chanillo_Wheeden_1986}
S.~Chanillo and R.~L. Wheeden, \emph{Harnack's inequality and mean-value
  inequalities for solutions of degenerate elliptic equations}, Comm. Partial
  Differential Equations \textbf{11} (1986), 1111--1134.

\bibitem{Chiarenza_Rustichini_Serapioni_1989}
F.~Chiarenza, A.~Rustichini, and R.~Serapioni, \emph{De {G}iorgi-{M}oser
  theorem for a class of degenerate non-uniformly elliptic equations}, Comm.
  Partial Differential Equations \textbf{14} (1989), no.~5, 635--662.

\bibitem{Chiarenza_Serapioni_1984a}
F.~Chiarenza and R.~Serapioni, \emph{Degenerate parabolic equations and
  {H}arnack inequality}, Ann. Mat. Pura Appl. (4) \textbf{137} (1984),
  139--162.

\bibitem{Chiarenza_Serapioni_1984b}
\bysame, \emph{A {H}arnack inequality for degenerate parabolic equations},
  Comm. Partial Differential Equations \textbf{9} (1984), no.~8, 719--749.

\bibitem{Chiarenza_Serapioni_1985}
\bysame, \emph{A remark on a {H}arnack inequality for degenerate parabolic
  equations}, Rend. Sem. Mat. Univ. Padova \textbf{73} (1985), 179--190.

\bibitem{Daskalopoulos_Feehan_statvarineqheston}
P.~Daskalopoulos and P.~M.~N. Feehan, \emph{Existence, uniqueness, and global
  regularity for variational inequalities and obstacle problems for degenerate
  elliptic partial differential operators in mathematical finance},
  arXiv:1109.1075v1.

\bibitem{Daskalopoulos_Feehan_optimalregstatheston}
P.~Daskalopoulos and Paul M.~N. Feehan, \emph{{$C^{1,1}$} regularity for
  degenerate elliptic obstacle problems}, J. Differential Equations
  \textbf{260} (2016), no.~6, 5043--5074. \MR{3448773}

\bibitem{DaskalHamilton1998}
P.~Daskalopoulos and R.~Hamilton, \emph{{$C^\infty$}-regularity of the free
  boundary for the porous medium equation}, J. Amer. Math. Soc. \textbf{11}
  (1998), 899--965.

\bibitem{Ekstrom_Tysk_bcsftse}
E.~Ekstr{\"o}m and J.~Tysk, \emph{Boundary conditions for the single-factor
  term structure equation}, Ann. Appl. Probab. \textbf{21} (2011), 332--350.

\bibitem{Epstein_Mazzeo_2010}
C.~L. Epstein and R.~Mazzeo, \emph{Wright-{F}isher diffusion in one dimension},
  SIAM J. Math. Anal. \textbf{42} (2010), 568--608.

\bibitem{Epstein_Mazzeo_annmathstudies}
\bysame, \emph{Degenerate diffusion operators arising in population biology},
  Annals of Mathematics Studies, Princeton University Press, Princeton, NJ,
  2013, arXiv:1110.0032.

\bibitem{Fabes_Kenig_Serapioni_1982a}
E.~B. Fabes, C.~E. Kenig, and R.~P. Serapioni, \emph{The local regularity of
  solutions of degenerate elliptic equations}, Comm. Partial Differential
  Equations \textbf{7} (1982), 77--116.

\bibitem{DiFazio_Fanciullo_Zamboni_2010}
G.~Di Fazio, M.~S. Fanciullo, and P.~Zamboni, \emph{Harnack inequality and
  regularity for degenerate quasilinear elliptic equations}, Math. Z.
  \textbf{264} (2010), 679--695.

\bibitem{DiFazio_Zamboni_2006}
G.~Di Fazio and P.~Zamboni, \emph{Regularity for quasilinear degenerate
  elliptic equations}, Math. Z. \textbf{253} (2006), 787--803.

\bibitem{Feehan_classical_perron_elliptic}
P.~M.~N. Feehan, \emph{A classical {P}erron method for existence of smooth
  solutions to boundary value and obstacle problems for degenerate-elliptic
  operators via holomorphic maps}, arXiv:1302.1849.

\bibitem{Feehan_maximumprinciple_v5}
\bysame, \emph{Partial differential operators with non-negative characteristic
  form, maximum principles, and uniqueness for boundary value and obstacle
  problems}, arXiv:1204.6613v5.

\bibitem{Feehan_maximumprinciple}
\bysame, \emph{Maximum principles for boundary-degenerate linear elliptic
  differential operators}, Communications in Partial Differential Equations
  \textbf{38} (2013), 1863--1935, arXiv:1204.6613.

\bibitem{Feehan_Pop_regularityweaksoln_v3}
P.~M.~N. Feehan and C.~A. Pop, \emph{Degenerate elliptic operators in
  mathematical finance and {H\"o}lder continuity for solutions to variational
  equations and inequalities}, 68 pages, arXiv:1110.5594v3.

\bibitem{Feehan_Pop_regularityweaksoln_v2}
\bysame, \emph{Degenerate elliptic operators in mathematical finance and
  {H\"o}lder continuity for solutions to variational equations and
  inequalities}, arXiv:1110.5594v2.

\bibitem{Feehan_Pop_mimickingdegen_pde}
\bysame, \emph{A {S}chauder approach to degenerate-parabolic partial
  differential equations with unbounded coefficients}, J. Differential
  Equations \textbf{254} (2013), 4401--4445, arXiv:1112.4824.

\bibitem{Feehan_Pop_elliptichestonschauder}
\bysame, \emph{Schauder a priori estimates and regularity of solutions to
  degenerate-elliptic linear second-order partial differential equations}, J.
  Differential Equations \textbf{256} (2014), 895--956, arXiv:1210.6727.

\bibitem{Feehan_Pop_higherregularityweaksoln}
\bysame, \emph{Higher-order regularity for solutions to degenerate elliptic
  variational equations in mathematical finance}, Advances in Differential
  Equations \textbf{20} (2015), no.~3/4, 361--432, arXiv:1208.2658.

\bibitem{Fichera_1956}
G.~Fichera, \emph{Sulle equazioni differenziali lineari ellittico-paraboliche
  del secondo ordine}, Atti Accad. Naz. Lincei. Mem. Cl. Sci. Fis. Mat. Nat.
  Sez. I. (8) \textbf{5} (1956), 1--30.

\bibitem{Fichera_1960}
\bysame, \emph{On a unified theory of boundary value problems for
  elliptic-parabolic equations of second order}, Boundary problems in
  differential equations, Univ. of Wisconsin Press, Madison, 1960, pp.~97--120.

\bibitem{Franchi_Serapioni_1987}
B.~Franchi and R.~Serapioni, \emph{Pointwise estimates for a class of strongly
  degenerate elliptic operators: a geometrical approach}, Ann. Scuola Norm.
  Sup. Pisa Cl. Sci. (4) \textbf{14} (1987), 527--568 (1988).

\bibitem{GilbargTrudinger}
D.~Gilbarg and N.~Trudinger, \emph{Elliptic partial differential equations of
  second order}, second ed., Springer, New York, 1983.

\bibitem{Grisvard}
P.~Grisvard, \emph{Elliptic problems in nonsmooth domains}, Monographs and
  Studies in Mathematics, vol.~24, Pitman (Advanced Publishing Program),
  Boston, MA, 1985.

\bibitem{Heston1993}
S.~Heston, \emph{A closed-form solution for options with stochastic volatility
  with applications to bond and currency options}, Review of Financial Studies
  \textbf{6} (1993), 327--343.

\bibitem{KaratzasShreve1991}
I.~Karatzas and S.~E. Shreve, \emph{Brownian motion and stochastic calculus},
  second ed., Springer, New York, 1991.

\bibitem{Keldys_1951}
M.~V. Keldy{\v{s}}, \emph{On certain cases of degeneration of equations of
  elliptic type on the boundry of a domain}, Doklady Akad. Nauk SSSR (N.S.)
  \textbf{77} (1951), 181--183. \MR{0042031 (13,41a)}

\bibitem{Koch}
H.~Koch, \emph{Non-{E}uclidean singular integrals and the porous medium
  equation}, Habilitation Thesis, University of Heidelberg, 1999,
  \url{www.mathematik.uni-dortmund.de/lsi/koch/publications.html}.

\bibitem{Kohn_Nirenberg_1967}
J.~J. Kohn and L.~Nirenberg, \emph{Degenerate elliptic-parabolic equations of
  second order}, Comm. Pure Appl. Math. \textbf{20} (1967), 797--872.

\bibitem{Krylov_LecturesHolder}
N.~V. Krylov, \emph{Lectures on elliptic and parabolic equations in {H}\"older
  spaces}, American Mathematical Society, Providence, RI, 1996.

\bibitem{Krylov_LecturesSobolev}
\bysame, \emph{Lectures on elliptic and parabolic equations in {S}obolev
  spaces}, American Mathematical Society, Providence, RI, 2008.

\bibitem{Kufner}
A.~Kufner, \emph{Weighted {S}obolev spaces}, Wiley, New York, 1985.

\bibitem{Lierl_Saloff-Coste_2012v6_arxiv}
J.~Lierl and L.~Saloff-Coste, \emph{Parabolic {H}arnack inequality for
  time-dependent non-symmetric {D}irichlet forms}, arXiv:1205.6493v6.

\bibitem{Mohammed_2002}
A.~Mohammed, \emph{Harnack's inequality for solutions of some degenerate
  elliptic equations}, Rev. Mat. Iberoamericana \textbf{18} (2002), 325--354.

\bibitem{Murthy_Stampacchia_1968}
M.~K.~V. Murthy and G.~Stampacchia, \emph{Boundary value problems for some
  degenerate elliptic operators}, Ann. Mat. Pura Appl. \textbf{80} (1968),
  1--122.

\bibitem{Murthy_Stampacchia_1968corr}
\bysame, \emph{Errata corrige: ``{B}oundary value problems for some
  degenerate-elliptic operators''}, Ann. Mat. Pura Appl. (4) \textbf{90}
  (1971), 413--414.

\bibitem{Oleinik_Radkevic}
O.~A. Ole{\u\i}nik and E.~V. Radkevi{\v{c}}, \emph{Second order equations with
  nonnegative characteristic form}, Plenum Press, New York, 1973.

\bibitem{Pingen_2008}
M.~Pingen, \emph{Regularity results for degenerate elliptic systems}, Ann.
  Inst. H. Poincar\'e Anal. Non Lin\'eaire \textbf{25} (2008), 369--380.

\bibitem{Radkevich_2009a}
E.~V. Radkevi{\v{c}}, \emph{Equations with nonnegative characteristic form.
  {I}}, J. Math. Sci. \textbf{158} (2009), 297--452.

\bibitem{Radkevich_2009b}
\bysame, \emph{Equations with nonnegative characteristic form. {II}}, J. Math.
  Sci. \textbf{158} (2009), 453--604.

\bibitem{Stredulinsky}
E.~W. Stredulinsky, \emph{Weighted inequalities and degenerate elliptic partial
  differential equations}, Lecture Notes in Math., vol. 1074, Springer, New
  York, 1984.

\bibitem{Sturm_1996}
K.~T. Sturm, \emph{Analysis on local {D}irichlet spaces. {III}. {T}he parabolic
  {H}arnack inequality}, J. Math. Pures Appl. (9) \textbf{75} (1996), no.~3,
  273--297. \MR{1387522}

\bibitem{Troianiello}
G.~M. Troianiello, \emph{Elliptic differential equations and obstacle
  problems}, Plenum Press, New York, 1987.

\bibitem{Turesson_2000}
B.~O. Turesson, \emph{Nonlinear potential theory and weighted {S}obolev
  spaces}, Lecture Notes in Mathematics, vol. 1736, Springer, Berlin, 2000.

\bibitem{Vitanza_Zamboni_2005}
C.~Vitanza and P.~Zamboni, \emph{Regularity and existence results for
  degenerate elliptic operators}, Variational analysis and applications,
  Nonconvex Optim. Appl., vol.~79, Springer, New York, 2005, pp.~1129--1140.

\bibitem{Vitanza_Zamboni_2007}
\bysame, \emph{A variational inequality for a degenerate elliptic operator
  under minimal assumptions on the coefficients}, Boll. Unione Mat. Ital. Sez.
  B Artic. Ric. Mat. (8) \textbf{10} (2007), 341--356.

\bibitem{Zamboni_2002}
P.~Zamboni, \emph{H\"older continuity for solutions of linear degenerate
  elliptic equations under minimal assumptions}, J. Differential Equations
  \textbf{182} (2002), 121--140.

\end{thebibliography}
\bibliographystyle{amsplain}

\end{document}